\documentclass[12pt,reqno]{amsart}
\usepackage[numbers,sort&compress]{natbib}
\usepackage{amsfonts,bbm}
\usepackage{amssymb,color}

\usepackage{fancyhdr}
\usepackage[titletoc]{appendix}
\usepackage{enumitem}
\usepackage{amsgen}
\usepackage{amscd}
\usepackage{amsmath}
\usepackage{mathtools}
\usepackage{mathrsfs}
\usepackage{cases}
\usepackage{accents} % This replaces the HARPOON package.

\usepackage[colorlinks=true,backref]{hyperref}
\hypersetup{urlcolor=blue, citecolor=green, linkcolor=blue}

\usepackage[margin=3cm, a4paper]{geometry}

\makeatletter
\renewcommand{\section}{\@startsection{section}{1}{\z@}{1cm}{0.5cm}{\normalfont\scshape\centering}}
\makeatother

\newtheorem{thm}{Theorem}[section]
\newtheorem{cor}[thm]{Corollary}
\newtheorem{lem}[thm]{Lemma}
\newtheorem{prop}[thm]{Proposition}
\newtheorem{defn}[thm]{ \bf{Definition}}

\theoremstyle{remark}
\newtheorem{remark}[thm]{Remark}

\newcommand{\EQ}[1]{\begin{align*} #1 \end{align*}}
\newcommand{\EQn}[1]{\begin{align}\begin{split} #1 \end{split}\end{align}}

\newcommand{\EQnnsub}[1]{\begin{subequations}\begin{align} #1 \end{align}\end{subequations}}
\newcommand{\Del}[1]{}

\def\norm#1{\left\|#1\right\|}
\def\normo#1{\|#1\|}
\def\normb#1{\big\|#1\big\|}

\def\normB#1{\Big\|#1\Big\|}

\def\abs#1{\left|#1\right|}
\def\abso#1{|#1|}
\def\absb#1{\big|#1\big|}

\def\brko#1{(#1)}

\def\mbrk#1{\left[#1\right]}
\def\mbrko#1{[#1]}
\def\mbrkb#1{\big[#1\big]}
\def\mbrkbb#1{\bigg[#1\bigg]}

\def\fbrk#1{\left\lbrace#1\right\rbrace}
\def\fbrko#1{\lbrace#1\rbrace}

\def\jb#1{\langle#1\rangle}

\def\wt#1{\widetilde{#1}}
\def\wh#1{\widehat{#1}}

\def\pd{\partial}

\newcommand{\ra}{{\rightarrow}}
\newcommand{\hra}{{\hookrightarrow}}

\def\lsm{\lesssim}

\newcommand{\N}{{\mathbb N}}

\newcommand{\R}{{\mathbb R}}
\newcommand{\C}{{\mathbb C}}
\newcommand{\Z}{{\mathbb Z}}

\newcommand{\A}{{\mathcal{A}}}

\newcommand{\F}{{\mathcal{F}}}

\newcommand{\J}{{\mathcal{J}}}
\newcommand{\E}{{\mathcal{E}}}

\newcommand{\W}{{\mathcal{W}}}
\newcommand{\dd}{{\mathrm{d}}}

\newcommand{\TT}{{\mathcal{T}}}

\newcommand{\cha}{{\mathbbm{1}}}

\newcommand{\re}{{\mathrm{Re}}}
\newcommand{\im}{{\mathrm{Im}}}

\def\ep{\varepsilon}
\def\al{\alpha}

\def\De{\Delta}

\def\la{\lambda}
\def\ga{\gamma}

\newcommand{\I}{\infty}
\def\rev#1{\frac{1}{#1}}

\def\Lam{\Lambda}

\def\Dy{\Delta}
\numberwithin{equation}{section}

\allowdisplaybreaks
\begin{document}
%\raggedbottom \activatedisplayskips
\title[Modified wave operators]{Modified wave operators for nonlinear Schr\"odinger equations in the full subcritical long range regime}

\subjclass[2020]{35Q55, 35B40, 35B30}
\keywords{Long range scattering, nonlinear Schr\"odinger equation, modified wave operator}

\author{Jia Shen}
\address{(J. Shen) School of Mathematical Sciences and LPMC\\
Nankai University\\ Tianjin 300071, China}
\email{shenjia@nankai.edu.cn}
\thanks{}

\author{Yifei Wu}
\address{(Y.Wu) School of Mathematical Sciences\\
	Nanjing Normal University\\
	Nanjing 210046, China}
\email{yerfmath@gmail.com}
\thanks{}

\date{}

\begin{abstract}
We construct modified wave operators for the nonlinear Schr\"odinger equation $i\partial_tu+\frac12\Delta u=|u|^pu$ in the full subcritical long-range case $0<p<2/d$, with small,  nonvanishing, analytic final data $W(x)$ with bounded logarithmic gradients. Previous results established large-time asymptotics for selected classes of Cauchy data. Moreover, the exact asymptotic expansion for $p<1/d$ remained unknown.

When $1/d<p<2/d$, our result gives the approximation
\EQ{
\frac{1}{(it)^{\frac d2}}e^{\frac{i|x|^2}{2t}}
W\brko{\frac{x}{t}}
\exp\mbrko{ -i\frac{t^{1-\frac{dp}{2}}-1}{1-\frac {dp}2}\abso{W\brko{\frac{x}{t}}}^p}.
}
The wave operator is constructed by an iteration in the analytic spaces with decreasing radius.

When $p\le 1/d$, we construct the profile from a finite truncation of a Fuchsian equation coupled with a transport equation. This profile still leaves a long-range triangular coupling whose terminal integral does not preserve the required fast decay class. The construction yields quantitative $L^q$ asymptotics for $2\le q\le\infty$ and uniqueness in the prescribed analytic asymptotic classes. The central new ingredients are a nonlinear final-state normal form that removes this long-range coupling and a mixed iteration in particular analytic spaces.
\end{abstract}

\maketitle

\tableofcontents

\section{Introduction and the main theorem}%\label{sec:intro}

Let $d\ge1$. We consider the nonlinear Schr\"odinger equation
\EQn{\label{eq:nls}
i\pd_tu+\frac12\De u=\la |u|^pu,
\qquad \la\in\R,
\qquad
x\in\R^d ,
}
in the subcritical long-range regime
\EQ{
0<p<\frac2d.
}
For short-range nonlinearities, that is $p>2/d$, the wave operator prescribes a final state $u_\pm\in L^2$ and constructs a nonlinear solution satisfying linear scattering:
\EQn{\label{eq:linear-scattering-def}
\normb{u(t)-e^{\frac12it\De}u_\pm}_{L^2}\ra0,
\qquad t\ra\pm\I.
}
The study of scattering theory includes the following:
\begin{itemize}
\item (Existence) Given $u_\pm$, prove the existence of a nonlinear solution satisfying \eqref{eq:linear-scattering-def}.
\item (Uniqueness) Prove uniqueness of the nonlinear solution with the prescribed final state $u_\pm$.
\item (Completeness) Given a global nonlinear solution, prove the existence of a final state $u_\pm$ satisfying \eqref{eq:linear-scattering-def}.
\end{itemize}
The first two problems define the wave operator
\EQ{
\Omega_\pm^{\rm lin}:u_\pm\longmapsto u(0),
}
whereas the third is usually referred to as the scattering problem. In other words, the construction of a wave operator is a final-state problem, while completeness is an initial-value problem.

Heuristically, the construction of a scattering solution relies on the decay of the free evolution, and the problem becomes harder as $p$ decreases. When $p\ge4/d$, standard Strichartz arguments yield small-data scattering and wave operators in suitable Sobolev spaces. There are extensive, important results for large data scattering in the range $p\ge4/d$; we do not discuss this in details. We refer to \cite{Cazenave2003,Tao2006,KochTataruVisan2014,Dodson2019} for general references. 

The short-range but mass-subcritical regime $2/d<p<4/d$ is more delicate. Available deterministic scattering results generally impose additional spatial localization, decay, or symmetry. Small-data scattering under spatial decay assumptions goes back to Strauss \cite{Strauss1981}, while radial-data scattering was studied by Hidano \cite{Hidano2008} and Guo-Wang \cite{GuoWang2014}. For the defocusing equation, Tsutsumi and Yajima \cite{TsutsumiYajima1984} proved $L^2$ scattering for initial data $u_0\in H^1$ satisfying $xu_0\in L^2$.  Cazenave-Weissler \cite{CazenaveWeissler1992} used the pseudo-conformal transformation to construct wave operators in the weighted energy space for $p>\max\fbrko{2/d,4/(d+2)}$. Ginibre-Ozawa-Velo \cite{GinibreOzawaVelo1994} subsequently used fractional weighted regularity and Besov estimates to reach every short-range power $p>2/d$ in dimension $d=3$ and to improve the admissible range in dimensions $d\ge4$. For the final-state existence problem in the short-range regime, Nakanishi \cite{Nakanishi2001} constructed asymptotically free nonlinear solutions for arbitrary prescribed free $L^2$ data.

Killip-Masaki-Murphy-Visan \cite{KillipMasakiMurphyVisan2017} proved that a solution is global and scatters if  $\sup_{t\in I}\normo{|x|^{|d/2-2/p|}e^{-\frac12it\Delta}u(t)}_{L^2}<\infty$ on its maximal lifespan $I$, in the range $\max\fbrko{2/d,4/(d+2)}<p<4/d$. For radial defocusing solutions in dimensions $d\ge3$, Killip-Masaki-Murphy-Visan \cite{KillipMasakiMurphyVisan2019} later proved an analogous conditional scattering result under an a priori bound in $\dot H^{s_c}$ for a subrange of $2/d<p<4/d$. In 3D defocusing quadratic case, our previous work \cite{ShenWuQuadratic2024} established global well-posedness for arbitrary radial data in the sharp critical weighted space $\mathcal F\dot H^{1/2}$, thereby removing the a priori critical weighted bound from the global-existence conclusion of \cite{KillipMasakiMurphyVisan2017}. For the focusing equation, Masaki \cite{Masaki2015Sharp} constructed a minimal non-scattering solution with respect to the critical weighted norm and proved that it is not the ground-state standing wave. 

There is further development along the study of Tsutsumi and Yajima \cite{TsutsumiYajima1984} in recent years. Burq-Georgiev-Tzvetkov-Visciglia \cite{BurqGeorgievTzvetkovVisciglia2023} upgraded this result to $H^1$ scattering for the same class of initial data throughout $2/d<p<4/d$. Another direction is to reduce the weighted condition. Lee \cite{Lee2021} showed that, above the Strauss exponent, there exists $0<\beta<p$ such that neither the wave operator nor the map from the initial datum to the scattering state admits an extension to a $C^{1+\beta}$ map from $L^2$ to $L^2$ near the origin. This motivates the study of intermediate weight assumptions between $u_0\in L^2$ and $\langle x\rangle u_0\in L^2$. In our recent work \cite{ShenWuMassSubcritical}, we obtained large-data $L^2$ scattering for $\jb{x}^su_0\in L_x^2$ with some $0<s<1$ and almost-sure scattering for randomized $L^2$-data. A more detailed review of scattering in the mass-subcritical case can be found in the introduction of \cite{ShenWuMassSubcritical}. 

For $p\le2/d$, Strauss \cite{Strauss1974} and Barab \cite{Barab1984} proved that an $L^2$ solution satisfying \eqref{eq:linear-scattering-def} must be identically zero. Hence $2/d$ is the threshold for linear scattering, and the case $p\le2/d$ is referred as long range. In the long-range regime, the free evolution has to be replaced by some modification:
\EQ{
\norm{u(t)-u_{\rm mod}(t;u_\pm)}_{L^2}\ra0,
\qquad t\ra\pm\I.
}

At the critical exponent $p=2/d$, modified scattering is well understood. Ozawa \cite{Ozawa1991} constructed the first modified wave operator for the one-dimensional cubic equation with small final data, and Ginibre-Ozawa \cite{GinibreOzawa1993} extended the construction to dimensions $d=2,3$. Carles \cite{Carles2001} later gave a new proof for the one-dimensional equation by methods of geometric optics. For the initial-value problem, Hayashi-Naumkin \cite{HayashiNaumkin1998} proved asymptotic completeness for small data in dimensions $d\le3$. Other approaches to the one-dimensional cubic problem include the final-state argument of Lindblad-Soffer \cite{LindbladSoffer2006}, the space-time resonance method of Kato-Pusateri \cite{KatoPusateri2011}, and the wave-packet method of Ifrim-Tataru \cite{IfrimTataru2014,IfrimTataru2024WavePackets}.

All above mentioned long-range results concern small final or initial data. A modified wave operator with large final data for the one-dimensional defocusing cubic NLS was constructed by Kawamoto-Mizutani \cite{KawamotoMizutani2025}. Georgiev-Ozawa \cite{GeorgievOzawa2025} obtained a completeness criterion under an additional $L^\I$ control assumption. For the integrable one-dimensional cubic NLS, Deift-Zhou \cite{DeiftZhou2002} used the Riemann-Hilbert steepest descent method introduced in \cite{DeiftZhou1993} to derive long-time asymptotics for large initial data.

The subcritical range $0<p<2/d$ is less developed. Hayashi-Kaikina-Naumkin \cite{HayashiKaikinaNaumkin1999} studied the one-dimensional generalized derivative NLS. Applied to \eqref{eq:nls} with $d=1$ and $\la=1$, their result \cite[pp.~95--96]{HayashiKaikinaNaumkin1999} gives, for $1<p<2$, after a time-independent phase renormalization, the explicit ODE-type profile 
\EQ{%\label{eq:intro-HKN-ODE-profile}
u_{\rm ODE}(t,x)
=\frac{1}{(it)^{\frac12}}e^{\frac{i|x|^2}{2t}}
W\brko{\frac{x}{t}}
\exp\mbrko{
-i\abso{W\brko{\frac{x}{t}}}^p
\frac{t^{1-\frac p2}-1}{1-\frac p2}
}.
}
Here $W$ is determined by the initial value, using the notation in this paper. Their results concern a class of Cauchy data satisfying weighted analytic bounds and a quantitative polynomial lower bound after removing a quadratic phase. For $0<p\le1$, their corresponding formula is
\EQ{%\label{eq:intro-HKN-lower-profile}
u(t,x)
&=\frac{1}{(it)^{\frac12}}e^{\frac{i|x|^2}{2t}}
W(\frac{x}{t})
\exp\mbrko{
-i\frac{t^{1-\frac p2}}{1-\frac p2}
|W(\frac{x}{t})|^p
-i\mathcal R_p(t,\frac{x}{t})
} +O\brko{t^{-\frac{1+p}{2}}},
}
uniformly in $x$, where $\mathcal R_p$ is real-valued. Their proof yields
\EQ{
\sup_{y\in\R}|\mathcal R_p(t,y)|
\lsm
\begin{cases}
1+t^{1-p},&0<p<1,
\\
1+\log t,&p=1.
\end{cases}
}
Related Hartree and time-dependent cubic models were treated in \cite{HayashiKaikinaNaumkin1998,HayashiKatoNaumkin1998}.
Following the quadratic-phase mechanism used by Cazenave-Weissler \cite{CazenaveWeissler1992}, Han \cite{Han2024} studied the standard NLS in dimensions $d\ge2$. He constructed a class of arbitrarily large nonvanishing initial data whose solutions decay at the linear rate $t^{-d/2}$, and obtained phase-modified asymptotics when $4/(d+\sqrt{d^2+8d})<p<2/d$.

In \cite[p.~95]{HayashiKaikinaNaumkin1999}, the authors stated: ``The existence of the modified wave operators for the sub-critical case is an open problem.'' Moreover, the exact asymptotic expansion for $p<1/d$ remained unknown. In this paper, we solve the modified wave operator problem in the whole subcritical long-range case $0<p<2/d$ for small, nonvanishing, analytic final data with bounded logarithmic gradients. Moreover, for $p\le 1/d$, we explicitly construct finite-order asymptotic profiles with quantitative remainder estimates.
\subsection{Main results}

We consider the case when $\la=1$ for simplicity, and our argument also works for general $\la\in\R$. We now fix $d\ge1$ and $0<p<2/d$, and set
\EQn{\label{eq:alpha-def}
\al:=\frac{dp}{2}.
}
Thus $0<\al<1$. Define the ray profile by
\EQn{\label{eq:ray-transform}
u(t,x)=\frac{1}{(it)^{\frac d2}}e^{\frac{i|x|^2}{2t}}
a\brko{t,\frac{x}{t}}.
}
Then \eqref{eq:nls} is equivalent to
\EQn{\label{eq:ray-equation}
i\pd_ta+\frac1{2t^2}\Dy a
=t^{-\al}|a|^pa.
}
For $2\le q\le\I$, with the convention $1/\I=0$, the ray transform satisfies the exact scaling identity
\EQn{\label{eq:ray-Lq-scaling}
\normB{
\frac1{(it)^{\frac d2}}e^{\frac{i|x|^2}{2t}}
F\brko{\frac{x}{t}}
}_{L^q_x}
=t^{-d\brko{\frac12-\frac1q}}\norm{F}_{L^q_y},
\qquad t>0,\quad F\in L^q(\R^d).
}
The ODE profile is
\EQn{\label{eq:ray-ode}
i\pd_ta=t^{-\al}|a|^pa.
}
For a prescribed $W$, the ODE-type profile contains the phase
\EQ{%\label{eq:Lambda-def}
\Lam(t)|W|^p,
\qquad
\Lam(t):=\int_1^t s^{-\al}\dd s
=\frac{t^{1-\al}-1}{1-\al}.
}

For $\rho\ge0$ and $\sigma\in\R$, we define the Wiener analytic space
\EQn{\label{eq:Arho-def}
\norm{f}_{\A^\sigma_\rho}
:=\int_{\R^d}e^{\rho\jb{\xi}}\jb{\xi}^{\sigma}
\absb{\widehat f(\xi)}\dd\xi .
}

\begin{defn}[Admissible final data]\label{def:final-data}
Fix $\rho_0>0$ and $R>0$. For $0<\ep\le1$, we call $W=e^\Psi$ an admissible final data of size $\ep$ if $\Psi\in C^\I(\R^d)$ satisfies
\EQn{\label{eq:R-bound}
\norm{\nabla\Psi}_{\A^6_{\rho_0}}
\le R,
}
and
\EQn{\label{eq:eps-small}
\norm{e^\Psi}_{L^2}
+\norm{e^\Psi}_{\A^6_{\rho_0}}
+\norm{e^{p\re\Psi}}_{\A^6_{\rho_0}}
\le\ep.
}
\end{defn}

This class contains infinite-dimensional families of complex-valued and possibly nonradial data. More precisely, Corollary \ref{cor:admissible-data-richness} in Appendix \ref{app:admissible-data} constructs $W_h=W_0e^h$ for every $h$ in a fixed open ball of $\A^7_{\rho_0}$, with all $W_h$ admissible for the same $(\rho_0,R,\ep)$.

\begin{thm}[Modified wave operators]\label{thm:main}
Let $d\ge1$ and $0<p<2/d$. Fix
\EQ{
0<\rho_*<\rho_1<\rho_0,
\qquad
R>0.
}
Let
\EQ{
W=e^\Psi\in L^2\cap\A^6_{\rho_0}
}
be an admissible final data of sufficiently small size.
\begin{enumerate}
\item If $1/d<p<2/d$, there exists a global $L^2$ solution of \eqref{eq:nls} such that
\EQ{
\norm{u(t)-u_{\rm ODE}(t)}_{L^2_x}\ra0,
\qquad t\ra\I,
}
where
\EQ{%\label{eq:main-ode-modifier}
u_{\rm ODE}(t,x)
=\frac{1}{(it)^{\frac d2}}e^{\frac{i|x|^2}{2t}}
W\brko{\frac{x}{t}}
\exp\mbrko{
-i\Lam(t)\abso{W\brko{\frac{x}{t}}}^p
}.
}
The solution is unique in the ODE-type solution class specified in Theorem \ref{thm:ode}.

\item If $0<p\le1/d$, then there exist $L$ and real-valued $S$ depending explicitly on $\Psi$, $d$, and $p$, and there is a global $L^2$ solution of \eqref{eq:nls} such that
\EQ{
\norm{u(t)-u_{\rm eik}(t)}_{L^2_x}\ra0,
\qquad t\ra\I,
}
where
\EQ{%\label{eq:main-eikonal-modifier}
u_{\rm eik}(t,x)
=\frac{1}{(it)^{\frac d2}}e^{\frac{i|x|^2}{2t}}
\exp\mbrko{
L\brko{t^{-\al},\frac{x}{t}}
-it^{1-\al}
S\brko{t^{-\al},\frac{x}{t}}
}.
}
The solution is unique in the Fuchsian solution class specified in Theorem \ref{thm:eikonal}. 
\end{enumerate}
\end{thm}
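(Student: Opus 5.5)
Theorem \ref{thm:main} is a combination of two separate constructions, to be carried out later as Theorem \ref{thm:ode} (for $1/d<p<2/d$) and Theorem \ref{thm:eikonal} (for $0<p\le1/d$). So the plan is to prove those two theorems and then read off the $L^2$ convergence from the scaling identity \eqref{eq:ray-Lq-scaling} with $q=2$. Both constructions are carried out in the ray variables \eqref{eq:ray-transform}, where \eqref{eq:nls} becomes \eqref{eq:ray-equation}. In these variables the dispersive term $\frac1{2t^2}\Dy a$ is integrable in time, while the nonlinearity $t^{-\al}|a|^pa$ is not, because $\al<1$. The admissibility hypotheses in Definition \ref{def:final-data} are used at two points. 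The condition $W=e^\Psi$ with $\nabla\Psi\in\A^6_{\rho_0}$ makes $|W|^p=e^{p\re\Psi}$ analytic with controlled norm even though $|\cdot|^p$ is not smooth at zero. The smallness \eqref{eq:eps-small} closes the contraction arguments.

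\textbf{Case $1/d<p<2/d$.} Write $a=W e^{-i\Lam(t)|W|^p}+r$. The profile solves the ODE \eqref{eq:ray-ode} exactly. Its error in \eqref{eq:ray-equation} is
$$\frac1{2t^2}\Dy\bigl(We^{-i\Lam|W|^p}\bigr),$$
which, since each derivative of the phase brings a factor $\Lam\sim t^{1-\al}$, has size $t^{-2}\Lam(t)^2\sim t^{-2\al}$. This is integrable exactly when $\al>1/2$, i.e.\ $p>1/d$. I would then solve for $r$ backwards from $t=\I$ via the Duhamel formula in the variable $a$. The linearized nonlinearity $t^{-\al}(|a|^pa)'$ acting on $r$ is not integrable in time, so the equation is instead written in terms of $a e^{i\Lam|W|^p}$, or with a time-dependent phase correction, so that the remaining linear terms carry an extra decaying factor. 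I would iterate in $\A_{\rho(t)}$ with a radius $\rho(t)$ decreasing from $\rho_1$ at $t=\I$ toward $\rho_*$. The loss of $\Lam(t)$ per derivative is then absorbed by the Cauchy--Kovalevskaya-type gain coming from $-\rho'(t)$, which I would choose proportional to $t^{-\al}$, suitably normalized. The expected decay is $\|r(t)\|\lesssim t^{1-2\al}$. Smallness of $|a|$ away from zero keeps $|a|^p$ analytic.

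\textbf{Case $p\le1/d$.} Here a single phase does not suffice. I would write $a=\exp(L-it^{1-\al}S)$ and substitute into \eqref{eq:ray-equation}, using $\tau=t^{-\al}$ as the new variable. This gives a Fuchsian system in $\tau$ at $\tau=0$ for the eikonal part $S$, coupled to a transport equation for the amplitude $L$. I would expand $S$ and $L$ in powers of $\tau$, or in $\tau$ together with $t^{-1}$-type monomials, up to a finite order $N$ with $N(2\al-1)+\ldots>1$. The coefficients are determined recursively from $\Psi$, and this is where the explicit dependence on $\Psi$, $d$, $p$ enters. Truncating at order $N$ leaves an error of sufficiently high decay.

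The main obstacle, as the abstract indicates, is the residual triangular coupling between the $L$ and $S$ equations at low orders. Its terminal integral $\int_t^\I$ does not decay fast enough to stay in the required class. I would try to remove it with a nonlinear normal-form change of final state: replace $\Psi$ by a modified final datum, depending on $\Psi$ through the coupling coefficients, chosen so that the offending term becomes an exact derivative in $t$. The final step is a mixed iteration for the remainder, with analyticity in $y$ in shrinking spaces combined with weighted-in-$t$ norms, which closes for small $\ep$.

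In both cases, uniqueness follows by running the same contraction for the difference of two solutions in the prescribed asymptotic class. The $L^2$ convergence $\|u-u_{\rm mod}\|_{L^2}\to0$ follows from \eqref{eq:ray-Lq-scaling}, and the $\A$-to-$L^\I$ embedding gives the $L^q$ statements.
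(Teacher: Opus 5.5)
Reducing to Theorems \ref{thm:ode} and \ref{thm:eikonal} is the right skeleton. You also need Tsutsumi's $L^2$ theory to extend the solution from $[T,\infty)$ to a global one, which you omit. The real gap is in the case $0<p\le1/d$: there your normal form is aimed at the wrong object and would not work.

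The truncated Fuchsian profile has no obstruction; its coefficients are solved recursively, provided the ansatz contains terms $\tau^q(\log\tau)^j$. Pure powers of $\tau$ and $t^{-1}$ fail whenever $\gamma=(1-\alpha)/\alpha$ is an integer (e.g.\ $p=1/d$). The reason is that $\alpha\tau\partial_\tau-(1-\alpha)$ annihilates $\tau^\gamma$, while the forcing at that exponent is generically nonzero. The truncation order must satisfy $\mu>2\gamma$, so that the residual, of size $t^{-\alpha(\mu+1)}$ up to logarithms, is $O(t^{-1-\kappa})$ for some $\kappa>1-\alpha$. A criterion built on $2\alpha-1\le 0$ is not the right one here.

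The long-range triangular coupling lives instead in the equation for the correction $Z=X+iY$ around the profile. The term $-it^{-\alpha}e^{p\,\mathrm{Re}\,\Phi^{(\mu)}}(e^{pX}-1)$ is linear in the unknown $X$, and for $X=O(t^{-\kappa})$ its terminal integral is only $O(t^{1-\alpha-\kappa})$. Changing the final datum $\Psi$, a fixed function of $y$, cannot turn a term that depends on the unknown into an exact $t$-derivative.

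The paper changes the unknown instead: $Z=\widetilde Z+iK_\mu(\mathrm{Re}\,\widetilde Z)$. Here the real function $K=K_\mu(X)$ solves the final-state transport--reaction problem $\partial_tK=P_\mu(t)K-F_\mu(t,X)$, $K(\infty)=0$, with $P_\mu(t)=t^{-1-\alpha}\nabla S^{(\mu)}(t^{-\alpha})\cdot\nabla$ and $F_\mu(t,X)=t^{-\alpha}e^{p\,\mathrm{Re}\,\Phi^{(\mu)}}(e^{pX}-1)$. Then $i\partial_tK$ cancels both the triangular term and the transport term $iP_\mu K$. The fast variable $\widetilde Z$ is iterated with the single weight $t^{\kappa}$ on both real and imaginary parts, and $K$ is built from the multilinear series $K_{m,\ell}$. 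Leftover terms such as $t^{-2}\Delta K$, which cost two derivatives against a gain $t^{-\alpha}$, are absorbed by the graded Picard scheme with radius allocation. Uniqueness in the Fuchsian class also requires showing that every correction in that class can be written in these variables. This is the automatic normal-form lemma, which bootstraps $\widetilde Z$ from decay $t^{1-\alpha-\kappa}$ to $t^{-\kappa}$; rerunning a contraction does not give it.

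For $1/d<p<2/d$ your profile and the rate $t^{1-2\alpha}$ are right, but a single decay rate does not close. Conjugating by $e^{i\Lambda|W|^p}$ does not produce the extra decaying factor you want. In a single-weight scheme you only know $X=O(t^{1-2\alpha})$. Then the term $-it^{-\alpha}|W|^p(e^{pX}-1)$ has terminal integral $\varepsilon\int_t^\infty s^{1-3\alpha}\,ds$, which diverges for $\alpha\le2/3$ and is $O(t^{2-3\alpha})$, larger than $t^{1-2\alpha}$, otherwise.

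The paper uses the logarithmic ansatz $a=e^{\Phi_0+Z}$ with two weights: $t^{\alpha}$ on $X=\mathrm{Re}\,Z$ and $t^{2\alpha-1}$ on $Y=\mathrm{Im}\,Z$. This is consistent for three reasons:
\begin{itemize}
\item $B_2=\nabla|W|^p\cdot\nabla|W|^p$ is real, so the worst source term $t^{-2}\Lambda^2B_2$ forces only the phase.
\item The triangular term is purely imaginary, so its same-radius integral lands in the $Y$-class with a small factor $\varepsilon$.
\item The mixing of real and imaginary parts by $U(t,s)$ is controlled by $\|(U(t,s)-I)f\|_{\mathcal A^\sigma_{\rho'}}\lesssim t^{-\nu}(\rho-\rho')^{-2\nu}\|f\|_{\mathcal A^\sigma_\rho}$ with $1-\alpha<\nu<1/2$, which again needs $\alpha>1/2$.
\end{itemize}
The multiplicative form is also what makes the nonlinearity analytic, $|a|^p=|W|^pe^{pX}$. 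Since $W\to0$ at spatial infinity, $|a|$ has no positive lower bound, and an additive remainder $r$ gives no control of $|a|^p$.

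Finally, a time-dependent radius with $|\rho'(t)|\propto t^{-\alpha}$ is impossible, because $t^{-\alpha}\notin L^1(T,\infty)$ would exhaust any finite radius budget. The derivative in $t^{-2}\nabla\Phi_0\cdot\nabla Z$ carries the integrable factor $t^{-1-\alpha}$. The paper in any case distributes the radius losses over the Picard orders rather than using a time-dependent radius.
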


\begin{remark}[Modified wave operators]
The solutions in Theorem \ref{thm:main} define the modified wave operator
\EQn{\label{eq:wave-operator-def}
\W_+(\Psi):=u(0),
\qquad 0<p<\frac2d.
}
The construction of the ODE-type profile is given in Theorem \ref{thm:ode}. In the eikonal case, the functions $L$ and $S$ are finite truncations of the Fuchsian amplitude-eikonal system.  Theorem \ref{thm:main} uses the restriction $C_+=0$ in the detailed construction below, and Theorem \ref{thm:eikonal} proves that $u(0)$ is independent of the truncation order.
\end{remark}

The recursive construction of $L$ and $S$ is stated in Proposition \ref{prop:eikonal-prep}. It is inspired by the analogous construction for the long-range Hartree equation. Ginibre and Velo developed an amplitude-phase construction for
\EQ{%\label{eq:intro-hartree-model}
	i\pd_t u+\frac12\Delta u
	=\la\brko{|\cdot|^{-\delta}*|u|^2}u,
	\qquad
	0<\delta<1,
}
in appropriate Sobolev and Gevrey settings \cite{GinibreVelo2000I,GinibreVelo2000,GinibreVelo2001}. Writing
\EQ{
	u(t,x)
	=\frac{1}{(it)^{\frac d2}}e^{\frac{i|x|^2}{2t}}
	e^{-i\phi(t,x/t)}w(t,x/t),
}
the amplitude and phase satisfy the system
\EQ{%\label{eq:intro-GV-system}
	\left\{
	\begin{aligned}
		\pd_t w
		&=\frac{i}{2t^2}\Delta w
		+\frac1{2t^2}
		\brko{2\nabla\phi\cdot\nabla+\Delta\phi}w,
		\\
		\pd_t\phi
		&=\frac1{2t^2}|\nabla\phi|^2
		+t^{-\delta}\la\brko{|\cdot|^{-\delta}*|w|^2},
	\end{aligned}
	\right.
}
They then constructed the amplitude and the phase successively, adding terms until the remaining forcing became integrable. 

The Hartree construction does not directly resolve the family of classical nonlinearities $|u|^pu$ considered here. The Hartree interaction remains cubic: the exponent $\delta$ controls the long-range decay through the kernel, whereas $\la\brko{|\cdot|^{-\delta}*|w|^2}$ is always quadratic in the amplitude. For the nonlinearity $|u|^pu$, as $p$ approaches zero, the number of nondecaying corrections increases precisely while the local coefficient loses regularity at the zero set. Although our final profiles are nonvanishing, their spatial decay prevents any uniform positive lower bound. This is the additional difficulty that has to be overcome in the standard NLS with the nonlinearity $|u|^pu$.

Higher-order large-time asymptotics for scattering solutions of NLS with local nonlinearities have been studied in both the short-range and long-range regimes. In the short-range regime, Masaki \cite{Masaki2009} constructed recursive expansions and showed that their accuracy depends on the fractional part of the power. When the power is an integer, the resulting series converges to the solution and gives a complete expansion. At the long-range threshold, Deift-Zhou \cite{DeiftZhou1994} obtained expansions to all orders for the integrable one-dimensional cubic equation with Schwartz initial data by nonlinear steepest descent. Kita-Wada \cite{KitaWada2002} rigorously identified the second asymptotic term for the 1D equation with a cubic term and a higher-power perturbation. Lindblad-Soffer \cite{LindbladSoffer2006} constructed asymptotic profiles to arbitrary order for the 1D cubic--quintic final-state problem. More recently, Jendrej-Salvi \cite{JendrejSalvi2026} justified arbitrary finite-order expansions for small localized solutions of one-dimensional NLS with gauge-invariant polynomial nonlinearities, including the logarithmic phase generated by the cubic term. A related but different question concerns expansions of the wave and scattering operators with respect to the size of the data. For analytic short-range nonlinearities, Carles-Gallagher \cite{CarlesGallagher2009} proved real analyticity of these operators and gave a procedure for computing their Taylor coefficients. For the 1D cubic long-range equation, Carles \cite{Carles2024Dynamics} showed that the modified wave operator, its inverse, and the modified scattering operator are the identity to leading order near the origin. The first cubic corrections with quantitative error bounds have been computed and the method can yield higher-order terms.

\subsection{Main ideas}

We first discuss the modified wave operator with ODE-type modifier. When $1/d<p<2/d$, the leading modifier is obtained directly from \eqref{eq:ray-ode}. Theorem \ref{thm:ode} constructs the corresponding final-state correction by exploiting the following structure.
\begin{itemize}
\item 
We write the logarithm of the ray profile as $\Phi=\Psi-i\Lam(t)|W|^p+Z$ and split the correction as $Z=X+iY$, $X=\re Z$, and $Y=\im Z$. This separation reveals a triangular structure: the nonlinear term $-i t^{-\al}|W|^p(e^{pX}-1)$ is purely imaginary and depends only on $X$, while the real and imaginary components of the differential forcing have different decay rates. 
\item 
We next measure $X$ and $Y$ with the respective weights $t^\al$ and $t^{2\al-1}$, which correspond to the decay rates $t^{-\al}$ and $t^{1-2\al}$. 
\item 
For the final-state construction, we decompose the Duhamel map into the explicit source, the same-radius integral of the triangular term, and the differential and dispersive remainders. The same-radius term is small because $|W|^p$ is small, while the remaining nonlinear terms are time-integrable but cost analytic radius. 
\item 
We then expand $Z=\sum_{n\ge0}Z_n$ and distribute the loss of analytic radius among the successive Picard orders. The resulting order-by-order estimates prove absolute convergence of the Picard series. 
\end{itemize}

We next discuss the modified wave operator with the eikonal type modifier. Our construction is inspired by the Hamilton-Jacobi expansion of Ginibre-Velo for Hartree equations in \cite{GinibreVelo2000I,GinibreVelo2000,GinibreVelo2001}. 
We use the eikonal form
\EQ{
	a(t,y)=\exp\mbrko{
		L(\tau,y)-it^{1-\al}S(\tau,y)+Z(t,y)},
	\qquad
	\tau=t^{-\al}.
}
Let
\EQ{
	\gamma=\frac{1-\al}{\al}
	=\frac{2-dp}{dp}.
}
Since $t^{1-\al}=\tau^{-\gamma}$, future infinity is the regular singular endpoint $\tau=0$. The logarithmic amplitude $L$ and the real eikonal function $S$ satisfy
\EQn{\label{eq:intro-eikonal-system}
	\al\tau\pd_\tau S
	&=(1-\al)S-\frac{\tau}{2}\nabla S\cdot\nabla S
	-e^{p\re L},
	\\
	\al\pd_\tau L
	&=-\nabla S\cdot\nabla L-\frac12\Dy S
	-\frac{i}{2}\tau^\gamma
	\brko{\Dy L+\nabla L\cdot\nabla L}.
}
We refer to the first equation for $S$ as the Hamilton--Jacobi, eikonal, or Fuchsian equation, and to the second equation for $L$ as a transport equation with a small dispersive perturbation. We call a finite Frobenius truncation of \eqref{eq:intro-eikonal-system} a Fuchsian profile; equivalently, it may also be called an eikonal or Hamilton--Jacobi profile. In this paper, we use the term Fuchsian profile throughout. 

We do not solve \eqref{eq:intro-eikonal-system} exactly. A finite Frobenius expansion at $\tau=0$ is sufficient, provided that its residual has enough decay. The coefficients contain terms of the form
\EQ{
	\tau^{m+n\gamma}(\log\tau)^j.
}
Logarithmic factors appear when a generated exponent meets the homogeneous Fuchsian exponent. 

Up to the required truncation order, the Fuchsian formulation identifies all nonintegrable power-law terms and the logarithmic terms generated at resonant exponents. This does not by itself solve the final-state problem: after the finite Fuchsian profile is removed, the remainder equation still contains a long-range triangular coupling whose terminal integral does not preserve the required fast decay class. The main innovation is to remove this coupling by a nonlinear final-state normal form and then to close the transformed equation by a mixed analytic iteration:
\begin{itemize}
\item \emph{(A nonlinear final-state normal form removes the long-range triangular coupling)}. Let
\EQ{
\Phi^{(\mu)}(t,y)
=L^{(\mu)}\brko{t^{-\al},y}
-it^{1-\al}S^{(\mu)}\brko{t^{-\al},y}
}
be the logarithmic phase associated with a finite Fuchsian profile. If $Z=X+iY$, the correction equation contains
\EQ{
-it^{-\al}e^{p\re\Phi^{(\mu)}}(e^{pX}-1).
}
This term is purely imaginary and depends on the real part $X$. If $X=O(t^{-\kappa})$, then integration from $t$ to infinity gives the slower size $O(t^{1-\al-\kappa})$ and does not preserve the desired decay class. We therefore solve for the fast variable $\widetilde Z$ and reconstruct
\EQ{
Z=\widetilde Z+iK_\mu(\re\widetilde Z),
}
where $K_\mu$ solves a final-state transport-reaction equation determined by the truncated Fuchsian profile.

\item \emph{(The transformed equation requires a new Wiener-analytic iteration)}. The normal-form equation contains arbitrary nonlinear degree, ordered time integrations, and a loss of analytic radius at every differentiation. We work in exponentially weighted Wiener algebras and estimate the nonlinear degree and the number of time integrations simultaneously, while allocating analytic radii across the Picard orders. The resulting estimates prove absolute convergence of the Picard series. The same estimates yield compatibility between different Fuchsian truncations and uniqueness in the prescribed solution class.

\end{itemize}

In summary, the combination of the Fuchsian profile with the nonlinear final-state normal form is the central new ingredient of this paper.

\section{Preliminaries}%\label{sec:prelim}

\subsection{Notation}
We write $A\lsm B$ if $A\le CB$ for a constant $C>0$. A subscript, as in $A\lsm_\Lambda B$, indicates the dependencies relevant to the argument. Unless otherwise stated, the implicit constant may also depend on the fixed structural parameters. The real and imaginary parts of a complex-valued function are denoted by $\re f$ and $\im f$. For the correction variable we usually write
\EQ{
Z=X+iY,\qquad X=\re Z,\quad Y=\im Z .
}
We use $\wh f$ or $\F f$ to denote the Fourier transform of $f$:
\EQ{
	\wh f(\xi)=\F f(\xi):=
	\rev{(2\pi)^{d/2}}\int_{\R^d} e^{-ix\cdot\xi}f(x)\dd x.
}
We also define the inverse Fourier transform:
\EQ{
	\F^{-1} g(x):=
	\rev{(2\pi)^{d/2}}\int_{\R^d} e^{ix\cdot\xi}g(\xi)\dd \xi.
}
Using the Fourier transform, we use $\jb{\nabla}:=\F^{-1}\jb{\xi}\F$, where the Japanese bracket $\jb{\cdot}$ is defined by
\EQ{
\jb{\xi} := \brko{1+|\xi|^2}^{1/2}.
}  
For $\rho\ge0$ and $\sigma\in\R$, we use the analytic Wiener norm
\EQ{
\norm{f}_{\A^\sigma_\rho}
:=\int_{\R^d}e^{\rho\jb{\xi}}\jb{\xi}^{\sigma}
\absb{\widehat f(\xi)}\dd\xi .
}
The space $\A^\sigma_\rho$ consists of all tempered distributions for which this norm is finite.  The parameter $\rho>0$ measures the analytic radius, and the Fourier $L^1$ structure gives $\A^0_\rho\hra L^\I$.  All spatial function spaces are taken over $\R^d$, and we omit this for simplicity.

A function $F:\C\to\C$ is entire if it is holomorphic on $\C$, namely
\EQ{
	F(z)=\sum_{n=0}^{\I}a_nz^n,
	\qquad z\in\C,
}
with infinite radius of convergence.

\subsection{A real analytic implicit function lemma}

\begin{lem}[Real analytic implicit function theorem]\label{lem:real-analytic-ift}
Let $A_0\in\R$. Let $F$ be real analytic near $(0,A_0)$ in the following sense: there exist $r_0,\delta_0>0$ and $c_{mn}\in\R$ such that
\EQ{%\label{eq:real-analytic-expansion}
F(z,A)
=\sum_{m,n\ge0}c_{mn}z^m(A-A_0)^n,
\qquad |z|<r_0,\quad |A-A_0|<\delta_0,
}
and
\EQ{%\label{eq:real-analytic-absolute-convergence}
\sum_{m,n\ge0}|c_{mn}|r^m\delta^n<\I,
\qquad 0\le r<r_0,\quad 0\le\delta<\delta_0.
}
Assume that
\EQ{%\label{eq:real-analytic-ift-hypothesis}
F(0,A_0)=0,\qquad \pd_AF(0,A_0)\ne0.
}
Then there exist $r>0$, $\delta>0$, and a unique real analytic function $A:(-r,r)\to(A_0-\delta,A_0+\delta)$ such that
\EQn{\label{eq:real-analytic-ift-conclusion}
A(0)=A_0,\qquad F(z,A(z))=0,\qquad |z|<r.
}
If $A_0>0$, then $r$ can also be chosen so that
\EQn{\label{eq:real-analytic-ift-positive}
\frac{A_0}{2}\le A(z)\le2A_0,\qquad 0\le z<r.
}
\end{lem}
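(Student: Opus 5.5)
We will reduce the statement to the classical holomorphic implicit function theorem by complexifying. The absolute convergence hypothesis says the power series $\sum c_{mn}z^m w^n$ converges absolutely and locally uniformly on the complex bidisc $\{|z|<r_0\}\times\{|w|<\delta_0\}$. It therefore defines a holomorphic function $\widetilde F(z,w)$ there, which agrees with $F(z,A_0+w)$ for real arguments. The hypotheses become $\widetilde F(0,0)=0$ and $\pd_w\widetilde F(0,0)=c_{01}\ne0$.

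\textbf{Existence.} By the holomorphic implicit function theorem there are $r,\delta>0$ and a holomorphic $w(z)$ on $|z|<r$ with $|w(z)|<\delta$, $w(0)=0$, and $\widetilde F(z,w(z))=0$. For $|z|<r$ this $w(z)$ is the unique zero of $\widetilde F(z,\cdot)$ in the disc $|w|<\delta$. A concrete way to see this is to fix $\delta$ with $|\pd_w\widetilde F(z,w)-c_{01}|\le|c_{01}|/2$ on the closed bidisc, and then shrink $r$ so that $|\widetilde F(z,0)|<|c_{01}|\delta/2$. Then $w\mapsto w-\widetilde F(z,w)/c_{01}$ is a contraction of the closed disc of radius $\delta$ into itself, and the fixed point depends holomorphically on $z$, being a uniform limit of holomorphic iterates.

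\textbf{Real analyticity.} The coefficients $c_{mn}$ are real, so for real $z$ the map $w\mapsto w-\widetilde F(z,w)/c_{01}$ preserves the real interval $(-\delta,\delta)$. The iterates started at $0$ are therefore real, and hence $w(z)$ is real for real $z$. Equivalently, $\overline{w(\bar z)}$ is another solution, and uniqueness forces it to equal $w(z)$. Consequently the Taylor coefficients of $w$ at $0$ are real. Setting $A(z)=A_0+w(z)$ on $(-r,r)$ gives a real analytic function with values in $(A_0-\delta,A_0+\delta)$ that satisfies the conclusion \eqref{eq:real-analytic-ift-conclusion}.

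\textbf{Uniqueness.} Any real-valued solution with values in $(A_0-\delta,A_0+\delta)$ is in particular a zero of $\widetilde F(z,\cdot)$ in the disc $|w|<\delta$. By the contraction property it must coincide with $w(z)$.

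\textbf{Positivity.} If $A_0>0$, we may take $\delta\le A_0/2$ from the start, shrinking $\delta$ first and then $r$. This immediately gives $A_0/2\le A(z)\le 3A_0/2\le 2A_0$, which is \eqref{eq:real-analytic-ift-positive}.

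\textbf{Main obstacle.} The only delicate point is the order in which the radii are chosen. First $\delta$ is fixed, using continuity of $\pd_w\widetilde F$ at the origin, which follows from the absolutely convergent series. Only then is $r$ shrunk, using $\widetilde F(z,0)\to0$ as $z\to0$. With this ordering, the same $\delta$ simultaneously gives the contraction, the uniqueness, and the positivity bound.
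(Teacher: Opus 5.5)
Your proposal is correct and follows essentially the same route as the paper: complexify the series, run the contraction $w\mapsto w-\widetilde F(z,w)/c_{01}$ on a closed disc with $\delta$ fixed before $r$, obtain holomorphy as a uniform limit of holomorphic iterates, realness from the real Picard iterates for real $z$, and uniqueness from the contraction. The only cosmetic difference is positivity: you take $\delta\le A_0/2$ up front, whereas the paper shrinks $r$ afterwards using continuity of $A$. Both work.
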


\begin{proof}
Let
\EQ{
c=\pd_AF(0,A_0),\qquad a=A-A_0,
}
and, for $\sigma>0$, write
\EQ{
D_\sigma:=\fbrk{w\in\C: |w|<\sigma}.
}
We use the same notation for the holomorphic extension
\EQ{
F(z,A_0+a)=\sum_{m,n\ge0}c_{mn}z^ma^n,
\qquad (z,a)\in D_{r_0}\times D_{\delta_0}.
}
Define
\EQn{\label{eq:ift-G-def}
G(z,a):=a-\frac{F(z,A_0+a)}{c}.
}
Then
\EQ{%\label{eq:ift-G-origin}
G(0,0)=0,\qquad \pd_aG(0,0)=0.
}
Choose $0<r<r_0$ and $0<\delta<\delta_0$ such that
\EQn{\label{eq:ift-contraction-constants}
\max_{\overline D_r\times\overline D_\delta}|\pd_aG|\le\frac12,
\qquad
\max_{|z|\le r}|G(z,0)|\le\frac{\delta}{2}.
}
For $|z|<r$ and $|a|,|b|\le\delta$, \eqref{eq:ift-contraction-constants} gives
\EQ{%\label{eq:ift-contraction-map}
|G(z,a)|
&\le |G(z,0)|+\max_{\overline D_r\times\overline D_\delta}|\pd_aG|\,|a|
\le \delta,
\\
|G(z,a)-G(z,b)|
&\le \frac12|a-b|.
}
Hence, by contraction mapping, for each $z\in D_r$, the equation
\EQn{\label{eq:ift-fixed-point-equation}
a=G(z,a),\qquad |a|\le\delta,
}
has a unique solution. More precisely, if
\EQn{\label{eq:ift-picard-iterates}
a_0(z)=0,\qquad a_{k+1}(z)=G(z,a_k(z)),
}
then
\EQ{%\label{eq:ift-picard-sequence}
|a_{k+1}(z)-a_k(z)|\le 2^{-k-1}\delta,
\qquad |z|<r.
}
Therefore $a_k\to a$ uniformly on compact subsets of $D_r$, and $a$ is holomorphic in $D_r$. By \eqref{eq:ift-G-def} and \eqref{eq:ift-fixed-point-equation},
\EQ{%\label{eq:ift-F-zero}
F(z,A_0+a(z))=0.
}
For real $z$, the iterates in \eqref{eq:ift-picard-iterates} are real-valued. Hence $a(z)\in\R$ for $z\in(-r,r)$, and $A(z)=A_0+a(z)$ satisfies \eqref{eq:real-analytic-ift-conclusion}. If $A_1$ and $A_2$ are two such solutions, then for $a_j=A_j-A_0$
\EQ{
|a_1(z)-a_2(z)|=|G(z,a_1(z))-G(z,a_2(z))|\le\frac12|a_1(z)-a_2(z)|,
}
so $A_1=A_2$. Finally, if $A_0>0$, choose $r$ so that
\EQ{
|A(z)-A_0|\le\frac{A_0}{2},\qquad 0\le z<r.
}
Then \eqref{eq:real-analytic-ift-positive} follows.
\end{proof}

\subsection{Analytic Wiener estimates}

\begin{lem}[Analytic Wiener algebra and composition]\label{lem:fourier-algebra}
Let $r\ge0$ and $\rho\ge0$. The following statements hold.
\begin{enumerate}[label=\textup{(\roman*)},leftmargin=*]
\item The space $\A^r_\rho$ is a Banach algebra:
\EQn{\label{eq:fourier-algebra}
\norm{fg}_{\A^r_\rho}
\le C_r
\norm{f}_{\A^r_\rho}
\norm{g}_{\A^r_\rho}.
}
\item The Wiener norm controls the $L^\I$ norm:
\EQn{\label{eq:wiener-embedding}
\norm{f}_{L^\I}\le (2\pi)^{-d/2}\norm{f}_{\A^0_\rho}.
}
\item The space is invariant under complex conjugation and taking real or imaginary parts:
\EQn{\label{eq:wiener-real-conjugate}
\norm{\overline f}_{\A^r_\rho}
=\norm{f}_{\A^r_\rho},
\qquad
\norm{\re f}_{\A^r_\rho}
+\norm{\im f}_{\A^r_\rho}
\le2\norm{f}_{\A^r_\rho}.
}
\item If $f\in\A^0_\rho$ and $g\in L^2$, then
\EQn{\label{eq:wiener-L2-product}
\norm{fg}_{L^2}
\le (2\pi)^{-d/2}\norm{f}_{\A^0_\rho}\norm{g}_{L^2}.
}
\item Let $F$ be an entire function with $F(0)=0$. For each $R>0$, if
\EQ{
\norm{u}_{\A^r_\rho}+\norm{v}_{\A^r_\rho}\le R,
}
then
\EQn{\label{eq:zero-composition}
\norm{F(u)}_{\A^r_\rho}\lsm_R\norm{u}_{\A^r_\rho},
\qquad
\norm{F(u)-F(v)}_{\A^r_\rho}\lsm_R\norm{u-v}_{\A^r_\rho}.
}
The implicit constants may also depend on $F$.
\item Under the same bound on $u$ and $v$, if $B\in \A^r_\rho$ and $G$ is entire, then
\EQn{\label{eq:product-composition}
\norm{BG(u)}_{\A^r_\rho}\lsm_R\norm{B}_{\A^r_\rho},
}
and
\EQn{\label{eq:product-composition-lip}
\norm{B\brko{G(u)-G(v)}}_{\A^r_\rho}
\lsm_R\norm{B}_{\A^r_\rho}\norm{u-v}_{\A^r_\rho}.
}
The implicit constant may also depend on $G$.
\item If $0\le\rho'<\rho$ and $\sigma\ge0$, then
\EQn{\label{eq:radius-gap-estimate}
\norm{|D|^\sigma f}_{\A^r_{\rho'}}
\lsm_\sigma(\rho-\rho')^{-\sigma}\norm{f}_{\A^r_\rho}.
}
Consequently,
\EQ{
\norm{f}_{\A^{r+\sigma}_{\rho'}}
\lsm_\sigma\mbrko{1+(\rho-\rho')^{-\sigma}}
\norm{f}_{\A^r_\rho}.
}
In particular, one spatial derivative gives one inverse radius gap:
\EQ{
\norm{\nabla f}_{\A^r_{\rho'}}
\lsm(\rho-\rho')^{-1}\norm{f}_{\A^r_\rho}.
}
\end{enumerate}
\end{lem}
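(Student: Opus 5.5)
All seven items follow from elementary Fourier-side inequalities for the weight $w(\xi):=e^{\rho\jb{\xi}}\jb{\xi}^r$, together with the Fourier inversion formula. The algebra estimate (i) is the only structural input; the composition statements (v)--(vi) then follow by power series, and (vii) is a one-variable calculus bound.

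For (i), write $\widehat{fg}=(2\pi)^{-d/2}\widehat f*\widehat g$. The weight $w$ is submultiplicative up to a constant. Since $\jb{\xi}\le\jb{\xi-\eta}+\jb{\eta}$ (triangle inequality for $(1,\xi)\in\R^{d+1}$), we get $e^{\rho\jb{\xi}}\le e^{\rho\jb{\xi-\eta}}e^{\rho\jb{\eta}}$. For $r\ge0$ we also have $\jb{\xi}^r\le 2^r(\jb{\xi-\eta}^r+\jb{\eta}^r)\le 2^{r+1}\jb{\xi-\eta}^r\jb{\eta}^r$. Hence $w(\xi)\le C_rw(\xi-\eta)w(\eta)$. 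Integrating $|\widehat f*\widehat g|\,w$ and applying Fubini gives \eqref{eq:fourier-algebra}.

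Items (ii)--(iv) are immediate consequences.
\begin{itemize}
\item For (ii), use $|f(x)|\le(2\pi)^{-d/2}\int|\widehat f|$ together with $w\ge1$.
\item For (iii), use $\widehat{\overline f}(\xi)=\overline{\widehat f(-\xi)}$ and the evenness of $w$. Then $\re f=(f+\overline f)/2$ and $\im f=(f-\overline f)/(2i)$ give the stated bound.
\item For (iv), bound $\|fg\|_{L^2}\le\|f\|_{L^\I}\|g\|_{L^2}$ and invoke (ii).
\end{itemize}

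For (v), expand $F(z)=\sum_{n\ge1}a_nz^n$. Iterating (i) gives $\|u^n\|_{\A^r_\rho}\le C_r^{n-1}\|u\|_{\A^r_\rho}^n$, so
\[
\norm{F(u)}_{\A^r_\rho}\le\sum_{n\ge1}|a_n|C_r^{n-1}R^{n-1}\norm{u}_{\A^r_\rho}.
\]
This sum converges because $F$ is entire. For the difference, write $u^n-v^n=(u-v)\sum_{k=0}^{n-1}u^kv^{n-1-k}$, which gives the bound $\sum_n n|a_n|C_r^{n-1}R^{n-1}\|u-v\|_{\A^r_\rho}$. This also converges, since $\sum n|a_n|s^{n-1}=\sum|a_n|\,(d/ds)s^n$ is entire.

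For (vi), write $G=G(0)+\tilde G$ with $\tilde G(0)=0$. Then $BG(u)=G(0)B+B\tilde G(u)$, and (i) combined with (v) gives \eqref{eq:product-composition}. The bound \eqref{eq:product-composition-lip} follows in the same way from $\tilde G(u)-\tilde G(v)$ and the Lipschitz part of (v). The one point to watch is that the constant is summable for every $R$. This holds because entire functions have coefficients decaying faster than any geometric sequence, and it is exactly why the lemma requires entire $F$ and $G$.

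For (vii), the Fourier weight of $|D|^\sigma$ satisfies
\[
|\xi|^\sigma e^{\rho'\jb{\xi}}\le \jb{\xi}^\sigma e^{-(\rho-\rho')\jb{\xi}}e^{\rho\jb{\xi}}.
\]
Next we use $\sup_{s\ge1}s^\sigma e^{-\delta s}\le\sup_{s>0}s^\sigma e^{-\delta s}=(\sigma/(e\delta))^\sigma$ with $\delta=\rho-\rho'$. This gives $\norm{|D|^\sigma f}_{\A^r_{\rho'}}\lsm_\sigma(\rho-\rho')^{-\sigma}\norm{f}_{\A^r_\rho}$. The "consequently" statement follows from $\jb{\xi}^\sigma\lsm 1+|\xi|^\sigma$ together with $\rho'<\rho$. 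The case of $\nabla$ is $\sigma=1$, using $|\xi_j|\le|\xi|$. There is no genuine obstacle in this lemma; the only steps requiring care are the submultiplicativity of $w$ in (i) and the summability in (v)--(vi).
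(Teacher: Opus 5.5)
Your proposal is correct and follows the paper's proof essentially step by step: Fourier-side convolution with the submultiplicative weight $e^{\rho\langle\xi\rangle}\langle\xi\rangle^r$ for (i), power series and the factorization $u^n-v^n=(u-v)\sum_k u^kv^{n-1-k}$ for (v), the splitting $G=G(0)+(G-G(0))$ for (vi), and boundedness of $x^\sigma e^{-x}$ for (vii). The only differences are cosmetic. You obtain (iv) from (ii) via $\|fg\|_{L^2}\le\|f\|_{L^\infty}\|g\|_{L^2}$ rather than from the $L^1*L^2\to L^2$ convolution bound on the Fourier side, which gives the same constant. You also omit the one-line remark that completeness of $\mathcal{A}^r_\rho$ follows from that of the weighted Fourier $L^1$ space.
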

\begin{proof}
The Fourier identity
\EQ{
\widehat{fg}=(2\pi)^{-d/2}\widehat f*\widehat g
}
and the inequalities
\EQ{
e^{\rho\jb{\xi}}
\le e^{\rho\jb{\eta}}e^{\rho\jb{\xi-\eta}},
\qquad
\jb{\xi}^r\le C_r\jb{\eta}^r\jb{\xi-\eta}^r
}
give \eqref{eq:fourier-algebra} by the $L^1$ convolution inequality. Completeness follows from the completeness of the corresponding weighted Fourier $L^1$ space. Fourier inversion gives \eqref{eq:wiener-embedding}, and the $L^1*L^2\to L^2$ convolution inequality gives \eqref{eq:wiener-L2-product}.  Since \(\widehat{\overline f}(\xi)=\overline{\widehat f(-\xi)}\), \eqref{eq:wiener-real-conjugate} follows from the symmetry of the weight.  Since $F$ is entire and $F(0)=0$, write
\EQ{
F(z)=\sum_{n\ge1}a_nz^n,
\qquad z\in\C .
}
Thus
\EQ{%\label{eq:composition-power-bound}
\norm{u^n}_{\A^r_\rho}
\le C_r^{n-1}\norm{u}_{\A^r_\rho}^n
\le C_r^{n-1}R^{n-1}\norm{u}_{\A^r_\rho}.
}
Since the series of $F$ is entire,
\EQ{
\norm{F(u)}_{\A^r_\rho}
\le
\sum_{n\ge1}|a_n|C_r^{n-1}R^{n-1}\norm{u}_{\A^r_\rho}
\lsm_R\norm{u}_{\A^r_\rho}.
}
The Lipschitz estimate in \eqref{eq:zero-composition} follows from
\EQ{
u^n-v^n=(u-v)\sum_{m=0}^{n-1}u^{n-1-m}v^m
}
and
\EQ{
\norm{u^n-v^n}_{\A^r_\rho}
\le nC_r^{n-1}R^{n-1}\norm{u-v}_{\A^r_\rho}.
}
For \eqref{eq:product-composition}, write
\EQ{
BG(u)=G(0)B+B\brko{G(u)-G(0)}
}
and apply \eqref{eq:fourier-algebra} and \eqref{eq:zero-composition} to $G-G(0)$.  The Lipschitz estimate \eqref{eq:product-composition-lip} follows from \eqref{eq:fourier-algebra} and the Lipschitz part of \eqref{eq:zero-composition} applied to $G-G(0)$.

It remains to prove \eqref{eq:radius-gap-estimate}. Set $a=\rho-\rho'>0$. Since $\abs{\xi}\le\jb{\xi}$,
\EQ{
\abs{\xi}^{\sigma}e^{\rho'\jb{\xi}}
&\le
\jb{\xi}^{\sigma}e^{-a\jb{\xi}}e^{\rho\jb{\xi}}
\\
&=
a^{-\sigma}\brko{a\jb{\xi}}^\sigma e^{-a\jb{\xi}}e^{\rho\jb{\xi}}.
}
The function $x^\sigma e^{-x}$ is bounded on $[0,\I)$. Hence
\EQ{
\abs{\xi}^{\sigma}e^{\rho'\jb{\xi}}
\lsm_\sigma a^{-\sigma}e^{\rho\jb{\xi}}
=(\rho-\rho')^{-\sigma}e^{\rho\jb{\xi}}.
}
Multiplying this pointwise inequality by $\jb{\xi}^{r}\abso{\widehat f(\xi)}$ and taking the $L^1_\xi$ norm gives \eqref{eq:radius-gap-estimate}. The preceding consequence follows from
\EQ{
\jb{\xi}^\sigma\lsm_\sigma\brko{1+|\xi|^\sigma}.
}
\end{proof}

\begin{lem}[Analytic product estimates]%\label{lem:analytic-radius-products}
Let $r\ge1$ and $0\le\rho'<\rho<\rho_1$. Then
\EQn{\label{eq:analytic-first-order-product}
\norm{a\nabla f}_{\A^r_{\rho'}}
\lsm_{r,\rho_1}(\rho-\rho')^{-1}
\norm{a}_{\A^r_\rho}\norm{f}_{\A^r_\rho},
}
and
\EQn{\label{eq:analytic-gradient-product}
\norm{\nabla f\cdot\nabla g}_{\A^r_{\rho'}}
\lsm_{r,\rho_1}(\rho-\rho')^{-1}
\norm{f}_{\A^r_\rho}\norm{g}_{\A^r_\rho}.
}
If $\rho'<\rho<\rho''<\rho_1$, then
\EQn{\label{eq:analytic-second-derivative}
\norm{\Dy f}_{\A^r_{\rho'}}
\lsm_{r,\rho_1}(\rho-\rho')^{-1}(\rho''-\rho)^{-1}
\norm{f}_{\A^r_{\rho''}}.
}
\end{lem}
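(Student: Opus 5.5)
All three estimates follow from Lemma \ref{lem:fourier-algebra}: the Banach algebra property (i) combined with the radius-gap estimate (vii). The one subtlety is that (vii) costs a factor of $(\rho-\rho')^{-1}$ per derivative, so I must avoid paying this twice in the bilinear bounds.

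For \eqref{eq:analytic-first-order-product}, the algebra property at radius $\rho'$ gives
\[
\norm{a\nabla f}_{\A^r_{\rho'}}\le C_r\norm{a}_{\A^r_{\rho'}}\norm{\nabla f}_{\A^r_{\rho'}}.
\]
For the first factor I use the monotonicity $\norm{a}_{\A^r_{\rho'}}\le\norm{a}_{\A^r_\rho}$, which is immediate since the weight $e^{\rho\jb{\xi}}$ increases in $\rho$. For the second factor, (vii) with $\sigma=1$ gives $\norm{\nabla f}_{\A^r_{\rho'}}\lsm(\rho-\rho')^{-1}\norm{f}_{\A^r_\rho}$. Multiplying the two bounds proves the estimate.

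For \eqref{eq:analytic-gradient-product}, naively applying (vii) to both factors would produce $(\rho-\rho')^{-2}$. To avoid this, I absorb one derivative into the Sobolev weight. Here $r\ge1$ is essential: $\norm{\nabla g}_{\A^{r-1}_{\rho}}\le\norm{g}_{\A^r_\rho}$, because $|\xi|\le\jb{\xi}$. So I work on the Fourier side directly. Writing $\eta$ for the frequency of $\nabla f$ and $\zeta=\xi-\eta$ for that of $\nabla g$, I need to bound
\[
e^{\rho'\jb{\xi}}\jb{\xi}^r|\eta||\zeta|.
\]
Two pointwise inequalities handle this. First, $\jb{\xi}^r\lsm\jb{\eta}^r+\jb{\zeta}^r$. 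Second, on the region $\jb{\eta}\ge\jb{\zeta}$, $|\eta|\jb{\xi}^r|\zeta|\lsm\jb{\eta}^{r}|\eta|\,\jb{\zeta}$, and $\jb{\zeta}\le\jb{\zeta}^r$ because $r\ge1$. On that region the derivative on $\eta$ costs one gap via $|\eta|e^{\rho'\jb\eta}\lsm(\rho-\rho')^{-1}e^{\rho\jb\eta}$, while the factor $\jb{\zeta}$ is absorbed into $\jb{\zeta}^r$ with no gap. The other region is symmetric. Using $e^{\rho'\jb{\xi}}\le e^{\rho'\jb\eta}e^{\rho'\jb\zeta}$ and Young's $L^1*L^1$ inequality then gives a single factor $(\rho-\rho')^{-1}$. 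The $\rho_1$-dependence enters only to bound the harmless constants, for instance $1\lsm_{\rho_1}(\rho-\rho')^{-1}$ when converting $\jb{\cdot}$-losses, since $\rho-\rho'<\rho_1$.

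For \eqref{eq:analytic-second-derivative}, I apply (vii) twice through the intermediate radius $\rho$:
\[
\norm{\Dy f}_{\A^r_{\rho'}}\lsm(\rho-\rho')^{-1}\norm{\nabla f}_{\A^r_\rho}\lsm(\rho-\rho')^{-1}(\rho''-\rho)^{-1}\norm{f}_{\A^r_{\rho''}}.
\]
This works because $\pd_j\pd_j$ factors as one derivative per step.

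The only genuinely delicate step is the gradient–gradient product. There the frequency splitting, together with the hypothesis $r\ge1$, is what prevents a squared gap.
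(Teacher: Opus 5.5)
Your proof is correct. For \eqref{eq:analytic-first-order-product} and \eqref{eq:analytic-second-derivative} it coincides with the paper's argument, but for \eqref{eq:analytic-gradient-product} you take a different route.

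The paper does not split frequencies. It bounds $\jb{\xi}^r\le\jb{\xi}^{r-1}(1+|\xi|)$ on the product, and applies the radius-gap estimate \eqref{eq:radius-gap-estimate} with $\sigma=1$ to the whole product $\nabla f\cdot\nabla g$ at weight $r-1$. This gives $(1+(\rho-\rho')^{-1})\norm{\nabla f\cdot\nabla g}_{\A^{r-1}_\rho}$. It then uses the algebra property at weight $r-1\ge0$ together with $\norm{\nabla f}_{\A^{r-1}_\rho}\le\norm{f}_{\A^r_\rho}$, so both gradients are absorbed into the Sobolev weight at no cost. In that argument the single gap is paid by the outgoing weight of the product rather than by a derivative on either factor, and Lemma \ref{lem:fourier-algebra} is used purely as a black box. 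This is also where the paper's $\rho_1$-dependence enters, through $1\lsm_{\rho_1}(\rho-\rho')^{-1}$.

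Your bilinear Fourier-side argument puts the costly derivative on the dominant frequency and absorbs $|\zeta|\le\jb{\zeta}\le\jb{\zeta}^r$ on the other factor. It is longer but slightly sharper in two ways. First, it yields $(\rho-\rho')^{-1}\brko{\norm{f}_{\A^r_\rho}\norm{g}_{\A^r_{\rho'}}+\norm{f}_{\A^r_{\rho'}}\norm{g}_{\A^r_\rho}}$. Second, since $|\eta|e^{\rho'\jb{\eta}}\le (e(\rho-\rho'))^{-1}e^{\rho\jb{\eta}}$ holds directly, your constants depend only on $r$ and not on $\rho_1$. So the conversion $1\lsm_{\rho_1}(\rho-\rho')^{-1}$ you mention at the end is not actually needed in your argument.
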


\begin{proof}
By \eqref{eq:fourier-algebra} and \eqref{eq:radius-gap-estimate},
\EQ{
\norm{a\nabla f}_{\A^r_{\rho'}}
\le C_r\norm{a}_{\A^r_{\rho'}}\norm{\nabla f}_{\A^r_{\rho'}}
\lsm_{r,\rho_1}(\rho-\rho')^{-1}\norm{a}_{\A^r_\rho}\norm{f}_{\A^r_\rho},
}
which proves \eqref{eq:analytic-first-order-product}. Applying \eqref{eq:radius-gap-estimate} to the product with polynomial weight $r-1$ gives
\EQ{
\norm{\nabla f\cdot\nabla g}_{\A^r_{\rho'}}
&\lsm\mbrko{\norm{\nabla f\cdot\nabla g}_{\A^{r-1}_{\rho'}}+\norm{|D|(\nabla f\cdot\nabla g)}_{\A^{r-1}_{\rho'}}}
\\
&\lsm\brko{1+(\rho-\rho')^{-1}}\norm{\nabla f\cdot\nabla g}_{\A^{r-1}_\rho}
\\
&\lsm_{\rho_1}(\rho-\rho')^{-1}\norm{\nabla f\cdot\nabla g}_{\A^{r-1}_\rho}.
}
Since $r-1\ge0$, \eqref{eq:fourier-algebra} yields
\EQ{
\norm{\nabla f\cdot\nabla g}_{\A^{r-1}_\rho}
\le C_r\norm{f}_{\A^r_\rho}\norm{g}_{\A^r_\rho},
}
and hence \eqref{eq:analytic-gradient-product}. Applying the one-derivative estimate in \eqref{eq:radius-gap-estimate} first from $\rho''$ to $\rho$ and then from $\rho$ to $\rho'$ proves \eqref{eq:analytic-second-derivative}.
\end{proof}
\subsection{Linear and Duhamel estimates}
For $T\le t\le s$, let
\EQ{%\label{eq:U-def}
U(t,s)=\exp\brko{i\brko{\frac1{2s}-\frac1{2t}}\Dy}.
}
Then
\EQ{%\label{eq:U-eq}
\pd_tU(t,s)=\frac{i}{2t^2}\Dy U(t,s),
\qquad
U(s,s)=I.
}
We first record linear propagator estimates for $U$. The estimate for $U(t,s)-I$ follows from the multiplier bound and \eqref{eq:radius-gap-estimate}.
\begin{lem}[The linear estimates]%\label{lem:linear-propagator}
For every $\rho\ge0$ and $\sigma\in\R$,
\EQn{\label{eq:U-isometry}
\norm{U(t,s)f}_{\A^\sigma_\rho}
=\norm{f}_{\A^\sigma_\rho}.
}
Moreover, for $0<\nu<1$, $0\le\rho'<\rho$, and $T\le t\le s$,
\EQn{\label{eq:UminusI}
\norm{\brko{U(t,s)-I}f}_{\A^\sigma_{\rho'}}
\lsm_\nu t^{-\nu}
(\rho-\rho')^{-2\nu}\norm{f}_{\A^\sigma_\rho}.
}
At a fixed analytic radius one also has the polynomial-weight version
\EQn{\label{eq:UminusI-sobolev}
\norm{\brko{U(t,s)-I}f}_{\A^\sigma_{\rho}}
\lsm_a t^{-a}
\norm{f}_{\A^{\sigma+2a}_{\rho}},
\qquad
0<a<1.
}
\end{lem}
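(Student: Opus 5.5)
All three estimates are proved on the Fourier side, where $U(t,s)$ is a unimodular multiplier:
\[
\widehat{U(t,s)f}(\xi)=m(\xi)\widehat f(\xi),\qquad m(\xi):=\exp\Big(-i\Big(\frac1{2s}-\frac1{2t}\Big)|\xi|^2\Big).
\]
Since $|m(\xi)|=1$, the integrand in the definition of $\A^\sigma_\rho$ is unchanged pointwise. This gives the isometry \eqref{eq:U-isometry} at once, for every $\rho\ge0$ and $\sigma\in\R$.

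For the two difference estimates, first bound the symbol $m-1$. For $T\le t\le s$ set
\[
\theta:=\frac1{2t}-\frac1{2s}\in\Big[0,\frac1{2t}\Big].
\]
Then $|m(\xi)-1|=|e^{i\theta|\xi|^2}-1|\le\min\{2,\theta|\xi|^2\}$. Interpolating the two bounds via $\min\{2,x\}\le 2x^{b}$ for $0<b<1$ and $x\ge0$ (the cases $x\le1$ and $x\ge1$ are immediate), we get
\[
|m(\xi)-1|\le 2\theta^{b}|\xi|^{2b}\le 2^{1-b}t^{-b}|\xi|^{2b}.
\]
The point is that the only $s$-dependence sits in $\theta$, and $\theta\le 1/(2t)$ uniformly in $s\ge t$.

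For \eqref{eq:UminusI-sobolev}, take $b=a$ and use $|\xi|^{2a}\le\jb{\xi}^{2a}$. Multiplying by $e^{\rho\jb{\xi}}\jb{\xi}^\sigma|\widehat f(\xi)|$ and integrating gives
\[
\norm{(U(t,s)-I)f}_{\A^\sigma_\rho}\lsm_a t^{-a}\norm{f}_{\A^{\sigma+2a}_\rho}.
\]
For \eqref{eq:UminusI}, take $b=\nu$ instead. The pointwise bound then gives
\[
\norm{(U(t,s)-I)f}_{\A^\sigma_{\rho'}}\lsm t^{-\nu}\norm{|D|^{2\nu}f}_{\A^\sigma_{\rho'}}.
\]
Apply the radius-gap estimate \eqref{eq:radius-gap-estimate} of Lemma \ref{lem:fourier-algebra} with exponent $2\nu$ (any nonnegative exponent is allowed there) to get
\[
\norm{|D|^{2\nu}f}_{\A^\sigma_{\rho'}}\lsm_\nu(\rho-\rho')^{-2\nu}\norm{f}_{\A^\sigma_\rho}.
\]
This yields \eqref{eq:UminusI}. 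Lemma \ref{lem:fourier-algebra}(vii) is stated for a nonnegative polynomial weight $r$, but its proof is purely pointwise in $\xi$ and works for any $\sigma\in\R$; equivalently, one can simply repeat the pointwise inequality $|\xi|^{2\nu}e^{\rho'\jb{\xi}}\lsm_\nu(\rho-\rho')^{-2\nu}e^{\rho\jb{\xi}}$ inside the integral.

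There is no real obstacle here. The only points needing care are that the interpolation exponent must be strictly less than $1$, so that $\min\{2,x\}\le 2x^b$ holds with a constant independent of $\xi$, and that the decay rate $t^{-b}$ is uniform in $s\ge t$.
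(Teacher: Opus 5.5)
Your proof is correct and is the same argument as the paper's. The multiplier of $U(t,s)$ is unimodular, which gives the isometry. The symbol bound $|m(\xi)-1|\lesssim t^{-b}|\xi|^{2b}$, uniform in $s\ge t$, gives both difference estimates, and the radius-gap estimate converts $|D|^{2\nu}$ into the factor $(\rho-\rho')^{-2\nu}$.

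Two small remarks. Your observation that the radius-gap estimate is pointwise in $\xi$, and so holds for any $\sigma\in\mathbb{R}$, usefully covers $\sigma<0$, which the paper's lemma, stated with a nonnegative weight, does not literally include. Also, the inequality $\min\{2,x\}\le 2x^b$ already holds for $b=1$, so strictness is not needed there; only $b\le 1$ matters.
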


\begin{proof}
The Fourier multiplier of $U(t,s)$ is $\exp\mbrko{-i\brko{\frac1{2s}-\frac1{2t}}\abso{\xi}^2}$, which has modulus one; this proves \eqref{eq:U-isometry}. Since $\abso{\frac1{2s}-\frac1{2t}}\le \frac1{2t}$,
\EQ{
\abs{\exp\mbrko{-i\brko{\frac1{2s}-\frac1{2t}}\abso{\xi}^2}-1}
\lsm_\nu t^{-\nu}\abso{\xi}^{2\nu}.
}
Combining this with \eqref{eq:radius-gap-estimate} proves \eqref{eq:UminusI}. The same multiplier bound, without changing the analytic radius, gives
\EQ{
\norm{\brko{U(t,s)-I}f}_{\A^\sigma_{\rho}}
\lsm_a
t^{-a}
\norm{|D|^{2a}f}_{\A^\sigma_\rho}
\le
t^{-a}
\norm{f}_{\A^{\sigma+2a}_\rho},
}
which proves \eqref{eq:UminusI-sobolev}.
\end{proof}

The following Duhamel estimate is used below.
\begin{lem}[Duhamel estimate]\label{lem:two-weight-duhamel}
Let $F$ be complex valued, and
\EQ{
V(t)=-\int_t^\I U(t,s)F(s)\dd s.
}
Let $0\le\rho'<\rho$ and $0<\nu<1$. Then
\EQn{\label{eq:two-weight-real}
\norm{\jb{\nabla}\re V(t)}_{\A^0_{\rho'}}
\lsm_\nu&
\int_t^\I\norm{\jb{\nabla}\re F(s)}_{\A^0_{\rho'}}\dd s
\\
&+t^{-\nu}(\rho-\rho')^{-2\nu}
\int_t^\I
\norm{\jb{\nabla}F(s)}_{\A^0_\rho}\dd s,
}
and
\EQn{\label{eq:two-weight-imag}
\norm{\jb{\nabla}\im V(t)}_{\A^0_{\rho'}}
\lsm_\nu&
\int_t^\I\norm{\jb{\nabla}\im F(s)}_{\A^0_{\rho'}}\dd s
\\
&+t^{-\nu}(\rho-\rho')^{-2\nu}
\int_t^\I
\norm{\jb{\nabla}F(s)}_{\A^0_\rho}\dd s.
}
\end{lem}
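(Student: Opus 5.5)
The idea is to split the propagator as $U(t,s)=I+(U(t,s)-I)$ inside the Duhamel integral:
\EQ{
V(t)=-\int_t^\I F(s)\dd s-\int_t^\I \brko{U(t,s)-I}F(s)\dd s.
}
Taking real parts commutes with the identity part, since $\jb{\nabla}$ is a real Fourier multiplier (its symbol $\jb{\xi}$ is real and even) and therefore commutes with $\re$. So the first piece contributes $-\int_t^\I\jb{\nabla}\re F(s)\dd s$. Minkowski's inequality in the Banach space $\A^0_{\rho'}$ bounds it by the first term on the right of \eqref{eq:two-weight-real}. The imaginary part is handled identically and gives \eqref{eq:two-weight-imag}.

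For the second piece, \eqref{eq:wiener-real-conjugate} gives $\norm{\re g}_{\A^0_{\rho'}}\le 2\norm{g}_{\A^0_{\rho'}}$, and likewise for $\im g$. So it suffices to bound the full complex quantity
\EQ{
\norm{\jb{\nabla}\int_t^\I \brko{U(t,s)-I}F(s)\dd s}_{\A^0_{\rho'}}.
}
Since $\jb{\nabla}$ commutes with $U(t,s)$, Minkowski's inequality bounds this by
\EQ{
\int_t^\I\norm{\brko{U(t,s)-I}\jb{\nabla}F(s)}_{\A^0_{\rho'}}\dd s .
}
For $s\ge t$, \eqref{eq:UminusI} with $\sigma=0$ gives the pointwise-in-$s$ bound
\EQ{
\lsm_\nu t^{-\nu}(\rho-\rho')^{-2\nu}\norm{\jb{\nabla}F(s)}_{\A^0_\rho}.
}
The factor $t^{-\nu}$ is uniform in $s$, because the multiplier bound only uses $\abso{\frac1{2s}-\frac1{2t}}\le\frac1{2t}$. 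Integrating in $s$ then gives the second term in each estimate.

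There is no real obstacle. The only points to check are that taking real and imaginary parts is compatible with the Fourier-side norm, which is \eqref{eq:wiener-real-conjugate}, and that $\re$ commutes with $\jb{\nabla}$, which holds because the symbol is real and even. Measurability and convergence of the Bochner integral are implicit: they hold whenever the right-hand side is finite.
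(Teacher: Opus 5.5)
Your proof is correct and follows the paper's argument: split $U(t,s)=I+(U(t,s)-I)$, bound the identity part componentwise, and bound the remainder using the commutation of $U(t,s)$ with $\jb{\nabla}$ and \eqref{eq:UminusI}, then integrate in $s$. Your remarks that $\re$ and $\im$ commute with $\jb{\nabla}$ (real, even symbol) and the appeal to \eqref{eq:wiener-real-conjugate} only spell out steps the paper leaves implicit.
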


\begin{proof}
Since
\EQ{
U(t,s)F(s)=F(s)+\brko{U(t,s)-I}F(s),
}
one has
\EQ{
\re V(t)&=-\int_t^\I\re F(s)\dd s
-\int_t^\I\re\brko{\brko{U(t,s)-I}F(s)}\dd s,
}
and similar identity for $\im$ also holds. By the commutation of $U(t,s)$ with $\nabla$ and \eqref{eq:UminusI},
\EQ{
\norm{\jb{\nabla}\brko{U(t,s)-I}F(s)}_{\A^0_{\rho'}}
&\lsm_\nu
t^{-\nu}(\rho-\rho')^{-2\nu}
\norm{\jb{\nabla}F(s)}_{\A^0_\rho}.
}
Integrating this estimate in $s$ proves \eqref{eq:two-weight-real} and \eqref{eq:two-weight-imag}.
\end{proof}

\subsection{The logarithmic phase equation}%\label{sec:phase}
Let
\EQ{%\label{eq:Q-def}
Q(\Phi)=\Dy\Phi+\nabla\Phi\cdot\nabla\Phi.
}
If $a=e^\Phi$, then
\EQ{
\Dy a=e^\Phi Q(\Phi).
}
Consequently, whenever the preceding differentiation identity is justified, the ray equation \eqref{eq:ray-equation} is equivalent to
\EQn{\label{eq:phase-equation}
i\pd_t\Phi+\frac1{2t^2}Q(\Phi)
=t^{-\al}e^{p\re\Phi}.
}
For existence, we solve \eqref{eq:phase-equation} through its integral formulation with the condition imposed at infinity and then define $a=e^\Phi$.

\section{The ODE-type profile for
\texorpdfstring{$1/d<p<2/d$}{1/d < p < 2/d}}
\label{sec:ode-proof}

Throughout this section, let $1/d<p<2/d$, or equivalently $1/2<\al<1$. Fix $\nu$ satisfying
\EQn{\label{eq:nu-choice}
1-\al<\nu<\frac12.
}
For a final profile $W=e^\Psi$, let
\EQn{\label{eq:ode-Phi0-def}
\Phi_0(t,y)=\Psi(y)-i\Lam(t)|W(y)|^p.
}
Then $e^{\Phi_0}$ solves the ODE \eqref{eq:ray-ode}. We write the full phase as
\EQn{\label{eq:ode-phase-correction-ansatz}
\Phi=\Phi_0+Z,
\qquad
Z=X+iY,
\qquad
Z(t)\ra0.
}
Using \eqref{eq:ode-phase-correction-ansatz} in \eqref{eq:phase-equation},
\EQn{\label{eq:ode-phase-correction-before-cancel}
i\pd_tZ+\frac1{2t^2}\brko{Q(\Phi_0)+\Dy Z+2\nabla\Phi_0\cdot\nabla Z+\nabla Z\cdot\nabla Z}
=t^{-\al}|W|^pe^{pX}-i\pd_t\Phi_0.
}
By \eqref{eq:ode-Phi0-def},
\EQ{%\label{eq:ode-Phi0-cancel}
i\pd_t\Phi_0=t^{-\al}|W|^p.
}
By \eqref{eq:ode-phase-correction-before-cancel},
\EQn{\label{eq:ode-Z-equation}
\pd_tZ=\frac{i}{2t^2}\Dy Z+N(t,Z),
}
where the nonlinearity is given by
\EQ{%\label{eq:ode-N-decomp}
N(t,Z):=N_0(t)+N_1(t,Z)+N_2(t,Z)+N_{\rm tri}(t,Z),
}
with
\EQn{\label{eq:ode-N0-def}
N_0(t)=\frac{i}{2t^2}Q(\Phi_0),
}
the first-order differential term
\EQn{\label{eq:ode-N1-def}
N_1(t,Z)=\frac{i}{t^2}\nabla\Phi_0\cdot\nabla Z,
}
the quadratic differential term
\EQn{\label{eq:ode-N2-def}
N_2(t,Z)=\frac{i}{2t^2}\nabla Z\cdot\nabla Z,
}
and the triangular type term
\EQn{\label{eq:ode-Ntri-def}
N_{\rm tri}(t,Z)=-i t^{-\al}|W|^p(e^{pX}-1),
\qquad X=\re Z.
}
The final-state Duhamel map associated with \eqref{eq:ode-Z-equation} is
\EQn{\label{eq:ode-Duhamel-map}
(\TT Z)(t):=-\int_t^\I U(t,s)N(s,Z(s))\dd s.
}

\begin{defn}[ODE-type solution class]\label{def:ode-asymptotic}
Fix $0<\rho_*<\rho_1$. For $T\ge2$, $0\le\rho<\rho_1$, and $Z=X+iY$, where $X$ and $Y$ are real-valued, define
\EQ{%\label{eq:XT-rho-norm}
\norm{Z}_{\mathcal X_T(\rho)}
:=
\sup_{t\ge T}
\mbrkb{
t^\al\norm{\jb{\nabla}X(t)}_{\A^0_{\rho}}
+t^{2\al-1}\norm{\jb{\nabla}Y(t)}_{\A^0_{\rho}}
}.
}
Let
\EQ{
\mathcal X_T(\rho)
:=\fbrk{Z=X+iY:\ \jb{\nabla}Z\in C([T,\I);\A^0_{\rho}),\
\ \norm{Z}_{\mathcal X_T(\rho)}<\I}.
}
We call $Z$ a final-state correction if there exist $T\ge2$ and $\rho_*<\rho<\rho_1$ such that
\EQ{
Z&\in\mathcal X_T(\rho),
\\
Z&=\TT Z\quad\hbox{in }\mathcal X_T(\rho')
\quad\hbox{for every }\rho_*<\rho'<\rho.
}
For a logarithm $\Psi$, the ODE-type solution class consists of the corresponding ray profiles
\EQ{%\label{eq:phase-class-form}
a(t,y)=\exp\mbrko{\Psi(y)-i\Lam(t)|W(y)|^p+Z(t,y)}.
}
\end{defn}

\begin{thm}[Modified wave operator for the ODE-type profile]
\label{thm:ode}
Let $d\ge1$ and $1/d<p<2/d$. Fix $0<\rho_*<\rho_1<\rho_0$ and $R>0$. There exists $0<\ep_0\le1$, depending on these parameters, such that the following holds whenever $0<\ep\le\ep_0$ and $W=e^\Psi$ is an admissible final profile of size $\ep$ in the sense of Definition \ref{def:final-data}.
\begin{enumerate}
\item There exist $T\ge2$ and a final-state correction $Z\in\mathcal X_T\brko{(\rho_*+\rho_1)/2}$ such that
\EQ{%\label{eq:ode-solution-form}
u(t,x)=\frac{1}{(it)^{\frac d2}}e^{\frac{i|x|^2}{2t}}
\exp\mbrko{
\Psi\brko{\frac{x}{t}}
-i\Lam(t)\abso{W\brko{\frac{x}{t}}}^p
+Z\brko{t,\frac{x}{t}}
}
}
solves \eqref{eq:nls} on $[T,\I)$ and belongs to the ODE-type solution class of Definition \ref{def:ode-asymptotic}.
\item For every $2\le q\le\I$, the solution has the quantitative modified asymptotics
\EQn{\label{eq:ode-Lq-remainder}
\norm{u(t)-u_{\rm ODE}(t)}_{L^q_x}
\lsm_R\norm{W}_{L^q_y}t^{-d\brko{\frac12-\frac1q}-(2\al-1)},
\qquad t\ge T,
}
where
\EQ{%\label{eq:ode-modifier-intro}
u_{\rm ODE}(t,x)
=\frac{1}{(it)^{\frac d2}}e^{\frac{i|x|^2}{2t}}
W\brko{\frac{x}{t}}
\exp\brko{-i\Lam(t)\abso{W\brko{\frac{x}{t}}}^p}.
}
Moreover,
\EQn{\label{eq:ode-Lq-modulus}
\norm{|u(t)|-|u_{\rm ODE}(t)|}_{L^q_x}
\lsm_R\norm{W}_{L^q_y}t^{-d\brko{\frac12-\frac1q}-\al},
}
and
\EQn{\label{eq:ode-Lq-leading}
\absb{t^{d\brko{\frac12-\frac1q}}\norm{u(t)}_{L^q_x}-\norm{W}_{L^q_y}}
\lsm_R\norm{W}_{L^q_y}t^{-\al}.
}
\item The solution is unique within the ODE-type solution class of Definition \ref{def:ode-asymptotic} for the same logarithm $\Psi$.
\item The solution constructed on $[T,\I)$ extends uniquely to a global $L^2$ solution on $[0,\I)$, and its value at time zero is independent of the admissible lower endpoint $T$.
\end{enumerate}
\end{thm}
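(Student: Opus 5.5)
We construct $Z$ as the sum of a Picard series $Z=\sum_{n\ge0}Z_n$ for the fixed-point problem $Z=\TT Z$ in \eqref{eq:ode-Duhamel-map}, as announced in the introduction. The radius losses are distributed over the orders. We fix a summable sequence of gaps $\delta_n=c(n+1)^{-2}$ with $\sum_n\delta_n\le(\rho_1-\rho_*)/4$ and set $\rho_n=\rho_1-\sum_{k<n}\delta_k$, so that every $\rho_n$ stays above $(\rho_*+\rho_1)/2$. Define $Z_0:=-\int_t^\I U(t,s)N_0(s)\dd s$. For $n\ge1$, let $Z_n$ collect the terms of homogeneous Picard order $n$ produced by $N_1$, $N_2$ and the Taylor expansion of $N_{\rm tri}$. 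Concretely, $N_{\rm tri}$ is expanded via $e^{pX}-1=\sum_k p^kX^k/k!$, and Lemma \ref{lem:fourier-algebra}(v)--(vi) supplies the entire-function bounds.

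\textbf{Step 1: the source.} The admissible-data bounds \eqref{eq:R-bound}--\eqref{eq:eps-small} and the algebra property give $\norm{\jb{\nabla}Q(\Phi_0)(s)}_{\A^0_{\rho}}\lsm_R \ep\,\Lam(s)^2$ for $\rho<\rho_0$. Here the factor $e^{\Psi}$ is absorbed because $Q(\Phi_0)$ only involves $\nabla\Psi$ and $\nabla|W|^p=p|W|^p\nabla\re\Psi$. Since $N_0=\frac{i}{2s^2}Q(\Phi_0)$ and $\Lam(s)\lsm s^{1-\al}$, we get $N_0=O(s^{-2\al})$. Integration therefore gives $\jb{\nabla}Z_0=O(t^{1-2\al})$, which is exactly the $Y$-weight. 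For the real part we use Lemma \ref{lem:two-weight-duhamel}. Since $Q(\Phi_0)$ has real part $Q(\Psi)+O(\Lam^2|W|^{2p})$, the part $\frac{i}{2s^2}Q(\Psi)$ contributes $O(s^{-2})$ to $\re F$. The $\Lam^2$-part has real coefficient, so $\re N_0=O(\ep s^{-2}\Lam)=O(s^{-1-\al})$, and this integrates to $t^{-\al}$. The dispersive correction in \eqref{eq:two-weight-real} costs $t^{-\nu}\cdot t^{1-2\al}$. Because $\nu>1-\al$ by \eqref{eq:nu-choice}, this is $\le t^{-\al}$. Hence $\norm{Z_0}_{\mathcal X_T(\rho_1)}\lsm_R\ep$.

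\textbf{Step 2: the higher orders and the main obstacle.} For the triangular term, $N_{\rm tri}$ is purely imaginary and of size $s^{-\al}\ep\,\norm{X}$. With $X=O(s^{-\al})$ it yields $\im$-forcing $O(\ep s^{-2\al})$, which integrates to $\ep t^{1-2\al}$. This is the same-radius integral, and it is small because $\norm{|W|^p}\lsm\ep$. Its dispersive remainder again costs $t^{-\nu}$ and feeds back into the real part at size $t^{1-2\al-\nu}\le t^{-\al}$, at the price $\delta_n^{-2\nu}$. The terms $N_1$ and $N_2$ carry a factor $s^{-2}$ times $\Lam\lsm s^{1-\al}$ and one derivative. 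By Lemma \ref{lem:two-weight-duhamel} and \eqref{eq:analytic-first-order-product}, they map the weights into themselves with gain $T^{-\eta}\delta_n^{-1}$ for some $\eta>0$. The main obstacle is the bookkeeping: the polynomial losses $\delta_n^{-1}$ and $\delta_n^{-2\nu}$ at order $n$ must be dominated by the geometric smallness $(C\ep+CT^{-\eta})^n$. I would prove the bound $\norm{Z_n}_{\mathcal X_T(\rho_n)}\le (C_R(\ep+T^{-\eta}))^{n}\prod_{k\le n}\delta_k^{-1}\,\ep$ by induction, using the multilinear structure and a Catalan-type count for $N_2$ and for the Taylor terms. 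Since $\prod_{k\le n}\delta_k^{-1}\sim C^n(n!)^2$ is not summable against a merely geometric factor, I would first try to avoid the product. The key observation is that each term of order $n$ loses only one derivative at its last step. Using nested radii $\rho_{n}$ and estimating each Picard level only from the previous level, the loss becomes $\delta_n^{-1}\sim n^2$ rather than a product, and the factor $n^2(C\ep)^n$ is summable. Summing the series gives $Z\in\mathcal X_T((\rho_*+\rho_1)/2)$ with $Z=\TT Z$. Finally, $a=e^{\Phi_0+Z}$ solves \eqref{eq:ray-equation}, and hence $u$ solves \eqref{eq:nls}.

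\textbf{Step 3: asymptotics, uniqueness, globalization.} For (2), write $u-u_{\rm ODE}$ via \eqref{eq:ray-Lq-scaling} as the ray transform of $W e^{-i\Lam|W|^p}(e^Z-1)$. Then $|e^Z-1|\lsm |Z|\lsm t^{1-2\al}$ in $L^\I$ by \eqref{eq:wiener-embedding}, which gives \eqref{eq:ode-Lq-remainder}. The modulus involves only $e^{X}-1=O(t^{-\al})$, which gives \eqref{eq:ode-Lq-modulus} and \eqref{eq:ode-Lq-leading}. For (3), take two corrections in the class and subtract their Duhamel equations. Running the same weighted estimates on the difference at a slightly smaller radius (a single gap suffices, together with a Gronwall or contraction argument on $[T',\I)$ with $T'$ large) shows that the difference vanishes for large $t$. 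Backward $L^2$ uniqueness then propagates this to the whole interval. For (4), $u(T)\in L^2$, and NLS with $p<4/d$ is globally well-posed in $L^2$ by mass conservation and Strichartz estimates. Independence of $T$ follows from uniqueness, since two choices of $T$ give solutions that agree on the common interval.
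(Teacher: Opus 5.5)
\textbf{Step 2 has a genuine gap, and it is the heart of the proof.} Your escape from the product of radius losses does not work. The formula for $Z_n$ differentiates only once, but your estimate of $Z_n$ at radius $\rho_{n+1}$ uses the inductive bound for $Z_{n-1}$ at radius $\rho_n$, and that bound already carries $\prod_{k<n}\delta_k^{-1}$. For example, the contribution to $Z_n$ obtained by applying the Duhamel integral of $N_1$ $n$ times to $Z_0$ genuinely involves $n$ derivatives of $Z_0$.

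So the losses compound. With a fixed summable sequence $\delta_k\sim k^{-2}$ they grow like $(n!)^2$, which no geometric factor $(C\ep+CT^{-\eta})^n$ absorbs. Even the $1/n!$ supplied by the ordered time integrals leaves a divergent factor $\sim c^{-n}n!$ with your gaps.

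The same defect breaks your uniqueness argument. After the same-radius triangular term is absorbed using small $\ep$, the difference $V=Z_1-Z_2$ only satisfies $Q_V(t;\zeta')\lsm(\zeta-\zeta')^{-1}\int_t^\I\omega(s)Q_V(s;\zeta)\dd s$ with $\zeta>\zeta'$. Here $Q_V(t;\zeta)=\sup_{\tau\ge t}\norm{V(\tau)}_{E_\zeta(\tau)}$ in the paper's notation. A single fixed gap combined with Gronwall or a contraction never closes this inequality.

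\textbf{The missing idea} is the Cauchy--Kovalevskaya (Ovsyannikov) bookkeeping of Lemma \ref{lem:ode-picard-convergence}. One proves a bound valid at every radius below $\rho_+$:
\[
Q_{Z_n}(t;\zeta)\le b_n z_\zeta(t)^n,\qquad z_\zeta(t)=\eta+\frac{B(t)}{\rho_+-\zeta},\qquad B(t)=\int_t^\I\omega,
\]
where $\eta\sim\ep$ comes from the same-radius triangular integral. At order $n$ the inputs are taken at the adapted radius $\zeta_n=\zeta+(\rho_+-\zeta)/(n+1)$, which costs a factor $(n+1)/(\rho_+-\zeta)$. The identity $\int_t^\I\omega(s)z_{\zeta_n}(s)^{n-1}\dd s\le\frac{\rho_+-\zeta_n}{n}z_{\zeta_n}(t)^n$ then supplies a compensating $1/n$, and $z_{\zeta_n}\le(1+\frac1n)z_\zeta$ costs only a factor $e$. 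Equivalently, $n$ derivative losses of total size $n^n(\rho_+-\zeta)^{-n}$ are paid for by the $1/n!$ of the time simplex, leaving a geometric bound.

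The constants $b_n$ satisfy a recursion whose generating function solves $A=M+2ez\mathcal P(A)$. This equation has an analytic solution near $z=0$ by the implicit function theorem, which gives absolute convergence once $\ep$ is small and $T$ is large. Uniqueness likewise needs $n$ equal subgaps and the $1/n!$ from iteration, giving $\norm{V}_{\mathcal X_{T_1}(\zeta_-)}\le\theta^n\norm{V}_{\mathcal X_{T_1}(\zeta_+)}\to0$.

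\textbf{A smaller inaccuracy in Step 1.} The bound $\norm{\jb{\nabla}Q(\Phi_0)}\lsm_R\ep\Lam^2$ is false, because the $\Lam$-independent part $\Dy\Psi+\nabla\Psi\cdot\nabla\Psi$ of $Q(\Phi_0)$ is only $O_R(1)$. Hence $\norm{Z_0}_{\mathcal X_T}\lsm_R1$, not $\lsm_R\ep$. This is harmless in the scheme above, since $M=C_R$ there need not be small.

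The rest of your argument is sound and agrees with the paper: the two-weight bookkeeping for $X$ and $Y$, the asymptotics in (2), and the globalization in (4).
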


\begin{remark}[Amplitude and $L^q$ asymptotics]
The estimates \eqref{eq:ode-Lq-modulus} and \eqref{eq:ode-Lq-leading} describe the amplitude asymptotics. The modulus error in \eqref{eq:ode-Lq-modulus} decays faster than the complex error in \eqref{eq:ode-Lq-remainder}. Thus the slower part of the complex remainder is a phase correction. Moreover, \eqref{eq:ode-Lq-leading} can be written as
\EQ{
\norm{u(t)}_{L^q_x}
=t^{-d\brko{\frac12-\frac1q}}\norm{W}_{L^q_y}
+O_R\brko{\norm{W}_{L^q_y}t^{-d\brko{\frac12-\frac1q}-\al}}.
}
Hence $t^{-d\brko{\frac12-\frac1q}}$ is the leading $L^q$ decay rate, with coefficient $\norm{W}_{L^q_y}$, and the long-range phase does not change this leading amplitude.
\end{remark}

\subsection{Nonlinear estimates}%\label{sec:ode-nonlinear-estimates}

All norms in this section are the analytic norms defined in \eqref{eq:Arho-def}. Constants may depend on $p,\rho_0,\rho_1,\rho_*$ and $R$, but not on $T$ or on the small parameter $\ep$. For the pointwise-in-time norm associated with $\mathcal X_T(\rho)$, write
\EQn{\label{eq:instant-norm}
	\norm{Z(t)}_{E_\rho(t)}
	:=t^\al\norm{\jb{\nabla}X(t)}_{\A^0_\rho}
	+t^{2\al-1}\norm{\jb{\nabla}Y(t)}_{\A^0_\rho}.
}

\begin{lem}[Nonlinear estimates for $N_0$]\label{lem:ode-N0}
Let $N_0$ be defined by \eqref{eq:ode-N0-def}. Under the assumptions of Theorem \ref{thm:ode},
\EQn{\label{eq:N0}
\norm{N_0(t)}_{E_{\rho_1}(t)}
\lsm_Rt^{-1}.
}
Consequently,
\EQn{\label{eq:N0-integrated}
\normb{\int_t^\I N_0(s)\dd s}_{E_{\rho_1}(t)}
\lsm_R 1.
}
Moreover, for every $0\le\rho<\rho_1$,
\EQn{\label{eq:N0-duhamel-bound}
\normb{\int_t^\I U(t,s)N_0(s)\dd s}_{E_{\rho}(t)}
\lsm_{R,\rho} 1.
}
\end{lem}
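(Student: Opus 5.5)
The plan is to compute $Q(\Phi_0)$ explicitly and bound it in $\A^1_{\rho_1}$ (recall $\norm{\jb{\nabla}f}_{\A^0_\rho}=\norm{f}_{\A^1_\rho}$). We then read off the weights in $E_{\rho_1}(t)$.

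Since $\Phi_0=\Psi-i\Lam(t)|W|^p$ and $|W|^p=e^{p\re\Psi}$, we have
\EQ{
\nabla\Phi_0=\nabla\Psi-ip\Lam(t)e^{p\re\Psi}\nabla\re\Psi .
}
Hence $Q(\Phi_0)$ is a polynomial in $\Lam(t)$ of degree two,
\EQ{
Q(\Phi_0)=Q(\Psi)-i\Lam(t)\,q_1+\Lam(t)^2q_2 ,
}
with
\EQ{
q_1&=\Dy e^{p\re\Psi}+2p\,e^{p\re\Psi}\nabla\Psi\cdot\nabla\re\Psi,
\\
q_2&=-p^2e^{2p\re\Psi}\nabla\re\Psi\cdot\nabla\re\Psi .
}

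\textbf{Step 1: bound the coefficients.} Each of $Q(\Psi)=\Dy\Psi+\nabla\Psi\cdot\nabla\Psi$, $q_1$ and $q_2$ is bounded in $\A^1_{\rho_1}$, uniformly in $\ep$, by a constant depending on $R$. Since $\rho_1<\rho_0$, we may use \eqref{eq:radius-gap-estimate} to spend derivatives: $\Dy\Psi=\nabla\cdot\nabla\Psi$ is controlled in $\A^1_{\rho_1}$ by $\norm{\nabla\Psi}_{\A^6_{\rho_0}}\le R$. The quadratic terms are handled by the algebra property \eqref{eq:fourier-algebra} together with \eqref{eq:R-bound}. For $q_1$ and $q_2$, the factor $e^{p\re\Psi}\in\A^6_{\rho_0}$ is bounded by $\ep\le1$ through \eqref{eq:eps-small}, and $\Dy e^{p\re\Psi}$ is again handled by the radius gap. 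In fact $q_1,q_2=O(\ep)$.

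\textbf{Step 2: insert the time weights.} Because $\al<1$, we have $\Lam(t)\le t^{1-\al}/(1-\al)$. The $\Lam^0$ and $\Lam^2$ terms are complex in general, so their real and imaginary parts are not separated. We therefore bound both $X$- and $Y$-components of $N_0=\frac{i}{2t^2}Q(\Phi_0)$ by $\norm{N_0}_{\A^1_{\rho_1}}$, using \eqref{eq:wiener-real-conjugate}. This gives
\EQ{
\norm{N_0(t)}_{E_{\rho_1}(t)}\lsm_R (t^\al+t^{2\al-1})\,t^{-2}\brko{1+t^{1-\al}+t^{2-2\al}}\lsm t^{\al-2+2-2\al}=t^{-\al}.
}
This is not $t^{-1}$. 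Since $\al<1$, a crude bound of this type fails, and the decisive point is the worst term: the $X$-weight $t^\al$ multiplied by the $\Lam^2$ term. The bound must therefore use the structure of the $\Lam^2$ coefficient. Here $q_2$ is real, so $\frac{i}{2t^2}\Lam^2q_2$ is purely imaginary and contributes only to $Y$, with weight $t^{2\al-1}\cdot t^{-2}\cdot t^{2-2\al}=t^{-1}$. Similarly, the real part of $\frac{i}{2t^2}(-i\Lam q_1)=\frac{\Lam}{2t^2}q_1$ is $\frac{\Lam}{2t^2}\re q_1$, which lies in $X$ with weight $t^{\al}t^{-2}t^{1-\al}=t^{-1}$. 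The imaginary parts of the $\Lam^1$ terms lie in $Y$ with weight $t^{2\al-1-2+1-\al}=t^{\al-2}\le t^{-1}$. Finally, the $\Lam^0$ term contributes $t^{\al-2}\le t^{-1}$ to both components. Together these give $\norm{N_0(t)}_{E_{\rho_1}(t)}\lsm_R t^{-1}$, which is \eqref{eq:N0}. This bookkeeping of real and imaginary parts is the main (though elementary) obstacle.

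\textbf{Step 3: the integrated bounds.} For \eqref{eq:N0-integrated} and \eqref{eq:N0-duhamel-bound}, a naive integration of $t^{-1}$ is not finite, so we integrate each term with its own time factor. For the $X$-part, $t^\al\int_t^\I s^{-1-\al}\dd s\lsm1$. For the $Y$-part, $t^{2\al-1}\int_t^\I s^{-2\al}\dd s\lsm1$, which uses $2\al>1$. The remaining terms carry $s^{\al-2}$, and their integrals times $t^\al$ are $\lsm t^{2\al-1}$. That power is not bounded; to fix this, we observe that the $\Lam^0$ contribution to $X$ is weighted by $t^\al\int_t^\I s^{-2}\dd s=t^{\al-1}\le1$, which is fine. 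With these per-term integrals, \eqref{eq:N0-integrated} follows.

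For \eqref{eq:N0-duhamel-bound}, we write $U(t,s)=I+(U(t,s)-I)$. The identity part is \eqref{eq:N0-integrated}. The remainder is estimated by Lemma \ref{lem:two-weight-duhamel}, passing from $\rho_1$ to $\rho<\rho_1$. It is bounded by $(t^\al+t^{2\al-1})\,t^{-\nu}(\rho_1-\rho)^{-2\nu}\int_t^\I\norm{N_0(s)}_{\A^1_{\rho_1}}\dd s$. The integral is $\lsm\int_t^\I s^{-2\al}\dd s\lsm t^{1-2\al}$, so the remainder is $\lsm t^{1-\al-\nu}\lsm_\rho1$ by \eqref{eq:nu-choice}.
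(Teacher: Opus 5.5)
Your proof is correct and is essentially the paper's argument. You use the same expansion $Q(\Phi_0)=Q(\Psi)-i\Lam q_1+\Lam^2q_2$ (your $q_1,q_2$ are the paper's $B_1,-B_2$), the same key observation that the $\Lam^2$ coefficient is real and so only enters the imaginary component, giving $\norm{\jb{\nabla}\re N_0}_{\A^0_{\rho_1}}\lsm t^{-1-\al}$ and $\norm{\jb{\nabla}\im N_0}_{\A^0_{\rho_1}}\lsm t^{-2\al}$, and the same splitting $U(t,s)=I+(U(t,s)-I)$ via Lemma \ref{lem:two-weight-duhamel} with $\nu>1-\al$. The only blemish is the Step 3 aside about terms that ``carry $s^{\al-2}$'', which conflates weighted and unweighted sizes and is unnecessary: once you record those two unweighted componentwise bounds, the integrated and Duhamel estimates follow directly, as in the paper.
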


\begin{proof}
Write
\EQn{\label{eq:QPhi0-expansion}
Q(\Phi_0)
=B_0-i\Lam(t)B_1-\Lam(t)^2B_2,
}
where
\EQ{
B_0&:=\Dy\Psi+\nabla\Psi\cdot\nabla\Psi,
\\
B_1&:=\Dy |W|^p+2\nabla\Psi\cdot\nabla |W|^p,
\\
B_2&:=\nabla |W|^p\cdot\nabla |W|^p.
}
By \eqref{eq:eps-small}, Lemma \ref{lem:fourier-algebra}, and the radius gap $\rho_0-\rho_1>0$,
\EQn{\label{eq:N0-real-imag-B012}
\norm{B_0}_{\A^1_{\rho_1}}
+\norm{B_1}_{\A^1_{\rho_1}}
+\norm{B_2}_{\A^1_{\rho_1}}
\lsm_R 1.
}
From \eqref{eq:ode-N0-def} and \eqref{eq:QPhi0-expansion},
\EQn{\label{eq:N0-real-imag-formula}
\re N_0(t)
&=
\frac{\Lam(t)}{2t^2}\re B_1
-\frac{1}{2t^2}\im B_0,
\\
\im N_0(t)
&=
\frac{1}{2t^2}\re B_0
+\frac{\Lam(t)}{2t^2}\im B_1
-\frac{\Lam(t)^2}{2t^2}B_2.
}
Notice that $B_2$ is real-valued, hence it appears only in the imaginary component in \eqref{eq:N0-real-imag-formula}. Since $\Lam(t)\lsm t^{1-\al}$, by \eqref{eq:N0-real-imag-formula} and \eqref{eq:N0-real-imag-B012},
\EQn{\label{eq:N0-component-pointwise}
\norm{\jb{\nabla}\re N_0(t)}_{\A^0_{\rho_1}}
&\lsm_Rt^{-1-\al},
\\
\norm{\jb{\nabla}\im N_0(t)}_{\A^0_{\rho_1}}
&\lsm_Rt^{-2\al}.
}
Then \eqref{eq:N0} follows from \eqref{eq:instant-norm} and \eqref{eq:N0-component-pointwise}. Since $2\al>1$, by \eqref{eq:N0-component-pointwise},
\EQn{\label{eq:N0-component-integrated}
\int_t^\I \norm{\jb{\nabla}\re N_0(s)}_{\A^0_{\rho_1}}\dd s
&\lsm_Rt^{-\al},
\\
\int_t^\I \norm{\jb{\nabla}\im N_0(s)}_{\A^0_{\rho_1}}\dd s
&\lsm_Rt^{1-2\al}.
}
Finally,
\EQ{
\normb{\int_t^\I N_0(s)\dd s}_{E_{\rho_1}(t)}
&\le
t^\al\int_t^\I \norm{\jb{\nabla}\re N_0(s)}_{\A^0_{\rho_1}}\dd s
\\
&\quad+t^{2\al-1}\int_t^\I \norm{\jb{\nabla}\im N_0(s)}_{\A^0_{\rho_1}}\dd s
\\
&\lsm_R 1,
}
which proves \eqref{eq:N0-integrated}. For the Duhamel estimate in \eqref{eq:N0-duhamel-bound}, let
\EQ{
V_0(t):=-\int_t^\I U(t,s)N_0(s)\dd s.
}
Apply Lemma \ref{lem:two-weight-duhamel} with $F=N_0$, upper radius $\rho_1$, and lower radius $\rho$. By \eqref{eq:two-weight-real} and \eqref{eq:N0-component-integrated},
\EQn{\label{eq:N0-real-duhamel-proof}
\norm{\jb{\nabla}\re V_0(t)}_{\A^0_{\rho}}
&\lsm_\nu
\int_t^\I
\norm{\jb{\nabla}\re N_0(s)}_{\A^0_{\rho_1}}
\dd s
\\
&\quad+t^{-\nu}(\rho_1-\rho)^{-2\nu}
\int_t^\I
\norm{\jb{\nabla} N_0(s)}_{\A^0_{\rho_1}}\dd s
\\
&\lsm_{R,\rho}t^{-\al}.
}
Here we used $\nu>1-\al$. Similarly, by \eqref{eq:two-weight-imag} and \eqref{eq:N0-component-integrated},
\EQn{\label{eq:N0-imag-duhamel-proof}
\norm{\jb{\nabla}\im V_0(t)}_{\A^0_{\rho}}
&\lsm_\nu
\int_t^\I
\norm{\jb{\nabla}\im N_0(s)}_{\A^0_{\rho_1}}
\dd s
\\
&\quad+t^{-\nu}(\rho_1-\rho)^{-2\nu}
\int_t^\I
\norm{\jb{\nabla} N_0(s)}_{\A^0_{\rho_1}}\dd s
\\
&\lsm_{R,\rho}t^{1-2\al}.
}
Here we used $\nu>1-\al$ and $\al<1$. Then, \eqref{eq:N0-duhamel-bound} follows by \eqref{eq:N0-real-duhamel-proof} and \eqref{eq:N0-imag-duhamel-proof}.
\end{proof}

\begin{lem}[Nonlinear estimates for $N_{\rm tri}$]\label{lem:ode-Ntri}
Let $N_{\rm tri}$ be defined by \eqref{eq:ode-Ntri-def}. Let $\rho_*<\rho<\rho_1$. If $Z=X+iY$ and $\norm{Z(t)}_{E_\rho(t)}<\I$, then
\EQn{\label{eq:ode-Ntri-bound}
\norm{N_{\rm tri}(t,Z)}_{\A^1_\rho}
\lsm_R\ep t^{-2\al}
\exp\brko{Ct^{-\al}\norm{Z(t)}_{E_\rho(t)}}
\norm{Z(t)}_{E_\rho(t)}.
}
Moreover, if $Z_j=X_j+iY_j$, $j=1,2$, and $\norm{Z_j(t)}_{E_\rho(t)}<\I$, then
\EQn{\label{eq:ode-Ntri-lip}
&\norm{N_{\rm tri}(t,Z_1)-N_{\rm tri}(t,Z_2)}_{\A^1_\rho}
\\
&\quad\lsm_R
\ep t^{-2\al}
\exp\brko{Ct^{-\al}\mbrko{\norm{Z_1(t)}_{E_\rho(t)}+\norm{Z_2(t)}_{E_\rho(t)}}}
\norm{Z_1(t)-Z_2(t)}_{E_\rho(t)}.
}
\end{lem}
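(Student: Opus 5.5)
The plan is to treat $N_{\rm tri}$ as a product of the small coefficient $|W|^p=e^{p\re\Psi}$ with the entire function $G(z):=e^{pz}-1$ composed with $X=\re Z$. Lemma \ref{lem:fourier-algebra}(i),(v) then gives the bound. Throughout I use $\A^1_\rho$ in the equivalent form $\norm{f}_{\A^1_\rho}=\norm{\jb{\nabla}f}_{\A^0_\rho}$; this is exactly the quantity appearing in $E_\rho(t)$. The definition \eqref{eq:instant-norm} gives
\EQ{
\norm{X(t)}_{\A^1_\rho}\le t^{-\al}\norm{Z(t)}_{E_\rho(t)} .
}
This is where the weight $t^\al$ on $X$ produces the extra factor $t^{-\al}$. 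Together with the explicit $t^{-\al}$ in \eqref{eq:ode-Ntri-def}, it yields the total rate $t^{-2\al}$. The imaginary part $Y$ plays no role, because $N_{\rm tri}$ depends only on $X$.

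The steps are as follows.

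First, since $\rho<\rho_1<\rho_0$, the monotonicity of the weights in $\rho$ and $\sigma$ together with \eqref{eq:eps-small} gives $\norm{|W|^p}_{\A^1_\rho}\le\norm{e^{p\re\Psi}}_{\A^6_{\rho_0}}\le\ep$.

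Second, by \eqref{eq:fourier-algebra},
\EQ{
\norm{N_{\rm tri}}_{\A^1_\rho}\le C_1 t^{-\al}\ep\norm{e^{pX}-1}_{\A^1_\rho}.
}

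Third, I estimate the composition directly from the power series rather than quoting \eqref{eq:zero-composition}, which has an $R$-dependent constant. The goal is an explicit exponential dependence. Using $\norm{X^n}_{\A^1_\rho}\le C_1^{n-1}\norm{X}^n_{\A^1_\rho}$, I get
\EQ{
\norm{e^{pX}-1}_{\A^1_\rho}\le\sum_{n\ge1}\frac{p^nC_1^{n-1}\norm{X}^n}{n!}\le p\,\norm{X}_{\A^1_\rho}\exp\brko{pC_1\norm{X}_{\A^1_\rho}},
}
where the last step uses $n!\ge (n-1)!$. Inserting $\norm{X}_{\A^1_\rho}\le t^{-\al}\norm{Z}_{E_\rho(t)}$ gives \eqref{eq:ode-Ntri-bound} with $C=pC_1$.

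For the Lipschitz estimate \eqref{eq:ode-Ntri-lip}, I write
\EQ{
e^{pX_1}-e^{pX_2}=\sum_{n\ge1}\frac{p^n}{n!}(X_1-X_2)\sum_{m=0}^{n-1}X_1^{n-1-m}X_2^m .
}
This gives the bound $\le pC_1^{n-1}\frac{n}{n!}p^{n-1}(\norm{X_1}+\norm{X_2})^{n-1}\norm{X_1-X_2}$ term by term. Since $n/n!=1/(n-1)!$, the series again sums to $p\norm{X_1-X_2}\exp\brko{pC_1(\norm{X_1}+\norm{X_2})}$. The difference $X_1-X_2=\re(Z_1-Z_2)$ then satisfies the same weighted bound, $\norm{X_1-X_2}_{\A^1_\rho}\le t^{-\al}\norm{Z_1-Z_2}_{E_\rho(t)}$.

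No step is genuinely hard. The only point needing care is to track constants explicitly, which is how one obtains the stated exponential form uniformly without any a priori size bound on $Z$. The other point is that the $\A^1_\rho$ norm coincides with $\norm{\jb{\nabla}\cdot}_{\A^0_\rho}$, so the algebra property \eqref{eq:fourier-algebra} applies with $r=1$. The implicit constant depends only on $p$, not on $R$.
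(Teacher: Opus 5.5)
Your proposal is correct and is essentially the paper's argument. It bounds $\norm{|W|^p}_{\A^1_\rho}$ by $\ep$ via \eqref{eq:eps-small}, uses $\norm{X(t)}_{\A^1_\rho}\le t^{-\al}\norm{Z(t)}_{E_\rho(t)}$, estimates $e^{pX}-1$ and $e^{pX_1}-e^{pX_2}$ through the power series and the algebra property \eqref{eq:fourier-algebra}, and multiplies by the explicit $t^{-\al}$. Your explicit constant tracking (giving $C=pC_1$ and an implicit constant independent of $R$) only sharpens the bookkeeping of the same steps.
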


\begin{proof}
Recall \eqref{eq:ode-Ntri-def},
\EQ{
N_{\rm tri}(t,Z)=-i t^{-\al}|W|^p(e^{pX}-1).
}
From \eqref{eq:eps-small},
\EQn{\label{eq:ode-Wp-small}
\norm{|W|^p}_{\A^1_\rho}\lsm_R\ep.
}
By \eqref{eq:instant-norm},
\EQ{%\label{eq:ode-X-from-E}
\norm{X(t)}_{\A^1_\rho}
\le t^{-\al}\norm{Z(t)}_{E_\rho(t)}.
}
By the power series of $e^{px}-1$ and \eqref{eq:fourier-algebra},
\EQn{\label{eq:ode-expX-bound}
\norm{e^{pX(t)}-1}_{\A^1_\rho}
\lsm
\exp\brko{C\norm{X(t)}_{\A^1_\rho}}
\norm{X(t)}_{\A^1_\rho}.
}
By \eqref{eq:fourier-algebra}, \eqref{eq:ode-Wp-small}, and \eqref{eq:ode-expX-bound},
\EQn{\label{eq:ode-Ntri-size-proof-bound}
\norm{|W|^p(e^{pX}-1)}_{\A^1_\rho}
&\lsm_R
\ep\norm{e^{pX(t)}-1}_{\A^1_\rho}
\\
&\lsm_R\ep
\exp\brko{Ct^{-\al}\norm{Z(t)}_{E_\rho(t)}}
t^{-\al}\norm{Z(t)}_{E_\rho(t)}.
}
Multiplying \eqref{eq:ode-Ntri-size-proof-bound} by $t^{-\al}$ proves \eqref{eq:ode-Ntri-bound}.

For the difference estimate, the power series of $e^{px}$ yields
\EQn{\label{eq:ode-expX-diff-bound}
\norm{e^{pX_1(t)}-e^{pX_2(t)}}_{\A^1_\rho}
\lsm
\exp\brko{C\mbrko{\norm{X_1(t)}_{\A^1_\rho}+\norm{X_2(t)}_{\A^1_\rho}}}
\norm{(X_1-X_2)(t)}_{\A^1_\rho}.
}
Using \eqref{eq:ode-Wp-small}, \eqref{eq:ode-expX-diff-bound}, and \eqref{eq:instant-norm},
\EQn{\label{eq:ode-Ntri-lip-proof-bound}
&\norm{|W|^p(e^{pX_1}-e^{pX_2})}_{\A^1_\rho}
\\
&\quad\lsm_R
\ep
\exp\brko{Ct^{-\al}\mbrko{\norm{Z_1(t)}_{E_\rho(t)}+\norm{Z_2(t)}_{E_\rho(t)}}}
t^{-\al}\norm{Z_1(t)-Z_2(t)}_{E_\rho(t)}.
}
Multiplying \eqref{eq:ode-Ntri-lip-proof-bound} by $t^{-\al}$ proves \eqref{eq:ode-Ntri-lip}.
\end{proof}
\begin{lem}[Nonlinear estimates for $N_1$ and $N_2$]\label{lem:ode-N1N2-nonlinear}
Let $N_1$ and $N_2$ be defined by \eqref{eq:ode-N1-def} and \eqref{eq:ode-N2-def}, and let $\rho_*<\rho'<\rho<\rho_1$.  If $Z=X+iY$ satisfies $\norm{Z(t)}_{E_\rho(t)}<\I$, then
\EQn{\label{eq:ode-N1-size}
\norm{N_1(t,Z)}_{E_{\rho'}(t)}
\lsm_R
t^{-1-\al}\brko{1+(\rho-\rho')^{-1}}
\norm{Z(t)}_{E_\rho(t)},
}
and
\EQn{\label{eq:ode-N2-size}
\norm{N_2(t,Z)}_{E_{\rho'}(t)}
\lsm
t^{-1-\al}\brko{1+(\rho-\rho')^{-1}}
\norm{Z(t)}_{E_\rho(t)}^2.
}
Moreover, if $Z_j=X_j+iY_j$, $j=1,2$, satisfy $\norm{Z_j(t)}_{E_\rho(t)}<\I$, then
\EQn{\label{eq:ode-N1-lip}
\norm{N_1(t,Z_1)-N_1(t,Z_2)}_{E_{\rho'}(t)}
&\lsm_R
t^{-1-\al}\brko{1+(\rho-\rho')^{-1}}
\\
&\quad\cdot\norm{Z_1(t)-Z_2(t)}_{E_\rho(t)},
}
and
\EQn{\label{eq:ode-N2-lip}
\norm{N_2(t,Z_1)-N_2(t,Z_2)}_{E_{\rho'}(t)}
&\lsm
t^{-1-\al}\brko{1+(\rho-\rho')^{-1}}
\\
&\quad\cdot
\mbrko{\norm{Z_1(t)}_{E_\rho(t)}+\norm{Z_2(t)}_{E_\rho(t)}}
\\
&\quad\cdot
\norm{Z_1(t)-Z_2(t)}_{E_\rho(t)}.
}

\end{lem}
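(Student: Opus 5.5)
The plan is to estimate $N_1$ and $N_2$ \emph{componentwise}: split each into real and imaginary parts, so that the two weights in $E_{\rho'}(t)$ are matched against the different decay rates of $X$ and $Y$. A crude bound that treats $Z$ as a whole fails for $N_1$ when $\al<1$. Indeed, $\|\jb{\nabla}Z\|_{\A^0_\rho}$ only decays like $t^{1-2\al}\|Z\|_{E_\rho(t)}$, and $\nabla\Phi_0$ is of size $\Lam(t)\sim t^{1-\al}$. The resulting bound on $\re N_1$, after the weight $t^\al$, is $t^{-2\al}$ instead of $t^{-1-\al}$. So the real/imaginary structure must be used.

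Throughout we use the following tools:
\begin{itemize}
\item $\|\jb{\nabla}f\|_{\A^0_\rho}=\|f\|_{\A^1_\rho}$;
\item \eqref{eq:analytic-first-order-product} and \eqref{eq:analytic-gradient-product} with $r=1$;
\item \eqref{eq:wiener-real-conjugate} to pass to real and imaginary parts;
\item from \eqref{eq:instant-norm}, the bounds $\|X(t)\|_{\A^1_\rho}\le t^{-\al}\|Z(t)\|_{E_\rho(t)}$ and $\|Y(t)\|_{\A^1_\rho}\le t^{1-2\al}\|Z(t)\|_{E_\rho(t)}$.
\end{itemize}

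\textbf{Estimate for $N_1$.} Write $\nabla\Phi_0=\nabla\Psi-i\Lam(t)B$ with $B:=\nabla|W|^p=\nabla e^{p\re\Psi}$. The key point is that $B$ is \emph{real}. By \eqref{eq:R-bound}, $\|\nabla\Psi\|_{\A^1_\rho}\le R$. By \eqref{eq:eps-small} and \eqref{eq:radius-gap-estimate} with gap $\rho_0-\rho_1$, $\|B\|_{\A^1_\rho}\lsm\ep\lsm1$. Then
\[
N_1=\frac{i}{t^2}\nabla\Psi\cdot\nabla Z+\frac{\Lam(t)}{t^2}\brko{B\cdot\nabla X+iB\cdot\nabla Y}.
\]
We estimate the three pieces separately.
\begin{itemize}
\item For the first piece we do not split. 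By \eqref{eq:analytic-first-order-product}, both its real and imaginary parts have $\A^1_{\rho'}$ norm
\[
\lsm t^{-2}(\rho-\rho')^{-1}R\,t^{1-2\al}\|Z\|_{E_\rho}.
\]
Multiplying by the weights $t^\al$ and $t^{2\al-1}$ gives $t^{-1-\al}$ and $t^{-2}\le t^{-1-\al}$, respectively.
\item The second piece is the real part of the $\Lam$-term, since $B$ is real. Its norm is $\lsm t^{-1-\al}(\rho-\rho')^{-1}t^{-\al}\|Z\|_{E_\rho}$, which becomes $t^{-1-\al}$ after the weight $t^\al$.
\item The third piece is the imaginary part of the $\Lam$-term. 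Its norm is $\lsm t^{-1-\al}(\rho-\rho')^{-1}t^{1-2\al}\|Z\|_{E_\rho}$, which becomes $t^{-1-\al}$ after the weight $t^{2\al-1}$.
\end{itemize}
Here we used $\Lam(t)\lsm t^{1-\al}$. This gives \eqref{eq:ode-N1-size}. Since $N_1$ is linear in $Z$, \eqref{eq:ode-N1-lip} follows by applying the same bound to $Z_1-Z_2$.

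\textbf{Estimate for $N_2$.} Expand
\[
N_2=-\frac{1}{t^2}\nabla X\cdot\nabla Y+\frac{i}{2t^2}\brko{\nabla X\cdot\nabla X-\nabla Y\cdot\nabla Y}.
\]
By \eqref{eq:analytic-gradient-product}, the real part has $\A^1_{\rho'}$ norm $\lsm t^{-2}(\rho-\rho')^{-1}t^{-\al}t^{1-2\al}\|Z\|_{E_\rho}^2$. After the weight $t^\al$ this is $t^{-1-2\al}\le t^{-1-\al}$.

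The imaginary part is dominated by the $\nabla Y\cdot\nabla Y$ term, of size $t^{-2}(\rho-\rho')^{-1}t^{2-4\al}\|Z\|_{E_\rho}^2$. After the weight $t^{2\al-1}$ this is again $t^{-1-2\al}$.

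For \eqref{eq:ode-N2-lip}, use the bilinear identity $\nabla Z_1\cdot\nabla Z_1-\nabla Z_2\cdot\nabla Z_2=\nabla(Z_1-Z_2)\cdot\nabla(Z_1+Z_2)$. Split it into real and imaginary components as above, and in each product put the $E$-weight of the appropriate component on the difference.

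\textbf{Main obstacle.} The only delicate point is the bookkeeping in $N_1$. The large factor $\Lam(t)$ must multiply only the real coefficient $B$, so that it never couples $\nabla Y$ into the real part, where the stronger weight $t^\al$ is imposed. The bounded complex coefficient $\nabla\Psi$ can be handled crudely, since it carries no $\Lam$. Once this structure is observed, the rest is routine.
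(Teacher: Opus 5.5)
Your proof is correct and follows essentially the same route as the paper. Both arguments split $N_1$ and $N_2$ into real and imaginary parts and use the fact that the growing factor $\Lam(t)\lsm t^{1-\al}$ in $\nabla\Phi_0$ multiplies the real function $\nabla|W|^p$. Hence $\Lam(t)$ enters only through $\im\nabla\Phi_0$, and in $\re N_1$, which carries the strong weight $t^\al$, it is paired only with $\nabla X$, never with $\nabla Y$.

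The only cosmetic difference is in how the coefficient is split. The paper writes $\re\nabla\Phi_0=\nabla\re\Psi$ and $\im\nabla\Phi_0=\nabla\im\Psi-\Lam(t)\nabla|W|^p$ and bounds four cross terms. You instead peel off the whole bounded complex piece $\nabla\Psi$ and estimate it crudely, which is legitimate since it carries no $\Lam$ and $t^{-2}\,t^{1-2\al}\,t^{\al}=t^{-1-\al}$.
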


\begin{proof}
From
\EQ{%\label{eq:N1-Phi0-gradient-identity}
\nabla\Phi_0=\nabla\Psi-i\Lam(t)\nabla(|W|^p),
}
the assumptions on $\Psi$ imply
\EQ{%\label{eq:N1-Phi0-gradient-bound}
\norm{\nabla\Phi_0(t)}_{\A^2_{\rho_1}}
+\norm{\nabla^2\Phi_0(t)}_{\A^1_{\rho_1}}
\lsm_Rt^{1-\al}.
}
Since
\EQ{
\re\nabla\Phi_0=\nabla\re\Psi,
\qquad
\im\nabla\Phi_0=\nabla\im\Psi-\Lam(t)\nabla(|W|^p),
}
we also have
\EQn{\label{eq:ode-Phi0-component-bounds}
\norm{\jb{\nabla}\re\nabla\Phi_0(t)}_{\A^0_{\rho_1}}\lsm_R 1,
\qquad
\norm{\jb{\nabla}\im\nabla\Phi_0(t)}_{\A^0_{\rho_1}}\lsm_Rt^{1-\al}.
}
For $N_1$, the real and imaginary parts are
\EQn{\label{eq:ode-N1-component-split}
\re N_1
&=-t^{-2}\re\nabla\Phi_0\cdot\nabla Y
-t^{-2}\im\nabla\Phi_0\cdot\nabla X,
\\
\im N_1
&=t^{-2}\re\nabla\Phi_0\cdot\nabla X
-t^{-2}\im\nabla\Phi_0\cdot\nabla Y .
}
By \eqref{eq:instant-norm}, \eqref{eq:ode-Phi0-component-bounds}, \eqref{eq:radius-gap-estimate}, and \eqref{eq:fourier-algebra},
\EQ{%\label{eq:ode-N1-size-proof}
&\norm{N_1(t,Z)}_{E_{\rho'}(t)} \\
&\lsm_R
t^{-2}\brko{1+(\rho-\rho')^{-1}}t^\al \mbrko{\norm{\re\nabla\Phi_0}_{\A^0_\rho} \norm{\nabla Y}_{\A^0_\rho} + \norm{\im\nabla\Phi_0}_{\A^0_\rho} \norm{\nabla X}_{\A^0_\rho}
} 
\\
&\quad
+t^{-2}\brko{1+(\rho-\rho')^{-1}}t^{2\al-1} \mbrko{\norm{\re\nabla\Phi_0}_{\A^0_\rho} \norm{\nabla X}_{\A^0_\rho} + \norm{\im\nabla\Phi_0}_{\A^0_\rho} \norm{\nabla Y}_{\A^0_\rho}
}  \\
&\lsm_R
t^{-2}\brko{1+(\rho-\rho')^{-1}}
\mbrko{
t^\al\norm{\jb{\nabla}Y(t)}_{\A^0_\rho}
+t^\al\Lam(t)\norm{\jb{\nabla}X(t)}_{\A^0_\rho}
}
\\
&\quad
+t^{-2}\brko{1+(\rho-\rho')^{-1}}
\mbrko{
t^{2\al-1}\norm{\jb{\nabla}X(t)}_{\A^0_\rho}
+t^{2\al-1}\Lam(t)\norm{\jb{\nabla}Y(t)}_{\A^0_\rho}
}
\\
&\lsm_R
t^{-1-\al}\brko{1+(\rho-\rho')^{-1}}
\norm{Z(t)}_{E_\rho(t)}.
}
This proves \eqref{eq:ode-N1-size}.

For $N_2$, direct computation gives
\EQn{\label{eq:ode-N2-component-split}
\re N_2&=-t^{-2}\nabla X\cdot\nabla Y,
\\
\im N_2&=\frac1{2t^2}
\brko{\nabla X\cdot\nabla X-\nabla Y\cdot\nabla Y}.
}
Then,
\EQ{%\label{eq:ode-N2-size-proof}
&\norm{N_2(t,Z)}_{E_{\rho'}(t)} \\ &\lsm
t^{-2}\brko{1+(\rho-\rho')^{-1}}
t^\al
\norm{\jb{\nabla}X(t)}_{\A^0_\rho}
\norm{\jb{\nabla}Y(t)}_{\A^0_\rho}
\\
&\quad
+t^{-2}\brko{1+(\rho-\rho')^{-1}}
\mbrko{
t^{2\al-1}
\norm{\jb{\nabla}X(t)}_{\A^0_\rho}^2
+t^{2\al-1}
\norm{\jb{\nabla}Y(t)}_{\A^0_\rho}^2
}
\\
&\lsm
t^{-1-\al}\brko{1+(\rho-\rho')^{-1}}
\norm{Z(t)}_{E_\rho(t)}^2.
}
This proves \eqref{eq:ode-N2-size}.

For the difference of the linear term, use \eqref{eq:ode-N1-component-split} with $Z$ replaced by $Z_1-Z_2$:
\EQ{%\label{eq:ode-N1-lip-proof}
\norm{N_1(t,Z_1)-N_1(t,Z_2)}_{E_{\rho'}(t)} \lsm_R
t^{-1-\al}\brko{1+(\rho-\rho')^{-1}}
\norm{Z_1(t)-Z_2(t)}_{E_\rho(t)}.
}
This proves \eqref{eq:ode-N1-lip}. For the quadratic term, by \eqref{eq:ode-N2-component-split},
\EQ{%\label{eq:ode-N2-component-diff-split}
\re\brko{N_2(t,Z_1)-N_2(t,Z_2)}
&=-t^{-2}\brko{
\nabla(X_1-X_2)\cdot\nabla Y_1
+\nabla X_2\cdot\nabla(Y_1-Y_2)},
\\
\im\brko{N_2(t,Z_1)-N_2(t,Z_2)}
&=\frac1{2t^2}\brko{
\nabla(X_1-X_2)\cdot\nabla(X_1+X_2)}
\\
&\quad
-\frac1{2t^2}\brko{
\nabla(Y_1-Y_2)\cdot\nabla(Y_1+Y_2)} .
}
Then similarly,
\EQ{%\label{eq:ode-N2-lip-proof}
&\norm{N_2(t,Z_1)-N_2(t,Z_2)}_{E_{\rho'}(t)} \\
&\lsm
t^{-1-\al}\brko{1+(\rho-\rho')^{-1}}
\mbrko{\norm{Z_1(t)}_{E_\rho(t)}+\norm{Z_2(t)}_{E_\rho(t)}}
\norm{Z_1(t)-Z_2(t)}_{E_\rho(t)}.
}
This proves \eqref{eq:ode-N2-lip}.
\end{proof}
\subsection{The asymptotic fixed point}%\label{sec:ode-fixed-point}
Recall the Duhamel map $\TT$ from \eqref{eq:ode-Duhamel-map}.
For a time-dependent function $F$ on $[T,\I)$, define the norm
\EQ{%\label{eq:ode-tail-Q-def}
Q_F(t;\rho):=
\sup_{\tau\ge t}\norm{F(\tau)}_{E_\rho(\tau)}.
}

\begin{lem}[Absolute convergence of the Picard series]\label{lem:ode-picard-convergence}
Let $0\le\rho_-<\rho<\rho_+<\rho_1$. Let $\{c_m\}_{m\ge1}$ be nonnegative numbers such that
\EQ{%\label{eq:ode-picard-am-summability}
\sum_{m\ge1}c_mr^m<\I,
\qquad 0\le r<\I.
}
Let $\eta\ge0$, let $\omega\in L^1([T,\I))$ with $\omega\ge0$, and let $\{Z_n\}_{n\ge0}$ be time-dependent functions satisfying
\EQn{\label{eq:ode-picard-initial-bound}
Q_{Z_0}(T;\rho_+)\le M.
}
Assume also that, for every $n\ge0$ and $\rho_-\le\zeta<\rho_+$,
\EQ{
Z_n\in C([T,\I);\A^1_\zeta).
}
Assume that, for every $n\ge1$ and $\rho_-\le\zeta'<\zeta<\rho_+$,
\EQn{\label{eq:ode-picard-hypotheses}
Q_{Z_n}(t;\zeta')
&\le
\sum_{m\ge1}
\sum_{\substack{n_1,\ldots,n_m\ge0\\ n_1+\cdots+n_m=n-1}}
\eta c_mQ_{Z_{n_1}}(t;\zeta')\cdots Q_{Z_{n_m}}(t;\zeta')
\\
&\quad+
\sum_{m\ge1}
\sum_{\substack{n_1,\ldots,n_m\ge0\\ n_1+\cdots+n_m=n-1}}
c_m(\zeta-\zeta')^{-1}\int_t^\I \omega(s)
Q_{Z_{n_1}}(s;\zeta)\cdots Q_{Z_{n_m}}(s;\zeta)\dd s .
}
Then there exists $z_0=z_0(M,\{c_m\})>0$ such that, if
\EQn{\label{eq:ode-picard-smallness}
\eta+\frac{1}{\rho_+-\rho}\int_T^\I\omega(s)\dd s
\le z_0,
}
the series $\sum_{n\ge0}Z_n$ converges absolutely in $\mathcal X_T(\rho)$. Moreover,
\EQn{\label{eq:ode-picard-absolute-bound}
\sum_{n\ge0}\norm{Z_n}_{\mathcal X_T(\rho)}
\le C_M.
}
\end{lem}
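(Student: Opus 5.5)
We prove this by a Cauchy–Kovalevskaya (Nirenberg–Nishida type) majorant argument: we allocate the radius loss across Picard orders and compare with a generating function that is analytic near zero. Set $\delta:=\rho_+-\rho>0$. Choose a decreasing family of radii $\zeta_n$ with $\zeta_0=\rho_+$ and $\zeta_n\downarrow\rho$, for instance $\zeta_n=\rho+\delta/(n+1)$ (any fixed choice with $\zeta_{n-1}-\zeta_n\gtrsim\delta/n^2$ works). Define the quantities
\[
q_n:=Q_{Z_n}(T;\zeta_n),\qquad \theta:=\frac1\delta\int_T^\I\omega(s)\dd s .
\]
Since $\rho\le\zeta_n$ and the norms are monotone in the radius, it suffices to prove $\sum_n q_n\le C_M$. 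Note that $Q_{Z_n}(t;\cdot)$ is nonincreasing in $t$, and that all lower orders $n_j\le n-1$ can be evaluated at radius $\zeta_{n-1}\le\zeta_{n_j}$.

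\emph{Step 1 (recursive inequality).} Apply \eqref{eq:ode-picard-hypotheses} with $\zeta=\zeta_{n-1}$ and $\zeta'=\zeta_n$, and bound every $Q_{Z_{n_j}}(s;\zeta_{n-1})\le q_{n_j}$ for $s\ge T$. This gives
\[
q_n\le\Bigl(\eta+\frac{\int_T^\I\omega}{\zeta_{n-1}-\zeta_n}\Bigr)\sum_{m\ge1}c_m\sum_{n_1+\cdots+n_m=n-1}q_{n_1}\cdots q_{n_m}.
\]
With the harmonic allocation above, $(\zeta_{n-1}-\zeta_n)^{-1}\le 2n^2/\delta$, so the prefactor is at most $\eta+2n^2\theta\le 2n^2(\eta+\theta)$. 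This polynomial loss $n^2$ is the main obstacle: it prevents a direct comparison with a fixed analytic generating function.

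\emph{Step 2 (removing the polynomial loss).} The first thing to try is absorbing $n^2$ through a weighted ansatz: we look for a bound $q_n\le A\,b_n$, where $b_n$ are the coefficients of the solution $B(z)=\sum b_nz^n$ of an auxiliary functional equation. A cleaner alternative is to avoid the loss altogether. Use geometric radii $\zeta_n=\rho+\delta 2^{-n}$, whose gaps are $\delta 2^{-n}$. Since $\omega$ is integrable, split $\int_t^\I$ and keep the time-integral structure inside the recursion, yielding a nested integral in the style of Nirenberg's abstract Cauchy–Kovalevskaya theorem. Concretely, I would prove by induction that
\[
Q_{Z_n}(t;\zeta')\le \frac{K_n\,\bigl(\eta(\rho_+-\zeta')+\int_t^\I\omega\bigr)^{n}}{(\rho_+-\zeta')^{n}}M,
\]
for all $\zeta'<\rho_+$. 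Here $K_n$ is governed by a majorant series. In the inductive step one chooses $\zeta$ halfway between $\zeta'$ and $\rho_+$, and the factor $(\zeta-\zeta')^{-1}$ combines with $\int\omega(s)\Omega(s)^{n-1}\dd s=\Omega(t)^n/n$, where $\Omega(t)=\int_t^\I\omega$. The standard Nirenberg computation shows that the $1/n$ gained from integrating $\Omega^{n-1}$ cancels the factor $\approx(1+1/n)^{n}\cdot n$ produced by the midpoint choice, up to a constant $e$. Thus $K_n$ satisfies $K_n\le C\sum_m c_m\sum K_{n_1}\cdots K_{n_m}$, with $K_0=1$.

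\emph{Step 3 (majorant series).} The generating function $k(z)=\sum K_nz^n$ is then dominated by the solution of $k=1+Cz\sum_m c_m k^m$. Because $\sum c_mr^m<\I$ for all $r$, the implicit function theorem (Lemma \ref{lem:real-analytic-ift}, applied at $(z,k)=(0,1)$) gives an analytic solution for $|z|<z_1(\{c_m\})$ with nonnegative coefficients. Its coefficients majorize $K_n$ by induction, and $\sum K_nz^n<\I$ for $z<z_1$. Evaluating at $\zeta'=\rho$ with $z=(\eta\delta+\int_T^\I\omega)/\delta$ and $M$ incorporated by scaling, the smallness condition \eqref{eq:ode-picard-smallness} with $z_0<z_1$ (adjusted for the $M$-dependence, via $\tilde c_m=c_mM^{m-1}$) gives
\[
\sum_n\sup_{t\ge T}Q_{Z_n}(t;\rho)\le M\,k(z_0)=:C_M .
\]
Finally, the weights in $E_\rho$ match those of $\mathcal X_T(\rho)$, so $\norm{Z_n}_{\mathcal X_T(\rho)}=Q_{Z_n}(T;\rho)$, which proves \eqref{eq:ode-picard-absolute-bound}. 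The continuity hypothesis ensures that the $Z_n$ lie in $\mathcal X_T(\rho)$, and completeness then yields convergence of the series.
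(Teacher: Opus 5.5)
Your closing argument (the Nirenberg-type induction in Step 2 together with Step 3) is essentially the paper's proof. Your ansatz $Q_{Z_n}(t;\zeta')\le K_nM\,z_{\zeta'}(t)^n$ with $z_{\zeta'}(t)=\eta+\Omega(t)/(\rho_+-\zeta')$ is exactly the paper's order-by-order bound. The same-radius $\eta$-term is absorbed because $\eta\le z_{\zeta'}(t)$. The recursion is then closed by the same generating equation $A=M+Cz\sum_m c_mA^m$, solved with Lemma~\ref{lem:real-analytic-ift}.

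\textbf{The midpoint choice fails.} You cannot take $\zeta$ halfway between $\zeta'$ and $\rho_+$. With the midpoint, $\rho_+-\zeta=(\rho_+-\zeta')/2$, so $z_\zeta(s)\le 2z_{\zeta'}(s)$, with equality when $\eta=0$. The radius-gap term then becomes
\[
\frac{2}{\rho_+-\zeta'}\int_t^\infty\omega(s)\,z_\zeta(s)^{n-1}\,ds=\frac1n\bigl(z_\zeta(t)^n-\eta^n\bigr)\le\frac{2^n}{n}\,z_{\zeta'}(t)^n .
\]
The factor $2^n/n$ enters the recursion at every order. For instance, if $c_1>0$, the $m=1$ term alone gives $K_n\gtrsim c_1 2^nK_{n-1}/n$. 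Hence $K_n$ is at least of order $c_1^n2^{n(n+1)/2}/n!$, and $\sum_nK_nz^n$ diverges for every $z>0$.

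\textbf{The fix.} The factor $(1+1/n)^n\cdot n$ you quote belongs to the choice the paper actually makes, $\zeta=\zeta'+(\rho_+-\zeta')/(n+1)$. Then $(\zeta-\zeta')^{-1}(\rho_+-\zeta)/n=1$ exactly, and $z_\zeta\le(1+1/n)z_{\zeta'}$. So the gap term is at most $z_\zeta(t)^n\le e\,z_{\zeta'}(t)^n$, and the recursion constant is $1+e\le 2e$. With this correction your proof coincides with the paper's.

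\textbf{Two smaller points.}
\begin{enumerate}
\item Step 1 and the geometric-radius alternative $\zeta_n=\rho+\delta 2^{-n}$ are dead ends for the same reason. A fixed radius schedule loses a factor $n^2$ or $2^n$ per order, and this compounds superexponentially. Delete them rather than keep them as motivation.
\item Run the induction only over $\rho_-\le\zeta'<\rho_+$, since \eqref{eq:ode-picard-hypotheses} is assumed only for $\rho_-\le\zeta'<\zeta<\rho_+$. This costs nothing, because the radius you feed into the hypothesis lies strictly between $\zeta'$ and $\rho_+$.
\end{enumerate}
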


\begin{proof}
Define
\EQ{
\mathcal P(r)=\sum_{m\ge1}c_mr^m,
\qquad
B(t)=\int_t^\I\omega(s)\dd s,
\qquad
z_\zeta(t)=\eta+\frac{B(t)}{\rho_+-\zeta}.
}
We prove the order-by-order estimate
\EQn{\label{eq:ode-picard-order-bound}
Q_{Z_n}(t;\zeta)
\le
b_n z_\zeta(t)^n,
\qquad n\ge0,
\quad \rho_-\le\zeta<\rho_+,
\quad t\ge T.
}
Choose the constants $b_n$ recursively by
\EQn{\label{eq:ode-picard-coefficient-recursion}
b_0=M,
\qquad
b_n=
2e\sum_{m\ge1}
\sum_{\substack{n_1,\ldots,n_m\ge0\\ n_1+\cdots+n_m=n-1}}
c_mb_{n_1}\cdots b_{n_m},
\qquad n\ge1.
}
To show that these estimates are summable, let
\EQ{
A(z)=\sum_{n\ge0}b_nz^n
}
be the formal generating series associated with \eqref{eq:ode-picard-coefficient-recursion}. The recursion is equivalent, as an identity of formal power series, to
\EQn{\label{eq:ode-picard-generating-equation}
A(z)=M+2ez\,\mathcal P(A(z)).
}
Indeed,
\EQ{
z\mathcal P(A(z))
=
\sum_{n\ge1}
\brko{
\sum_{m\ge1}
\sum_{\substack{n_1,\ldots,n_m\ge0\\ n_1+\cdots+n_m=n-1}}
c_mb_{n_1}\cdots b_{n_m}
}z^n.
}
Let $\mathcal F(z,A)=A-M-2ez\mathcal P(A)$. Then
\EQ{
\mathcal F(0,M)=0,
\qquad
\pd_A\mathcal F(0,M)=1.
}
By Lemma \ref{lem:real-analytic-ift}, \eqref{eq:ode-picard-generating-equation} has a real analytic solution near $z=0$ with $A(0)=M$. Uniqueness of the formal solution shows that its Taylor coefficients are exactly the constants $b_n$. Hence there exists $z_0=z_0(M,\{c_m\})>0$ such that
\EQn{\label{eq:ode-picard-coefficient-sum}
\sum_{n\ge0}b_nz^n=A(z)\le C_M,
\qquad 0\le z\le z_0.
}
We now establish \eqref{eq:ode-picard-order-bound} by induction. For $n=0$, it follows from \eqref{eq:ode-picard-initial-bound} and the monotonicity of the analytic norm. Assume that \eqref{eq:ode-picard-order-bound} holds up to order $n-1$. Fix $n_1,\ldots,n_m\ge0$ with $n_1+\cdots+n_m=n-1$. The first summand in \eqref{eq:ode-picard-hypotheses} is bounded by
\EQn{\label{eq:ode-same-radius-induction}
\eta c_mQ_{Z_{n_1}}(t;\zeta)\cdots Q_{Z_{n_m}}(t;\zeta)
\le
\eta c_mb_{n_1}\cdots b_{n_m}
z_\zeta(t)^{n_1+\cdots+n_m}
\le
c_mb_{n_1}\cdots b_{n_m}z_\zeta(t)^n,
}
because $\eta\le z_\zeta(t)$. For the second summand, let
\EQ{
\zeta_n=\zeta+\frac{\rho_+-\zeta}{n+1}.
}
Then $\zeta<\zeta_n<\rho_+$ and
\EQ{
(\zeta_n-\zeta)^{-1}=\frac{n+1}{\rho_+-\zeta},
\qquad
\rho_+-\zeta_n=\frac{n}{n+1}(\rho_+-\zeta).
}
Using \eqref{eq:ode-picard-hypotheses} at the radius $\zeta_n$ and applying the induction hypothesis at $\zeta_n$, we obtain
\EQn{\label{eq:ode-radius-gap-induction}
&c_m(\zeta_n-\zeta)^{-1}\int_t^\I\omega(s)
Q_{Z_{n_1}}(s;\zeta_n)\cdots Q_{Z_{n_m}}(s;\zeta_n)\dd s
\\
&\le
c_mb_{n_1}\cdots b_{n_m}
(\zeta_n-\zeta)^{-1}\int_t^\I\omega(s)z_{\zeta_n}(s)^{n-1}\dd s
\\
&\le
e\,c_mb_{n_1}\cdots b_{n_m}z_\zeta(t)^n.
}
The last inequality follows from
\EQ{
-\frac{\dd}{\dd s}z_{\zeta_n}(s)^n
=
\frac{n}{\rho_+-\zeta_n}\omega(s)z_{\zeta_n}(s)^{n-1}
}
and
\EQ{
(\zeta_n-\zeta)^{-1}\int_t^\I\omega(s)z_{\zeta_n}(s)^{n-1}\dd s
&=
\frac{\rho_+-\zeta_n}{n(\zeta_n-\zeta)}
\brko{z_{\zeta_n}(t)^n-\eta^n}
\\
&=
z_{\zeta_n}(t)^n-\eta^n
\\
&\le z_{\zeta_n}(t)^n
\le
\brko{1+\frac1n}^nz_\zeta(t)^n
\le ez_\zeta(t)^n.
}
Therefore \eqref{eq:ode-picard-hypotheses}, \eqref{eq:ode-same-radius-induction}, and \eqref{eq:ode-radius-gap-induction} imply
\EQ{%\label{eq:ode-summed-induction}
Q_{Z_n}(t;\zeta)
&\le
(1+e)
\sum_{m\ge1}
\sum_{\substack{n_1,\ldots,n_m\ge0\\ n_1+\cdots+n_m=n-1}}
c_mb_{n_1}\cdots b_{n_m}z_\zeta(t)^n
\\
&\le b_nz_\zeta(t)^n,
}
where the last inequality follows from \eqref{eq:ode-picard-coefficient-recursion}. This completes the induction at order $n$. By \eqref{eq:ode-picard-smallness}, $z_\rho(T)\le z_0$. Since $z_\rho(t)\le z_\rho(T)$ for $t\ge T$, \eqref{eq:ode-picard-order-bound} and \eqref{eq:ode-picard-coefficient-sum} give
\EQ{
\sum_{n\ge0}Q_{Z_n}(T;\rho)
\le
\sum_{n\ge0}b_nz_{\rho}(T)^n
\le C_M.
}
At the radius $\rho$, the last estimate and the Weierstrass test give uniform convergence in $\A^1_\rho$ on $[T,\I)$ of
\EQ{
\sum_{n\ge0}t^\al\re Z_n(t)
\qquad\hbox{and}\qquad
\sum_{n\ge0}t^{2\al-1}\im Z_n(t).
}
Since every term is continuous, the sums are continuous. Hence $\sum_n Z_n$ converges absolutely in $\mathcal X_T(\rho)$, and \eqref{eq:ode-picard-absolute-bound} follows.
\end{proof}

\begin{prop}[Fixed point for the phase correction]\label{prop:ode-fixed-point}
Under the assumptions of Theorem \ref{thm:ode}, set
\EQ{
\rho_-:=\frac{\rho_*+\rho_1}{2},
\qquad
\rho:=\frac{2\rho_-+\rho_1}{3},
\qquad
\rho_+:=\frac{\rho+\rho_1}{2}.
}
There exist $\ep_0>0$, $T\ge2$, and $R_0>1$ such that, if $0<\ep\le\ep_0$, there is a final-state correction $Z\in\mathcal X_T(\rho)$ satisfying
\EQ{
Z=\TT Z\quad\hbox{in }\mathcal X_T(\rho_-),
\qquad
\norm{Z}_{\mathcal X_T(\rho)}\le R_0.
}
Moreover, any two final-state corrections for the same final profile $W$ and the corresponding map $\TT$ agree on $[T_*,\I)$ for some $T_*\ge2$.
\end{prop}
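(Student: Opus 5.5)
The construction is a Picard scheme fed into Lemma \ref{lem:ode-picard-convergence}. First, the nonlinearity $N(t,Z)$ must be expanded in a form adapted to Picard orders. Write $N=N_0+N_1+N_2+N_{\rm tri}$. The analytic pieces are $N_1$ (linear in $Z$), $N_2$ (quadratic), and $N_{\rm tri}=-it^{-\al}|W|^p\sum_{k\ge1}\frac{(pX)^k}{k!}$ (a power series in $X$). Define the iterates by
\begin{align*}
Z_0(t)&=-\int_t^\I U(t,s)N_0(s)\dd s,\\
Z_n(t)&=-\int_t^\I U(t,s)\Big[N_1(s,Z_{n-1})+\sum_{n_1+n_2=n-1}\tfrac{i}{2s^2}\nabla Z_{n_1}\cdot\nabla Z_{n_2}\\
&\qquad\qquad-is^{-\al}|W|^p\sum_{m\ge1}\tfrac{p^m}{m!}\sum_{n_1+\cdots+n_m=n-1}X_{n_1}\cdots X_{n_m}\Big]\dd s,
\end{align*}
so that formally $\sum_n Z_n$ solves $Z=\TT Z$. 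By \eqref{eq:N0-duhamel-bound}, applied with the upper radius $\rho_1$ and the lower radius $\rho_+$, we get $Q_{Z_0}(T;\rho_+)\le M$ with $M\lsm_R1$. Continuity in $\A^1_\zeta$ follows from the Duhamel structure.

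Second, I verify the hypothesis \eqref{eq:ode-picard-hypotheses} for $\rho_-\le\zeta'<\zeta<\rho_+$. The key is to split the Duhamel map for the triangular part as $U(t,s)=I+(U(t,s)-I)$, in the spirit of Lemma \ref{lem:two-weight-duhamel}.

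\emph{Same-radius triangular term.} The $I$ part of $N_{\rm tri}$ is purely imaginary, so it only enters the $Y$ component with weight $t^{2\al-1}$. By the multilinear version of \eqref{eq:ode-Ntri-bound}, with $\norm{X_{n_j}}_{\A^1}\le s^{-\al}Q_{Z_{n_j}}$, it is bounded by $\ep s^{-2\al}s^{-\al(m-1)}\prod Q$. Integrating gives $t^{2\al-1}\int_t^\I\ep s^{-2\al}\dd s\lsm\ep$. This supplies the $\eta c_m\prod Q(t;\zeta')$ term with $\eta=C\ep$ and $c_m=Cp^m/m!$ (the same radius $\zeta'$ throughout, using $Q$ monotone in $t$).

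\emph{Dispersive triangular remainder.} The $(U-I)$ part costs radius via \eqref{eq:UminusI}. It gives $t^{-\nu}(\zeta-\zeta')^{-2\nu}\int_t^\I\ep s^{-2\al}\prod Q(s;\zeta)\dd s$ in both components. Weighting by $t^\al$, we need $t^{\al-\nu}\int_t^\I s^{-2\al}\lsm\int_t^\I s^{-\al-\nu}\dd s$, which is integrable since $\nu>1-\al$. For gaps at most $1$, $(\zeta-\zeta')^{-2\nu}\le(\zeta-\zeta')^{-1}$ because $2\nu<1$. Hence this piece fits the second sum with $\omega(s)=Cs^{-\al-\nu}$.

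\emph{Differential terms.} For $N_1,N_2$, Lemma \ref{lem:ode-N1N2-nonlinear} gives pointwise bounds $s^{-1-\al}(\zeta-\zeta')^{-1}\prod Q(s;\zeta)$ in $E_{\zeta'}$. One obstacle is converting pointwise $E$ bounds into bounds on the integral. For the real part, $t^\al\int_t^\I s^{-1-\al}s^{-\al}\cdots$ is fine. For the imaginary part, $\im N_1$ has size $s^{-1-\al}\cdot s^{1-2\al}$, and $t^{2\al-1}\int_t^\I s^{-2\al}\cdots\dd s\lsm\int_t^\I s^{-1}\cdots$ is not integrable. I will therefore instead majorize directly, as in the proof of \eqref{eq:ode-N1-size}. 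The $\nabla\Phi_0$ components give $\im N_1\lsm s^{-2}(\norm{\nabla X}+s^{1-\al}\norm{\nabla Y})\lsm s^{-2}s^{-\al}Q+s^{-1-\al}s^{1-2\al}Q$. Multiplied by $t^{2\al-1}$ and integrated, this is bounded by $\int_t^\I s^{-1-\al}\cdots$, which is integrable. So both components admit $\omega(s)=Cs^{-1-\al}+Cs^{-\al-\nu}$ after absorbing the weight shifts $t^{a}\le s^{a}$ for nonnegative exponents $a$. Checking this weight bookkeeping in every real/imaginary combination is the step I expect to be most delicate.

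Third, apply Lemma \ref{lem:ode-picard-convergence}. Choose $\ep_0$ so that $C\ep\le z_0/2$. Choose $T$ large so that $(\rho_+-\rho)^{-1}\int_T^\I\omega\le z_0/2$, which is possible since $\omega\in L^1$. Then $\sum Z_n$ converges absolutely in $\mathcal X_T(\rho)$ with bound $R_0:=C_M$.

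To identify the limit as a fixed point, use that the same estimates with radius loss from $\rho$ to $\rho_-$ show that $\TT$ is continuous on bounded sets of $\mathcal X_T(\rho)$ into $\mathcal X_T(\rho_-)$. Here the Lipschitz bounds \eqref{eq:ode-Ntri-lip}, \eqref{eq:ode-N1-lip}, \eqref{eq:ode-N2-lip} are used, with the exponential factors $\exp(Ct^{-\al}\norm{Z}_E)$ bounded by the fixed $R_0$. Since the partial sums $S_N$ satisfy $S_N-\TT S_{N-1}\to0$ by regrouping the Cauchy products, we obtain $Z=\TT Z$ in $\mathcal X_T(\rho_-)$.

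For uniqueness, let $Z^1,Z^2$ be two corrections in $\mathcal X_{T_j}(\rho^j)$. Set $D=Z^1-Z^2$ and apply the same difference estimates. This gives a linear instance of \eqref{eq:ode-picard-hypotheses} for $Q_D$ with $c_1=C(1+\norm{Z^1}+\norm{Z^2})$ and $c_m=0$ for $m\ge2$, but now on a scale of radii. Iterating this inequality $n$ times with radii spread as in the proof of Lemma \ref{lem:ode-picard-convergence} gives $Q_D(t;\zeta)\le C\,z_\zeta(t)^n(2e c_1)^n$. Taking $T_*$ large enough that $z_\zeta(T_*)\,2ec_1<1$ and letting $n\to\I$ yields $D=0$ on $[T_*,\I)$.
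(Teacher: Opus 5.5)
Your existence argument follows the paper's route. You use the same Picard layers and the same three-way split of each layer: the same-radius triangular integral, the $(U-I)$ triangular remainder, and the $N_1,N_2$ Duhamel terms. These are fed into Lemma \ref{lem:ode-picard-convergence} with $\eta\sim\ep$ and $\omega\sim s^{-\al-\nu}$. Three small corrections there:
\begin{itemize}
\item $U(t,s)$ mixes real and imaginary parts. So the real part of the $N_1$ Duhamel term is only bounded by $t^\al\int_t^\I s^{-3\al}Q\,\dd s\le\int_t^\I s^{-2\al}Q\,\dd s$, not by the $s^{-1-2\al}$ rate you use. This is still dominated by your $\omega$, since $\nu<\al$, and it is exactly the paper's conversion \eqref{eq:ode-U-E-weight-conversion}.
\item The algebra constant forces $c_m=(Cp)^m/m!$ rather than $Cp^m/m!$.
\item The ``non-integrable $s^{-1}$'' obstacle you flag is an arithmetic slip. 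Directly, $t^{2\al-1}s^{-3\al}\le s^{-1-\al}$.
\end{itemize}

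The genuine problem is in the uniqueness step. In the difference inequality, the same-radius contribution comes from $\mathcal A_0(Z^1)-\mathcal A_0(Z^2)$. Its bound $t^{2\al-1}\int_t^\I\ep s^{-2\al}Q_D\,\dd s\sim\ep\,Q_D(t)$ has no decay in $t$. In the format of \eqref{eq:ode-picard-hypotheses} you therefore have $z_\zeta(t)=\eta+B(t)/(\rho_+-\zeta)\ge\eta=C\ep$ for every $t$. Increasing $T_*$ cannot make $z_\zeta(T_*)\,2ec_1<1$. With your choice $c_1=C(1+\norm{Z^1}+\norm{Z^2})$, which in that format multiplies the same-radius term too, the condition becomes $\ep(1+\norm{Z^1}+\norm{Z^2})\ll1$. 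But $\ep_0$ is fixed before the uniqueness claim, and Definition \ref{def:ode-asymptotic} imposes no size bound on corrections. As written, your argument only gives uniqueness among corrections of size $\lesssim\ep^{-1}$.

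The fix, which is what the paper does, is to keep the two constants separate.
\begin{itemize}
\item By \eqref{eq:ode-Ntri-lip}, the same-radius Lipschitz constant is $C_R\ep\, e^{CT^{-\al}M}$. This is $\le2C_R\ep$ once $T$ is large depending on $M$, so it does not depend on the size of $Z^j$.
\item Absorb it into the left side using a choice $2C_R\ep_0\le\frac12$ made once and for all.
\item Put all $M$-dependence into $\omega_M=Ms^{-2\al}+\ep s^{-\al-\nu}$.
\item Iterate the resulting pure radius-loss inequality over $n$ equally spaced radii. The ordered time integrals give $1/n!$, hence a factor $\theta_{T_1}^n\to0$ for $T_1$ large.
\end{itemize}
Alternatively, you could first use $Z^j=\TT Z^j$ to show $\norm{Z^j}\lsm_R1$ at a slightly smaller radius on a later time interval; this again requires $\ep$ small, uniformly in $M$. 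That makes your $c_1$ uniform, and $\ep_0$ can then be chosen depending only on $R$. Either way, ``taking $T_*$ large'' alone does not close the argument.
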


\begin{proof}
For $0\le\zeta\le\rho_+$, the monotonicity of the analytic norm gives
\EQ{
\norm{f}_{\A^\sigma_\zeta}\le \norm{f}_{\A^\sigma_{\rho_+}},
\qquad
Q_F(t;\zeta)\le Q_F(t;\rho_+),
\qquad 0\le\zeta\le\rho_+.
}
We write the Duhamel map in the form
\EQn{\label{eq:ode-duhamel-decomposition}
\TT Z=D_0+\mathcal A_0(Z)+\mathcal A_1(Z),
}
where
\EQ{%\label{eq:ode-duhamel-parts}
D_0(t)&=-\int_t^\I U(t,s)N_0(s)\dd s,
\\
\mathcal A_0(Z)(t)&=-\int_t^\I N_{\rm tri}(s,Z(s))\dd s,
\\
\mathcal A_1(Z)(t)&=-\int_t^\I U(t,s)\brko{N_1(s,Z(s))+N_2(s,Z(s))}\dd s
\\
&\quad
-\int_t^\I\brko{U(t,s)-I}N_{\rm tri}(s,Z(s))\dd s.
}
We construct the fixed point as a formal Picard series
\EQ{
Z=\sum_{n\ge0}Z_n.
}
By \eqref{eq:ode-duhamel-decomposition} and $Z=\TT Z$, this series satisfies the formal identity
\EQn{\label{eq:ode-picard-formal-equation}
\sum_{n\ge0}Z_n
=
D_0+
\mathcal A_0\brko{\sum_{n\ge0}Z_n}
+
\mathcal A_1\brko{\sum_{n\ge0}Z_n}.
}
The exponential part in $\mathcal A_0$ is grouped by the index $1+n_1+\cdots+n_m$:
\EQ{%\label{eq:ode-exponential-layer-expansion}
e^{p\re\brko{\sum_{\ell\ge0}Z_\ell(t)}}-1
=
\sum_{n\ge1}
\sum_{m\ge1}\frac{p^m}{m!}
\sum_{\substack{n_1,\ldots,n_m\ge0\\ n_1+\cdots+n_m=n-1}}
\re Z_{n_1}(t)\cdots\re Z_{n_m}(t).
}
For $n\ge1$ define
\EQn{\label{eq:ode-picard-forcing}
N_{{\rm tri},n}(t)
&=-i t^{-\al}|W|^p
\sum_{m\ge1}\frac{p^m}{m!}
\sum_{\substack{n_1,\ldots,n_m\ge0\\ n_1+\cdots+n_m=n-1}} \re Z_{n_1}(t)\cdots\re Z_{n_m}(t),
\\
N_{1,n}(t)
&=\frac{i}{t^2}\nabla\Phi_0(t)\cdot\nabla Z_{n-1}(t),
\\
N_{2,n}(t)
&=\sum_{n_1+n_2=n-1}\frac{i}{2t^2}
\nabla Z_{n_1}(t)\cdot\nabla Z_{n_2}(t).
}
With this notation, the right hand side of \eqref{eq:ode-picard-formal-equation} expands as
\EQn{\label{eq:ode-picard-rhs-layer}
&D_0(t)
+\mathcal A_0\brko{\sum_{\ell\ge0}Z_\ell}(t)
+\mathcal A_1\brko{\sum_{\ell\ge0}Z_\ell}(t)
\\
&=D_0(t)
+i\sum_{m\ge1}\frac{p^m}{m!}
\sum_{n_1,\ldots,n_m\ge0} \int_t^\I s^{-\al}|W|^p
\re Z_{n_1}(s)\cdots\re Z_{n_m}(s)\dd s
\\
&\quad
-\sum_{\ell\ge0}
\int_t^\I U(t,s)
\frac{i}{s^2}\nabla\Phi_0(s)\cdot\nabla Z_\ell(s)\dd s
\\
&\quad
-\sum_{\ell_1,\ell_2\ge0}
\int_t^\I U(t,s)
\frac{i}{2s^2}
\nabla Z_{\ell_1}(s)\cdot\nabla Z_{\ell_2}(s)\dd s
\\
&\quad
+i\sum_{m\ge1}\frac{p^m}{m!}
\sum_{n_1,\ldots,n_m\ge0} \int_t^\I\brko{U(t,s)-I}
\brko{s^{-\al}|W|^p\re Z_{n_1}(s)\cdots\re Z_{n_m}(s)}\dd s
\\
&=D_0(t)+
\sum_{n\ge1}
\brko{
-\int_t^\I N_{{\rm tri},n}(s)\dd s 
-\int_t^\I U(t,s)\brko{N_{1,n}(s)+N_{2,n}(s)}\dd s
\\
&\qquad\qquad\quad
-\int_t^\I\brko{U(t,s)-I}N_{{\rm tri},n}(s)\dd s
}.
}
By \eqref{eq:ode-picard-formal-equation} and \eqref{eq:ode-picard-rhs-layer},
\EQ{
Z_0=D_0,
}
and, for $n\ge1$,
\EQn{\label{eq:ode-picard-series}
Z_n(t)
&=-\int_t^\I N_{{\rm tri},n}(s)\dd s
 -\int_t^\I U(t,s)\brko{N_{1,n}(s)+N_{2,n}(s)}\dd s
\\
&\quad-\int_t^\I\brko{U(t,s)-I}N_{{\rm tri},n}(s)\dd s .
}
Write the three terms in \eqref{eq:ode-picard-series} as
\EQn{\label{eq:ode-picard-three-parts}
J_{{\rm tri},n}(t)&=-\int_t^\I N_{{\rm tri},n}(s)\dd s,
\\
J_{12,n}(t)&=-\int_t^\I U(t,s)\brko{N_{1,n}(s)+N_{2,n}(s)}\dd s,
\\
J_{U,n}(t)&=-\int_t^\I\brko{U(t,s)-I}N_{{\rm tri},n}(s)\dd s .
}
Thus $Z_n=J_{{\rm tri},n}+J_{12,n}+J_{U,n}$. Fix $\rho_-\le\zeta'<\zeta<\rho_+$. We first estimate $J_{{\rm tri},n}$. Fix $\tau\ge t$. For $s\ge\tau$, by \eqref{eq:ode-picard-forcing}, \eqref{eq:eps-small}, \eqref{eq:fourier-algebra}, and \eqref{eq:instant-norm},
\EQn{\label{eq:ode-picard-tri-integral-bound}
\norm{\jb{\nabla}\im N_{{\rm tri},n}(s)}_{\A^0_{\zeta'}}
&\lsm_R\ep
\sum_{m\ge1}
\sum_{\substack{n_1,\ldots,n_m\ge0\\ n_1+\cdots+n_m=n-1}}
\frac{p^m}{m!}s^{-\al}
\norm{\re Z_{n_1}(s)\cdots\re Z_{n_m}(s)}_{\A^1_{\zeta'}}
\\
&\lsm_R\ep
\sum_{m\ge1}
\sum_{\substack{n_1,\ldots,n_m\ge0\\ n_1+\cdots+n_m=n-1}}
\frac{(C p)^m}{m!}s^{-\al}
\\
&\quad\cdot
\norm{\jb{\nabla}\re Z_{n_1}(s)}_{\A^0_{\zeta'}}\cdots\norm{\jb{\nabla}\re Z_{n_m}(s)}_{\A^0_{\zeta'}}
\\
&\lsm_R\ep
\sum_{m\ge1}
\sum_{\substack{n_1,\ldots,n_m\ge0\\ n_1+\cdots+n_m=n-1}}
\frac{(C p)^m}{m!}s^{-(m+1)\al} 
Q_{Z_{n_1}}(\tau;\zeta')\cdots Q_{Z_{n_m}}(\tau;\zeta').
}
Since $N_{{\rm tri},n}$ and $J_{{\rm tri},n}$ are purely imaginary, \eqref{eq:ode-picard-three-parts}, \eqref{eq:instant-norm}, and \eqref{eq:ode-picard-tri-integral-bound} imply, for every $\tau\ge t$,
\EQ{
\norm{J_{{\rm tri},n}(\tau)}_{E_{\zeta'}(\tau)}
&=
\tau^{2\al-1}
\norm{\jb{\nabla}\im J_{{\rm tri},n}(\tau)}_{\A^0_{\zeta'}}
\\
&\le
\tau^{2\al-1}
\int_\tau^\I
\norm{\jb{\nabla}\im N_{{\rm tri},n}(s)}_{\A^0_{\zeta'}}\dd s
\\
&\lsm_R\ep
\sum_{m\ge1}
\sum_{\substack{n_1,\ldots,n_m\ge0\\ n_1+\cdots+n_m=n-1}}
\frac{(C p)^m}{m!}
\tau^{2\al-1}\int_\tau^\I s^{-(m+1)\al}\dd s
\\
&\quad\cdot
Q_{Z_{n_1}}(\tau;\zeta')\cdots Q_{Z_{n_m}}(\tau;\zeta')
\\
&\lsm_R\ep
\sum_{m\ge1}
\sum_{\substack{n_1,\ldots,n_m\ge0\\ n_1+\cdots+n_m=n-1}}
\frac{(C p)^m}{m!}
\tau^{-(m-1)\al} 
Q_{Z_{n_1}}(\tau;\zeta')\cdots Q_{Z_{n_m}}(\tau;\zeta')
\\
&\lsm_R\ep
\sum_{m\ge1}
\sum_{\substack{n_1,\ldots,n_m\ge0\\ n_1+\cdots+n_m=n-1}}
\frac{(C p)^m}{m!} 
Q_{Z_{n_1}}(\tau;\zeta')\cdots Q_{Z_{n_m}}(\tau;\zeta').
}
Taking the supremum over $\tau\ge t$,
\EQn{\label{eq:ode-picard-tri-Q-bound}
Q_{J_{{\rm tri},n}}(t;\zeta')
\lsm_R\ep
\sum_{m\ge1}
\sum_{\substack{n_1,\ldots,n_m\ge0\\ n_1+\cdots+n_m=n-1}}
\frac{(C p)^m}{m!}
Q_{Z_{n_1}}(t;\zeta')\cdots Q_{Z_{n_m}}(t;\zeta').
}
For the term $J_{12,n}$, fix $\tau\ge t$. Since $\al>2\al-1$, by \eqref{eq:instant-norm} and \eqref{eq:U-isometry}, for $s\ge\tau$,
\EQn{\label{eq:ode-U-E-weight-conversion}
\norm{U(\tau,s)F}_{E_{\zeta'}(\tau)}
&\lsm\tau^{\al}\norm{\jb{\nabla}U(\tau,s)F}_{\A^0_{\zeta'}}
=\tau^{\al}\norm{\jb{\nabla}F}_{\A^0_{\zeta'}}
\\
&\le\tau^{\al}s^{1-2\al}\norm{F}_{E_{\zeta'}(s)}
\le s^{1-\al}\norm{F}_{E_{\zeta'}(s)}.
}
Applying \eqref{eq:ode-U-E-weight-conversion} with $F=N_{1,n}(s)+N_{2,n}(s)$ and using \eqref{eq:ode-N1-size} and \eqref{eq:ode-N2-size}, the factor $s^{-1-\al}$ in these two estimates becomes $s^{-2\al}$. Therefore
\EQ{%\label{eq:ode-picard-N12-tau-bound}
\norm{J_{12,n}(\tau)}_{E_{\zeta'}(\tau)}
&\lsm_R(\zeta-\zeta')^{-1}
\int_\tau^\I s^{-2\al}Q_{Z_{n-1}}(s;\zeta)\dd s
\\
&\quad+(\zeta-\zeta')^{-1}
\sum_{n_1+n_2=n-1}
\int_\tau^\I s^{-2\al}
Q_{Z_{n_1}}(s;\zeta)Q_{Z_{n_2}}(s;\zeta)\dd s .
}
Taking the supremum over $\tau\ge t$,
\EQn{\label{eq:ode-picard-N12-Q-bound}
Q_{J_{12,n}}(t;\zeta')
&\lsm_R(\zeta-\zeta')^{-1}
\int_t^\I s^{-2\al}Q_{Z_{n-1}}(s;\zeta)\dd s
\\
&\quad+(\zeta-\zeta')^{-1}
\sum_{n_1+n_2=n-1}
\int_t^\I s^{-2\al}
Q_{Z_{n_1}}(s;\zeta)Q_{Z_{n_2}}(s;\zeta)\dd s .
}
For $J_{U,n}$, fix $\tau\ge t$. For $s\ge\tau$ and $m\ge1$, by \eqref{eq:nu-choice},
\EQ{
\tau^\al\tau^{-\nu}s^{-(m+1)\al}
\le s^{-m\al-\nu}
\le s^{-\al-\nu}.
}
Thus, by \eqref{eq:UminusI}, $2\nu<1$, \eqref{eq:fourier-algebra}, and \eqref{eq:ode-picard-forcing},
\EQ{%\label{eq:ode-picard-Utri-tau-bound}
\norm{J_{U,n}(\tau)}_{E_{\zeta'}(\tau)}
&\lsm_R\ep
\sum_{m\ge1}
\sum_{\substack{n_1,\ldots,n_m\ge0\\ n_1+\cdots+n_m=n-1}}
\frac{p^m}{m!}(\zeta-\zeta')^{-1}
\tau^\al\tau^{-\nu}
\int_\tau^\I s^{-\al}
\\
&\quad\cdot
\norm{\re Z_{n_1}(s)\cdots\re Z_{n_m}(s)}_{\A^1_{\zeta}}\dd s
\\
&\lsm_R\ep
\sum_{m\ge1}
\sum_{\substack{n_1,\ldots,n_m\ge0\\ n_1+\cdots+n_m=n-1}}
\frac{(C p)^m}{m!}(\zeta-\zeta')^{-1}
\int_\tau^\I s^{-\al-\nu}
\\
&\quad\cdot
Q_{Z_{n_1}}(s;\zeta)\cdots Q_{Z_{n_m}}(s;\zeta)\dd s .
}
Taking the supremum over $\tau\ge t$,
\EQn{\label{eq:ode-picard-Utri-Q-bound}
Q_{J_{U,n}}(t;\zeta') &
\lsm_R\ep
\sum_{m\ge1}
\sum_{\substack{n_1,\ldots,n_m\ge0\\ n_1+\cdots+n_m=n-1}} \frac{(C p)^m}{m!}(\zeta-\zeta')^{-1}
\\
&\quad\cdot \int_t^\I s^{-\al-\nu}
Q_{Z_{n_1}}(s;\zeta)\cdots Q_{Z_{n_m}}(s;\zeta)\dd s.
}
Since $\nu<1/2<\al$ and $s\ge1$,
\EQ{
s^{-2\al}\le s^{-\al-\nu}.
}
Choose $C_p\ge1$ so large that, with
\EQ{
c_m=\frac{C_p^m}{m!},
}
one has
\EQn{\label{eq:ode-Cp-choice}
\frac{(C p)^m}{m!}\le c_m,
\quad m\ge1,
\qquad
1\le c_1,c_2.
}
Then $\sum_{m\ge1}c_mr^m<\I$ for every fixed $0\le r<\I$. Fix $C_R\ge1$, depending only on $R$, so that the implicit constants in \eqref{eq:ode-picard-tri-Q-bound}, \eqref{eq:ode-picard-N12-Q-bound}, \eqref{eq:ode-picard-Utri-Q-bound}, and \eqref{eq:N0-duhamel-bound} are bounded by $C_R$, and set
\EQ{
\omega(s)=C_Rs^{-\al-\nu}.
}
Combining these estimates with \eqref{eq:ode-Cp-choice} and using $\ep\le1$, we obtain, for $n\ge1$ and $\rho_-\le\zeta'<\zeta<\rho_+$,
\EQn{\label{eq:ode-picard-coeff-bound}
Q_{Z_n}(t;\zeta')
&\le
C_R\ep
\sum_{m\ge1}
\sum_{\substack{n_1,\ldots,n_m\ge0\\ n_1+\cdots+n_m=n-1}}
c_mQ_{Z_{n_1}}(t;\zeta')\cdots Q_{Z_{n_m}}(t;\zeta')
\\
&\quad+
\sum_{m\ge1}
\sum_{\substack{n_1,\ldots,n_m\ge0\\ n_1+\cdots+n_m=n-1}}
c_m(\zeta-\zeta')^{-1}\int_t^\I\omega(s)
Q_{Z_{n_1}}(s;\zeta)\cdots Q_{Z_{n_m}}(s;\zeta)\dd s .
}
Since $2\al>1$ and $\nu>1-\al$,
\EQn{\label{eq:ode-omega-tail}
B(T):=\int_T^\I\omega(s)\dd s
=\frac{C_R}{\al+\nu-1}T^{1-\al-\nu}.
}
By \eqref{eq:N0-duhamel-bound},
\EQn{\label{eq:ode-D0-bound-outer}
Q_{Z_0}(T;\rho_+)\le C_R.
}
Let $z_0$ and $C_M$ be the constants in Lemma \ref{lem:ode-picard-convergence} with
\EQ{
\eta=C_R\ep,
\qquad
M=C_R .
}
Fix $R_0>1$ such that $C_M\le R_0$. Choose $\ep_0>0$ and then $T\ge2$ so large that
\EQn{\label{eq:ode-fixed-point-smallness}
0<\ep_0\le1,
\qquad
C_R\ep_0+
\frac{C_R}{(\al+\nu-1)(\rho_+-\rho)}T^{1-\al-\nu}
\le z_0,
}
The Duhamel formulas defining the Picard terms give
\EQ{
Z_n\in C([T,\I);\A^1_\zeta),
\qquad n\ge0,
\qquad \rho_-\le\zeta<\rho_+.
}
The bounds \eqref{eq:ode-picard-coeff-bound}, \eqref{eq:ode-omega-tail}, \eqref{eq:ode-D0-bound-outer}, and \eqref{eq:ode-fixed-point-smallness} verify the hypotheses of Lemma \ref{lem:ode-picard-convergence}. Hence the Picard series $\sum_{n\ge0}Z_n$, whose terms are defined by \eqref{eq:ode-picard-series}, converges absolutely in $\mathcal X_T(\rho)$. Since $\rho_-<\rho$, the monotonicity of the analytic norm gives convergence in $\mathcal X_T(\rho_-)$. Let
\EQ{
Z=\sum_{n\ge0}Z_n.
}
The absolute convergence at $\rho$ and the summable estimate \eqref{eq:ode-picard-coeff-bound} at the radius $\rho_-$ justify summation of \eqref{eq:ode-picard-series}. Thus
\EQ{
Z=D_0+\mathcal A_0(Z)+\mathcal A_1(Z)=\TT Z .
}
This identity holds in $\mathcal X_T(\rho_-)$ and hence in $\mathcal X_T(\rho')$ for every $\rho_*<\rho'<\rho_-$. Thus $Z$ is a final-state correction in the sense of Definition \ref{def:ode-asymptotic}.
By \eqref{eq:ode-picard-absolute-bound},
\EQ{
\norm{Z}_{\mathcal X_T(\rho)}\le R_0.
}

It remains to prove the uniqueness assertion. Let $Z_j$, $j=1,2$, be two final-state corrections as in the statement. By Definition \ref{def:ode-asymptotic}, we may choose $T_0\ge2$ and
\EQ{
\rho_*<\zeta_-<\zeta_+<\rho_1
}
such that
\EQ{
Z_j\in\mathcal X_{T_0}(\zeta_+),
\qquad
Z_j=\TT Z_j
\quad\hbox{in }\mathcal X_{T_0}(\zeta')
\quad\hbox{for every }\rho_*<\zeta'<\zeta_+.
}
Set
\EQ{%\label{eq:ode-uniqueness-tail-def}
M&=1+\norm{Z_1}_{\mathcal X_{T_0}(\zeta_+)}
+\norm{Z_2}_{\mathcal X_{T_0}(\zeta_+)},
\\
\omega_M(s)&=Ms^{-2\al}+\ep s^{-\al-\nu}.
}
Decrease $\ep_0$ so that
\EQn{\label{eq:ode-uniqueness-epsilon-final}
2C_R\ep_0\le\frac12.
}
For every $T_1\ge T_0$,
\EQ{
\norm{Z_1}_{\mathcal X_{T_1}(\zeta_+)}
+\norm{Z_2}_{\mathcal X_{T_1}(\zeta_+)}
\le M.
}
Since $2\al>1$ and $\al+\nu>1$, we may choose $T_1\ge T_0$ so that
\EQn{\label{eq:ode-uniqueness-small-final}
e^{CT_1^{-\al}M}\le2,
\qquad
\theta_{T_1}:=
\frac{C_{R,M}e}{\zeta_+-\zeta_-}
\int_{T_1}^\I\omega_M(s)\dd s
<1.
}
By \eqref{eq:ode-Ntri-lip}, \eqref{eq:ode-uniqueness-epsilon-final}, and \eqref{eq:ode-uniqueness-small-final}, for $\zeta_-\le\zeta'<\zeta_+$,
\EQn{\label{eq:ode-A0-diff-final}
Q_{\mathcal A_0(Z_1)-\mathcal A_0(Z_2)}(t;\zeta')
\le C_R\ep e^{CT_1^{-\al}M}Q_{Z_1-Z_2}(t;\zeta')
\le\frac12Q_{Z_1-Z_2}(t;\zeta').
}
By \eqref{eq:ode-N1-lip}, \eqref{eq:ode-N2-lip}, and \eqref{eq:UminusI} applied to $N_{\rm tri}(Z_1)-N_{\rm tri}(Z_2)$, for $\zeta_-\le\zeta'<\zeta\le\zeta_+$,
\EQn{\label{eq:ode-A1-diff-final}
Q_{\mathcal A_1(Z_1)-\mathcal A_1(Z_2)}(t;\zeta')
\le C_{R,M}(\zeta-\zeta')^{-1}
\int_t^\I\omega_M(s)Q_{Z_1-Z_2}(s;\zeta)\dd s.
}
Since $Z_j=\TT Z_j$, combining \eqref{eq:ode-A0-diff-final} and \eqref{eq:ode-A1-diff-final} and absorbing the first term gives
\EQn{\label{eq:ode-uniqueness-step-final}
Q_{Z_1-Z_2}(t;\zeta')
\le
C_{R,M}(\zeta-\zeta')^{-1}
\int_t^\I\omega_M(s)Q_{Z_1-Z_2}(s;\zeta)\dd s.
}
For $n\ge1$, let
\EQ{
\zeta_j=\zeta_-+\frac{j}{n}(\zeta_+-\zeta_-),
\qquad j=0,\ldots,n.
}
Iterating \eqref{eq:ode-uniqueness-step-final} on the pairs $(\zeta_{j-1},\zeta_j)$ gives
\EQn{\label{eq:ode-uniqueness-iteration}
\norm{Z_1-Z_2}_{\mathcal X_{T_1}(\zeta_-)}
\le
\frac{1}{n!}
\brko{\frac{C_{R,M}n}{\zeta_+-\zeta_-}
\int_{T_1}^\I\omega_M(s)\dd s}^{n}
\norm{Z_1-Z_2}_{\mathcal X_{T_1}(\zeta_+)}.
}
Using $n!\ge(n/e)^n$ and \eqref{eq:ode-uniqueness-small-final} in \eqref{eq:ode-uniqueness-iteration}, we obtain
\EQ{
\norm{Z_1-Z_2}_{\mathcal X_{T_1}(\zeta_-)}
\le
\theta_{T_1}^n
\norm{Z_1-Z_2}_{\mathcal X_{T_1}(\zeta_+)}.
}
Letting $n\ra\I$ gives $Z_1=Z_2$ on $[T_1,\I)$.
\end{proof}

\subsection{Construction, convergence, and uniqueness}%\label{sec:ode-construction}

Let $Z$ be the fixed point from Proposition \ref{prop:ode-fixed-point}, set $\Phi=\Phi_0+Z$, define
\EQ{%\label{eq:a-construction}
a(t,y)=e^{\Phi(t,y)},
}
and define $u$ from $a$ by the ray transform \eqref{eq:ray-transform}. On every compact interval $I\subset[T,\I)$, Lemma \ref{lem:fourier-algebra}, \eqref{eq:wiener-L2-product}, and the $L^2$ isometry of the ray transform give
\EQ{
a=W\,e^{-i\Lam|W|^p}e^Z
&\in C(I;L^2\cap\A^1_{\rho_*}).
}
Moreover, $u\in C(I;L^2)$ solves \eqref{eq:nls} on $[T,\I)$, satisfying
\EQ{
\norm{u(t)}_{L^2_x}=\norm{a(t)}_{L^2_y}
&\le e^{\norm{X(t)}_{L^\I_y}}\norm{W}_{L^2_y}.
}
Proposition \ref{prop:ode-fixed-point} shows that $Z$ is a final-state correction, so the corresponding ray profile belongs to the ODE-type solution class of Definition \ref{def:ode-asymptotic}.

Since $W\in L^2\cap\A^6_{\rho_0}$, the Wiener embedding and interpolation give $W\in L^q$ for $2\le q\le\I$.
The fixed-point bound in Proposition \ref{prop:ode-fixed-point}, the definition of $\mathcal X_T$, and \eqref{eq:wiener-embedding} give
\EQn{\label{eq:ode-XY-Linfty-decay}
\norm{X(t)}_{L^\I_y}
\lsm_Rt^{-\al}, \quad
\norm{Y(t)}_{L^\I_y}
\lsm_Rt^{-(2\al-1)}, \quad
\norm{Z(t)}_{L^\I_y}
\lsm_Rt^{-(2\al-1)}.
}
Here $1/2<\al<1$, so $0<2\al-1<\al$. Since $|e^z-1|\le e^{|\re z|}|z|$, \eqref{eq:ray-Lq-scaling} and \eqref{eq:ode-XY-Linfty-decay} give
\EQ{
\norm{u(t)-u_{\rm ODE}(t)}_{L^q_x}
&\le t^{-d\brko{\frac12-\frac1q}}\norm{W}_{L^q_y}
e^{\norm{X(t)}_{L^\I_y}}\norm{Z(t)}_{L^\I_y}
\\
&\lsm_R\norm{W}_{L^q_y}t^{-d\brko{\frac12-\frac1q}-(2\al-1)}.
}
Moreover,
\EQ{
|u(t,x)|-|u_{\rm ODE}(t,x)|
=t^{-\frac d2}|W(x/t)|\brko{e^{X(t,x/t)}-1},
}
and hence
\EQ{
\norm{|u(t)|-|u_{\rm ODE}(t)|}_{L^q_x}
\lsm_R\norm{W}_{L^q_y}t^{-d\brko{\frac12-\frac1q}-\al}.
}
Finally, \eqref{eq:ray-Lq-scaling} gives
\EQ{
\norm{u_{\rm ODE}(t)}_{L^q_x}=t^{-d\brko{\frac12-\frac1q}}\norm{W}_{L^q_y},
}
and therefore
\EQ{
\absb{t^{d\brko{\frac12-\frac1q}}\norm{u(t)}_{L^q_x}-\norm{W}_{L^q_y}}
&\le t^{d\brko{\frac12-\frac1q}}\norm{|u(t)|-|u_{\rm ODE}(t)|}_{L^q_x}
\\
&\lsm_R\norm{W}_{L^q_y}t^{-\al}.
}
This proves \eqref{eq:ode-Lq-remainder}--\eqref{eq:ode-Lq-leading}, and hence the existence and convergence assertions of Theorem \ref{thm:ode}.

If $u_1,u_2$ belong to the ODE-type solution class with the same logarithm $\Psi$, Definition \ref{def:ode-asymptotic} gives corrections $Z_1,Z_2$ that are fixed points of the same Duhamel map at every smaller analytic radius. Proposition \ref{prop:ode-fixed-point} therefore gives $Z_1=Z_2$, and hence $u_1=u_2$, for all sufficiently large times.

Every solution in the ODE-type solution class satisfies the $L^2$ bound above. Since $p<2/d<4/d$, Tsutsumi's $L^2$ Cauchy theory \cite{Tsutsumi1987} extends every such solution uniquely to $[0,\I)$. Uniqueness of the $L^2$ flow propagates the large-time equality backward and shows that constructions from different choices of the starting time $T$ have the same value at $t=0$. Thus the wave operator \eqref{eq:wave-operator-def} is well defined, and Theorem \ref{thm:ode} follows.

\section{Fuchsian profiles for \texorpdfstring{$0<p\le1/d$}{0 < p <= 1/d}}\label{sec:eikonal-proof}

Throughout this section, assume $0<p\le1/d$ and set
\EQn{\label{eq:gamma-def}
\gamma=\frac{1-\al}{\al}
=\frac{2-dp}{dp}.
}
For $t>0$, let
\EQ{
\tau=t^{-\al}.
}

\begin{defn}[Truncated Fuchsian profiles]\label{def:eikonal-profile}
For an integer $\mu>2\gamma$, set
\EQn{\label{eq:eikonal-exponent-set}
E_{\gamma,\mu}
=\fbrk{\ell_1+\ell_2\gamma:
\ell_1,\ell_2\in\N,\ \ell_1+\ell_2\ge1}\cap(0,\mu).
}
For a complex-valued function $L$, set
\EQ{
S_*(L):=\frac1{1-\al}e^{p\re L}.
}
Given nonnegative integers $J_q^\Omega,J_q^S$ and coefficients $\Omega_{q,j}(y),\Sigma_{q,j}(y)$ for $q\in E_{\gamma,\mu}$, set
\EQ{
\Omega_q(r,y)=\sum_{j=0}^{J_q^\Omega}\Omega_{q,j}(y)r^j,
\qquad
\Sigma_q(r,y)=\sum_{j=0}^{J_q^S}\Sigma_{q,j}(y)r^j.
}
Let
\EQ{
\Omega^{(\mu)}(\tau,y)
&=\sum_{q\in E_{\gamma,\mu}}\tau^q\Omega_q(\log\tau,y),
\\
\Sigma^{(\mu)}(\tau,y)
&=\sum_{q\in E_{\gamma,\mu}}\tau^q\Sigma_q(\log\tau,y).
}
For a real-valued function $C_+$, the corresponding truncated Fuchsian profile is the pair
\EQn{\label{eq:eikonal-LS-expansion}
L^{(\mu)}(\tau,y)&=\Psi(y)+\Omega^{(\mu)}(\tau,y),
\\
S^{(\mu)}(\tau,y)&=S_*(\Psi)(y)+C_+(y)\tau^\gamma
+\Sigma^{(\mu)}(\tau,y).
}
\end{defn}

\begin{remark}[Final datum and the homogeneous mode]
Since $t^{1-\al}\tau^\gamma=1$, the profile in Definition \ref{def:eikonal-profile} satisfies
\EQ{
\exp\mbrko{L^{(\mu)}(\tau)-it^{1-\al}S^{(\mu)}(\tau)}
=e^{\Psi-iC_+}\exp\mbrko{\Omega^{(\mu)}(\tau)-it^{1-\al}\brko{S_*(\Psi)+\Sigma^{(\mu)}(\tau)}}.
}
Thus the prescribed bounded nonlogarithmic term in the full logarithmic phase and its corresponding factor are
\EQ{
\Theta_+:=\Psi-iC_+,
\qquad
W_{\rm full}:=e^{\Theta_+}=e^\Psi e^{-iC_+}.
}
The pair $(\Psi,C_+)$ is an auxiliary parametrization of $\Theta_+$. Then a natural question appears: if $\Theta_+$ can be written in two different forms $\Theta_+=\Psi_1-iC_{+,1}=\Psi_2-iC_{+,2}$, then the coefficients in Fuchsian profile constructed from $(\Psi_1,C_{+,1})$ and $(\Psi_2,C_{+,2})$ are different, thus the solution may also differ. However, Proposition \ref{prop:eikonal-final-datum-invariance} below shows that the solutions constructed from $(\Psi_1,C_{+,1})$ and $(\Psi_2,C_{+,2})$ are actually identical. Thus $C_+$ is not an additional datum once $\Theta_+$ is prescribed. Varying $C_+$ with $\Psi$ fixed changes $\Theta_+$. Theorem \ref{thm:main} uses the normalization $C_+=0$, for which $\Theta_+=\Psi$ and $W_{\rm full}=W$.
\end{remark}

The coefficients in Definition \ref{def:eikonal-profile} are constructed in Proposition \ref{prop:eikonal-prep}. They are determined by $(\Psi,C_+)$. Explicit lower-order coefficients are listed in the Appendix.

\begin{defn}[Fuchsian solution class]\label{def:eikonal-asymptotic}
Fix $0<\rho_*<\rho_1$, an integer $\mu>2\gamma$, and an exponent $\kappa$ satisfying
\EQn{\label{eq:kappa-choice}
1-\al<\kappa<\al(\mu+1)-1.
}
For $T\ge2$ and $0\le\rho<\rho_1$, define
\EQ{
\mathcal Z_T^\kappa(\rho)
:=\fbrk{Z\in C([T,\I);\A^4_\rho):
\norm{Z}_{\mathcal Z_T^\kappa(\rho)}<\I},
}
where
\EQ{%\label{eq:eikonal-original-solution-norm}
\norm{Z}_{\mathcal Z_T^\kappa(\rho)}
:=\sup_{t\ge T}\mbrko{
t^\kappa\norm{\re Z(t)}_{\A^4_\rho}
+t^{\kappa+\al-1}\norm{\im Z(t)}_{\A^4_\rho}}.
}
The Duhamel map $\TT_\mu$ is defined in \eqref{eq:eikonal-Duhamel-map} below. We call $Z$ a final-state correction associated with a truncated Fuchsian profile $(L^{(\mu)},S^{(\mu)})$ if there exist $T\ge2$ and $\rho_*<\rho<\rho_1$ such that
\EQn{\label{eq:eikonal-final-state-correction}
Z\in\mathcal Z_T^\kappa(\rho),
\qquad
Z=\TT_\mu Z
\quad\hbox{in }\mathcal Z_T^\kappa(\rho')
\quad\hbox{for every }\rho_*<\rho'<\rho.
}
The Fuchsian solution class consists of the ray profiles
\EQ{%\label{eq:eikonal-phase-class-form}
a(t,y)=\exp\mbrko{L^{(\mu)}(t^{-\al},y)-it^{1-\al}S^{(\mu)}(t^{-\al},y)+Z(t,y)}.
}
\end{defn}

\begin{thm}[Modified wave operator for a truncated Fuchsian profile]\label{thm:eikonal}
Let $d\ge1$, $0<p\le1/d$, and let $\al$ and $\gamma$ be defined by \eqref{eq:alpha-def} and \eqref{eq:gamma-def}. Fix $0<\rho_*<\rho_1<\rho_0$, $R>0$, an integer $\mu>2\gamma$, and an exponent $\kappa$ satisfying $1-\al<\kappa<\al(\mu+1)-1$. There exists $0<\ep_0=\ep_0(d,p,\rho_*,\rho_1,\rho_0,R,\mu,\kappa)\le1$ such that the following holds whenever $0<\ep\le\ep_0$, $\Psi$ is the logarithm of an admissible final profile $W=e^\Psi$ of size $\ep$ in the sense of Definition \ref{def:final-data}, and $C_+\in\A^6_{\rho_0}$ is real-valued with
\EQn{\label{eq:Cplus-small}
\norm{C_+}_{\A^6_{\rho_0}}\le\ep.
}
Then, for each $q\in E_{\gamma,\mu}$, there exist nonnegative integers $J_q^\Omega,J_q^S$ and coefficients
\EQ{
\Omega_{q,j}&\in\A^6_{\rho_1},
\qquad 0\le j\le J_q^\Omega,
\\
\Sigma_{q,j}&\in\A^6_{\rho_1},
\qquad 0\le j\le J_q^S.
}
The coefficients depend on $(\Psi,C_+)$. Definition \ref{def:eikonal-profile} then determines $\Omega_q$, $\Sigma_q$, and the truncated profile $(L^{(\mu)},S^{(\mu)})$. The following conclusions hold.
\begin{enumerate}
\item There exist $T\ge2$ and a final-state correction $Z$ in the sense of Definition \ref{def:eikonal-asymptotic}. The function
\EQ{%\label{eq:eikonal-solution-form}
u(t,x)=\frac{1}{(it)^{\frac d2}}e^{\frac{i|x|^2}{2t}}
\exp\mbrko{
L^{(\mu)}\brko{t^{-\al},\frac{x}{t}}
-it^{1-\al}S^{(\mu)}\brko{t^{-\al},\frac{x}{t}}
+Z\brko{t,\frac{x}{t}}
}
}
solves \eqref{eq:nls} on $[T,\I)$, and its ray profile belongs to the Fuchsian solution class of Definition \ref{def:eikonal-asymptotic}.
\item For every $2\le q\le\I$, the solution satisfies
\EQn{\label{eq:eikonal-Lq-remainder}
\normb{u(t)-u_{\rm eik}^{(\mu)}(t)}_{L^q_x}
\lsm_{\mu,R}\norm{W}_{L^q_y}t^{-d\brko{\frac12-\frac1q}-(\kappa+\al-1)},
\qquad t\ge T,
}
where
\EQ{%\label{eq:eikonal-modifier-intro}
u_{\rm eik}^{(\mu)}(t,x)
=\frac{1}{(it)^{\frac d2}}e^{\frac{i|x|^2}{2t}}
\exp\mbrko{
L^{(\mu)}\brko{t^{-\al},\frac{x}{t}}
-it^{1-\al}S^{(\mu)}\brko{t^{-\al},\frac{x}{t}}
}.
}
Moreover,
\EQn{\label{eq:eikonal-Lq-modulus}
\normb{|u(t)|-|u_{\rm eik}^{(\mu)}(t)|}_{L^q_x}
\lsm_{\mu,R}\norm{W}_{L^q_y}t^{-d\brko{\frac12-\frac1q}-\kappa},
}
and, with $M_\mu$ as in Proposition \ref{prop:eikonal-prep},
\EQn{\label{eq:eikonal-Lq-leading}
\absb{t^{d\brko{\frac12-\frac1q}}\norm{u(t)}_{L^q_x}-\norm{W}_{L^q_y}}
\lsm_{\mu,R}\norm{W}_{L^q_y}t^{-\al}(1+\log t)^{M_\mu}.
}
\item For fixed $(\mu,\kappa)$, the solution is unique within the Fuchsian solution class of Definition \ref{def:eikonal-asymptotic} associated with the same truncated Fuchsian profile.
\item Let $\mu_j>2\gamma$ be integers and let $1-\al<\kappa_j<\al(\mu_j+1)-1$, $j=1,2$. If the same Fuchsian profile parameters $(\Psi,C_+)$ satisfy the smallness assumptions for both constructions, then the corresponding time-zero values coincide. Thus the constructed wave operator is independent of both the truncation order $\mu$ and the auxiliary exponent $\kappa$.
\end{enumerate}
\end{thm}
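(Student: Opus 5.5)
The proof follows the scheme of Section \ref{sec:ode-proof}, with two additions: the Fuchsian profile must be constructed first, and a normal-form change of variables is needed before the iteration. The first step is the construction of the coefficients $\Omega_{q,j},\Sigma_{q,j}$ (this is Proposition \ref{prop:eikonal-prep}).
\begin{itemize}
\item Substitute the ansatz \eqref{eq:eikonal-LS-expansion} into \eqref{eq:intro-eikonal-system}. Write $\tau\pd_\tau$ acting on $\tau^q(\log\tau)^j$, and expand $e^{p\re L}$ around $e^{p\re\Psi}$ by its power series.
\item Match coefficients of $\tau^q(\log\tau)^j$ for $q\in E_{\gamma,\mu}$, ordered by increasing $q$ and decreasing $j$.
\item In the $S$-equation, the coefficient at order $q$ is multiplied by the indicial factor $\al q-(1-\al)=\al(q-\gamma)$. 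When $q\ne\gamma$ the coefficient is obtained by division. At the resonant exponent $q=\gamma$ (and at generated exponents hitting it) the free mode $C_+$ enters, and the $\log\tau$ powers are raised by one. This fixes $J^S_q$, $J^\Omega_q$ and the exponent $M_\mu$.
\item The $L$-equation is a pure transport hierarchy, $\al q\,\Omega_q=$ (known lower-order terms), so it is always solvable.
\item Each coefficient is a finite polynomial in $\Psi$, $C_+$, $e^{p\re\Psi}$ and finitely many derivatives of them. Lemma \ref{lem:fourier-algebra} and \eqref{eq:radius-gap-estimate}, spending a fixed fraction of the gap $\rho_0-\rho_1$ at each of finitely many steps, give $\Omega_{q,j},\Sigma_{q,j}\in\A^6_{\rho_1}$ with bounds $\lsm_{R,\mu}\ep$.
\item The truncation residual is a sum of terms $\tau^{q}(\log\tau)^j$ with $q\ge\mu$. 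In $t$ this means the residual forcing in the real direction is $O(t^{-\al(\mu+1)}(\log t)^{M})$ up to the $t^{1-\al}$ weight on the imaginary part. Its integral from $t$ to $\I$ is then $O(t^{-\kappa})$ in real part and $O(t^{1-\al-\kappa})$ in imaginary part; this is exactly where $\kappa<\al(\mu+1)-1$ is used.
\end{itemize}

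Next, I write $\Phi=\Phi^{(\mu)}+Z$ in \eqref{eq:phase-equation}. This gives an equation of the form \eqref{eq:ode-Z-equation} with four contributions:
\begin{itemize}
\item the residual source just described;
\item linear differential terms $\frac{i}{t^2}\nabla\Phi^{(\mu)}\cdot\nabla Z$, where $\nabla\Phi^{(\mu)}$ now carries the large imaginary part $t^{1-\al}\nabla S^{(\mu)}$;
\item the quadratic term $N_2$;
\item the triangular term $-it^{-\al}e^{p\re\Phi^{(\mu)}}(e^{pX}-1)$.
\end{itemize}
Since $\al\le 1/2$, the triangular term cannot be absorbed as in Lemma \ref{lem:ode-Ntri}: its integral from $t$ loses $t^{1-\al}$. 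So I introduce the normal form $Z=\widetilde Z+iK_\mu(\re\widetilde Z)$. Here $K_\mu(X)(t)$ is defined as the solution, vanishing at $t=\I$, of the linear transport-reaction equation
\begin{align*}
\pd_tK=-t^{-2}\nabla S^{(\mu)}\cdot\nabla K\,t^{1-\al}+\cdots-t^{-\al}e^{p\re\Phi^{(\mu)}}(e^{pX}-1),
\end{align*}
with the transport coefficients read off from the $\im$ part of $N_1$. One should check that this absorbs exactly the slow coupling.

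I would construct $K_\mu$ by integrating along the characteristics of $\nabla S^{(\mu)}$, or more simply by a Neumann series in analytic norms. Both are feasible because the transport speed $t^{-1-\al}\nabla S^{(\mu)}$ is integrable in time. Smallness $\ep$ is needed only for the coefficient $e^{p\re\Psi}$, as in \eqref{eq:ode-Wp-small}. The resulting equation for $\widetilde Z$ has all nonlinear terms time-integrable at the weights $t^{\kappa}$ and $t^{\kappa+\al-1}$. The price is that the nonlinear terms now have arbitrary degree and contain iterated time integrals coming from $K_\mu$.

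I then expand $\widetilde Z=\sum_n\widetilde Z_n$ and prove order-by-order bounds of the type \eqref{eq:ode-picard-hypotheses}, extended to multi-indices counting both nonlinear degree and number of nested integrations. The radius loss is distributed as in Lemma \ref{lem:ode-picard-convergence}, with $\zeta_n=\zeta+(\rho_+-\zeta)/(n+1)$. Absolute convergence then follows from the generating-function argument via Lemma \ref{lem:real-analytic-ift}. Undoing the normal form gives $Z\in\mathcal Z^\kappa_T(\rho)$ with $Z=\TT_\mu Z$. \textbf{This step is the main obstacle}: checking that the nested integrations in $K_\mu$ produce a majorant series with only factorial-free combinatorial growth, so that a single $z_0$ works.

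The remaining assertions are then routine.
\begin{itemize}
\item The $L^q$ estimates \eqref{eq:eikonal-Lq-remainder}--\eqref{eq:eikonal-Lq-leading} follow exactly as in Section \ref{sec:ode-proof}. For \eqref{eq:eikonal-Lq-leading}, the bound $|\re\Omega^{(\mu)}|\lsm\tau(1+\log t)^{M_\mu}$ is used, since the smallest element of $E_{\gamma,\mu}$ is $\min(1,\gamma)=1$ when $\gamma\ge1$.
\item Uniqueness (3) follows by the difference estimate and radius-splitting iteration of Proposition \ref{prop:ode-fixed-point}, applied to $\widetilde Z_1-\widetilde Z_2$.
\item For (4), take $\mu_1<\mu_2$ and $\kappa_1\le\kappa_2$. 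The profile of order $\mu_2$ differs from that of order $\mu_1$ by terms of order $\tau^{q}$ with $q\ge\mu_1$. Hence the solution built with $\mu_2$ yields a correction relative to profile $\mu_1$ lying in $\mathcal Z^{\kappa_1}_T$ and solving $\TT_{\mu_1}$. Uniqueness in (3) identifies the two solutions for large $t$.
\item Finally, Tsutsumi's $L^2$ theory extends the solution to $t=0$, and the time-zero value is independent of $T$, $\mu$ and $\kappa$.
\end{itemize}
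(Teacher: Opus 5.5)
Your architecture is the paper's. You construct $(L^{(\mu)},S^{(\mu)})$ by Frobenius matching, use the normal form $Z=\widetilde Z+iK_\mu(\re\widetilde Z)$ with $K_\mu$ a Neumann series, close a Picard series by a generating function and Lemma \ref{lem:real-analytic-ift}, and prove (4) by a truncation change plus Tsutsumi. However, the step you call the main obstacle is left open, and the mechanism you propose for it fails. Lemma \ref{lem:ode-picard-convergence} requires every Duhamel step to cost exactly one radius gap against an integrable weight $\omega$: the allocation $\zeta_n=\zeta+(\rho_+-\zeta)/(n+1)$ costs a factor $n+1$, repaid by the $1/n$ in $-\frac{\dd}{\dd s}z_{\zeta_n}(s)^n$. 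The transformed map $\widetilde{\TT}$ does not have this structure. It contains $\frac12\int_t^\I U(t,s)s^{-2}\Dy K_\mu(\re H)(s)\dd s$, where the composite $X\mapsto K\mapsto s^{-2}\Dy K$ loses two gaps at once but gains only $t^{-\al}$ relative to the $t^{-\kappa}$ class. Moreover, $K_\mu$ itself is a sum of $\ell$-fold ordered integrals $(-J_P)^\ell$, each losing a gap. You anticipate grading by the number of nested integrations, but with the ODE allocation a two-gap step costs $(n+1)^2$ against a single $1/n$, so the majorants grow factorially and no $z_0$ exists. The missing idea is the paper's bookkeeping:
\begin{itemize}
\item $\mathcal P_{m,q}$ collects the terms with exactly $q$ radius losses.
\item The Picard terms are majorized by the pure power $B_n\brko{t^{-\al/2}/(\rho_+-\rho')}^n$ with $n=q+n_1+\cdots+n_m$. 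No same-radius $\eta$-term remains, and smallness comes from $T$ alone.
\item The gap is cut into $N+q$ equal pieces.
\item The exact formula for ordered time integrals (Lemma \ref{lem:eikonal-ordered-time-integrals}) shows that $q$ nested integrations against inputs decaying like $(\theta/t_0)^{-\al N/2}$ produce $(N+1)_q$ in the denominator, which cancels $(N+q)^q$ up to $C^q$.
\end{itemize}
This gives Corollary \ref{cor:eikonal-picard-layer}, after which the generating function $\frac{C_0z}{1-C_0z}\brko{e^{C_4pC_0w}-1}$ closes the argument.

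A second gap concerns the function space and uniqueness. What the normal form buys is that every forcing in the $\widetilde Z$ equation integrates to $O(t^{-\kappa})$. The paper therefore places $\widetilde Z$ in the single-weight space $\mathcal F_T^\kappa$, where $U(t,s)$ is a plain isometry. If you keep the slow weight $t^{\kappa+\al-1}$ on $\im\widetilde Z$, then $U(t,s)-I$ rotates that slow component into the real part. For instance, the real part of $-\int_t^\I U(t,s)P_\mu(s)\,i\im\widetilde Z(s)\dd s$ is only $O(t^{1-2\al-\kappa})$ unless you pay further derivatives, and this is not $O(t^{-\kappa})$ for $\al<1/2$. Once you adopt $\mathcal F_T^\kappa$, though, an element of the Fuchsian class (defined through $Z$ with the two-weight norm) is not automatically in the space where uniqueness is proved. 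Part (3) then needs four ingredients you do not supply:
\begin{itemize}
\item the conjugacy $Z=\TT_\mu Z\Leftrightarrow\widetilde Z=\widetilde{\TT}\widetilde Z$ (Lemma \ref{lem:eikonal-Duhamel-conjugacy}, a Fubini argument for the $\Dy K$ term);
\item a bootstrap upgrading $Z-iK_\mu(\re Z)$ from $t^{-(\kappa+\al-1)}$ to $t^{-\kappa}$ in $\lceil\gamma\rceil$ steps of $t^{-\al}$ (Lemma \ref{lem:eikonal-automatic-normal-form});
\item the independence of $K_\mu$ from $T$ and the radii (Lemma \ref{lem:eikonal-K-tail}\textup{(v)});
\item a difference iteration adapted to the multi-gap layers, namely the operators $\mathcal L_q$ with $d_M(z)<1$ in Lemma \ref{lem:eikonal-picard-convergence}\textup{(ii)}, rather than the one-gap splitting of Proposition \ref{prop:ode-fixed-point}.
\end{itemize}

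Minor points:
\begin{itemize}
\item The transport term has the opposite sign: $\pd_tK=t^{-1-\al}\nabla S^{(\mu)}\cdot\nabla K-F_\mu(t,X)$.
\item The profile coefficients are $O_{\mu,R}(1)$, not $O(\ep)$; for example, the coefficient of $\tau^{1+\gamma}$ in $L$ contains $\Dy\Psi+\nabla\Psi\cdot\nabla\Psi$.
\item In (4) you may only normalize $\kappa_1\le\kappa_2$. Your argument still works for either order of $\mu_1,\mu_2$, because $\kappa_1<\al\min\{\mu_1,\mu_2\}$.
\end{itemize}
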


\begin{remark}[Truncation order]
In Theorem \ref{thm:eikonal}, the integer $\mu>2\gamma$ and the exponent $\kappa$ are fixed before the fixed point argument. For the same Fuchsian profile parameters $(\Psi,C_+)$, the finite Fuchsian profile $(L^{(\mu)},S^{(\mu)})$ depends on $\mu$: increasing $\mu$ adds further terms of the form $\tau^q(\log\tau)^j$ and cancels more coefficients in the two eikonal residuals. Thus a larger $\mu$ gives a higher-order Fuchsian profile; the estimate \eqref{eq:eikonal-formal-residual} records this by the factor $\tau^\mu$. The exponent $\kappa$ determines the decay class used in the fixed point argument and does not enter the finite Fuchsian profile.
\end{remark}

\begin{cor}[Arbitrary finite-order decay of the Fuchsian remainder]
Fix the parameters and data in Theorem \ref{thm:eikonal}, except for $(\mu,\kappa)$, and let $N>0$. Choose an integer $\mu_N$ and define $\kappa_N$ by
\EQ{
\mu_N>\max\fbrko{2\gamma,\frac{N+2}{\al}-2},
\qquad
\kappa_N:=N+1-\al.
}
Take $\mu=\mu_N$ and $\kappa=\kappa_N$ in Theorem \ref{thm:eikonal}. The corresponding solution satisfies
\EQ{
\norm{u(t)-u_{\rm eik}^{(\mu_N)}(t)}_{L^q_x}
\lsm_{\mu_N,R}\norm{W}_{L^q_y}t^{-d\brko{\frac12-\frac1q}-N},
\qquad 2\le q\le\I,\quad t\ge T.
}
The profile $u_{\rm eik}^{(\mu_N)}$ depends on $N$.
\end{cor}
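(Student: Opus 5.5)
This corollary is a direct specialization of estimate \eqref{eq:eikonal-Lq-remainder} in Theorem \ref{thm:eikonal}. We only need to check that the pair $(\mu_N,\kappa_N)$ is admissible there and then compute the resulting decay exponent.

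\emph{Admissibility of $\mu_N$.} By definition $\mu_N$ is an integer with $\mu_N>2\gamma$, which is what Definition \ref{def:eikonal-profile} and Theorem \ref{thm:eikonal} require of the truncation order.

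\emph{Admissibility of $\kappa_N$.} We need
\[
1-\al<\kappa_N<\al(\mu_N+1)-1 .
\]
For the lower bound, $\kappa_N=N+1-\al>1-\al$ because $N>0$. The upper bound is equivalent to $N+2-\al<\al\mu_N+\al$, that is,
\[
\mu_N>\frac{N+2}{\al}-2 .
\]
This is exactly the second condition in the choice of $\mu_N$. Hence \eqref{eq:kappa-choice} holds.

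\emph{Applying the theorem.} With this choice, Theorem \ref{thm:eikonal} applies with $\ep_0=\ep_0(d,p,\rho_*,\rho_1,\rho_0,R,\mu_N,\kappa_N)$. Since $\kappa_N$ is determined by $N$, and $N$ is determined once $\mu_N$ and $\al$ are fixed, this smallness threshold depends on $N$. We therefore take the data of size $\ep\le\ep_0$ for this particular threshold. This is the only subtle point: the statement fixes the data "in Theorem \ref{thm:eikonal}", so the smallness is implicitly taken relative to $(\mu_N,\kappa_N)$.

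Estimate \eqref{eq:eikonal-Lq-remainder} then gives, for $2\le q\le\I$ and $t\ge T$, the decay exponent
\[
d\Bigl(\tfrac12-\tfrac1q\Bigr)+(\kappa_N+\al-1)=d\Bigl(\tfrac12-\tfrac1q\Bigr)+N .
\]
The implicit constant depends on $\mu_N$ and $R$, and hence on $N$.

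\emph{Dependence of the profile on $N$.} The profile $u_{\rm eik}^{(\mu_N)}$ is built from the truncation $(L^{(\mu_N)},S^{(\mu_N)})$, so it depends on $N$. By part (4) of Theorem \ref{thm:eikonal}, the solution $u$ itself, through its time-zero value, does not depend on $N$ whenever the smallness conditions hold for the constructions being compared.
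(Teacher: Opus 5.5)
Your proposal is correct and matches the paper's proof: check that $1-\al<\kappa_N<\al(\mu_N+1)-1$ and $\kappa_N+\al-1=N$, then invoke \eqref{eq:eikonal-Lq-remainder}. One aside is backwards: $N$ is not determined by $\mu_N$ and $\al$; $N$ is given and $\mu_N$ is chosen from it. This does not affect the argument, since the only point there is that $\ep_0$ depends on $(\mu_N,\kappa_N)$.
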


\begin{proof}
The choices above give
\EQ{
1-\al<\kappa_N<\al(\mu_N+1)-1,
\qquad
\kappa_N+\al-1=N.
}
The conclusion follows from \eqref{eq:eikonal-Lq-remainder}.
\end{proof}

\subsection{Truncated Fuchsian expansion}%\label{sec:eikonal-preparation}

To construct the Fuchsian profile, set
\EQn{\label{eq:eikonal-phase-formal}
\Phi(t,y)=L(\tau,y)-it^{1-\al}S(\tau,y),
\qquad S \text{ real-valued}.
}
Here $\tau\to0$ as $t\to\I$.

We first compute the residual of \eqref{eq:phase-equation} for the general form \eqref{eq:eikonal-phase-formal}. Since $S$ is real,
\EQ{
e^{p\re\Phi}=e^{p\re L},
\qquad
\pd_t\tau=-\al t^{-1}\tau .
}
Hence
\EQ{
i\pd_t\Phi
=
-t^{-\al}\brko{\al\tau\pd_\tau S-(1-\al)S}
-it^{-1}\al\tau\pd_\tau L,
}
and
\EQ{
\frac1{2t^2}Q(\Phi)
&=
-t^{-\al}\frac{\tau}{2}\nabla S\cdot\nabla S
\\
&\quad
-it^{-1}
\mbrko{
\tau\nabla S\cdot\nabla L+\frac{\tau}{2}\Dy S
+\frac{i}{2}\tau^{\gamma+1}
\brko{\Dy L+\nabla L\cdot\nabla L}
}.
}
Therefore
\EQn{\label{eq:eikonal-phase-residual-expanded}
&i\pd_t\Phi+\frac1{2t^2}Q(\Phi)-t^{-\al}e^{p\re\Phi}
\\
&\quad =
-t^{-\al}\mbrko{\al\tau\pd_\tau S-(1-\al)S+\frac{\tau}{2}\nabla S\cdot\nabla S+e^{p\re L}}
\\
&\qquad
-it^{-1}\mbrko{\al\tau\pd_\tau L+\tau\nabla S\cdot\nabla L+\frac{\tau}{2}\Dy S+\frac{i}{2}\tau^{\gamma+1}\brko{\Dy L+\nabla L\cdot\nabla L}}.
}
A pair $(L,S)$ for which the two brackets in \eqref{eq:eikonal-phase-residual-expanded} vanish would solve the phase equation exactly. This gives the exact eikonal system
\EQn{\label{eq:eikonal-system-S}
\al\tau\pd_\tau S
=(1-\al)S-\frac{\tau}{2}\nabla S\cdot\nabla S-e^{p\re L},
}
and
\EQn{\label{eq:eikonal-system-L}
\al\pd_\tau L
=-\nabla S\cdot\nabla L-\frac12\Dy S
-\frac{i}{2}\tau^\gamma\brko{\Dy L+\nabla L\cdot\nabla L}.
}
We do not need an exact solution of \eqref{eq:eikonal-system-S}--\eqref{eq:eikonal-system-L}. Instead we solve the system formally at the singular endpoint $\tau=0$ up to a finite order. The two brackets in \eqref{eq:eikonal-phase-residual-expanded} are therefore kept as residuals:
\EQn{\label{eq:eikonal-ES}
\mathcal R_S(L,S)
:=\al\tau\pd_\tau S-(1-\al)S
+\frac{\tau}{2}\nabla S\cdot\nabla S+e^{p\re L},
}
and
\EQn{\label{eq:eikonal-EL}
\mathcal R_L(L,S)
:=\al\tau\pd_\tau L+\tau\nabla S\cdot\nabla L
+\frac{\tau}{2}\Dy S
+\frac{i}{2}\tau^{\gamma+1}\brko{\Dy L+\nabla L\cdot\nabla L}.
}
Thus, for $\Phi=L(t^{-\al})-it^{1-\al}S(t^{-\al})$, the residual of \eqref{eq:phase-equation} is
\EQn{\label{eq:eikonal-residual-identity}
&i\pd_t\Phi+\frac1{2t^2}Q(\Phi)-t^{-\al}e^{p\re\Phi} \\
&=-t^{-\al}\mathcal R_S(L,S)(t^{-\al})
-it^{-1}\mathcal R_L(L,S)(t^{-\al}).
}
The construction of the truncated eikonal phase is the recursive cancellation of the coefficients of $\mathcal R_S$ and $\mathcal R_L$ at $\tau=0$ up to a prescribed order.

We first determine the leading terms for the eikonal equation. If the terms with an explicit factor $\tau$ are omitted, \eqref{eq:eikonal-system-S} becomes the algebraic equilibrium
\EQ{%\label{eq:Sstar-def}
S=S_*(L).
}
Thus the order-zero Fuchsian profile is $(\Psi,S_*(\Psi))$. Since $-(1-\al)S_*(\Psi)+e^{p\re\Psi}=0$, the Fuchsian part in the $\Sigma$ equation is governed by
\EQ{%\label{eq:Lp-def}
	\mathcal L_\al\sigma
	:=\al\tau\pd_\tau\sigma-(1-\al)\sigma.
}
Next, the homogeneous equation associated with the Fuchsian part of $\mathcal R_S$
\EQ{
\mathcal L_\al S = \al\tau\pd_\tau S-(1-\al)S = 0
}
is
\EQ{
C_+(y)\tau^\gamma,
}
for any $C_+(y)$. The homogeneous exponent of this Fuchsian operator is $q=\gamma=(1-\al)/\al$. Hence the coefficient of $\tau^\gamma$ cannot be fixed using the equation. The coefficient $C_+$ is retained to display the homogeneous mode in the auxiliary recursion. The main theorem uses only the restricted case $C_+=0$.

We write the correction pair by
\EQ{
L=\Psi+\Omega,
\qquad
S=S_*(\Psi)+C_+\tau^\gamma+\Sigma.
}
Let $S^\circ=S-C_+\tau^\gamma$. Since $\al\gamma=1-\al$, the Fuchsian part of \eqref{eq:eikonal-ES} vanishes on $C_+\tau^\gamma$, and \eqref{eq:eikonal-ES}--\eqref{eq:eikonal-EL} give
\EQn{\label{eq:eikonal-Cplus-first-forcing}
\mathcal R_S(L,S)-\mathcal R_S(L,S^\circ)
&=\frac{\tau^{1+\gamma}}2\mbrko{2\nabla S^\circ\cdot\nabla C_++\tau^\gamma|\nabla C_+|^2},
\\
\mathcal R_L(L,S)-\mathcal R_L(L,S^\circ)
&=\tau^{1+\gamma}\mbrko{\nabla C_+\cdot\nabla L+\frac12\Dy C_+}.
}
Thus the prescribed homogeneous coefficient $C_+$ does not enter the forcing at exponent $q=\gamma$; its first forced occurrence is at exponent $1+\gamma$. Here $\Omega$ and $\Sigma$ denote higher-order terms in $\tau$. Then, the main idea is to determine the coefficient of $\Omega$ and $\Sigma$ such that the residuals $\mathcal R_S$ and $\mathcal R_L$ possess desired time decay.

Next, we should make clear the form of the expansion of $\tau$. We have the following observations, which are based on the Frobenius method for the Fuchsian equations:
\begin{itemize}
\item The initial terms in $L$ and $S$, that is $\Psi$ and $S_*(\Psi)+C_+\tau^\gamma$, contain $\tau^0$ and $\tau^\gamma$. In the formulas for $\mathcal R_S$ and $\mathcal R_L$, terms of order $\tau^1$ also appear.
\item The algebraic construction of $\mathcal R_S$ and $\mathcal R_L$ yields that the higher order terms for $\tau$ should include the form of
\EQ{
\tau^{m+n\gamma},\qquad m,n\in\N,\quad m+n\ge1,
}
which is generated by $\tau^0$, $\tau^1$, and $\tau^\gamma$.
\item At the resonant exponent $q=\gamma$, one has
\EQ{
\mathcal L_\al(\tau^\gamma)=0.
}
Hence the coefficient of $\tau^\gamma$ cannot be obtained by inverting the nonresonant operator. Instead,
\EQ{
\mathcal L_\al(\tau^\gamma\log\tau)=\al\tau^\gamma.
}
Therefore a resonant forcing $F(y)\tau^\gamma$ is solved by the term $\al^{-1}F(y)\tau^\gamma\log\tau$.
\item The algebraic construction of $\mathcal R_S$ and $\mathcal R_L$ yields again that the higher order terms for $\tau$ should include the form of
\EQ{
\tau^{m+n\gamma}(\log\tau)^r,\qquad m,n,r\in\N,\quad m+n\ge1
}
This explains why we denote $E_{\ga,\mu}$ and the coefficients in Definition \ref{def:eikonal-profile}
\end{itemize}

These observations explain the exponent set \eqref{eq:eikonal-exponent-set} and the profile form \eqref{eq:eikonal-LS-expansion} in Definition \ref{def:eikonal-profile}.

\begin{prop}[Truncated Fuchsian profile]\label{prop:eikonal-prep}
Let $\mu>2\gamma$ be an integer, and let $E_{\gamma,\mu}$ be defined by \eqref{eq:eikonal-exponent-set}. Under the assumptions of Theorem \ref{thm:eikonal}, there exist coefficients $\Omega_q$ and $\Sigma_q$ depending on $(\Psi,C_+)$ such that the finite Fuchsian profile $(L^{(\mu)},S^{(\mu)})$ in Definition \ref{def:eikonal-profile} satisfies the following estimates: There exist $M_\mu,N_\mu\in\N$ such that, for $0<\tau\le1$,
\EQn{\label{eq:eikonal-positive-order}
\norm{\Omega^{(\mu)}(\tau)}_{\A^6_{\rho_1}}
+\norm{\Sigma^{(\mu)}(\tau)}_{\A^6_{\rho_1}}
\lsm_{\mu,R}\tau(1+|\log\tau|)^{M_\mu},
}
and
\EQn{\label{eq:eikonal-formal-residual}
\norm{\mathcal R_S(L^{(\mu)},S^{(\mu)})(\tau)}_{\A^8_{\rho_1}}
+\norm{\mathcal R_L(L^{(\mu)},S^{(\mu)})(\tau)}_{\A^8_{\rho_1}}
\lsm_{\mu,R}\tau^\mu(1+|\log\tau|)^{N_\mu}.
}
Moreover, $S^{(\mu)}$ is real-valued.
\end{prop}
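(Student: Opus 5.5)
We construct the coefficients by a Frobenius recursion on the ordered finite set $E_{\gamma,\mu}=\{q_1<q_2<\dots<q_K\}$. We expand both residuals $\mathcal R_S(L,S)$ and $\mathcal R_L(L,S)$, with $L=\Psi+\Omega$ and $S=S_*(\Psi)+C_+\tau^\gamma+\Sigma$, as formal sums of monomials $\tau^q(\log\tau)^j$, and kill the coefficients for $q\in E_{\gamma,\mu}\cup\{0\}$ order by order.

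The key structural facts are the following. First, $\mathcal L_\al(\tau^q(\log\tau)^j)=(\al q-(1-\al))\tau^q(\log\tau)^j+\al j\tau^q(\log\tau)^{j-1}$. Hence on the finite-dimensional space of polynomials in $\log\tau$ of degree $\le J$ multiplying $\tau^q$, the operator $\mathcal L_\al$ is upper triangular. It is invertible when $q\ne\gamma$, since its diagonal entry $\al(q-\gamma)\ne0$. When $q=\gamma$ it maps degree $j+1$ onto degree $j$, so it is invertible after raising the log degree by one.

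Second, in $\mathcal R_S$ the term $e^{p\re L}=e^{p\re\Psi}e^{p\re\Omega}$ contributes at order $\tau^q$ the expression $pe^{p\re\Psi}\re\Omega_q$ plus terms built from $\Omega_{q'}$ with $q'<q$. In $\mathcal R_L$ the unknown $\Omega_q$ enters only through $\al\tau\pd_\tau L$, whose coefficient at $\tau^q$ is $\al q\Omega_q$ (plus the $\log$ derivative). All other terms carry an explicit factor $\tau$ or $\tau^{\gamma+1}$, so they shift exponents up by $1$ or $1+\gamma$.

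We therefore proceed in the following order at each $q\in E_{\gamma,\mu}$. We first solve the $\mathcal R_L$ equation at order $\tau^{q-1}$ for $\Omega_q$. Its forcing involves only $\Sigma_{q'},\Omega_{q'}$ with $q'\le q-1$, and $\Omega_q$ appears through the triangular operator $\al(q+\tau\pd_\tau)$ acting on polynomials in $\log\tau$; this is invertible since $q>0$. Then we solve the $\mathcal R_S$ equation at order $\tau^q$ for $\Sigma_q$ via $\mathcal L_\al\Sigma_q=-pe^{p\re\Psi}\re\Omega_q-(\text{lower-order terms})$, raising the log degree at the resonance $q=\gamma$. Since all operations (products, $\nabla$, $\Dy$, real parts, multiplication by $e^{p\re\Psi}$, and the finitely many Taylor terms of $e^{p\re\Omega}$ restricted to the relevant order) are real on the $S$-side, $\Sigma_q$ is real, which gives the reality of $S^{(\mu)}$. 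Since exponents in $E_{\gamma,\mu}$ are generated from $0,1,\gamma$ by addition, every forcing exponent lies in $E_{\gamma,\mu}\cup\{0\}$, so the recursion closes. The order-zero equation holds by the choice $S_*(\Psi)$, and $C_+$ first enters at $1+\gamma$ by \eqref{eq:eikonal-Cplus-first-forcing}. The integers $J_q^\Omega,J_q^S$ are determined by the recursion, being at most the number of times a resonance has been crossed along the generation chain.

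For the bounds, the coefficients involve at most finitely many derivatives; each step uses $\nabla,\Dy$ at most a bounded number of times. Rather than losing polynomial weights, we lose analytic radius: we fix a finite ladder of radii between $\rho_0$ and $\rho_1$ (there are $K$ steps, so a gap of $(\rho_0-\rho_1)/(2K)$ per step suffices). Using Lemma \ref{lem:fourier-algebra}(vii) and the product estimates, each coefficient then lies in $\A^8$ at the next radius with bounds depending on $R,\mu$. The exponential $e^{p\re\Psi}$ is controlled by \eqref{eq:eps-small}, and $\nabla\Psi$ by \eqref{eq:R-bound}; note that only $\nabla\Psi$, not $\Psi$ itself, need be in the algebra, since $\Psi$ enters only through $e^{p\re\Psi}$ and gradients. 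Then \eqref{eq:eikonal-positive-order} follows because the smallest element of $E_{\gamma,\mu}$ is $\min(1,\gamma)\ge1$, as $\gamma\ge1$ for $p\le1/d$.

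For \eqref{eq:eikonal-formal-residual}, after substituting the truncated profile, all coefficients of $\tau^q$ with $q<\mu$ vanish by construction. The remaining terms are finitely many monomials with $q\ge\mu$, together with the Taylor tail of $e^{p\re\Omega^{(\mu)}}$, which is $O(\tau^\mu)$ in $\A^8$ via Lemma \ref{lem:fourier-algebra}(v) since $\|\Omega^{(\mu)}\|\lesssim\tau|\log\tau|^{M}$. These give $\tau^\mu(1+|\log\tau|)^{N_\mu}$ for $\tau\le1$.

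The main obstacle is the bookkeeping that the mixed recursion is genuinely triangular. Specifically, we must check that at order $q$ the unknowns $\Omega_q,\Sigma_q$ appear only through the stated diagonal operators, including the cross-coupling of $\re\Omega_q$ into the $S$-equation at the same order. We must also check that the exponent set is closed under the generated operations, so that the final residual contains no uncancelled exponent below $\mu$.
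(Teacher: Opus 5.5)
Your proposal is correct and is essentially the paper's argument. It uses the same triangular recursion over $E_{\gamma,\mu}$: first $\Omega_q$ by inverting $\al(q+\pd_r)$, then $\Sigma_q$ by inverting $\mathcal L_\al$ with forcing $p e^{p\re\Psi}\re\Omega_q$ plus lower terms and a log-degree raise at $q=\gamma$. It also uses the same finite ladder of analytic radii for the coefficient bounds and the same Taylor-remainder treatment of $e^{p\re\Omega^{(\mu)}}$ for the residual.

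There are two minor slips. With the paper's normalization of $\mathcal R_L$, which carries the explicit factor $\tau$, $\Omega_q$ is fixed by the coefficient of $\tau^{q}$, not $\tau^{q-1}$; the index $q-1$ belongs to the $\tau$-divided equation \eqref{eq:eikonal-system-L}. At $q=\gamma$ the map $P\mapsto\al P'$ is onto but not injective, so "invertible" is not right; you must fix the constant term, as the paper does by setting $\Sigma_{\gamma,0}=0$ and letting $C_+$ carry the homogeneous mode.
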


Before proving this proposition, we need the following technical lemma for solving the coefficients for suitable polynomial equations.

\begin{lem}[Polynomial equations]%\label{lem:eikonal-poly-inversions}
Let $F$ be a polynomial
	\EQ{
		F(r,y)=\sum_{j=0}^{J}f_j(y)r^j,
	}
where $f_j$ are given. Then, the coefficients of a polynomial solution $P$ are uniquely determined by $f_j$ in the following three polynomial equations:
	\EQ{
		&\al qP+\al P'=F\quad (q>0),\\
		&\brko{\al(q+1)-1}P+\al P'=F\quad (q\ne\gamma),\\
		&\al P'=F,\quad P(0,y)=0.
	}
\end{lem}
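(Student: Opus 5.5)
The plan is to compare coefficients of powers of $r$. In each of the three equations, $P'$ lowers the degree by one, so the coefficient system is triangular in the degree and can be solved by back substitution from the top degree. We seek $P(r,y)=\sum_{j=0}^{J'}p_j(y)r^j$, with $J'=J$ in the first two cases and $J'=J+1$ in the third. Since $P'=\sum_{j\ge1}jp_jr^{j-1}$, matching the coefficient of $r^j$ gives a recursion in each case.

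\emph{First equation.} Matching $r^j$ in $\al qP+\al P'=F$ gives
\begin{equation*}
\al q\,p_j+\al(j+1)p_{j+1}=f_j,\qquad 0\le j\le J,
\end{equation*}
with $p_{J+1}=0$. Because $\al q\neq0$, we solve downward:
\begin{equation*}
p_J=\frac{f_J}{\al q},\qquad p_j=\frac{f_j-\al(j+1)p_{j+1}}{\al q}.
\end{equation*}
This determines every coefficient uniquely. Uniqueness among all polynomial solutions also follows. If $\deg P>J$, the top coefficient of $\al qP+\al P'$ is $\al q$ times the top coefficient of $P$, which is nonzero. This contradicts $\deg F\le J$.

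\emph{Second equation.} The argument is identical with $\al q$ replaced by $c_q:=\al(q+1)-1$. The one point to check is that $c_q\ne0$. This holds because $c_q=0$ exactly when $q+1=1/\al$, that is, when $q=(1-\al)/\al=\gamma$, and this value is excluded. So the same downward recursion gives a unique polynomial solution, again with $\deg P\le J$.

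\emph{Third equation.} Here there is no zeroth-order term, so we integrate directly. Matching coefficients gives $\al(j+1)p_{j+1}=f_j$, hence
\begin{equation*}
p_{j+1}=\frac{f_j}{\al(j+1)},\qquad 0\le j\le J.
\end{equation*}
The normalization $P(0,y)=0$ fixes $p_0=0$. Uniqueness holds because two solutions differ by a polynomial $D$ with $D'=0$, so $D$ is constant in $r$, and $D(0,y)=0$ forces $D=0$.

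The only genuinely nontrivial step is the nonvanishing of $c_q$ in the second case, which is precisely the resonance exclusion $q\ne\gamma$. In the later use, the coefficients $p_j$ are finite linear combinations of the $f_j$ with constant coefficients. They therefore inherit membership in $\A^6_{\rho_1}$ or any other linear function space containing the $f_j$.
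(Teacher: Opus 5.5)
Your proposal is correct and follows essentially the same route as the paper: coefficient matching in $r$ with $p_{J+1}=0$, downward back substitution using $\al q\neq0$ (first equation) and $\al(q+1)-1\neq0$ for $q\neq\gamma$ (second equation), and direct integration with $p_0=0$ in the resonant case. You also show that any polynomial solution automatically has degree at most $J$, and that in the third case two solutions differ by an $r$-constant killed by $P(0,y)=0$; this is a small addition that the paper leaves implicit by fixing the degree of $P$ in advance.
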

\begin{proof}
Notice that we have for a polynomial $P=P(r)$, $r=\log\tau$,
	\EQ{
		\mathcal L_\al\brko{\tau^qP(\log\tau)}
		=\tau^q\mbrko{\brko{\al(q+1)-1}P+\al P'}.
	}
For the transport equation, write $P(r,y)=\sum_{j=0}^{J}p_j(y)r^j$ and set $p_{J+1}:=0$. Comparing the coefficients of $r^j$ gives
	\EQn{\label{eq:eikonal-poly-inversion-transport}
		&\al qP+\al P'=F,
		\qquad
		P(r,y)=\sum_{j=0}^{J}p_j(y)r^j,
		\qquad p_{J+1}:=0,
		\\
		&p_j=\frac{f_j-\al(j+1)p_{j+1}}{\al q},
		\qquad 0\le j\le J.
	}
Thus the coefficients are obtained successively for $j=J,J-1,\ldots,0$. For the nonresonant Fuchsian equation, again write $P(r,y)=\sum_{j=0}^{J}p_j(y)r^j$ and set $p_{J+1}:=0$. Since $q\ne\gamma=(1-\al)/\al$, comparing coefficients gives
	\EQn{\label{eq:eikonal-poly-inversion-fuchsian}
		&\brko{\al(q+1)-1}P+\al P'=F,
		\qquad
		P(r,y)=\sum_{j=0}^{J}p_j(y)r^j,
		\qquad p_{J+1}:=0,
		\\
		&p_j=\frac{f_j-\al(j+1)p_{j+1}}{\al(q+1)-1},
		\qquad 0\le j\le J.
	}
The denominator is nonzero, so this also determines $p_J,p_{J-1},\ldots,p_0$. Finally, for the resonant Fuchsian equation, write $P(r,y)=\sum_{j=0}^{J+1}p_j(y)r^j$. The condition $P(0,y)=0$ gives $p_0=0$, and equating coefficients gives
	\EQn{\label{eq:eikonal-poly-inversion-resonant}
		&\al P'=F,
		\qquad
		P(0,y)=0,
		\qquad
		P(r,y)=\sum_{j=0}^{J+1}p_j(y)r^j,
		\\
		&p_0=0,
		\qquad
		p_{j+1}=\frac{f_j}{\al(j+1)},
		\qquad 0\le j\le J.
	}
This finishes the proof of this lemma.
\end{proof}
\begin{proof}[Proof of Proposition \ref{prop:eikonal-prep}]

\emph{Step 1: basic settings.} Start from
\EQ{
L=\Psi+\Omega,\qquad
S=S_*(\Psi)+C_+\tau^\gamma+\Sigma .
}
By $L=\Psi+\Omega$, $S=S_*(\Psi)+C_+\tau^\gamma+\Sigma$, and $-(1-\al)S_*(\Psi)+e^{p\re\Psi}=0$, the two residuals are
\EQn{\label{eq:eikonal-residual-expanded}
\mathcal R_S(L,S)&=\mathcal L_\al\brko{C_+\tau^\gamma+\Sigma}+\frac{\tau}{2}\nabla S\cdot\nabla S+\brko{e^{p\re L}-e^{p\re\Psi}},
\\
\mathcal R_L(L,S)&=\al\tau\pd_\tau\Omega+\tau\nabla S\cdot\nabla L+\frac{\tau}{2}\Dy S+\frac{i}{2}\tau^{\gamma+1}\brko{\Dy L+\nabla L\cdot\nabla L} .
}
The finite Fuchsian profile is obtained by choosing polynomial coefficients in $\Omega$ and $\Sigma$ so that, in \eqref{eq:eikonal-residual-expanded}, every coefficient of $\tau^q(\log\tau)^j$ with $0<q<\mu$ is zero, and then the related polynomials are denoted as $\Omega^{(\mu)}$ and $\Sigma^{(\mu)}$. We next fix the notation for this finite coefficient recursion.

Set $r=\log\tau$. If a finite expansion is written as
\EQ{
F(\tau,y)=\sum_{q\in\mathcal Q}\tau^qF_q(r,y),
}
with each $F_q$ polynomial in $r$, then we define the coefficient
\EQ{
\mbrk{F}_q:=F_q(r,y),
\qquad
\mbrk{F}_q:=0\quad(q\notin\mathcal Q).
}
The recursive cancellation condition is
\EQn{\label{eq:eikonal-coefficient-equations}
\mbrk{\mathcal R_L(L^{(\mu)},S^{(\mu)})}_q=0,
\qquad
\mbrk{\mathcal R_S(L^{(\mu)},S^{(\mu)})}_q=0,
\qquad q\in E_{\gamma,\mu}.
}
Equivalently, every coefficient of $r^j$ in the two polynomials in \eqref{eq:eikonal-coefficient-equations} is set equal to zero.

\emph{Step 2: recursive determination of the correction coefficients.} The recursion starts from the order-zero terms
\EQ{%\label{eq:eikonal-step4-zero-order}
L_0=\Psi,
\qquad
S_0=S_*(\Psi)=\frac1{1-\al}e^{p\re\Psi}.
}
Fix $q\in E_{\gamma,\mu}$, and assume that all coefficient pairs with exponent $q'\in E_{\gamma,\mu}$ and $q'<q$ have already been chosen. The lower correction pair is
\EQ{%\label{eq:eikonal-lower-correction-q}
\Omega_{<q}
=\sum_{\substack{q'\in E_{\gamma,\mu}\\ q'<q}}\tau^{q'}\Omega_{q'}(r,y);\quad
\Sigma_{<q}
=\sum_{\substack{q'\in E_{\gamma,\mu}\\ q'<q}}\tau^{q'}\Sigma_{q'}(r,y).
}
Then we already have
\EQn{\label{eq:eikonal-lower-profile-q}
L_{<q}&=\Psi+\Omega_{<q},
\\
S_{<q}&=S_*(\Psi)+\cha_{\{\gamma<q\}}C_+\tau^\gamma+\Sigma_{<q},
}
where the lower coefficients have already been constructed, and the coefficients in $\Sigma_{<q}$ are real-valued. Let
\EQn{\label{eq:eikonal-step4-insertion}
L=L_{<q}+\tau^q\Omega_q,
\qquad
S=S_{<q}+\tau^q\Sigma_q,
}
where order-$q$ unknowns are $\Omega_q$ and $\Sigma_q$. Next, we show how we can solve $\Omega_q$ and $\Sigma_q$ by the coefficients $\Omega_{<q}$ and $\Sigma_{<q}$. By \eqref{eq:eikonal-step4-insertion}, \eqref{eq:eikonal-residual-expanded}, and $r=\log\tau$, the coefficient of $\tau^q$ in $\mathcal R_L$ is
\EQ{%\label{eq:eikonal-RL-q-dependence}
&\mbrk{\mathcal R_L\brko{L_{<q}+\tau^q\Omega_q,\,S_{<q}+\tau^q\Sigma_q}}_q
\\
= &\al\brko{q\Omega_q+\pd_r\Omega_q}+\mbrk{\tau\nabla S_{<q}\cdot\nabla L_{<q}+\frac{\tau}{2}\Dy S_{<q}}_q
\\
&+\mbrk{\frac{i}{2}\tau^{\gamma+1}\brko{\Dy L_{<q}+\nabla L_{<q}\cdot\nabla L_{<q}}}_q .
}
Hence the equation $\mbrk{\mathcal R_L}_q=0$ is
\EQ{%\label{eq:eikonal-Omegaq-equation}
\al q\Omega_q+\al\pd_r\Omega_q=F^\Omega_q,
}
where the known polynomial $F^\Omega_q$ is
\EQn{\label{eq:eikonal-Omegaq-forcing}
F^\Omega_q
&=-\mbrk{
\tau\nabla S_{<q}\cdot\nabla L_{<q}
+\frac{\tau}{2}\Dy S_{<q}
}_q
\\
&\quad
-\mbrk{
\frac{i}{2}\tau^{\gamma+1}
\brko{\Dy L_{<q}+\nabla L_{<q}\cdot\nabla L_{<q}}
}_q .
}
Write $F^\Omega_q(r,y)=\sum_{j=0}^{K_q^\Omega}f^\Omega_{q,j}(y)r^j$. Then $J_q^\Omega=K_q^\Omega$, and \eqref{eq:eikonal-poly-inversion-transport} determines $\Omega_q$. Thus $\Omega_q$ depends only on $(\Psi,C_+)$ and on the previously constructed coefficients, and can be written explicitly on those coefficients.

After $\Omega_q$ has been fixed, let
\EQn{\label{eq:eikonal-partial-profile-q}
L_{\le q}&=L_{<q}+\tau^q\Omega_q(r,y),
\\
S_{\le q}&=S_{<q}+\cha_{\{q=\gamma\}}C_+\tau^\gamma+\tau^q\Sigma_q(r,y).
}
For the phase component we use $r=\log\tau$ and hence $\tau\pd_\tau r=1$. Therefore
\EQn{\label{eq:eikonal-Sigmaq-fuchsian-action}
\mathcal L_\al\brko{\tau^q\Sigma_q(r,y)}
=\tau^q\mbrko{\brko{\al(q+1)-1}\Sigma_q+\al\pd_r\Sigma_q}.
}
By \eqref{eq:eikonal-partial-profile-q}, \eqref{eq:eikonal-Sigmaq-fuchsian-action}, the explicit factor $\tau$ in the quadratic term of \eqref{eq:eikonal-residual-expanded}, and \eqref{eq:eikonal-Cplus-first-forcing},
\EQn{\label{eq:eikonal-RS-q-dependence}
0
&=\mbrk{\mathcal R_S\brko{L^{(\mu)},S^{(\mu)}}}_q
=\mbrk{\mathcal R_S\brko{L_{\le q},S_{\le q}}}_q
\\
&=\brko{\al(q+1)-1}\Sigma_q+\al\pd_r\Sigma_q
+\mbrk{
\frac{\tau}{2}\nabla S_{<q}\cdot\nabla S_{<q}
+\brko{e^{p\re L_{\le q}}-e^{p\re\Psi}}
}_q.
}
Here the first equality is the second cancellation condition in \eqref{eq:eikonal-coefficient-equations}. Define the known polynomial
\EQn{\label{eq:eikonal-Sigmaq-forcing}
F^S_q
=-\mbrk{
\frac{\tau}{2}\nabla S_{<q}\cdot\nabla S_{<q}
+\brko{e^{p\re L_{\le q}}-e^{p\re\Psi}}
}_q .
}
Then \eqref{eq:eikonal-RS-q-dependence} is equivalent to
\EQn{\label{eq:eikonal-Sigmaq-equation}
\brko{\al(q+1)-1}\Sigma_q+\al\pd_r\Sigma_q=F^S_q.
}
By the induction hypothesis, the lower coefficients of $\Sigma$ are real-valued. Since $S_*(\Psi)$ and $C_+$ are real-valued, $S_{<q}$ is real-valued. Hence $\nabla S_{<q}\cdot\nabla S_{<q}$ is real-valued. Moreover $e^{p\re L_{\le q}}$ and $e^{p\re\Psi}$ are real-valued. Therefore $F^S_q$ is a real-valued polynomial. Write
\EQ{%\label{eq:eikonal-FSq-polynomial}
F^S_q(r,y)=\sum_{j=0}^{K_q^S}f^S_{q,j}(y)r^j,
\qquad f^S_{q,j}\ \text{is real-valued}.
}
The coefficient $\Sigma_q$ in \eqref{eq:eikonal-Sigmaq-equation} is determined as follows:
\begin{itemize}
\item If $q\ne\gamma$, then \eqref{eq:eikonal-poly-inversion-fuchsian} gives
\EQ{%\label{eq:eikonal-Sigma-case-nonresonant}
J_q^S=K_q^S,
\qquad
\Sigma_q=\sum_{j=0}^{J_q^S}\Sigma_{q,j}(y)r^j .
}
\item If $q=\gamma$, then \eqref{eq:eikonal-poly-inversion-resonant} gives
\EQ{%\label{eq:eikonal-Sigma-case-resonant}
J_\gamma^S=K_\gamma^S+1,
\qquad
\Sigma_{\gamma,0}=0,
\qquad
\Sigma_\gamma=\sum_{j=0}^{J_\gamma^S}\Sigma_{\gamma,j}(y)r^j .
}
\end{itemize}
In both cases, the coefficients in \eqref{eq:eikonal-poly-inversion-fuchsian} and \eqref{eq:eikonal-poly-inversion-resonant} are real. Hence each $\Sigma_{q,j}$ is real-valued. Consequently the correction pair $(\Omega_{\le q},\Sigma_{<q}+\tau^q\Sigma_q)$ is determined by $(\Psi,C_+)$ and by the lower exponents. This completes the recursion for all $q\in E_{\gamma,\mu}$.

\emph{Step 3: coefficient and residual estimates.} Let
\EQ{%\label{eq:eikonal-ordered-exponents}
E_{\gamma,\mu}=\{q_1<q_2<\cdots<q_K\}.
}
Choose a finite decreasing sequence of analytic radii
\EQ{%\label{eq:eikonal-coefficient-radius-sequence}
\rho_0=\zeta_0>\zeta_1>\cdots>\zeta_K>\rho_1 .
}
At the $k$-th step the already constructed lower coefficients are estimated at the radius $\zeta_{k-1}$, while the new coefficients are estimated at $\zeta_k$.  By \eqref{eq:radius-gap-estimate},
\EQn{\label{eq:eikonal-coefficient-radius-gap}
\norm{f}_{\A^8_{\zeta_k}}
\lsm\mbrko{1+(\zeta_{k-1}-\zeta_k)^{-2}}
\norm{f}_{\A^6_{\zeta_{k-1}}}.
}
For $1\le k\le K$, write
\EQn{\label{eq:eikonal-coefficient-induction-size}
\mathcal B_{k-1}:=
\sum_{m<k}
\brko{
\sum_{j=0}^{J_{q_m}^\Omega}
\norm{\Omega_{q_m,j}}_{\A^6_{\zeta_{k-1}}}
+
\sum_{j=0}^{J_{q_m}^S}
\norm{\Sigma_{q_m,j}}_{\A^6_{\zeta_{k-1}}}
}.
}
We prove by induction that $\mathcal B_{k-1}\lsm_{k,\mu,R}1$.  This is clear for $k=1$.  Assume it has been proved for every exponent strictly below $q_k$. For a finite expansion in $\tau$ and $r$, let $\mbrk{F}_{a,r^j}$ denote the coefficient of $\tau^ar^j$. By \eqref{eq:eikonal-lower-profile-q}, \eqref{eq:eikonal-coefficient-radius-gap}, the assumptions on $(\Psi,C_+)$, and \eqref{eq:eikonal-coefficient-induction-size},
\EQ{%\label{eq:eikonal-lower-coefficient-derivative-bound}
&\sum_{\substack{a<q_k\\j\ge0}}
\brko{
\norm{\nabla\mbrk{L_{<q_k}}_{a,r^j}}_{\A^7_{\zeta_k}}
+\norm{\nabla\mbrk{S_{<q_k}}_{a,r^j}}_{\A^7_{\zeta_k}}
}
\lsm_{k,\mu,R}1.
}
Then, by \eqref{eq:eikonal-poly-inversion-transport} and \eqref{eq:eikonal-Omegaq-forcing},
\EQn{\label{eq:eikonal-Omegaq-bound-proof}
\sum_{j=0}^{J_{q_k}^\Omega}
\norm{\Omega_{q_k,j}}_{\A^6_{\zeta_k}}
\lsm_{k,\mu}
\sum_{j=0}^{K_{q_k}^\Omega}
\norm{f^\Omega_{q_k,j}}_{\A^6_{\zeta_k}}
\lsm_{k,\mu,R}1.
}
After $\Omega_{q_k}$ has been fixed, define $g_{q_k,j}$ by
\EQn{\label{eq:eikonal-exp-q-coefficient}
\sum_j g_{q_k,j}r^j
=
\mbrk{e^{p\re L_{\le q_k}}-e^{p\re\Psi}}_{q_k}
=
e^{p\re\Psi}
\mbrk{\sum_{\ell\ge1}
\frac{\brko{p\re\Omega_{\le q_k}}^\ell}{\ell!}}_{q_k} .
}
Only finitely many $\ell$ contribute to the coefficient of $\tau^{q_k}$. Thus \eqref{eq:eikonal-exp-q-coefficient}, the algebra property, \eqref{eq:eikonal-coefficient-induction-size}, and \eqref{eq:eikonal-Omegaq-bound-proof} imply
\EQn{\label{eq:eikonal-exp-q-bound}
\sum_j
\norm{g_{q_k,j}}_{\A^6_{\zeta_k}}
\lsm_{k,\mu,R}1.
}
The remaining part of \eqref{eq:eikonal-Sigmaq-forcing} contains $\tau\nabla S_{<q_k}\cdot\nabla S_{<q_k}$.  Applying \eqref{eq:eikonal-coefficient-radius-gap} once to each differentiated factor gives
\EQn{\label{eq:eikonal-S-gradient-q-bound}
\sum_j
\norm{
\mbrk{\frac{\tau}{2}\nabla S_{<q_k}\cdot\nabla S_{<q_k}}_{q_k,r^j}
}_{\A^6_{\zeta_k}}
\lsm_{k,\mu,R}1,
}
Combining \eqref{eq:eikonal-exp-q-bound} and \eqref{eq:eikonal-S-gradient-q-bound} in \eqref{eq:eikonal-Sigmaq-forcing} gives
\EQn{\label{eq:eikonal-FSq-bound}
\sum_{j=0}^{K_{q_k}^S}
\norm{f^S_{q_k,j}}_{\A^6_{\zeta_k}}
\lsm_{k,\mu,R}1.
}
For $q_k\ne\gamma$, \eqref{eq:eikonal-poly-inversion-fuchsian} and \eqref{eq:eikonal-FSq-bound} give
\EQ{%\label{eq:eikonal-Sigmaq-bound-proof}
\sum_{j=0}^{J_{q_k}^S}
\norm{\Sigma_{q_k,j}}_{\A^6_{\zeta_k}}
\lsm_{k,\mu,R}1.
}
For $q_k=\gamma$, the same bound follows from the resonant formula \eqref{eq:eikonal-poly-inversion-resonant} and \eqref{eq:eikonal-FSq-bound}.  Therefore
\EQ{%\label{eq:eikonal-coefficient-step-bound}
\sum_{j=0}^{J_{q_k}^\Omega}
\norm{\Omega_{q_k,j}}_{\A^6_{\zeta_k}}
+
\sum_{j=0}^{J_{q_k}^S}
\norm{\Sigma_{q_k,j}}_{\A^6_{\zeta_k}}
\lsm_{k,\mu,R}1.
}
This closes the induction. In particular, the last radius in the construction satisfies
\EQn{\label{eq:eikonal-coefficient-bound-construction-radius}
\sum_{q\in E_{\gamma,\mu}}
\brko{
\sum_{j=0}^{J_q^\Omega}
\norm{\Omega_{q,j}}_{\A^6_{\zeta_K}}
+
\sum_{j=0}^{J_q^S}
\norm{\Sigma_{q,j}}_{\A^6_{\zeta_K}}
}
\lsm_{\mu,R}1.
}
Since $\zeta_K>\rho_1$, monotonicity of the analytic norm gives
\EQn{\label{eq:eikonal-coefficient-bound}
\sum_{q\in E_{\gamma,\mu}}
\brko{
\sum_{j=0}^{J_q^\Omega}
\norm{\Omega_{q,j}}_{\A^6_{\rho_1}}
+
\sum_{j=0}^{J_q^S}
\norm{\Sigma_{q,j}}_{\A^6_{\rho_1}}
}
\lsm_{\mu,R}1.
}
Let
\EQ{
M_\mu:=\max_{q\in E_{\gamma,\mu}}\max\{J_q^\Omega,J_q^S\}.
}
For $0<\tau\le1$, $q\ge1$, and $0\le j\le M_\mu$,
\EQ{
\tau^q\abs{\log\tau}^j\le \tau(1+\abs{\log\tau})^{M_\mu}.
}
Combining this with \eqref{eq:eikonal-coefficient-bound} proves \eqref{eq:eikonal-positive-order}. The gap $\zeta_K-\rho_1$ is kept for the residual estimates. Since $\mathcal R_L$ contains Laplacian terms, the residual estimate in $\A^8_{\rho_1}$ requires two additional spatial derivatives of the coefficients. By \eqref{eq:radius-gap-estimate} and \eqref{eq:eikonal-coefficient-bound-construction-radius},
\EQn{\label{eq:eikonal-final-gap-derivatives}
\sum_{q\in E_{\gamma,\mu}}
\brko{
\sum_{j=0}^{J_q^\Omega}
\norm{\Omega_{q,j}}_{\A^{10}_{\rho_1}}
+
\sum_{j=0}^{J_q^S}
\norm{\Sigma_{q,j}}_{\A^{10}_{\rho_1}}
}
\lsm_{\mu,R}1.
}

It remains to prove \eqref{eq:eikonal-formal-residual}. We estimate the two residuals in \eqref{eq:eikonal-residual-expanded} with
\EQ{
L=L^{(\mu)},\qquad S=S^{(\mu)}.
}
We first separate the powers below $\mu$ from the remaining terms.  For $x\in\R$, we use the Taylor expansion with the remainder in the integral form:
\EQ{%\label{eq:eikonal-exp-scalar-taylor}
e^{px}-1
=\sum_{m=1}^{\mu-1}\frac{(px)^m}{m!}
+x^\mu G_\mu(x),
\qquad
G_\mu(x)=\frac{p^\mu}{(\mu-1)!}\int_0^1(1-\theta)^{\mu-1}e^{p\theta x}\dd \theta .
}
The same formula with $x$ replaced by $z\in\C$ shows that $G_\mu$ is entire. Then,
\EQn{\label{eq:eikonal-exp-taylor-remainder}
e^{p\re\Omega^{(\mu)}}-1
&=\sum_{m=1}^{\mu-1}\frac{(p\re\Omega^{(\mu)})^m}{m!}
+\brko{\re\Omega^{(\mu)}}^\mu G_\mu(\re\Omega^{(\mu)}).
}
By \eqref{eq:eikonal-positive-order}, \eqref{eq:eikonal-final-gap-derivatives}, and the algebra and composition estimates in Lemma \ref{lem:fourier-algebra},
\EQn{\label{eq:eikonal-exp-composition-bound}
|G_\mu(0)|+
\norm{G_\mu(\re\Omega^{(\mu)}(\tau))-G_\mu(0)}_{\A^8_{\rho_1}}
\lsm_{\mu,R}1.
}
Moreover,
\EQn{\label{eq:eikonal-exp-remainder-split}
\brko{\re\Omega^{(\mu)}}^\mu G_\mu(\re\Omega^{(\mu)})
=G_\mu(0)\brko{\re\Omega^{(\mu)}}^\mu
+\brko{\re\Omega^{(\mu)}}^\mu
\brko{G_\mu(\re\Omega^{(\mu)})-G_\mu(0)}.
}
Combining \eqref{eq:eikonal-positive-order}, \eqref{eq:eikonal-exp-composition-bound}, and \eqref{eq:eikonal-exp-remainder-split} gives
\EQn{\label{eq:eikonal-exp-remainder-bound}
\norm{\brko{\re\Omega^{(\mu)}}^\mu G_\mu(\re\Omega^{(\mu)})}_{\A^8_{\rho_1}}
\lsm_{\mu,R}\tau^\mu(1+|\log\tau|)^{N_\mu}.
}
By \eqref{eq:eikonal-residual-expanded} and \eqref{eq:eikonal-exp-taylor-remainder}, there exist finite sets
\EQ{
\mathcal Q_L,\mathcal Q_S
\subset
\fbrk{\ell_1+\ell_2\gamma:
\ell_1,\ell_2\in\N,\ \ell_1+\ell_2\ge1}
}
and polynomials $P^L_q(r,y)$ and $P^S_q(r,y)$ such that
\EQn{\label{eq:eikonal-residual-finite-expansion}
\mathcal R_L(L^{(\mu)},S^{(\mu)})(\tau)
&=\sum_{q\in\mathcal Q_L}\tau^qP^L_q(r,y),
\\
\mathcal R_S(L^{(\mu)},S^{(\mu)})(\tau)
&=\sum_{q\in\mathcal Q_S}\tau^qP^S_q(r,y)
+e^{p\re\Psi}
\brko{\re\Omega^{(\mu)}}^\mu
G_\mu\brko{\re\Omega^{(\mu)}}.
}
Here the first sum contains all terms in $\mathcal R_L$, while the second sum contains the Fuchsian term, the quadratic term in $S^{(\mu)}$, and the finite Taylor polynomial in \eqref{eq:eikonal-exp-taylor-remainder}. We set $P^L_q=0$ for $q\notin\mathcal Q_L$ and $P^S_q=0$ for $q\notin\mathcal Q_S$. Every exponent in $\mathcal Q_L\cup\mathcal Q_S$ below $\mu$ belongs to $E_{\gamma,\mu}$. Hence \eqref{eq:eikonal-residual-finite-expansion} can be written as
\EQn{\label{eq:eikonal-residual-low-high-split}
\mathcal R_L(L^{(\mu)},S^{(\mu)})(\tau)
&=\sum_{q\in E_{\gamma,\mu}}\tau^qP^L_q(\log\tau,y)
+\tau^\mu E_L(\tau,\log\tau,y),
\\
\mathcal R_S(L^{(\mu)},S^{(\mu)})(\tau)
&=\sum_{q\in E_{\gamma,\mu}}\tau^qP^S_q(\log\tau,y)
+\tau^\mu E_S(\tau,\log\tau,y).
}
More precisely,
\EQn{\label{eq:eikonal-EL-ES-structure}
E_L(\tau,r,y)
&=\sum_{\substack{q\in\mathcal Q_L\\q\ge\mu}}
\tau^{q-\mu}P^L_q(r,y),
\\
E_S(\tau,r,y)
&=\sum_{\substack{q\in\mathcal Q_S\\q\ge\mu}}
\tau^{q-\mu}P^S_q(r,y)
+e^{p\re\Psi}\tau^{-\mu}
\brko{\re\Omega^{(\mu)}}^\mu
G_\mu\brko{\re\Omega^{(\mu)}}.
}
The two sums in \eqref{eq:eikonal-EL-ES-structure} are finite. For every $q>\mu$ and $j\in\N$,
\EQn{\label{eq:eikonal-high-log-uniform}
\sup_{0<\tau\le1}\tau^{q-\mu}(1+|\log\tau|)^j<\I.
}
Thus the terms with $q>\mu$ in \eqref{eq:eikonal-EL-ES-structure} are uniformly bounded, while the terms with $q=\mu$ are polynomials in $r=\log\tau$. Moreover, \eqref{eq:eikonal-exp-remainder-bound}, \eqref{eq:radius-gap-estimate}, the assumptions on $e^{p\re\Psi}$, and Lemma \ref{lem:fourier-algebra} give
\EQn{\label{eq:eikonal-ES-Taylor-tail-bound}
\norm{e^{p\re\Psi}\tau^{-\mu}
\brko{\re\Omega^{(\mu)}}^\mu
G_\mu\brko{\re\Omega^{(\mu)}}
}_{\A^8_{\rho_1}}
\lsm_{\mu,R}(1+|\log\tau|)^{N_\mu}.
}

For $q\in E_{\gamma,\mu}$,
\EQ{%\label{eq:eikonal-low-polynomial-identification}
	P^L_q=\mbrk{\mathcal R_L(L^{(\mu)},S^{(\mu)})}_q,
	\qquad
	P^S_q=\mbrk{\mathcal R_S(L^{(\mu)},S^{(\mu)})}_q.
}
By \eqref{eq:eikonal-coefficient-equations},
\EQ{%\label{eq:eikonal-low-polynomial-vanishing}
	P^L_q=0,
	\qquad
	P^S_q=0,
	\qquad q\in E_{\gamma,\mu}.
}
Therefore \eqref{eq:eikonal-residual-low-high-split} reduces to
\EQn{\label{eq:eikonal-residual-after-cancellation}
	\mathcal R_L(L^{(\mu)},S^{(\mu)})(\tau)
	&=\tau^\mu E_L(\tau,\log\tau,y),
	\\
	\mathcal R_S(L^{(\mu)},S^{(\mu)})(\tau)
	&=\tau^\mu E_S(\tau,\log\tau,y).
}
The coefficient bounds \eqref{eq:eikonal-coefficient-bound} and \eqref{eq:eikonal-final-gap-derivatives}, the structure \eqref{eq:eikonal-EL-ES-structure}, \eqref{eq:eikonal-high-log-uniform}, and \eqref{eq:eikonal-ES-Taylor-tail-bound} give, for some integer $N_\mu$,
\EQn{\label{eq:eikonal-residual-coefficient-bound}
\norm{E_L(\tau,\log\tau)}_{\A^8_{\rho_1}}
+\norm{E_S(\tau,\log\tau)}_{\A^8_{\rho_1}}
\lsm_{\mu,R}(1+\abs{\log\tau})^{N_\mu}.
}
\eqref{eq:eikonal-residual-after-cancellation} and \eqref{eq:eikonal-residual-coefficient-bound} prove \eqref{eq:eikonal-formal-residual}. This completes the proof of Proposition \ref{prop:eikonal-prep}.
\end{proof}

For the Fuchsian profile constructed in Proposition \ref{prop:eikonal-prep}, define
\EQ{
\Phi^{(\mu)}(t,y)=L^{(\mu)}(t^{-\al},y)-it^{1-\al}S^{(\mu)}(t^{-\al},y)
}
and
\EQn{\label{eq:Rmu-def}
R^{(\mu)}(t)
:=i\pd_t\Phi^{(\mu)}+\frac1{2t^2}Q(\Phi^{(\mu)})
-t^{-\al}e^{p\re\Phi^{(\mu)}}.
}

\begin{cor}[Estimates for truncated Fuchsian profiles]\label{cor:eikonal-auxiliary-bounds}
After increasing $T$ if necessary, the following statements hold for $t\ge T$.
\begin{enumerate}[label=\textup{(\roman*)},leftmargin=*]
\item Let $L^{(\mu)}$ and $S^{(\mu)}$ be the Fuchsian profile constructed in Proposition \ref{prop:eikonal-prep}. Then
\EQn{\label{eq:eikonal-profile-bounds}
\norm{\nabla L^{(\mu)}(t^{-\al})}_{\A^4_{\rho_1}}
+\norm{\nabla S^{(\mu)}(t^{-\al})}_{\A^4_{\rho_1}}
&\lsm_{\mu,R}1,
}
\EQn{\label{eq:eikonal-amplitude-comparable}
\norm{e^{\re L^{(\mu)}(t^{-\al})-\re\Psi}}_{L^\I_y}
\lsm_{\mu,R}1,
}
and
\EQn{\label{eq:eikonal-amplitude-small}
\norm{e^{p\re L^{(\mu)}(t^{-\al})}}_{\A^4_{\rho_1}}
\lsm_{\mu,R}\ep.
}
\item Let $\mu_1,\mu_2>2\gamma$ be integers, let $\bar\mu=\min\fbrko{\mu_1,\mu_2}$, and set $\sigma_0=\al\bar\mu$. For $j=1,2$, let $(L^{(\mu_j)},S^{(\mu_j)})$ be the truncated Fuchsian profile of order $\mu_j$ constructed from the same Fuchsian profile parameters $(\Psi,C_+)$. Then there is $N\in\N$ such that
\EQn{\label{eq:eikonal-separated-profile-diff}
&\norm{L^{(\mu_2)}(t^{-\al})-L^{(\mu_1)}(t^{-\al})}_{\A^5_{\rho_1}}
\\
&\quad+\norm{S^{(\mu_2)}(t^{-\al})-S^{(\mu_1)}(t^{-\al})}_{\A^5_{\rho_1}}
\lsm_{\mu_1,\mu_2,R}t^{-\sigma_0}(1+\log t)^N.
}
\item The residual $R^{(\mu)}$ defined by \eqref{eq:Rmu-def} satisfies
\EQn{\label{eq:Rmu-bound}
\norm{R^{(\mu)}(t)}_{\A^8_{\rho_1}}
\lsm_{\mu,R}t^{-1-\kappa},
\qquad t\ge T.
}
\end{enumerate}
\end{cor}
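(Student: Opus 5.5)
The plan is to read all three parts directly off Proposition \ref{prop:eikonal-prep}, the residual identity \eqref{eq:eikonal-residual-identity}, and the algebra and composition estimates of Lemma \ref{lem:fourier-algebra}, with $\tau=t^{-\al}\in(0,1]$ for $t\ge1$.

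\emph{Part (i).} For \eqref{eq:eikonal-profile-bounds}, write
\[
\nabla L^{(\mu)}=\nabla\Psi+\nabla\Omega^{(\mu)},\qquad
\nabla S^{(\mu)}=\nabla S_*(\Psi)+\tau^\gamma\nabla C_++\nabla\Sigma^{(\mu)}.
\]
The term $\nabla\Psi$ is bounded in $\A^4_{\rho_1}$ by \eqref{eq:R-bound}. For $\nabla S_*(\Psi)=\frac{1}{1-\al}\nabla e^{p\re\Psi}$ and $\nabla C_+$, I would use \eqref{eq:eps-small}, \eqref{eq:Cplus-small} and one radius gap $\rho_0-\rho_1$ via \eqref{eq:radius-gap-estimate}. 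For the correction terms, \eqref{eq:eikonal-final-gap-derivatives} gives coefficients in $\A^{10}_{\rho_1}$, and $\tau^q|\log\tau|^j\lsm 1$ for $q>0$, so they are bounded uniformly in $t$.

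For \eqref{eq:eikonal-amplitude-comparable}, note $\re L^{(\mu)}-\re\Psi=\re\Omega^{(\mu)}$. By \eqref{eq:wiener-embedding} and \eqref{eq:eikonal-positive-order}, its $L^\I$ norm is $\lsm\tau(1+|\log\tau|)^{M_\mu}\lsm1$.

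For \eqref{eq:eikonal-amplitude-small}, factor
\[
e^{p\re L^{(\mu)}}=e^{p\re\Psi}\,e^{p\re\Omega^{(\mu)}},
\]
apply the algebra property, and use \eqref{eq:product-composition} with $B=e^{p\re\Psi}$ and $G=\exp(p\,\cdot)$. Here the bound on $\re\Omega^{(\mu)}$ in $\A^4_{\rho_1}$ comes from \eqref{eq:eikonal-positive-order} and \eqref{eq:wiener-real-conjugate}, and $\norm{e^{p\re\Psi}}_{\A^4_{\rho_1}}\le\ep$.

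\emph{Part (ii).} The recursion of Proposition \ref{prop:eikonal-prep} determines $\Omega_q,\Sigma_q$ at exponent $q$ only from lower exponents and $(\Psi,C_+)$, independently of the truncation order. Hence the two profiles share every coefficient with $q<\bar\mu$.

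The key point to check is that $E_{\gamma,\mu_1}\cap(0,\bar\mu)=E_{\gamma,\mu_2}\cap(0,\bar\mu)$, and that the forcing at $q<\bar\mu$ does not see terms with exponent $\ge\bar\mu$. This holds because all products raise exponents, since every exponent involved is nonnegative.

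The difference is then a finite sum of terms $\tau^q(\log\tau)^j$ with $q\ge\bar\mu$, with coefficients bounded in $\A^5_{\rho_1}$ by \eqref{eq:eikonal-coefficient-bound}. Consequently it is $\lsm\tau^{\bar\mu}(1+|\log\tau|)^N=t^{-\al\bar\mu}(1+\al\log t)^N$.

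\emph{Part (iii).} By \eqref{eq:eikonal-residual-identity},
\[
R^{(\mu)}(t)=-t^{-\al}\mathcal R_S-it^{-1}\mathcal R_L,
\]
evaluated at $\tau=t^{-\al}$. Then \eqref{eq:eikonal-formal-residual} gives
\[
\norm{R^{(\mu)}(t)}_{\A^8_{\rho_1}}\lsm t^{-\al-\al\mu}(1+\log t)^{N_\mu}+t^{-1-\al\mu}(1+\log t)^{N_\mu}.
\]
Since $\al<1$, the first term dominates, so the bound is $t^{-\al(\mu+1)}(1+\log t)^{N_\mu}$. The choice \eqref{eq:kappa-choice} gives $\al(\mu+1)>1+\kappa$ strictly, and the strict margin absorbs the logarithm for $t\ge T$ large.

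\emph{Expected obstacle.} The only delicate point is the coefficient-agreement claim in (ii). I would verify by induction on $q\in E_{\gamma,\bar\mu}$ that $F^\Omega_q$ and $F^S_q$ in \eqref{eq:eikonal-Omegaq-forcing}--\eqref{eq:eikonal-Sigmaq-forcing} depend only on coefficients with exponents $<q$ (resp.\ $\le q$ for $\Omega$). Combined with the uniqueness in the polynomial-equations lemma, this forces equality of those coefficients.
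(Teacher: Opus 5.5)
Your proposal is correct and follows the paper's proof essentially step by step. Part (i) uses the splitting $L^{(\mu)}=\Psi+\Omega^{(\mu)}$, the factorization $e^{p\re L^{(\mu)}}=e^{p\re\Psi}e^{p\re\Omega^{(\mu)}}$ and the Wiener algebra and composition bounds; part (ii) uses agreement of the recursively determined coefficients below $\bar\mu$; and part (iii) combines \eqref{eq:eikonal-residual-identity} and \eqref{eq:eikonal-formal-residual}, absorbing the logarithm through the strict inequality $\kappa<\al(\mu+1)-1$. Your induction showing that the forcing at exponent $q$ involves only coefficients of exponent at most $q$ just spells out the one-line assertion the paper makes in (ii).
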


\begin{proof}
For \textup{(i)}, \eqref{eq:eikonal-LS-expansion} and \eqref{eq:eikonal-positive-order} give
\EQ{
\norm{\nabla\Omega^{(\mu)}(t^{-\al})}_{\A^4_{\rho_1}}
+\norm{\nabla\Sigma^{(\mu)}(t^{-\al})}_{\A^4_{\rho_1}}
\lsm_{\mu,R}1.
}
Together with \eqref{eq:R-bound}, \eqref{eq:eps-small}, and \eqref{eq:Cplus-small}, this proves \eqref{eq:eikonal-profile-bounds}. Moreover,
\EQn{\label{eq:eikonal-exponential-factorization}
e^{p\re L^{(\mu)}}
=e^{p\re\Psi}e^{p\re\Omega^{(\mu)}},
\qquad
e^{\re L^{(\mu)}-\re\Psi}=e^{\re\Omega^{(\mu)}}.
}
Lemma \ref{lem:fourier-algebra} and \eqref{eq:eikonal-positive-order} give
\EQn{\label{eq:eikonal-expOmega-bound}
\normb{e^{\re\Omega^{(\mu)}(t^{-\al})}}_{L^\I_y}
+\norm{e^{p\re\Omega^{(\mu)}(t^{-\al})}-1}_{\A^4_{\rho_1}}
\lsm_{\mu,R}1.
}
The first identity in \eqref{eq:eikonal-exponential-factorization} is written as
\EQn{\label{eq:eikonal-expL-product}
e^{p\re L^{(\mu)}}
=e^{p\re\Psi}
+e^{p\re\Psi}\brko{e^{p\re\Omega^{(\mu)}}-1}.
}
Using \eqref{eq:eps-small}, \eqref{eq:eikonal-expOmega-bound}, and \eqref{eq:fourier-algebra} in \eqref{eq:eikonal-expL-product}, we obtain
\EQ{
\normb{e^{p\re L^{(\mu)}(t^{-\al})}}_{\A^4_{\rho_1}}
\lsm_{\mu,R}\ep.
}
This proves \eqref{eq:eikonal-amplitude-small}. The second identity in \eqref{eq:eikonal-exponential-factorization}, together with \eqref{eq:eikonal-expOmega-bound}, proves \eqref{eq:eikonal-amplitude-comparable}.

For \textup{(ii)}, the two finite Fuchsian profiles have identical coefficients for every exponent $q<\bar\mu$. Hence their difference is a finite sum of terms $t^{-\al q}(\log t)^j$ with $q\ge\bar\mu$. The coefficient bounds in Proposition \ref{prop:eikonal-prep}, with the fixed gap between the construction radius and $\rho_1$, give the $\A^5_{\rho_1}$ bounds in \eqref{eq:eikonal-separated-profile-diff}.

For \textup{(iii)}, \eqref{eq:eikonal-residual-identity} gives
\EQ{
R^{(\mu)}(t)
=-t^{-\al}
\mathcal R_S(L^{(\mu)},S^{(\mu)})(t^{-\al})
-it^{-1}
\mathcal R_L(L^{(\mu)},S^{(\mu)})(t^{-\al}).
}
Together with \eqref{eq:eikonal-formal-residual}, this identity gives
\EQn{\label{eq:Rmu-log-bound}
\norm{R^{(\mu)}(t)}_{\A^8_{\rho_1}}
\lsm_{\mu,R}
t^{-\al(\mu+1)}(1+\log t)^{N_\mu}.
}
Recall the choice of $\kappa$ in \eqref{eq:kappa-choice}, and let
\EQ{
\delta_\mu:=\al(\mu+1)-1-\kappa>0.
}
For every $N\in\N$ and $\delta>0$, there exists $T_{N,\delta}\ge1$ such that
\EQ{
(1+\log t)^N\le t^\delta,
\qquad t\ge T_{N,\delta}.
}
Taking $N=N_\mu$ and $\delta=\delta_\mu$, and increasing $T$ if necessary, gives
\EQn{\label{eq:eikonal-log-absorption}
(1+\log t)^{N_\mu}\le t^{\delta_\mu},
\qquad t\ge T.
}
\eqref{eq:Rmu-log-bound} and \eqref{eq:eikonal-log-absorption} prove \eqref{eq:Rmu-bound}.
\end{proof}
\subsection{Equation for the phase correction around the Fuchsian profile}%\label{sec:eikonal-error}

Let
\EQ{%\label{eq:eikonal-error-ansatz}
a(t,y)=\exp\mbrko{\Phi^{(\mu)}(t,y)+Z(t,y)},
\qquad Z=X+iY.
}
We now impose the phase equation \eqref{eq:phase-equation} on $\Phi^{(\mu)}+Z$. The identity
\EQ{
Q(\Phi^{(\mu)}+Z)-Q(\Phi^{(\mu)})
=\Dy Z+2\nabla\Phi^{(\mu)}\cdot\nabla Z+\nabla Z\cdot\nabla Z
}
and the definition \eqref{eq:Rmu-def} give
\EQ{
0
&=
R^{(\mu)}+i\pd_tZ+\frac1{2t^2}\Dy Z
+\frac1{t^2}\nabla\Phi^{(\mu)}\cdot\nabla Z
+\frac1{2t^2}\nabla Z\cdot\nabla Z
\\
&\quad
-t^{-\al}e^{p\re\Phi^{(\mu)}}\brko{e^{pX}-1}.
}
Multiplying this identity by $-i$ gives the equation for the phase correction,
\EQn{\label{eq:eikonal-Z-equation}
\pd_tZ=\frac{i}{2t^2}\Dy Z+N^{(\mu)}(t,Z),
}
where
\EQn{\label{eq:Nmu-def}
N^{(\mu)}(t,Z)
=N_{\rm res}(t)+N_{\rm tri}(t,Z)+N_{\rm fir}(t,Z)+N_{\rm qua}(t,Z),
}
with
\EQn{\label{eq:eikonal-terms-decomp}
N_{\rm res}(t)=iR^{(\mu)}(t),
\qquad
N_{\rm tri}(t,Z)=-i t^{-\al}e^{p\re\Phi^{(\mu)}}(e^{pX}-1),
}
the first-order term
\EQn{\label{eq:eikonal-fir-term}
N_{\rm fir}(t,Z)
&=\frac{i}{t^2}\nabla\Phi^{(\mu)}\cdot\nabla Z
\\
&=t^{-1-\al}\nabla S^{(\mu)}(t^{-\al})\cdot\nabla Z
+\frac{i}{t^2}\nabla L^{(\mu)}(t^{-\al})\cdot\nabla Z,
}
and the quadratic term
\EQ{%\label{eq:eikonal-qua-term}
N_{\rm qua}(t,Z)=\frac{i}{2t^2}\nabla Z\cdot\nabla Z.
}

The Duhamel map associated with \eqref{eq:eikonal-Z-equation} is
\EQn{\label{eq:eikonal-Duhamel-map}
(\TT_\mu Z)(t):=-\int_t^\I U(t,s)N^{(\mu)}(s,Z(s))\dd s.
}

\subsection{Normal-form reduction}%\label{sec:eikonal-normal-form}

For $T\ge2$, $0\le\rho<\rho_1$, and $\sigma>0$, define the auxiliary space
\EQ{%\label{eq:eikonal-fast-norm}
\mathcal F_T^\sigma(\rho)
:=\fbrk{H\in C([T,\I);\A^4_\rho):
\norm{H}_{\mathcal F_T^\sigma(\rho)}<\I},
}
where
\EQ{
\norm{H}_{\mathcal F_T^\sigma(\rho)}
:=\sup_{t\ge T}t^\sigma\norm{H(t)}_{\A^4_\rho}.
}
Define
\EQ{%\label{eq:eikonal-PF-def}
P_\mu(t)H&:=t^{-1-\al}\nabla S^{(\mu)}(t^{-\al})\cdot\nabla H,
\\
F_\mu(t,X)&:=t^{-\al}e^{p\re\Phi^{(\mu)}(t)}(e^{pX}-1).
}
Then
\EQ{%\label{eq:eikonal-tri-normal-form}
N_{\rm tri}(t,Z)=-iF_\mu(t,X),\qquad X=\re Z.
}
We split the first-order term in \eqref{eq:eikonal-fir-term} as
\EQ{%\label{eq:eikonal-fir-normal-split}
N_{\rm fir}(t,Z)=P_\mu(t)Z+N_{\rm fir}^{\rm sh}(t,Z),\qquad
N_{\rm fir}^{\rm sh}(t,Z):=\frac{i}{t^2}\nabla L^{(\mu)}(t^{-\al})\cdot\nabla Z.
}
For a real-valued $X$, we look for a real-valued function $K$ satisfying the final-state equation
\EQn{\label{eq:eikonal-K-integral}
K(t)=\int_t^\I F_\mu(s,X(s))\dd s-\int_t^\I P_\mu(s)K(s)\dd s.
}
When it exists, this function is denoted by $K_\mu(X)$. Differentiating \eqref{eq:eikonal-K-integral} gives
\EQ{%\label{eq:eikonal-K-equation}
\pd_tK=P_\mu(t)K-F_\mu(t,X),\qquad K(\I)=0.
}
All construction, stability, and compatibility properties of $K_\mu$ are collected in Lemma \ref{lem:eikonal-K-tail}. Let
\EQn{\label{eq:eikonal-normal-decomposition}
Z=\widetilde Z+iK,\qquad K=K_\mu(\re\widetilde Z).
}
Here $\wt Z$ represents the new unknown after applying normal form transform. Since $K$ is real-valued, $\re Z=\re\widetilde Z$. By \eqref{eq:eikonal-normal-decomposition} and \eqref{eq:eikonal-Z-equation},
\EQn{\label{eq:eikonal-normal-reduction-identity}
\pd_t\widetilde Z+i\pd_tK
&=\frac{i}{2t^2}\Dy\widetilde Z-\frac1{2t^2}\Dy K+N_{\rm res}-iF_\mu(t,\re\widetilde Z)
\\
&\quad +P_\mu(t)\widetilde Z+iP_\mu(t)K+N_{\rm fir}^{\rm sh}(t,\widetilde Z+iK)+N_{\rm qua}(t,\widetilde Z+iK).
}
By \eqref{eq:eikonal-normal-reduction-identity} and $i\pd_tK=iP_\mu(t)K-iF_\mu(t,\re\widetilde Z)$,
\EQ{%\label{eq:eikonal-normal-equation}
\pd_t\widetilde Z
&=\frac{i}{2t^2}\Dy\widetilde Z+P_\mu(t)\widetilde Z+N_{\rm res}(t)
\\
&\quad+N_{\rm fir}^{\rm sh}(t,\widetilde Z+iK)+N_{\rm qua}(t,\widetilde Z+iK)-\frac1{2t^2}\Dy K.
}
The corresponding final-state equation for the new unknown $\wt Z$ is
\EQn{\label{eq:eikonal-normal-duhamel}
\widetilde Z(t)
&=-\int_t^\I U(t,s)\mbrko{P_\mu(s)\widetilde Z(s)+N_{\rm res}(s)}\dd s
\\
&\quad-\int_t^\I U(t,s)\mbrko{N_{\rm fir}^{\rm sh}(s,\widetilde Z(s)+iK(s))+N_{\rm qua}(s,\widetilde Z(s)+iK(s))}\dd s
\\
&\quad+\frac12\int_t^\I U(t,s)s^{-2}\Dy K(s)\dd s.
}
Here $K(s)=K_\mu(\re\widetilde Z)(s)$ is given in \eqref{eq:eikonal-K-integral}, and the terms in \eqref{eq:eikonal-normal-duhamel} are
\EQ{
P_\mu(s)\widetilde Z(s)
&=s^{-1-\al}\nabla S^{(\mu)}(s^{-\al})\cdot\nabla\widetilde Z(s),
\\
N_{\rm res}(s)
&=i\mbrko{
i\pd_s\Phi^{(\mu)}(s)+\frac1{2s^2}Q(\Phi^{(\mu)}(s))
- s^{-\al}e^{p\re\Phi^{(\mu)}(s)}
},
\\
N_{\rm fir}^{\rm sh}(s,\widetilde Z(s)+iK(s))
&=\frac{i}{s^2}\nabla L^{(\mu)}(s^{-\al})\cdot
\nabla\brko{\widetilde Z(s)+iK(s)},
\\
N_{\rm qua}(s,\widetilde Z(s)+iK(s))
&=\frac{i}{2s^2}
\nabla\brko{\widetilde Z(s)+iK(s)}
\cdot\nabla\brko{\widetilde Z(s)+iK(s)}.
}

\subsection{Estimates for normal form transform}%\label{sec:eikonal-normal-estimates}

For the fixed truncation order $\mu$, define
\EQ{%\label{eq:eikonal-JP-def}
(J_PH)(t):=\int_t^\I P_\mu(s)H(s)\dd s
=\int_t^\I s^{-1-\al}\nabla S^{(\mu)}(s^{-\al})\cdot\nabla H(s)\dd s.
}
Expanding $F_\mu$ in powers of its second argument gives
\EQn{\label{eq:eikonal-F-expansion}
t^{-\al}e^{p\re\Phi^{(\mu)}(t)}\brko{e^{pX}-1}
&=\sum_{m\ge1}F_{\mu,m}(t;X,\ldots,X),
}
where
\EQ{
F_{\mu,m}(t;H_1,\ldots,H_m)
&:=t^{-\al}e^{p\re\Phi^{(\mu)}(t)}\frac{p^m}{m!}
\re H_1\cdots\re H_m.
}
Set
\EQn{\label{eq:eikonal-am-definition}
a_m:=\frac{(C_4p)^m}{m!},
\qquad m\ge1,
}
and define the real $m$-linear transport coefficients recursively by
\EQn{\label{eq:eikonal-Kml-def}
K_{m,0}(H_1,\ldots,H_m)(t)&:=\int_t^\I
F_{\mu,m}(s;H_1(s),\ldots,H_m(s))\dd s,
\\
K_{m,\ell+1}(H_1,\ldots,H_m)&:=-J_PK_{m,\ell}(H_1,\ldots,H_m),
\qquad \ell\ge0.
}

\begin{lem}\label{lem:eikonal-K-tail}
The following statements hold.
\begin{enumerate}[label=\textup{(\roman*)},leftmargin=*]
\item \emph{(Multilinear transport coefficients)}.
Let $m\ge1$, $\ell\ge0$, and let $\zeta_h$, $0\le h\le\ell$, be analytic radii satisfying
\EQ{
0\le\zeta_0,
\qquad
\zeta_{h-1}<\zeta_h\quad(1\le h\le\ell),
\qquad
\zeta_\ell<\rho_1.
}
Then
\EQn{\label{eq:eikonal-K-flexible-estimate}
&\norm{K_{m,\ell}(G_1,\ldots,G_m)(t)}_{\A^4_{\zeta_0}} \\
&\le C_{\mu,R}^{\ell+1}\ep a_m
\mbrkbb{\prod_{h=1}^\ell(\zeta_h-\zeta_{h-1})}^{-1}
\\
&\quad\times\int_{t\le s_1\le\cdots\le s_\ell\le\theta<\I}
s_1^{-1-\al}\cdots s_\ell^{-1-\al}\theta^{-\al}
\norm{G_1(\theta)}_{\A^4_{\zeta_\ell}}\cdots
\norm{G_m(\theta)}_{\A^4_{\zeta_\ell}}
\dd\theta\dd s_\ell\cdots\dd s_1.
}
For $\ell=0$, the gap product and the intermediate time variables are
absent, and the integral is taken over $t\le\theta<\I$.

\item \emph{(Construction and size estimate)}.
Let $0\le\rho_-<\rho_+<\rho_1$ and assume
\EQn{\label{eq:eikonal-K-smallness-radius}
C_{\mu,R}T^{-\al}(\rho_+-\rho_-)^{-1}\le\frac14.
}
If $X$ is real-valued and
$\norm{X}_{\mathcal F_T^\kappa(\rho_+)}\le M$, then the series
\EQn{\label{eq:eikonal-K-double-series}
K_\mu(X)(t)=\sum_{m\ge1}\sum_{\ell\ge0}
K_{m,\ell}(X,\ldots,X)(t)
}
converges absolutely in
$\mathcal F_T^{\kappa+\al-1}(\rho_-)$ and satisfies
\EQn{\label{eq:eikonal-K-tail-bound}
\norm{K_\mu(X)}_{\mathcal F_T^{\kappa+\al-1}(\rho_-)}
\lsm_{\mu,R,M}\ep.
}
For every $0\le\rho'<\rho_-$, its sum solves
\eqref{eq:eikonal-K-integral} in
$C([T,\I);\A^4_{\rho'})$.

\item \emph{(Stability with respect to the input)}.
Let $0\le\rho_-<\rho_+<\rho_1$ and assume
\eqref{eq:eikonal-K-smallness-radius}. Let $X_j$, $j=1,2$, be
real-valued and satisfy
\EQ{
\norm{X_j}_{\mathcal F_T^\kappa(\rho_+)}\le M,
\qquad j=1,2.
}
If
$\norm{X_1-X_2}_{\mathcal F_T^{\sigma}(\rho_+)}<\I$
for some $\sigma>1-\al$, then
\EQn{\label{eq:eikonal-K-tail-lip}
\norm{K_\mu(X_1)-K_\mu(X_2)}
_{\mathcal F_T^{\sigma+\al-1}(\rho_-)}
\lsm_{\mu,R,M,\sigma}\ep
\norm{X_1-X_2}_{\mathcal F_T^{\sigma}(\rho_+)}.
}

\item \emph{(Dependence on the truncation order)}.
Let $0\le\rho_-<\rho_+<\rho_1$. Let
$\mu_1,\mu_2>2\gamma$ be integers, put
$\bar\mu=\min\fbrko{\mu_1,\mu_2}$, and set
$\sigma_0=\al\bar\mu$. For $j=1,2$, let
$(L^{(\mu_j)},S^{(\mu_j)})$ be the truncated Fuchsian profile of order
$\mu_j$ constructed from the same Fuchsian profile parameters $(\Psi,C_+)$, and let
$K_{\mu_j}$ be defined by \eqref{eq:eikonal-K-integral} using this
Fuchsian profile. Increase $T$, if necessary, so that
\EQn{\label{eq:eikonal-K-two-order-smallness}
C_{\mu_1,\mu_2,R}T^{-\al}(\rho_+-\rho_-)^{-1}
\le\frac1{12}.
}
There exists $N=N(\mu_1,\mu_2)\in\N$ such that, if $X$ is
real-valued and
$\norm{X}_{\mathcal F_T^\kappa(\rho_+)}\le M$, then
\EQn{\label{eq:eikonal-K-truncation-diff}
\norm{K_{\mu_2}(X)(t)-K_{\mu_1}(X)(t)}
_{\A^4_{\rho_-}}
\lsm_{\mu_1,\mu_2,R,M}
t^{1-\al-\sigma_0-\kappa}(1+\log t)^N.
}

\item \emph{(Compatibility of the construction parameters)}.
For $j=1,2$, let $T_j\ge2$ and
$0\le\zeta_{j,-}<\zeta_{j,+}<\rho_1$. Suppose that the
same real-valued function $X$ belongs to
$\mathcal F_{T_j}^\kappa(\zeta_{j,+})$ and that, for each $j$, the
double series in \eqref{eq:eikonal-K-double-series} converges absolutely
in $\mathcal F_{T_j}^{\kappa+\al-1}(\zeta_{j,-})$. Denote its sum
on $[T_j,\I)$ by $K^{(j)}_\mu(X)$. Then
\EQ{%\label{eq:eikonal-K-compatibility}
K^{(1)}_\mu(X)=K^{(2)}_\mu(X)
\quad\hbox{in }C([\max\{T_1,T_2\},\I);\A^4_\zeta)
}
for every
$0\le\zeta\le\min\{\zeta_{1,-},\zeta_{2,-}\}$.
\end{enumerate}
\end{lem}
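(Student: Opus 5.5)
The plan is to prove (i) by induction on $\ell$, then obtain (ii)--(v) by summing the resulting bounds, using the same radius-splitting device as in Lemma \ref{lem:ode-picard-convergence}.

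\textbf{Step (i).} For $\ell=0$, take the $\A^4_{\zeta_0}$ norm inside the $\theta$-integral. By the algebra property \eqref{eq:fourier-algebra}, the estimate \eqref{eq:eikonal-amplitude-small} for $e^{p\re\Phi^{(\mu)}}=e^{p\re L^{(\mu)}}$ (at radius $\rho_1\ge\zeta_0$), and \eqref{eq:wiener-real-conjugate} for the real parts, the integrand is bounded by $C\ep\,\theta^{-\al}\frac{(C_4p)^m}{m!}\prod_j\norm{G_j(\theta)}_{\A^4_{\zeta_0}}$. This fixes $C_4$ as an algebra constant.

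For the inductive step, write $K_{m,\ell+1}=-J_PK_{m,\ell}$. By \eqref{eq:analytic-first-order-product} and \eqref{eq:eikonal-profile-bounds},
\EQ{
\norm{P_\mu(s)H}_{\A^4_{\zeta_0}}\le C_{\mu,R}\,s^{-1-\al}(\zeta_1-\zeta_0)^{-1}\norm{H}_{\A^4_{\zeta_1}}.
}
Apply the induction hypothesis to $K_{m,\ell}(s_1)$ with the shifted radii $\zeta_1<\dots<\zeta_{\ell+1}$. Then Fubini produces the ordered simplex integral in \eqref{eq:eikonal-K-flexible-estimate}, and one more power of $C_{\mu,R}$ appears.

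\textbf{Steps (ii)--(iii).} For (ii), insert $\norm{X(\theta)}\le M\theta^{-\kappa}$ and choose equally spaced radii $\zeta_h=\rho_-+\frac{h}{\ell}(\rho_+-\rho_-)$. The $\theta$-integral gives $\lsm s_\ell^{1-\al-m\kappa}$, which is allowed because $\al+\kappa>1$. The simplex integral of $s^{-1-\al}$ factors is bounded by $t^{-\ell\al}\al^{-\ell}/\ell!$, and this comes with a factor $t^{1-\al-m\kappa}\le t^{1-\al-\kappa}$.

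The gap product equals $(\ell/(\rho_+-\rho_-))^\ell$. Combining this with $\ell^\ell/\ell!\le e^\ell$ gives a geometric series in $eC_{\mu,R}\al^{-1}T^{-\al}(\rho_+-\rho_-)^{-1}$, which is $<1$ under \eqref{eq:eikonal-K-smallness-radius} after enlarging the constant. Summing $\sum_m a_mM^m<\I$ then yields \eqref{eq:eikonal-K-tail-bound}.

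To see that the sum solves \eqref{eq:eikonal-K-integral}, telescope: $\sum_\ell K_{m,\ell}=K_{m,0}-J_P\sum_\ell K_{m,\ell}$. Interchanging $J_P$ with the sum requires absolute convergence at a slightly larger radius, which is why the identity only holds at radius $\rho'<\rho_-$. Continuity in time is inherited from each term.

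For (iii), use multilinearity: $X_1^m-X_2^m$ is a telescoping sum of $m$ terms, each containing one factor of $X_1-X_2$. The same bounds apply with $\theta^{-m\kappa}$ replaced by $\theta^{-(m-1)\kappa-\sigma}$, and the requirement $\sigma>1-\al$ keeps the integral convergent.

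\textbf{Step (iv).} Write the difference of the two fixed-point equations. $K_{\mu_2}-K_{\mu_1}$ solves \eqref{eq:eikonal-K-integral} for $P_{\mu_2}$, with forcing
\EQ{
(F_{\mu_2}-F_{\mu_1})(X)-(P_{\mu_2}-P_{\mu_1})K_{\mu_1}(X).
}
By \eqref{eq:eikonal-separated-profile-diff} and the mean value theorem for the exponential in the Wiener algebra, each forcing piece carries an extra factor $t^{-\sigma_0}(1+\log t)^N$. The first piece gives $t^{1-\al-\kappa-\sigma_0}$ after integration. The second gives $s^{-1-\al-\sigma_0}\cdot s^{1-\al-\kappa}$, which is even better.

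The resulting linear transport iteration is then resummed exactly as in (ii). The extra logarithms are harmless: integrating $s^{-a}(\log s)^N$ keeps the form $t^{1-a}(\log t)^N$, and the three contributions are what account for the constant $1/12$.

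\textbf{Step (v).} Both sums consist of the same terms $K_{m,\ell}(X,\dots,X)(t)$ for $t\ge\max T_j$. These terms do not depend on $T_j$, since they are integrals from $t$ to $\infty$. Absolute convergence at the smaller radius then identifies the limits.

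The main obstacle is the bookkeeping in (ii): showing that the loss of one radius gap per transport integration, i.e. $(\ell/\delta)^\ell$, is beaten by the time-simplex factor $t^{-\ell\al}/\ell!$. The device that makes this work is the $\ell^\ell/\ell!\le e^\ell$ estimate combined with smallness of $T^{-\al}$.
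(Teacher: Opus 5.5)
Your proposal is correct and follows the paper's proof essentially step by step. The shared steps are: the ordered expansion of $K_{m,\ell}=(-J_P)^\ell K_{m,0}$ with one radius gap per transport integration; equally spaced radii, so that the gap product against the $1/\ell!$ from the time simplex is geometric under \eqref{eq:eikonal-K-smallness-radius}; finite telescoping to obtain \eqref{eq:eikonal-K-integral} at strictly smaller radius; and a linear Neumann-series argument for (iv). Your multilinear telescoping in (iii), in place of the integral mean-value formula for $e^{pX_1}-e^{pX_2}$, and your transporting with $P_{\mu_2}$ instead of $P_{\mu_1}$ in (iv) are cosmetic changes.

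In (iv) you should state explicitly that identifying $H:=K_{\mu_2}(X)-K_{\mu_1}(X)$ with the Neumann series of the forcing requires the remainder $(-J_{P})^nH\to0$. This follows from the a priori bound on $H$ at an intermediate radius supplied by (ii). Also, the constant $1/12$ comes from splitting $[\rho_-,\rho_+]$ into three equal radius sub-gaps, each carrying the $1/4$ condition of (ii), not from the number of forcing terms.
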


\begin{proof}
For \textup{(i)}, the radius-gap estimate and
\eqref{eq:eikonal-profile-bounds} give, whenever
$0\le\zeta_-<\zeta_+<\rho_1$,
\EQn{\label{eq:eikonal-P-estimate}
\norm{P_\mu(t)H}_{\A^4_{\zeta_-}}
\le C_{\mu,R}t^{-1-\al}(\zeta_+-\zeta_-)^{-1}
\norm{H}_{\A^4_{\zeta_+}}.
}
By \eqref{eq:eikonal-F-expansion} and
\eqref{eq:eikonal-amplitude-small}, for every $0\le\zeta<\rho_1$,
\EQn{\label{eq:eikonal-Fm-estimate}
\norm{F_{\mu,m}(t;H_1,\ldots,H_m)}_{\A^4_\zeta}
&\le t^{-\al}\frac{p^m}{m!}C_4^m
\norm{e^{p\re\Phi^{(\mu)}(t)}}_{\A^4_\zeta}
\prod_{j=1}^m\norm{H_j(t)}_{\A^4_\zeta}
\\
&\le C_{\mu,R}\ep a_m t^{-\al}
\prod_{j=1}^m\norm{H_j(t)}_{\A^4_\zeta}.
}
Induction in \eqref{eq:eikonal-Kml-def} gives
\EQ{%\label{eq:eikonal-Kml-iteration}
K_{m,\ell}=(-J_P)^\ell K_{m,0},
\qquad \ell\ge0,
}
and hence
\EQn{\label{eq:eikonal-K-ordered-expansion}
&K_{m,\ell}(G_1,\ldots,G_m)(t)
\\
&=(-1)^\ell
\int_{t\le s_1\le\cdots\le s_\ell\le\theta<\I}
P_\mu(s_1)\cdots P_\mu(s_\ell)
F_{\mu,m}(\theta;G_1(\theta),\ldots,G_m(\theta))
\dd\theta\dd s_\ell\cdots\dd s_1.
}
For $t\le s_1\le\cdots\le s_\ell\le\theta$, successive applications
of \eqref{eq:eikonal-P-estimate} along the prescribed radius chain give
\EQn{\label{eq:eikonal-K-P-iteration}
&\norm{P_\mu(s_1)\cdots P_\mu(s_\ell)
F_{\mu,m}(\theta;G_1(\theta),\ldots,G_m(\theta))}
_{\A^4_{\zeta_0}}
\\
&\quad\le C_{\mu,R}^\ell
(s_1\cdots s_\ell)^{-1-\al}
\mbrk{\prod_{h=1}^\ell(\zeta_h-\zeta_{h-1})}^{-1} 
\norm{F_{\mu,m}(\theta;G_1(\theta),\ldots,G_m(\theta))}
_{\A^4_{\zeta_\ell}}.
}
Moreover, \eqref{eq:eikonal-Fm-estimate} yields
\EQn{\label{eq:eikonal-K-terminal-estimate}
\norm{F_{\mu,m}(\theta;G_1(\theta),\ldots,G_m(\theta))}
_{\A^4_{\zeta_\ell}} \le C_{\mu,R}\ep a_m\theta^{-\al}
\prod_{j=1}^m\norm{G_j(\theta)}_{\A^4_{\zeta_\ell}}.
}
Combining \eqref{eq:eikonal-K-ordered-expansion},
\eqref{eq:eikonal-K-P-iteration}, and
\eqref{eq:eikonal-K-terminal-estimate} proves
\eqref{eq:eikonal-K-flexible-estimate}.

For \textup{(ii)}, rewrite \eqref{eq:eikonal-K-integral} as
\EQn{\label{eq:eikonal-K-integral-expanded}
K_\mu(X)(t)
&=\int_t^\I s^{-\al}e^{p\re\Phi^{(\mu)}(s)}\brko{e^{pX(s)}-1}\dd s
\\
&\quad-\int_t^\I s^{-1-\al}\nabla S^{(\mu)}(s^{-\al})\cdot\nabla K_\mu(X)(s)\dd s.
}
Then \eqref{eq:eikonal-K-integral-expanded} becomes
\EQn{\label{eq:eikonal-K-operator-form}
\brko{I+J_P}K_\mu(X)(t)
=\int_t^\I s^{-\al}e^{p\re\Phi^{(\mu)}(s)}\brko{e^{pX(s)}-1}\dd s.
}
For integers $N\ge1$ and $J\ge0$, the recursion in
\eqref{eq:eikonal-Kml-def} gives
\EQn{\label{eq:eikonal-K-finite-telescoping}
&\brko{I+J_P}\sum_{m=1}^N\sum_{\ell=0}^JK_{m,\ell}(X,\ldots,X)
\\
&\quad=\sum_{m=1}^N\sum_{\ell=0}^J
\mbrko{K_{m,\ell}(X,\ldots,X)+J_PK_{m,\ell}(X,\ldots,X)}
\\
&\quad=\sum_{m=1}^N\sum_{\ell=0}^J
\mbrko{K_{m,\ell}(X,\ldots,X)-K_{m,\ell+1}(X,\ldots,X)}
\\
&\quad=\sum_{m=1}^NK_{m,0}(X,\ldots,X)
-\sum_{m=1}^NK_{m,J+1}(X,\ldots,X).
}
The passage $J\ra\I$ requires
\EQn{\label{eq:eikonal-K-remainder-condition}
\lim_{J\ra\I}\normB{\sum_{m=1}^NK_{m,J+1}(X,\ldots,X)}
_{\mathcal F_T^{\kappa+\al-1}(\rho_-)}=0,
\qquad N\ge1.
}
We prove \eqref{eq:eikonal-K-remainder-condition} quantitatively below. By \eqref{eq:eikonal-F-expansion} and \eqref{eq:eikonal-Kml-def}, the right-hand side of \eqref{eq:eikonal-K-operator-form} has the expansion
\EQn{\label{eq:eikonal-K-forcing-series}
&\int_t^\I s^{-\al}e^{p\re\Phi^{(\mu)}(s)}\brko{e^{pX(s)}-1}\dd s
\\
&=\int_t^\I\sum_{m\ge1}F_{\mu,m}(s;X(s),\ldots,X(s))\dd s
=\sum_{m\ge1}K_{m,0}(X,\ldots,X)(t).
}
Assuming \eqref{eq:eikonal-K-remainder-condition} for the moment, we
first let $J\ra\I$ and then $N\ra\I$ in
\eqref{eq:eikonal-K-finite-telescoping}. By
\eqref{eq:eikonal-K-forcing-series}, we obtain formally
\EQ{%\label{eq:eikonal-K-formal-solution}
&\brko{I+J_P}\sum_{m\ge1}\sum_{\ell\ge0}K_{m,\ell}(X,\ldots,X)(t)
\\
&\quad=\sum_{m\ge1}K_{m,0}(X,\ldots,X)(t)
=\int_t^\I s^{-\al}e^{p\re\Phi^{(\mu)}(s)}\brko{e^{pX(s)}-1}\dd s.
}
Thus \eqref{eq:eikonal-K-double-series} is the formal solution of
\eqref{eq:eikonal-K-operator-form}. The estimates below justify the
passage $J\ra\I$, followed by $N\ra\I$, in
\eqref{eq:eikonal-K-finite-telescoping} at every strictly smaller
analytic radius.

The coefficients $a_m$ satisfy
\EQn{\label{eq:eikonal-am-sum}
\sum_{m\ge1}a_mR_1^m=e^{C_4pR_1}-1<\I,
\qquad 0\le R_1<\I.
}

To prove \eqref{eq:eikonal-K-tail-bound}, let $\ell\ge1$ and choose
\EQ{%\label{eq:eikonal-K-equal-radii}
\zeta_h:=\rho_-+\frac{h}{\ell+1}(\rho_+-\rho_-),
\qquad h=0,1,\ldots,\ell+1.
}
Thus $\zeta_0=\rho_-$, $\zeta_{\ell+1}=\rho_+$, and
\EQn{\label{eq:eikonal-K-equal-gap}
\zeta_h-\zeta_{h-1}
=\frac{\rho_+-\rho_-}{\ell+1},
\qquad h=1,\ldots,\ell+1.
}
Applying \eqref{eq:eikonal-K-flexible-estimate} along this chain,
using \eqref{eq:eikonal-K-equal-gap} and monotonicity from
$\rho_+$ to $\zeta_\ell$, gives
\EQn{\label{eq:eikonal-K-ell-estimate}
&\norm{K_{m,\ell}(H_1,\ldots,H_m)(t)}
_{\A^4_{\rho_-}}
\\
&\le C_{\mu,R}^{\ell+1}\ep a_m(\ell+1)^\ell
(\rho_+-\rho_-)^{-\ell}
\\
&\quad\times\int_{t\le s_1\le\cdots\le s_\ell\le\theta<\I}
s_1^{-1-\al}\cdots s_\ell^{-1-\al}\theta^{-\al}
\prod_{j=1}^m\norm{H_j(\theta)}_{\A^4_{\rho_+}}
\dd\theta\dd s_\ell\cdots\dd s_1.
}
The case $\ell=0$ follows directly from
\eqref{eq:eikonal-K-flexible-estimate}; hence
\eqref{eq:eikonal-K-ell-estimate} holds for every $\ell\ge0$, with
the usual empty-product convention. Now set $H_j=X$. Then
\EQ{
\norm{X(\theta)}_{\A^4_{\rho_+}}^m
\le M^m\theta^{-m\kappa}.
}
Successive integration, together with $m\ge1$ and $\kappa>1-\al$, gives
\EQn{\label{eq:eikonal-K-time-simplex}
\int_{t\le s_1\le\cdots\le s_\ell\le\theta<\I}
s_1^{-1-\al}\cdots s_\ell^{-1-\al}\theta^{-\al-m\kappa}
\dd\theta\dd s_\ell\cdots\dd s_1
\le C_{\al,\kappa}^{\ell+1}(\ell!)^{-1}
 t^{1-\al-m\kappa-\ell\al}.
}
By \eqref{eq:eikonal-K-ell-estimate} and \eqref{eq:eikonal-K-time-simplex}, together with
\EQ{
\frac{(\ell+1)^\ell}{\ell!}\le C^\ell,
\qquad
t^{-\ell\al}\le T^{-\ell\al},
\qquad
t^{-(m-1)\kappa}\le1,
}
we obtain
\EQ{%\label{eq:eikonal-Kml-X-bound}
\norm{K_{m,\ell}(X,\ldots,X)(t)}_{\A^4_{\rho_-}}
\lsm_{\mu,R}\ep a_mM^m
\brko{\frac{C_{\mu,R}T^{-\al}}{\rho_+-\rho_-}}^\ell
 t^{1-\al-m\kappa},
}
and hence
\EQn{\label{eq:eikonal-Kml-F-bound}
\norm{K_{m,\ell}(X,\ldots,X)}
_{\mathcal F_T^{\kappa+\al-1}(\rho_-)}
\lsm_{\mu,R}\ep a_mM^m
\brko{\frac{C_{\mu,R}T^{-\al}}{\rho_+-\rho_-}}^\ell.
}
By \eqref{eq:eikonal-Kml-F-bound} and \eqref{eq:eikonal-K-smallness-radius},
\EQn{\label{eq:eikonal-K-remainder-decay}
&\sup_{N\ge1}\norm{\sum_{m=1}^NK_{m,J+1}(X,\ldots,X)}
_{\mathcal F_T^{\kappa+\al-1}(\rho_-)}
\\
&\quad\le\sum_{m\ge1}\norm{K_{m,J+1}(X,\ldots,X)}
_{\mathcal F_T^{\kappa+\al-1}(\rho_-)}
\\
&\quad\lsm_{\mu,R}\ep\brko{\sum_{m\ge1}a_mM^m}4^{-(J+1)}\ra0,
\qquad J\ra\I.
}
Thus \eqref{eq:eikonal-K-remainder-condition} holds. Moreover, \eqref{eq:eikonal-K-smallness-radius} gives
\EQn{\label{eq:eikonal-K-absolute-sum}
\sum_{m\ge1}\sum_{\ell\ge0}
\norm{K_{m,\ell}(X,\ldots,X)}
_{\mathcal F_T^{\kappa+\al-1}(\rho_-)}
\lsm_{\mu,R}\ep
\brko{\sum_{m\ge1}a_mM^m}
\brko{\sum_{\ell\ge0}4^{-\ell}}<\I.
}
Thus the double series in \eqref{eq:eikonal-K-double-series} converges
absolutely in $\mathcal F_T^{\kappa+\al-1}(\rho_-)$, and
\eqref{eq:eikonal-K-tail-bound} holds. Fix
$0\le\rho'<\rho_-$. By \eqref{eq:eikonal-P-estimate},
\eqref{eq:eikonal-Fm-estimate}, and
\eqref{eq:eikonal-K-absolute-sum},
\EQn{\label{eq:eikonal-K-limit-justification}
&\sum_{m\ge1}\sum_{\ell\ge0}
\norm{J_PK_{m,\ell}(X,\ldots,X)}_{\mathcal F_T^{\kappa+\al-1}(\rho')}
\\
&\quad\lsm_{\mu,R}T^{-\al}(\rho_--\rho')^{-1}
\sum_{m\ge1}\sum_{\ell\ge0}
\norm{K_{m,\ell}(X,\ldots,X)}
_{\mathcal F_T^{\kappa+\al-1}(\rho_-)}<\I,
\\
&\sum_{m\ge1}\int_t^\I
\norm{F_{\mu,m}(s;X(s),\ldots,X(s))}
_{\A^4_{\rho_+}}\dd s
\lsm_{\mu,R}\ep\sum_{m\ge1}a_mM^m
\int_t^\I s^{-\al-m\kappa}\dd s<\I.
}
By \eqref{eq:eikonal-K-absolute-sum},
\eqref{eq:eikonal-K-remainder-decay}, and
\eqref{eq:eikonal-K-limit-justification}, we may let $J\ra\I$ and
then $N\ra\I$ in \eqref{eq:eikonal-K-finite-telescoping}. By
\eqref{eq:eikonal-K-forcing-series}, at the radius $\rho'$,
\EQ{
\brko{I+J_P}K_\mu(X)(t)
=\sum_{m\ge1}K_{m,0}(X,\ldots,X)(t)
=\int_t^\I s^{-\al}e^{p\re\Phi^{(\mu)}(s)}\brko{e^{pX(s)}-1}\dd s.
}
Thus $K_\mu(X)$ solves \eqref{eq:eikonal-K-integral} in
$C([T,\I);\A^4_{\rho'})$ for every
$0\le\rho'<\rho_-$.

For \textup{(iii)}, since $P_\mu$ is independent of $X$, subtracting
\eqref{eq:eikonal-K-integral} for $X_1$ and $X_2$ amounts to applying
the same linear iteration to $F_\mu(X_1)-F_\mu(X_2)$. The identity
\EQ{
e^{pX_1}-e^{pX_2}
=p(X_1-X_2)\int_0^1
e^{p(\vartheta X_1+(1-\vartheta)X_2)}\dd\vartheta
}
and Lemma \ref{lem:fourier-algebra} give
\EQn{\label{eq:eikonal-F-lip-proof}
\norm{F_\mu(t,X_1(t))-F_\mu(t,X_2(t))}
_{\A^4_{\rho_+}}
\lsm_{\mu,R,M,\sigma}\ep t^{-\al-\sigma}
\norm{X_1-X_2}_{\mathcal F_T^{\sigma}(\rho_+)}.
}
Since $\sigma>1-\al$, the corresponding time integral is finite. More precisely, for every $\ell\ge0$,
\EQn{\label{eq:eikonal-K-lip-time-simplex}
\int_{t\le s_1\le\cdots\le s_\ell\le\theta<\I}
s_1^{-1-\al}\cdots s_\ell^{-1-\al}\theta^{-\al-\sigma}
\dd\theta\dd s_\ell\cdots\dd s_1
\le C_{\al,\sigma}^{\ell+1}(\ell!)^{-1}
 t^{1-\al-\sigma-\ell\al}.
}
Dividing $[\rho_-,\rho_+]$ into $\ell+1$ equal parts and applying
\eqref{eq:eikonal-P-estimate}, \eqref{eq:eikonal-F-lip-proof}, and
\eqref{eq:eikonal-K-lip-time-simplex}, we obtain
\EQn{\label{eq:eikonal-K-lip-ell-bound}
&\normB{(-J_P)^\ell\brko{\int_{t}^\I
\brko{F_\mu(s,X_1(s))-F_\mu(s,X_2(s))}\dd s}}
_{\mathcal F_T^{\sigma+\al-1}(\rho_-)}
\\
&\quad\lsm_{\mu,R,M,\sigma}\ep
\norm{X_1-X_2}_{\mathcal F_T^{\sigma}(\rho_+)}
\brko{\frac{C_{\mu,R}T^{-\al}}{\rho_+-\rho_-}}^\ell.
}
The same iteration gives the difference series
\EQ{%\label{eq:eikonal-K-lip-series}
K_\mu(X_1)-K_\mu(X_2)
=\sum_{\ell\ge0}(-J_P)^\ell
\brko{\int_{\,\cdot}^\I
\brko{F_\mu(s,X_1(s))-F_\mu(s,X_2(s))}\dd s}
}
with absolute convergence in
$\mathcal F_T^{\sigma+\al-1}(\rho_-)$. Summing
\eqref{eq:eikonal-K-lip-ell-bound} over $\ell$ and using
\eqref{eq:eikonal-K-smallness-radius} proves
\eqref{eq:eikonal-K-tail-lip}.

For \textup{(iv)}, Corollary
\ref{cor:eikonal-auxiliary-bounds}\textup{(ii)} gives
\eqref{eq:eikonal-separated-profile-diff}. Let
\EQ{%\label{eq:eikonal-K-truncation-radii}
\zeta_1=\rho_-+\frac{\rho_+-\rho_-}3,
\qquad
\zeta_2=\rho_-+\frac{2(\rho_+-\rho_-)}3.
}
Choose the constant in \eqref{eq:eikonal-K-two-order-smallness} to dominate the constants for both truncation orders. Since
\EQ{
\zeta_1-\rho_-=\zeta_2-\zeta_1=\rho_+-\zeta_2
=\frac{\rho_+-\rho_-}3,
}
condition \eqref{eq:eikonal-K-two-order-smallness} gives
\EQ{%\label{eq:eikonal-K-truncation-subgap-smallness}
C_{\mu_1,R}T^{-\al}(\zeta_2-\zeta_1)^{-1}\le\frac14,
\qquad
C_{\mu_2,R}T^{-\al}(\rho_+-\zeta_2)^{-1}\le\frac14.
}
Part \textup{(ii)} applies to $K_{\mu_1}(X)$ from $\zeta_2$ to $\zeta_1$ and to $K_{\mu_2}(X)$ from $\rho_+$ to $\zeta_2$.
In particular,
\EQn{\label{eq:eikonal-Kmu2-intermediate-radius}
\norm{K_{\mu_2}(X)(t)}_{\A^4_{\zeta_2}}
\lsm_{\mu_2,R,M}\ep t^{1-\al-\kappa}.
}
For every $0\le\rho'<\zeta_1$, both identities hold in $C([T,\I);\A^4_{\rho'})$ and take the form
\EQ{
K_{\mu_j}(X)(t)=\int_t^\I F_{\mu_j}(s,X(s))\dd s-
\int_t^\I P_{\mu_j}(s)K_{\mu_j}(X)(s)\dd s,
\qquad j=1,2.
}
Subtracting these identities gives
\EQn{\label{eq:eikonal-K-truncation-equation}
&\brko{K_{\mu_2}(X)-K_{\mu_1}(X)}(t)
\\
&=\int_t^\I \brko{F_{\mu_2}-F_{\mu_1}}(s,X(s))\dd s
-\int_t^\I P_{\mu_1}(s)\brko{K_{\mu_2}(X)-K_{\mu_1}(X)}(s)\dd s
\\
&\quad-\int_t^\I \brko{P_{\mu_2}-P_{\mu_1}}(s)K_{\mu_2}(X)(s)\dd s.
}
The separated profile estimate \eqref{eq:eikonal-separated-profile-diff} gives
\EQn{\label{eq:eikonal-F-truncation-diff}
\norm{\brko{F_{\mu_2}-F_{\mu_1}}(t,X(t))}
_{\A^4_{\zeta_1}}
\lsm_{\mu_1,\mu_2,R,M}t^{-\al-\sigma_0-\kappa}(1+\log t)^N.
}
Moreover, \eqref{eq:radius-gap-estimate}, \eqref{eq:eikonal-separated-profile-diff}, and \eqref{eq:eikonal-Kmu2-intermediate-radius} yield
\EQn{\label{eq:eikonal-P-truncation-diff}
&\norm{\brko{P_{\mu_2}-P_{\mu_1}}(t)K_{\mu_2}(X)(t)}
_{\A^4_{\zeta_1}}
\\
&\quad\lsm_{\mu_1,\mu_2,R,M}(\zeta_2-\zeta_1)^{-1}
 t^{-1-\al-\sigma_0}t^{1-\al-\kappa}(1+\log t)^N
\\
&\quad\lsm_{\mu_1,\mu_2,R,M}t^{-\al-\sigma_0-\kappa}(1+\log t)^N,
}
where the last line uses \(t^{-2\al}\le t^{-\al}\) for \(t\ge1\). Since $\al+\sigma_0+\kappa>1$, integration of \eqref{eq:eikonal-F-truncation-diff} and \eqref{eq:eikonal-P-truncation-diff} gives
\EQn{\label{eq:eikonal-K-truncation-forcing-integrals}
&\normB{\int_t^\I \brko{F_{\mu_2}-F_{\mu_1}}(s,X(s))\dd s
-\int_t^\I \brko{P_{\mu_2}-P_{\mu_1}}(s)K_{\mu_2}(X)(s)\dd s}
_{\A^4_{\zeta_1}}
\\
&\quad \lsm_{\mu_1,\mu_2,R,M}
 t^{1-\al-\sigma_0-\kappa}(1+\log t)^N.
}
Let
\EQ{
G(t)&:=\int_t^\I \brko{F_{\mu_2}-F_{\mu_1}}(s,X(s))\dd s
-\int_t^\I \brko{P_{\mu_2}-P_{\mu_1}}(s)K_{\mu_2}(X)(s)\dd s.
\\
H(t)&:=K_{\mu_2}(X)(t)-K_{\mu_1}(X)(t).
}
Then \eqref{eq:eikonal-K-truncation-equation} is
\EQ{
H=G-J_{P_{\mu_1}}H,
}
Here $J_{P_{\mu_1}}H(t)=\int_t^\I P_{\mu_1}(s)H(s)\dd s$. For $n\ge1$, iteration along radii between $\rho_-$ and $\zeta_1$ gives
\EQ{
H=\sum_{\ell=0}^{n-1}(-J_{P_{\mu_1}})^\ell G+(-J_{P_{\mu_1}})^nH
}
in $\mathcal F_T^{\kappa+\al-1}(\rho_-)$, and the estimates in \textup{(ii)} give
\EQ{
\norm{(-J_{P_{\mu_1}})^nH}_{\mathcal F_T^{\kappa+\al-1}(\rho_-)}
\lsm 4^{-n}\norm{H}_{\mathcal F_T^{\kappa+\al-1}(\zeta_1)}
\ra0.
}
Letting $n\ra\I$ gives
\EQn{\label{eq:eikonal-K-truncation-series}
K_{\mu_2}(X)-K_{\mu_1}(X)
=\sum_{\ell\ge0}(-J_{P_{\mu_1}})^\ell G.
}
For $\ell\ge1$, the ordered time integral arising from $\ell$ applications of $J_{P_{\mu_1}}$ satisfies
\EQn{\label{eq:eikonal-K-truncation-time-simplex}
&\int_{t\le s_1\le\cdots\le s_\ell<\I}
s_1^{-1-\al}\cdots s_\ell^{-1-\al}
 s_\ell^{1-\al-\sigma_0-\kappa}(1+\log s_\ell)^N
\dd s_\ell\cdots\dd s_1
\\
&\quad\le \frac{C^\ell}{\ell!}
 t^{1-\al-\sigma_0-\kappa-\ell\al}(1+\log t)^N.
}
For $\ell=0$, use
\eqref{eq:eikonal-K-truncation-forcing-integrals} directly. Dividing
$[\rho_-,\zeta_1]$ into $\ell+1$ equal parts and applying
\eqref{eq:eikonal-P-estimate},
\eqref{eq:eikonal-K-truncation-forcing-integrals}, and
\eqref{eq:eikonal-K-truncation-time-simplex} yields
\EQn{\label{eq:eikonal-K-truncation-ell-bound}
\norm{(-J_{P_{\mu_1}})^\ell G(t)}_{\A^4_{\rho_-}}
&\lsm_{\mu_1,\mu_2,R,M}
\brko{\frac{C_{\mu_1,R}T^{-\al}}{\zeta_1-\rho_-}}^\ell
 t^{1-\al-\sigma_0-\kappa}(1+\log t)^N
\\
&\lsm_{\mu_1,\mu_2,R,M}4^{-\ell}
 t^{1-\al-\sigma_0-\kappa}(1+\log t)^N.
}
The series in \eqref{eq:eikonal-K-truncation-series} is therefore
absolutely convergent at the radius $\rho_-$. Summing
\eqref{eq:eikonal-K-truncation-ell-bound} over $\ell$ proves
\eqref{eq:eikonal-K-truncation-diff}.

For \textup{(v)}, let $T_0:=\max\{T_1,T_2\}$ and fix
$0\le\zeta\le\min\{\zeta_{1,-},\zeta_{2,-}\}$. For
$j=1,2$, the monotonicity of the analytic norms and $T_0\ge T_j$ give
\EQ{
&\sum_{m\ge1}\sum_{\ell\ge0}
\sup_{t\ge T_0}t^{\kappa+\al-1}
\norm{K_{m,\ell}(X,\ldots,X)(t)}_{\A^4_\zeta}
\\
&\quad\le
\sum_{m\ge1}\sum_{\ell\ge0}
\norm{K_{m,\ell}(X,\ldots,X)}
_{\mathcal F_{T_j}^{\kappa+\al-1}(\zeta_{j,-})}<\I.
}
The terms defined in \eqref{eq:eikonal-Kml-def} depend on neither $T_j$ nor the radii. Consequently,
\EQ{
K^{(1)}_\mu(X)
=\lim_{N,J\ra\I}\sum_{m=1}^N\sum_{\ell=0}^J
K_{m,\ell}(X,\ldots,X)
=K^{(2)}_\mu(X)
}
in $\mathcal F_{T_0}^{\kappa+\al-1}(\zeta)$, and hence in
$C([T_0,\I);\A^4_\zeta)$.
\end{proof}

\subsection{Estimates for iteration}%\label{sec:eikonal-iteration-estimates}

We first derive the expansion used in the Picard iteration. Define
\EQn{\label{eq:eikonal-normal-map}
(\widetilde{\TT}H)(t)=D_0(t)+\mathcal A^{(1)}(H)(t)+\mathcal A^{(2)}(H)(t),
}
where
\EQn{\label{eq:eikonal-normal-map-parts}
D_0(t)&:=-\int_t^\I U(t,s)N_{\rm res}(s)\dd s,
\\
\mathcal A^{(1)}(H)(t)&:=-\int_t^\I U(t,s)\mbrko{P_\mu(s)H(s)+N_{\rm fir}^{\rm sh}(s,H(s)+iK_\mu(\re H)(s))}\dd s
\\
&\quad-\int_t^\I U(t,s)N_{\rm qua}(s,H(s)+iK_\mu(\re H)(s))\dd s,
\\
\mathcal A^{(2)}(H)(t)&:=\frac12\int_t^\I U(t,s)s^{-2}\Dy K_\mu(\re H)(s)\dd s.
}

\begin{lem}[Duhamel form of the normal-form transformation]\label{lem:eikonal-Duhamel-conjugacy}
Let $T\ge2$, $\sigma>0$, and fix
\EQ{
0\le\rho'<\rho''<\rho<\rho_1.
}
Suppose that $H\in\mathcal F_T^\sigma(\rho)$, $\re H\in\mathcal F_T^\kappa(\rho)$, and $K=K_\mu(\re H)\in\mathcal F_T^{\kappa+\al-1}(\rho'')$ satisfies \eqref{eq:eikonal-K-integral} at radius $\rho''$. Then the integrals defining $\widetilde{\TT}H$ and $\TT_\mu(H+iK)$ converge in $C([T,\I);\A^4_{\rho'})$, and
\EQ{
\TT_\mu(H+iK)=\widetilde{\TT}H+iK
\quad\hbox{in }C([T,\I);\A^4_{\rho'}).
}
Consequently,
\EQn{\label{eq:eikonal-Duhamel-conjugacy}
H=\widetilde{\TT}H
\quad\Longleftrightarrow\quad
H+iK=\TT_\mu(H+iK)
\quad\hbox{in }C([T,\I);\A^4_{\rho'}).
}
\end{lem}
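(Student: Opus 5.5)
The plan is to compare the two Duhamel integrals term by term, after first checking that every integral makes sense at radius $\rho'$. Write $Z=H+iK$. Since $K$ is real-valued, $\re Z=\re H$. By \eqref{eq:Nmu-def} together with the splittings $N_{\rm tri}(t,Z)=-iF_\mu(t,\re H)$ and $N_{\rm fir}(t,Z)=P_\mu(t)H+iP_\mu(t)K+N_{\rm fir}^{\rm sh}(t,Z)$, we get the pointwise identity
\EQ{
N^{(\mu)}(s,H+iK)
&=N_{\rm res}(s)+P_\mu(s)H+N_{\rm fir}^{\rm sh}(s,Z)+N_{\rm qua}(s,Z)\\
&\quad+i\brko{P_\mu(s)K(s)-F_\mu(s,\re H(s))}.
}
Inserting this into \eqref{eq:eikonal-Duhamel-map} and comparing with \eqref{eq:eikonal-normal-map}--\eqref{eq:eikonal-normal-map-parts} reduces the claim to the single identity
\EQ{
-i\int_t^\I U(t,s)\brko{P_\mu(s)K(s)-F_\mu(s,\re H(s))}\dd s
=iK(t)+\frac12\int_t^\I U(t,s)s^{-2}\Dy K(s)\dd s .
}

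To prove this identity, I will use the differential form of \eqref{eq:eikonal-K-integral}, namely $\pd_sK=P_\mu K-F_\mu(s,\re H)$ with $K(\I)=0$, which holds at radius $\rho''$. I then integrate by parts in $s$ against $U(t,s)$. Since $\pd_sU(t,s)=-\frac{i}{2s^2}\Dy U(t,s)$, we have
\EQ{
\int_t^\I U(t,s)\pd_sK(s)\dd s=-K(t)+\frac{i}{2}\int_t^\I U(t,s)s^{-2}\Dy K(s)\dd s .
}
Here the boundary term at infinity vanishes because $\norm{K(s)}_{\A^4_{\rho''}}\lsm s^{1-\al-\kappa}\to0$ and $U$ is an isometry by \eqref{eq:U-isometry}. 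Multiplying by $-i$ gives exactly the required identity.

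Because $K$ is only known to be continuous in time, I would make this rigorous in integral form rather than by differentiating. Concretely, I substitute $K(s)=\int_s^\I(F_\mu-P_\mu K)\,\dd\theta$ into $\int_t^\I U(t,s)s^{-2}\Dy K(s)\,\dd s$. Then I apply Fubini and the identity $\int_t^\theta \frac{i}{2s^2}\Dy U(t,s)\,\dd s=U(t,t)-U(t,\theta)$, which follows from \eqref{eq:U-eq}.

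Convergence at radius $\rho'$ uses the radius gap $\rho''-\rho'$ and Lemma \ref{lem:fourier-algebra}(vii), which lets us pay the one or two derivatives in $\Dy K$, $P_\mu K$, $P_\mu H$, and the first-order and quadratic terms. We have the pointwise bound
\EQ{
\norm{s^{-2}\Dy K(s)}_{\A^4_{\rho'}}\lsm s^{-2}s^{1-\al-\kappa}.
}
This is integrable since $\kappa>1-\al$. The term $P_\mu K$ decays like $s^{-1-\al}s^{1-\al-\kappa}$. For $F_\mu$, \eqref{eq:eikonal-Fm-estimate} together with $\re H\in\mathcal F_T^\kappa(\rho)$ gives decay $s^{-\al-\kappa}$, which is integrable because $\al+\kappa>1$. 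The remaining terms are integrable because they carry the factor $s^{-2}$ and \eqref{eq:eikonal-profile-bounds} holds, with $H$ decaying like $s^{-\sigma}$ and $\sigma>0$. Continuity in $t$ follows from dominated convergence.

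The main obstacle is justifying the Fubini and integration-by-parts step at a single analytic radius with only time continuity. The absolute integrability of $\int_t^\I\int_s^\I \norm{s^{-2}\Dy(\cdots)(\theta)}\,\dd\theta\,\dd s$ at radius $\rho'$ handles this, since the double integral is bounded by $\int s^{-2}s^{1-\al-\kappa}\,\dd s<\I$. Finally, the equivalence \eqref{eq:eikonal-Duhamel-conjugacy} is immediate from $\TT_\mu(H+iK)=\widetilde\TT H+iK$ by subtracting $iK$ from both sides.
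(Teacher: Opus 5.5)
Your proposal is correct and follows essentially the same route as the paper: the same splitting of $N^{(\mu)}(s,H+iK)$ that isolates $B_\mu=F_\mu-P_\mu K$, and the same Fubini argument. That argument uses $i(U(t,s)-I)=\frac12\int_t^s r^{-2}U(t,r)\Dy\,\dd r$ to turn $iK(t)=i\int_t^\I B_\mu(s)\,\dd s$ into $i\int_t^\I U(t,s)B_\mu(s)\,\dd s-\frac12\int_t^\I U(t,s)s^{-2}\Dy K(s)\,\dd s$. Three small points to tidy: the $s$-derivative of $U(t,s)$ comes from the explicit multiplier rather than from \eqref{eq:U-eq}; $B_\mu$ should be placed at an intermediate radius $\rho'<\zeta<\rho''$ before applying $\Dy$, as the paper does; and $N_{\rm res}$ and $P_\mu H$ are integrable via \eqref{eq:Rmu-bound} and the factor $s^{-1-\al}$, not via an $s^{-2}$ factor.
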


\begin{proof}
Set
\EQ{
B_\mu(s):=F_\mu(s,\re H(s))-P_\mu(s)K(s).
}
The integral equation for $K$ reads
\EQ{
K(t)=\int_t^\I B_\mu(s)\dd s.
}
Choose $\rho'<\zeta<\rho''$. The analytic estimates above give
\EQ{
\norm{B_\mu(s)}_{\A^4_\zeta}
\lsm s^{-\al-\kappa},
\qquad
B_\mu\in L^1([T,\I);\A^4_\zeta).
}
Since
\EQ{
i\brko{U(t,s)-I}
=\frac12\int_t^s r^{-2}U(t,r)\Dy\dd r,
}
the radius-gap estimate gives
\EQ{
&\int_t^\I\int_t^s r^{-2}
\norm{\Dy B_\mu(s)}_{\A^4_{\rho'}}\dd r\dd s
\\
&\quad\lsm(\zeta-\rho')^{-2}t^{-1}
\int_t^\I\norm{B_\mu(s)}_{\A^4_\zeta}\dd s<\I.
}
Fubini's theorem therefore gives
\EQn{\label{eq:eikonal-K-propagated-identity}
iK(t)
=i\int_t^\I U(t,s)B_\mu(s)\dd s
-\frac12\int_t^\I U(t,s)s^{-2}\Dy K(s)\dd s.
}
Moreover,
\EQ{
\int_t^\I s^{-2}\norm{\Dy K(s)}_{\A^4_{\rho'}}\dd s
\lsm(\rho''-\rho')^{-2}\int_t^\I s^{-1-\kappa-\al}\dd s<\I.
}
The decomposition \eqref{eq:Nmu-def} gives, with $Z=H+iK$,
\EQ{
N^{(\mu)}(s,Z)
&=P_\mu(s)H(s)+N_{\rm res}(s)
+N_{\rm fir}^{\rm sh}(s,Z(s))+N_{\rm qua}(s,Z(s))
-iB_\mu(s).
}
The assumptions give $Z=H+iK\in\mathcal F_T^{\min\{\sigma,\kappa+\al-1\}}(\rho'')$. The radius-gap estimate, \eqref{eq:Rmu-bound}, \eqref{eq:eikonal-profile-bounds}, and the direct product estimates give
\EQ{
&\norm{N_{\rm res}(s)}_{\A^4_{\rho'}}
+\norm{P_\mu(s)H(s)}_{\A^4_{\rho'}}
+\norm{N_{\rm fir}^{\rm sh}(s,Z(s))}_{\A^4_{\rho'}}
\\
&\quad+\norm{N_{\rm qua}(s,Z(s))}_{\A^4_{\rho'}}
+s^{-2}\norm{\Dy K(s)}_{\A^4_{\rho'}}
\\
&\lsm s^{-1-\min\{\kappa,\al+\sigma\}}.
}
All terms are integrable. Substitution of \eqref{eq:eikonal-K-propagated-identity} into the two Duhamel maps proves the stated map identity and hence \eqref{eq:eikonal-Duhamel-conjugacy}.
\end{proof}

For the construction of the coefficients in this map, write
\EQ{%\label{eq:eikonal-qua-bilinear}
N_{\rm qua}(t;Z_1,Z_2):=\frac{i}{2t^2}\nabla Z_1\cdot\nabla Z_2
}
and
\EQ{
K_{m,\ell}(H):=K_{m,\ell}(H,\ldots,H).
}
The series expansion \eqref{eq:eikonal-K-double-series} reads
\EQn{\label{eq:eikonal-K-expanded-for-map}
K_\mu(\re H)=\sum_{\ell\ge0}\sum_{m\ge1}K_{m,\ell}(H).
}
By \eqref{eq:eikonal-K-expanded-for-map}, \eqref{eq:eikonal-normal-map-parts}, and the symmetry of $N_{\rm qua}$,
\EQnnsub{
\widetilde{\TT}H-D_0
&=-\int_t^\I U(t,s)P_\mu(s)H(s)\dd s,
\label{eq:eikonal-map-expanded-P}
\\
&\quad-\int_t^\I U(t,s)N_{\rm fir}^{\rm sh}(s,H(s))\dd s,
%\label{eq:eikonal-map-expanded-firH}
\\
&\quad-\int_t^\I U(t,s)N_{\rm qua}(s;H(s),H(s))\dd s,
%\label{eq:eikonal-map-expanded-quaHH}
\\
&\quad-\sum_{\ell\ge0}\sum_{m\ge1}\int_t^\I U(t,s)N_{\rm fir}^{\rm sh}\brko{s,iK_{m,\ell}(H)(s)}\dd s,
%\label{eq:eikonal-map-expanded-firK}
\\
&\quad-2\sum_{\ell\ge0}\sum_{m\ge1}\int_t^\I U(t,s)N_{\rm qua}\brko{s;H(s),iK_{m,\ell}(H)(s)}\dd s,
%\label{eq:eikonal-map-expanded-quaHK}
\\
&\quad-\sum_{\ell_1,\ell_2\ge0}\sum_{m_1,m_2\ge1}\int_t^\I U(t,s)N_{\rm qua}\brko{s,iK_{m_1,\ell_1}(H)(s),iK_{m_2,\ell_2}(H)(s)}\dd s,
%\label{eq:eikonal-map-expanded-quaKK}
\\
&\quad+\frac12\sum_{\ell\ge0}\sum_{m\ge1}\int_t^\I U(t,s)s^{-2}\Dy K_{m,\ell}(H)(s)\dd s.
\label{eq:eikonal-map-expanded-DeltaK}
}
We now group \eqref{eq:eikonal-map-expanded-P}--\eqref{eq:eikonal-map-expanded-DeltaK} according to the number of applications of \eqref{eq:radius-gap-estimate}. Define the first layer by
\EQ{%\label{eq:eikonal-P1-definition}
\mathcal P_1(H)(t)
&:=-\int_t^\I U(t,s)P_\mu(s)H(s)\dd s
-\int_t^\I U(t,s)N_{\rm fir}^{\rm sh}(s,H(s))\dd s
\\
&\quad-\int_t^\I U(t,s)N_{\rm qua}(s;H(s),H(s))\dd s
\\
&\quad-\sum_{m\ge1}\int_t^\I U(t,s)N_{\rm fir}^{\rm sh}\brko{s,iK_{m,0}(H)(s)}\dd s
\\
&\quad-2\sum_{m\ge1}\int_t^\I U(t,s)N_{\rm qua}\brko{s;H(s),iK_{m,0}(H)(s)}\dd s
\\
&\quad-\sum_{m_1,m_2\ge1}\int_t^\I U(t,s)
N_{\rm qua}\brko{s,iK_{m_1,0}(H)(s),iK_{m_2,0}(H)(s)}\dd s.
}
For $q\ge2$, define
\EQ{%\label{eq:eikonal-Pq-definition}
\mathcal P_q(H)(t)
&:=-\sum_{m\ge1}\int_t^\I U(t,s)N_{\rm fir}^{\rm sh}\brko{s,iK_{m,q-1}(H)(s)}\dd s
\\
&\quad-2\sum_{m\ge1}\int_t^\I U(t,s)N_{\rm qua}\brko{s;H(s),iK_{m,q-1}(H)(s)}\dd s
\\
&\quad-\sum_{\substack{\ell_1,\ell_2\ge0\\ \ell_1+\ell_2+1=q}}\sum_{m_1,m_2\ge1}\int_t^\I U(t,s)
N_{\rm qua}\brko{s,iK_{m_1,\ell_1}(H)(s),iK_{m_2,\ell_2}(H)(s)}\dd s
\\
&\quad+\frac12\sum_{m\ge1}\int_t^\I U(t,s)s^{-2}\Dy K_{m,q-2}(H)(s)\dd s.
}
Thus
\EQn{\label{eq:eikonal-map-q-layers}
\widetilde{\TT}H=D_0+\sum_{q\ge1}\mathcal P_q(H).
}
We construct a fixed point as a formal Picard series
\EQ{
H=\sum_{n\ge0}H_n.
}
Since $K_{m,\ell}$ is real \(m\)-linear by \eqref{eq:eikonal-Kml-def}, the terms of degree $m$ in $\mathcal P_q(H)$ can be written directly, retaining the order of their arguments. For $m=q=1$, define
\EQn{\label{eq:eikonal-P11-explicit}
\mathcal P_{1,1}(G_1)(t)
&:=-\int_t^\I U(t,s)P_\mu(s)G_1(s)\dd s
-\int_t^\I U(t,s)N_{\rm fir}^{\rm sh}(s,G_1(s))\dd s
\\
&\quad-\int_t^\I U(t,s)N_{\rm fir}^{\rm sh}\brko{s,iK_{1,0}(G_1)(s)}\dd s.
}
For $m\ge2$, define
\EQn{\label{eq:eikonal-Pm1-explicit}
\mathcal P_{m,1}(G_1,\ldots,G_m)(t)
&:=-\cha_{\{m=2\}}\int_t^\I U(t,s)N_{\rm qua}\brko{s;G_1(s),G_2(s)}\dd s
\\
&\quad-\int_t^\I U(t,s)N_{\rm fir}^{\rm sh}\brko{s,iK_{m,0}(G_1,\ldots,G_m)(s)}\dd s
\\
&\quad-2\int_t^\I U(t,s)
\\
&\qquad N_{\rm qua}\brko{s;G_1(s),iK_{m-1,0}(G_2,\ldots,G_m)(s)}\dd s
\\
&\quad+\frac{i}{2}\sum_{\substack{m_1,m_2\ge1\\m_1+m_2=m}}\int_t^\I U(t,s)s^{-2}
\nabla K_{m_1,0}(G_1,\ldots,G_{m_1})(s)
\\
&\qquad\cdot\nabla K_{m_2,0}(G_{m_1+1},\ldots,G_m)(s)\dd s.
}
For $q\ge2$, define
\EQn{\label{eq:eikonal-P1q-explicit}
\mathcal P_{1,q}(G_1)(t)
&:=-\int_t^\I U(t,s)N_{\rm fir}^{\rm sh}\brko{s,iK_{1,q-1}(G_1)(s)}\dd s
\\
&\quad+\frac12\int_t^\I U(t,s)s^{-2}\Dy K_{1,q-2}(G_1)(s)\dd s.
}
For $m,q\ge2$, define
\EQn{\label{eq:eikonal-Pmq-explicit}
\mathcal P_{m,q}(G_1,\ldots,G_m)(t)
&:=-\int_t^\I U(t,s)N_{\rm fir}^{\rm sh}\brko{s,iK_{m,q-1}(G_1,\ldots,G_m)(s)}\dd s
\\
&\;+\int_t^\I U(t,s)s^{-2}\nabla G_1(s)
\cdot\nabla K_{m-1,q-1}(G_2,\ldots,G_m)(s)\dd s
\\
&\;+\frac{i}{2}\sum_{\substack{\ell_1,\ell_2\ge0\\\ell_1+\ell_2+1=q}}
\sum_{\substack{m_1,m_2\ge1\\m_1+m_2=m}}\int_t^\I U(t,s)s^{-2}
\\
&\quad\nabla K_{m_1,\ell_1}(G_1,\ldots,G_{m_1})(s)
\cdot\nabla K_{m_2,\ell_2}(G_{m_1+1},\ldots,G_m)(s)\dd s
\\
&\;+\frac12\int_t^\I U(t,s)s^{-2}\Dy K_{m,q-2}(G_1,\ldots,G_m)(s)\dd s.
}
By \eqref{eq:eikonal-P11-explicit}--\eqref{eq:eikonal-Pmq-explicit},
\EQn{\label{eq:eikonal-Pmq-decomposition}
\mathcal P_q(H)
=\sum_{m\ge1}\mathcal P_{m,q}(H,\ldots,H).
}
For $N\ge0$, the real multilinearity of $\mathcal P_{m,q}$ gives
\EQn{\label{eq:eikonal-Pmq-sum}
\mathcal P_{m,q}\brko{\sum_{n=0}^NH_n,\ldots,\sum_{n=0}^NH_n}
=\sum_{n_1=0}^N\cdots\sum_{n_m=0}^N
\mathcal P_{m,q}(H_{n_1},\ldots,H_{n_m}).
}
By \eqref{eq:eikonal-map-q-layers} and \eqref{eq:eikonal-Pmq-decomposition},
\EQn{\label{eq:eikonal-q-index-expansion}
\widetilde{\TT}H=D_0+\sum_{q\ge1}\sum_{m\ge1}\mathcal P_{m,q}(H,\ldots,H).
}
Whenever \eqref{eq:eikonal-K-double-series} and the double series in \eqref{eq:eikonal-q-index-expansion} converge absolutely at a smaller analytic radius, the series may be rearranged, and \eqref{eq:eikonal-q-index-expansion} is an identity in that space.

By \eqref{eq:eikonal-q-index-expansion} and $H=\widetilde{\TT}H$,
\EQn{\label{eq:eikonal-picard-formal-equation}
\sum_{n\ge0}H_n
=D_0+\sum_{q\ge1}\sum_{m\ge1}\mathcal P_{m,q}\brko{\sum_{n\ge0}H_n,\ldots,\sum_{n\ge0}H_n}.
}
By \eqref{eq:eikonal-Pmq-sum}, the right-hand side of \eqref{eq:eikonal-picard-formal-equation} is
\EQn{\label{eq:eikonal-picard-rhs-layer}
&D_0+\sum_{q\ge1}\sum_{m\ge1}\mathcal P_{m,q}\brko{\sum_{n\ge0}H_n,\ldots,\sum_{n\ge0}H_n}
\\
&=D_0+\sum_{q\ge1}\sum_{m\ge1}\sum_{n_1,\ldots,n_m\ge0}\mathcal P_{m,q}(H_{n_1},\ldots,H_{n_m})
\\
&=D_0+\sum_{n\ge1}\sum_{q=1}^{n}\sum_{m\ge1}
\sum_{\substack{n_1,\ldots,n_m\ge0\\ n_1+\cdots+n_m=n-q}}
\mathcal P_{m,q}(H_{n_1},\ldots,H_{n_m}).
}
Comparing \eqref{eq:eikonal-picard-formal-equation} and \eqref{eq:eikonal-picard-rhs-layer} gives
\EQ{
H_0=D_0,
}
and, for $n\ge1$,
\EQn{\label{eq:eikonal-picard-recursion}
H_n
=\sum_{q=1}^{n}\sum_{m\ge1}
\sum_{\substack{n_1,\ldots,n_m\ge0\\ n_1+\cdots+n_m=n-q}}
\mathcal P_{m,q}(H_{n_1},\ldots,H_{n_m}).
}
Thus $q$ is fixed before the inner Picard indices are chosen, and the corresponding term occurs at order $n=q+n_1+\cdots+n_m$.

By \eqref{eq:eikonal-terms-decomp} and \eqref{eq:Rmu-bound},
\EQ{
\norm{N_{\rm res}(s)}_{\A^4_{\rho_1}}
=\norm{R^{(\mu)}(s)}_{\A^4_{\rho_1}}
\le\norm{R^{(\mu)}(s)}_{\A^8_{\rho_1}}
\lsm_{\mu,R}s^{-1-\kappa}.
}
Hence \eqref{eq:eikonal-normal-map-parts} and \eqref{eq:U-isometry} give
\EQn{\label{eq:eikonal-D0-bound}
\norm{D_0}_{\mathcal F_T^\kappa(\rho_1)}
\le\sup_{t\ge T}t^\kappa\int_t^\I\norm{N_{\rm res}(s)}_{\A^4_{\rho_1}}\dd s
\lsm_{\mu,R}\sup_{t\ge T}t^\kappa\int_t^\I s^{-1-\kappa}\dd s
\lsm_{\mu,R}1.
}
Next, we derive the estimates for $\mathcal P_{m,q}$.  Throughout this part, we use \eqref{eq:eikonal-K-flexible-estimate} with its small prefactor bounded by one.

We first estimate the layer $q=1$. In the following, let $a_0=0$.

\begin{prop}[Estimate for $\mathcal P_{m,1}$]\label{prop:eikonal-Pm1-estimate}
Let $0\le\rho'<\rho<\rho_1$. Then
\EQ{%\label{eq:eikonal-Pm1-bound}
\norm{\mathcal P_{m,1}(G_1,\ldots,G_m)}_{\mathcal F_t^\kappa(\rho')} 
&\lsm_{\mu,R}C^ma_m(\rho-\rho')^{-1}
\int_t^\I s^{-1-\al} \prod_{j=1}^m
\norm{G_j}_{\mathcal F_s^\kappa(\rho)} \dd s.
}
\end{prop}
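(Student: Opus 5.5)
The plan is to estimate the terms in the definitions \eqref{eq:eikonal-P11-explicit} and \eqref{eq:eikonal-Pm1-explicit} of $\mathcal P_{m,1}$ one at a time, always by the same three steps.
\begin{itemize}
\item Remove $U(t,s)$ by the isometry \eqref{eq:U-isometry}.
\item Pay exactly one radius gap $(\rho-\rho')^{-1}$ for the spatial derivatives. For the single-gradient terms, use \eqref{eq:radius-gap-estimate} together with the coefficient bound \eqref{eq:eikonal-profile-bounds} for $\nabla L^{(\mu)}$ and $\nabla S^{(\mu)}$. For the bilinear terms, use \eqref{eq:analytic-gradient-product} with $r=4$.
\item Convert the time weights.
\end{itemize}
For the time weights, fix $\tau\ge t$. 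Every integrand lives at times $s\ge\tau$, so the weight $\tau^\kappa$ can be replaced by $s^\kappa$. Also, for $\theta\ge s$ we have $\norm{G_j(\theta)}_{\A^4_\rho}\le\theta^{-\kappa}\norm{G_j}_{\mathcal F_s^\kappa(\rho)}$. Each term will then be bounded by $\int_\tau^\I s^{-1-\al}\prod_j\norm{G_j}_{\mathcal F_s^\kappa(\rho)}\dd s$. Since the integrand is nonnegative, taking the supremum over $\tau\ge t$ gives the integral from $t$, which is the claimed right-hand side.

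The key auxiliary bound is for the transport coefficients. Apply Lemma \ref{lem:eikonal-K-tail}(i) with $\ell=0$ and $\zeta_0=\rho$, with the prefactor $C_{\mu,R}\ep$ bounded by a constant. Since $\kappa>1-\al$ and $m\ge1$, the integral $\int_s^\I\theta^{-\al-m\kappa}\dd\theta$ converges, and I get
\EQ{
\norm{K_{m,0}(G_1,\ldots,G_m)(s)}_{\A^4_\rho}\lsm_{\mu,R}a_m\,s^{1-\al-m\kappa}\prod_{j=1}^m\norm{G_j}_{\mathcal F_s^\kappa(\rho)}.
}
With this, the power counting is as follows; in each line the factor $s^\kappa$ comes from the weight.
\begin{itemize}
\item $P_\mu G_1$: gives $s^{-1-\al}\cdot s^\kappa\cdot s^{-\kappa}$.
\item $N_{\rm fir}^{\rm sh}(s,G_1)$: gives $s^{-2}\le s^{-1-\al}$.
\item $N_{\rm fir}^{\rm sh}(s,iK_{m,0})$: gives $s^{-2}\cdot s^{\kappa}\cdot s^{1-\al-m\kappa}\le s^{-1-\al}$.
\item $N_{\rm qua}(G_1,G_2)$: gives $s^{-2-\kappa}\le s^{-1-\al}$.
\item $N_{\rm qua}(G_1,iK_{m-1,0})$: gives $s^{-2}\cdot s^{\kappa}\cdot s^{-\kappa}\cdot s^{1-\al-(m-1)\kappa}\le s^{-1-\al}$.
\end{itemize}

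The one term where the exponent bookkeeping is not automatic, and which I expect to be the main point, is the $\nabla K_{m_1,0}\cdot\nabla K_{m_2,0}$ term. It carries $s^{-2}\cdot s^\kappa\cdot s^{2-2\al-m\kappa}=s^{-2\al-(m-1)\kappa}$. This is bounded by $s^{-1-\al}$ exactly when $\al+(m-1)\kappa\ge1$. That holds because $m\ge2$ and $\kappa>1-\al$ by \eqref{eq:kappa-choice}, and $s\ge1$.

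Finally, the combinatorial constants must be absorbed into $C^ma_m$. The relevant inequalities are:
\begin{itemize}
\item $a_{m_1}a_{m_2}=(C_4p)^m/(m_1!m_2!)\le2^ma_m$;
\item the sum over $m_1+m_2=m$ has at most $m$ terms;
\item $a_{m-1}=m(C_4p)^{-1}a_m\le C^ma_m$.
\end{itemize}
For $m=2$, the bare term $N_{\rm qua}(G_1,G_2)$ has no $a$-factor, but $a_2$ is a fixed positive constant, so it is also bounded by $C^2a_2$. Collecting all terms gives the proposition.
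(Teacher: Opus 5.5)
Your proposal is correct and follows essentially the same route as the paper. Both arguments use the isometry of $U$, pay one radius gap through the first-order and gradient product estimates, and take the $\ell=0$ case of Lemma~\ref{lem:eikonal-K-tail}(i) to get $\norm{K_{m,0}(s)}_{\A^4_\rho}\lsm a_m s^{1-\al-m\kappa}\prod_j\norm{G_j}_{\mathcal F_s^\kappa(\rho)}$; both then need $(m-1)\kappa+\al>1$ for the $\nabla K\cdot\nabla K$ term and absorb $a_{m_1}a_{m_2}$ and $a_{m-1}$ into $C^ma_m$. The only point left implicit is that the coefficient-free $m=1$ terms ($P_\mu G_1$ and $N^{\rm sh}_{\rm fir}(s,G_1)$) are absorbed exactly like your bare $m=2$ term, since $a_1=C_4p>0$.
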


\begin{proof}
By \eqref{eq:eikonal-P-estimate}, \eqref{eq:eikonal-profile-bounds}, \eqref{eq:analytic-first-order-product}, and \eqref{eq:analytic-gradient-product},
\EQn{\label{eq:eikonal-direct-one-index-estimates}
\norm{P_\mu(s)G_1(s)}_{\A^4_{\rho'}}
&\lsm_{\mu,R}(\rho-\rho')^{-1}s^{-1-\al}\norm{G_1(s)}_{\A^4_\rho},
\\
\norm{N_{\rm fir}^{\rm sh}(s,G_1(s))}_{\A^4_{\rho'}}
&\lsm_{\mu,R}(\rho-\rho')^{-1}s^{-2}\norm{G_1(s)}_{\A^4_\rho},
\\
\norm{N_{\rm qua}(s;G_1(s),G_2(s))}_{\A^4_{\rho'}}
&\lsm(\rho-\rho')^{-1}s^{-2}
\norm{G_1(s)}_{\A^4_\rho}\norm{G_2(s)}_{\A^4_\rho}.
}
Using $t_0^\kappa\le s^\kappa$ for $s\ge t_0\ge1$, \eqref{eq:eikonal-direct-one-index-estimates} gives
\EQn{\label{eq:eikonal-direct-one-index-duhamel}
&t_0^\kappa\int_{t_0}^\I
\brko{\norm{P_\mu(s)G_1(s)}_{\A^4_{\rho'}}
+\norm{N_{\rm fir}^{\rm sh}(s,G_1(s))}_{\A^4_{\rho'}}}\dd s
\\
&\quad\lsm_{\mu,R}(\rho-\rho')^{-1}\int_{t_0}^\I s^{-1-\al}
\norm{G_1}_{\mathcal F_s^\kappa(\rho)}\dd s,
\\
&t_0^\kappa\int_{t_0}^\I
\norm{N_{\rm qua}(s;G_1(s),G_2(s))}_{\A^4_{\rho'}}\dd s
\\
&\quad\lsm(\rho-\rho')^{-1}\int_{t_0}^\I s^{-1-\al}
\norm{G_1}_{\mathcal F_s^\kappa(\rho)}
\norm{G_2}_{\mathcal F_s^\kappa(\rho)}\dd s.
}
For $\ell=0$, \eqref{eq:eikonal-K-flexible-estimate} and $\kappa+\al>1$ give
\EQ{%\label{eq:eikonal-K0-proof-bound}
\norm{K_{m,0}(G_1,\ldots,G_m)(s)}_{\A^4_\rho}
&\lsm_{\mu,R}a_m\int_s^\I \theta^{-\al-m\kappa}\dd \theta \prod_{j=1}^m
\norm{G_j}_{\mathcal F_s^\kappa(\rho)} 
\\
&\lsm_{\mu,R}a_m s^{1-\al-m\kappa}\prod_{j=1}^m
\norm{G_j}_{\mathcal F_s^\kappa(\rho)} .
}

With the above estimates in hand, we next deal with $\mathcal P_{m,1}$ in \eqref{eq:eikonal-P11-explicit} and \eqref{eq:eikonal-Pm1-explicit}. The term containing $N_{\rm fir}^{\rm sh}(iK_{m,0})$ in \eqref{eq:eikonal-P11-explicit} and \eqref{eq:eikonal-Pm1-explicit} satisfies
\EQn{\label{eq:eikonal-Pm1-firK-bound}
&(\rho-\rho')^{-1}t_0^\kappa\int_{t_0}^\I s^{-2}
\norm{K_{m,0}(G_1,\ldots,G_m)(s)}_{\A^4_\rho}\dd s
\\
&\quad\lsm_{\mu,R}a_m(\rho-\rho')^{-1}
\int_{t_0}^\I s^{-1-\al-(m-1)\kappa}\prod_{j=1}^m
\norm{G_j}_{\mathcal F_s^\kappa(\rho)}  \dd s
\\
&\quad\lsm_{\mu,R}a_m(\rho-\rho')^{-1}
\int_{t_0}^\I s^{-1-\al} \prod_{j=1}^m
\norm{G_j}_{\mathcal F_s^\kappa(\rho)} \dd s.
}
For $m\ge2$, the mixed term in \eqref{eq:eikonal-Pm1-explicit} satisfies
\EQ{%\label{eq:eikonal-Pm1-mixedK-bound}
&(\rho-\rho')^{-1}t_0^\kappa\int_{t_0}^\I s^{-2}
\norm{G_1(s)}_{\A^4_\rho}
\norm{K_{m-1,0}(G_2,\ldots,G_m)(s)}_{\A^4_\rho}\dd s
\\
&\quad\lsm_{\mu,R}a_{m-1}(\rho-\rho')^{-1}
\int_{t_0}^\I s^{-1-\al-(m-1)\kappa} \prod_{j=1}^m
\norm{G_j}_{\mathcal F_s^\kappa(\rho)} \dd s
\\
&\quad\lsm_{\mu,R}a_{m-1}(\rho-\rho')^{-1}
\int_{t_0}^\I s^{-1-\al} \prod_{j=1}^m
\norm{G_j}_{\mathcal F_s^\kappa(\rho)} \dd s.
}
If $m=m_1+m_2$ with $m_1,m_2\ge1$, the last term in \eqref{eq:eikonal-Pm1-explicit} satisfies
\EQn{\label{eq:eikonal-Pm1-doubleK-bound}
&(\rho-\rho')^{-1}t_0^\kappa\int_{t_0}^\I s^{-2}
\\
&\quad\norm{K_{m_1,0}(G_1,\ldots,G_{m_1})(s)}_{\A^4_\rho}
\norm{K_{m_2,0}(G_{m_1+1},\ldots,G_m)(s)}_{\A^4_\rho}\dd s
\\
&\quad\lsm_{\mu,R}a_{m_1}a_{m_2}(\rho-\rho')^{-1}
\int_{t_0}^\I s^{-2\al-(m-1)\kappa} \prod_{j=1}^m
\norm{G_j}_{\mathcal F_s^\kappa(\rho)} \dd s
\\
&\quad\lsm_{\mu,R}a_{m_1}a_{m_2}(\rho-\rho')^{-1}
\int_{t_0}^\I s^{-1-\al} \prod_{j=1}^m
\norm{G_j}_{\mathcal F_s^\kappa(\rho)} \dd s.
}
The last inequality uses $(m-1)\kappa+\al\ge\kappa+\al>1$. Notice that by \eqref{eq:eikonal-am-definition}, \(\frac{1}{m_1!(m-m_1)!}=\frac{1}{m!}\binom{m}{m_1}\), and \(\sum_{i=1}^{m-1}\binom{m}{i}=2^m-2\),
\EQn{\label{eq:eikonal-am-combinations}
	\sum_{\substack{m_1,m_2\ge1\\m_1+m_2=m}}a_{m_1}a_{m_2}
	&=(2^m-2)a_m,
	\qquad m\ge1.
}
By \eqref{eq:eikonal-P11-explicit}--\eqref{eq:eikonal-Pm1-explicit}, \eqref{eq:eikonal-am-combinations}, \eqref{eq:eikonal-direct-one-index-duhamel}, and \eqref{eq:eikonal-Pm1-firK-bound}--\eqref{eq:eikonal-Pm1-doubleK-bound},
\EQ{
\norm{\mathcal P_{m,1}(G_1,\ldots,G_m)}_{\mathcal F_t^\kappa(\rho')}
&\lsm_{\mu,R}C^ma_m(\rho-\rho')^{-1}
\int_t^\I s^{-1-\al}\prod_{j=1}^m
\norm{G_j}_{\mathcal F_s^\kappa(\rho)} \dd s.
}
\end{proof}

\begin{lem}[Common time-radius estimates]
\label{lem:eikonal-ordered-time-integrals}
For $x>0$ and $k\in\N$, write
\EQ{
(x)_0:=1,
\qquad
(x)_k:=x(x+1)\cdots(x+k-1),
\qquad k\ge1.
}
Assume that $\al,\kappa>0$ and $\kappa+\al>1$.  For $b\in\N_+$ and $n\in\N$, set
\EQ{
D_{b,n}:=b\kappa+\al-1+\frac{\al n}{2}.
}
For $\ell\in\N$, $t_0\ge1$, and $s\ge t_0$, define
\EQ{%\label{eq:eikonal-ordered-time-definition}
\J_{\ell}^{b,n}(s;t_0)
&:=
\int_{s\le s_1\le\cdots\le s_\ell\le\theta<\I}
s_1^{-1-\al}\cdots s_\ell^{-1-\al}
\theta^{-\al-b\kappa}
\brko{\frac\theta{t_0}}^{-\frac{\al n}{2}}
\dd\theta\dd s_\ell\cdots\dd s_1.
}
When $\ell=0$, the intermediate variables and their differentials are absent, and every product indexed by them is one.  Thus the ordered integral is the $\theta$-integral over $s\le\theta<\I$.

The following statements hold, with $C\ge1$ depending only on $\al$ and $\kappa$.
\begin{enumerate}[label=\textup{(\roman*)},leftmargin=*]
\item We have that
\EQn{\label{eq:eikonal-ordered-time-formula}
\J_{\ell}^{b,n}(s;t_0)
&=
\frac{\al^{-(\ell+1)}}
{\brko{\frac{D_{b,n}}\al}_{\ell+1}}
s^{1-\al-b\kappa-\ell\al}
\brko{\frac{s}{t_0}}^{-\frac{\al n}{2}}.
}
\item Let $m_0,N_0,\ell,N_1\in\N$ and $m_1\in\N_+$, and set
\EQ{
m=m_0+m_1,
\qquad
N=N_0+N_1.
}
Then
\EQn{\label{eq:eikonal-one-ordered-time-identity}
&t_0^\kappa\int_{t_0}^\I
s^{-2-m_0\kappa}
\brko{\frac{s}{t_0}}^{-\frac{\al N_0}{2}}
\J_{\ell}^{m_1,N_1}(s;t_0)\dd s
\\
&\quad=
\frac{\al^{-(\ell+1)}
t_0^{-(m-1)\kappa-(\ell+1)\al}}
{\brko{m\kappa+(\ell+1)\al+\frac{\al N}{2}}
\brko{\frac{D_{m_1,N_1}}\al}_{\ell+1}}.
}
Moreover,
\EQn{\label{eq:eikonal-one-ordered-time-bound}
t_0^\kappa\int_{t_0}^\I
s^{-2-m_0\kappa}
\brko{\frac{s}{t_0}}^{-\frac{\al N_0}{2}}
\J_{\ell}^{m_1,N_1}(s;t_0)\dd s \le
\frac{C^{\ell+1}t_0^{-(\ell+1)\al}}
{(N+\ell+2)(N_1+1)_{\ell+1}}.
}
\item Let $m_0,N_0\in\N$, let $m_1,m_2\in\N_+$, and let $\ell_1,\ell_2,N_1,N_2\in\N$.  Set
\EQ{
m=m_0+m_1+m_2,
\qquad
N=N_0+N_1+N_2,
\qquad
L=\ell_1+\ell_2.
}
Then
\EQn{\label{eq:eikonal-two-ordered-time-identity}
&t_0^\kappa\int_{t_0}^\I
s^{-2-m_0\kappa}
\brko{\frac{s}{t_0}}^{-\frac{\al N_0}{2}}
\J_{\ell_1}^{m_1,N_1}(s;t_0)
\J_{\ell_2}^{m_2,N_2}(s;t_0)\dd s
\\
&\quad=
\frac{\al^{-(L+2)}
t_0^{1-(m-1)\kappa-(L+2)\al}}
{\brko{m\kappa+(L+2)\al-1+\frac{\al N}{2}}
\brko{\frac{D_{m_1,N_1}}\al}_{\ell_1+1}
\brko{\frac{D_{m_2,N_2}}\al}_{\ell_2+1}}.
}
Moreover,
\EQn{\label{eq:eikonal-two-ordered-time-bound}
&t_0^\kappa\int_{t_0}^\I
s^{-2-m_0\kappa}
\brko{\frac{s}{t_0}}^{-\frac{\al N_0}{2}}
\J_{\ell_1}^{m_1,N_1}(s;t_0)
\J_{\ell_2}^{m_2,N_2}(s;t_0)\dd s
\\
&\quad\le
\frac{C^{L+2}t_0^{-(L+1)\al}}
{(N+L+2)(N_1+1)_{\ell_1+1}(N_2+1)_{\ell_2+1}}.
}
\item Let $g_0>0$, $N\in\N$, and $q\in\N_+$.  Set
\EQ{
\delta_h:=\frac{g_0}{2(N+q)},
\qquad
1\le h\le q,
\qquad
g_q:=g_0-\sum_{h=1}^q\delta_h.
}
Then $g_q\ge g_0/2$ and
\EQ{
\brko{\prod_{h=1}^q\delta_h^{-1}}
g_q^{-N}(N+1)_q^{-1}
\le C^qg_0^{-N-q}.
}
\item Let $g_0>0$, $B\in\{1,2\}$, and $N_0,N_1,\ldots,N_B,
\ell_1,\ldots,\ell_B\in\N$, and set
\EQ{
N:=N_0+\sum_{i=1}^BN_i,
\qquad
q:=1+\sum_{i=1}^B\ell_i.
}
Define
\EQ{
\delta_0:=\frac{g_0}{2(N+1)},
\qquad
g_1:=g_0-\delta_0.
}
For every $i$ such that $\ell_i\ge1$, set
\EQ{
\delta_{i,h}:=\frac{g_1}{2(N_i+\ell_i)},
\qquad
1\le h\le\ell_i,
\qquad
g_{i,\ell_i}:=g_1-\sum_{h=1}^{\ell_i}\delta_{i,h}.
}
When $\ell_i=0$, set $g_{i,0}:=g_1$ and interpret the corresponding
product as one.  Then
\EQ{
\frac{\delta_0^{-1}}{N+1}g_1^{-N_0}
\prod_{i=1}^B\mbrkbb{
\brko{\prod_{h=1}^{\ell_i}\delta_{i,h}^{-1}}
g_{i,\ell_i}^{-N_i}
(N_i+1)_{\ell_i}^{-1}} \le C^qg_0^{-N-q}.
}
\end{enumerate}
\end{lem}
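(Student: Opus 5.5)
Parts (i)--(iii) are explicit iterated integrals of power functions, and parts (iv)--(v) are elementary bookkeeping for the radius gaps. I would prove them in that order.

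For (i), I will induct on $\ell$. When $\ell=0$ the integrand is $t_0^{\al n/2}\theta^{-\al-b\kappa-\al n/2}$. Its exponent is $-1-D_{b,n}$ with $D_{b,n}>0$, because $\kappa+\al>1$ and $b\ge1$. Integrating over $[s,\I)$ gives $D_{b,n}^{-1}s^{-D_{b,n}}t_0^{\al n/2}$, which is the formula with $\al^{-1}/(D_{b,n}/\al)_1$. For the inductive step, the innermost variable $s_1$ is integrated last, so
\EQ{
\J_{\ell}^{b,n}(s;t_0)=\int_s^\I s_1^{-1-\al}\J_{\ell-1}^{b,n}(s_1;t_0)\dd s_1 .
}
Inserting the inductive formula gives a power of $s_1$ with exponent $-1-D_{b,n}-\ell\al$. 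Integration contributes the factor $(D_{b,n}+\ell\al)^{-1}=\al^{-1}(D_{b,n}/\al+\ell)^{-1}$, which extends the Pochhammer symbol by one factor. The resulting power is $s^{1-\al-b\kappa-\ell\al}(s/t_0)^{-\al n/2}$, as claimed.

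For (ii) and (iii), I substitute (i) and collect the powers of $s$. In (ii) the total exponent is $-1-\brko{m\kappa+(\ell+1)\al+\al N/2}$. In (iii) it is $-1-\brko{m\kappa+(L+2)\al-1+\al N/2}$, which is positive inside the bracket because $m\ge2$ and $\kappa+\al>1$. One integration from $t_0$ then gives the identities \eqref{eq:eikonal-one-ordered-time-identity} and \eqref{eq:eikonal-two-ordered-time-identity}, after the powers of $t_0$ are combined. For the bounds I use $t_0\ge1$ and $(m-1)\kappa\ge0$, which give $t_0^{-(m-1)\kappa}\le1$. In (iii) I also use $1-(m-1)\kappa-\al\le0$, which follows from $m-1\ge1$ and $\kappa+\al>1$; this drops $t_0^{1-(m-1)\kappa-\al}$ and leaves $t_0^{-(L+1)\al}$. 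The prefactor is bounded by the lower bound $m\kappa+(\ell+1)\al+\al N/2\ge c(N+\ell+2)$, where $c=\min\{\al/2,\kappa\}$. For the Pochhammer factor I write $D_{b,n}/\al=\al^{-1}(b\kappa+\al-1)+n/2\ge c'(n+1)$. Each factor then satisfies $D/\al+j\ge c'(n+1+j)$, and so $(D/\al)_{k}\ge c'^{\,k}(n+1)_k$. All constants are absorbed into $C^{\ell+1}$, respectively $C^{L+2}$.

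For (iv), the choice of the $\delta_h$ gives $\sum_h\delta_h=g_0q/(2(N+q))\le g_0/2$, so $g_q\ge g_0/2$. The quantity to bound is then at most
\EQ{
\brko{\frac{2(N+q)}{g_0}}^q2^Ng_0^{-N}(N+1)_q^{-1}.
}
Since $N+q\le q(N+1)\le q(N+j)$ for $1\le j\le q$, we have $(N+q)^q\le q^q(N+1)_q$. The remaining estimate is $q^q/q!\le e^q$, and $2^N$ is absorbed because $(N+1)_q\ge N!\ldots$. This last point is the delicate one: $2^N$ is not controlled by $C^q$ alone. I therefore expect to cut the gaps more carefully. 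I would take $\delta_h=g_0/(2(N+q))$ with $g_q\ge g_0(1-\frac{q}{2(N+q)})$, so that
\EQ{
(g_0/g_q)^N\le\brko{1-\tfrac{q}{2(N+q)}}^{-N}\le e^{q}.
}
This uses $(1-x)^{-N}\le e^{2xN}$ for $x\le1/2$, with $xN\le q/2$. With this refinement the bound closes with $C^q$.

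Part (v) is the same computation done twice: first for the single gap $\delta_0$, then for the chains of each $i$ starting from $g_1$. The same $(1-x)^{-N}$ trick gives $(g_0/g_1)^{N}\le e$. The chains contribute $C^{\ell_i}g_1^{-N_i-\ell_i}$ by (iv) with $g_0$ replaced by $g_1$. Collecting the factors gives $C^{q}g_0^{-N-q}$.

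The main obstacle is exactly this control of $g^{-N}$ against the loss of radius: the factor must be $C^q$, independent of $N$.
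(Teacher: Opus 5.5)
Your proposal follows the paper's proof essentially step by step. Part (i) goes by the recursion $\mathcal{J}_\ell(s)=\int_s^\infty s_1^{-1-\alpha}\mathcal{J}_{\ell-1}(s_1)\,ds_1$. Parts (ii)--(iii) substitute (i), integrate once, and bound the outer denominator and each Pochhammer factor from below by $c(N+\ell+2)$ and $c^k(n+1)_k$. Parts (iv)--(v) use the exact gap computation, with $\bigl(1-\frac{q}{2(N+q)}\bigr)^{-N}\le e^q$; the paper writes this as $-N\log\bigl(1-\frac{q}{2(N+q)}\bigr)\le q$.

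There is one step in (iv) that does not work as written. From $N+q\le q(N+1)\le q(N+j)$ you only obtain $(N+q)^q\le q^q(N+1)_q$, that is, $(N+q)^q/(N+1)_q\le q^q$. No $q!$ is left to divide by, so your ``remaining estimate $q^q/q!\le e^q$'' does not follow, and $q^q$ is not bounded by $C^q$. This matters: a $q^q$ loss per layer would destroy the summation over $q$ in the graded Picard iteration, where the layer bounds must be of the form $C_0^{m+q}$.

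The fix is the per-factor inequality $j(N+q)\le q(N+j)$, which is equivalent to $jN\le qN$. It gives
\[
\frac{(N+q)^q}{(N+1)_q}=\prod_{j=1}^{q}\frac{N+q}{N+j}\le\prod_{j=1}^{q}\frac{q}{j}=\frac{q^q}{q!}\le e^q ,
\]
which is exactly what the paper uses. With this correction, (iv) gives $(2e^2)^q g_0^{-N-q}$. Your derivation of (v) then goes through as in the paper: apply (iv) from $g_1$ to each chain, and use $(g_0/g_1)^N\le e$ and $g_1\ge g_0/2$.

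A minor point: in (iii) you only state that the outer exponent is positive. For the bound you need the lower bound $m\kappa+(L+2)\alpha-1+\frac{\alpha N}{2}\ge c(N+L+2)$. This follows from $m\ge2$ by writing the left side as $1+2(\kappa+\alpha-1)+(m-2)\kappa+L\alpha+\frac{\alpha N}{2}$.
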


\begin{proof}
For $\ell=0$, direct integration gives
\EQ{
\J_0^{b,n}(s;t_0)
=
D_{b,n}^{-1}
s^{1-\al-b\kappa}
\brko{\frac{s}{t_0}}^{-\frac{\al n}{2}}.
}
For $\ell\ge0$, the definition also gives the recursion
\EQ{
\J_{\ell+1}^{b,n}(s;t_0)
=
\int_s^\I
\tau^{-1-\al}\J_\ell^{b,n}(\tau;t_0)\dd\tau.
}
If \eqref{eq:eikonal-ordered-time-formula} holds at level $\ell$, this integration contributes the factor
\EQ{
\brko{D_{b,n}+(\ell+1)\al}^{-1}
=
\frac{\al^{-1}}{\frac{D_{b,n}}\al+\ell+1}.
}
The recursion defining the rising factorial therefore proves \eqref{eq:eikonal-ordered-time-formula} by induction.

For \textup{(ii)}, use \eqref{eq:eikonal-ordered-time-formula}. The remaining time integral is
\EQ{
t_0^\kappa\int_{t_0}^\I
s^{-1-m\kappa-(\ell+1)\al}
\brko{\frac{s}{t_0}}^{-\frac{\al N}{2}}\dd s.
}
Its direct evaluation proves \eqref{eq:eikonal-one-ordered-time-identity}.

For \textup{(iii)}, apply \eqref{eq:eikonal-ordered-time-formula} to both ordered integrals.  The remaining time integral is
\EQ{
t_0^\kappa\int_{t_0}^\I
s^{-m\kappa-(L+2)\al}
\brko{\frac{s}{t_0}}^{-\frac{\al N}{2}}\dd s.
}
Its direct evaluation proves \eqref{eq:eikonal-two-ordered-time-identity}.

It remains to prove the two uniform bounds.  For $b\in\N_+$, $n,a\in\N$,
\EQ{
\frac{D_{b,n}}\al+a
=
\frac1\al\mbrko{
\brko{\kappa+\al-1}+(b-1)\kappa
+\frac{\al n}{2}+a\al}
\ge c_{\al,\kappa}(n+a+1).
}
Consequently,
\EQ{
\brko{\frac{D_{b,n}}\al}_{\ell+1}
\ge c_{\al,\kappa}^{\ell+1}(n+1)_{\ell+1}.
}
For \textup{(ii)}, the outer denominator satisfies
\EQ{
&m\kappa+(\ell+1)\al+\frac{\al N}{2}
\\
&\quad=
1+\brko{\kappa+\al-1}+(m-1)\kappa
+\ell\al+\frac{\al N}{2}
\ge c_{\al,\kappa}(N+\ell+2).
}
For \textup{(iii)},
\EQ{
&m\kappa+(L+2)\al-1+\frac{\al N}{2}
\\
&\quad=
1+2\brko{\kappa+\al-1}+(m-2)\kappa
+L\al+\frac{\al N}{2}
\ge c_{\al,\kappa}(N+L+2).
}
Finally,
\EQ{
-(m-1)\kappa-(\ell+1)\al
\le-(\ell+1)\al,
}
and
\EQ{
&1-(m-1)\kappa-(L+2)\al
\\
&\quad=
-(L+1)\al-\brko{\kappa+\al-1}
-(m-2)\kappa
\le-(L+1)\al.
}
Together with $t_0\ge1$, these estimates prove \eqref{eq:eikonal-one-ordered-time-bound} and \eqref{eq:eikonal-two-ordered-time-bound}.

For \textup{(iv)},
\EQ{
g_q=g_0\brko{1-\frac{q}{2(N+q)}}\ge\frac{g_0}{2}.
}
Moreover,
\EQ{
&\brko{\prod_{h=1}^q\delta_h^{-1}}
g_q^{-N}(N+1)_q^{-1}
\\
&\quad=
2^qg_0^{-N-q}
\brko{1-\frac{q}{2(N+q)}}^{-N}
\frac{(N+q)^q}{(N+1)_q}.
}
Since
\EQ{
-N\log\brko{1-\frac{q}{2(N+q)}}\le q,
\qquad
\frac{N+q}{N+a}\le\frac qa,
\quad 1\le a\le q,
}
we obtain
\EQ{
\brko{1-\frac{q}{2(N+q)}}^{-N}\le e^q,
\qquad
\frac{(N+q)^q}{(N+1)_q}
\le\frac{q^q}{q!}\le e^q.
}
This proves \textup{(iv)}.

For \textup{(v)}, first note that $g_1\ge g_0/2$ and
\EQ{
\brko{\frac{g_0}{g_1}}^N
=\brko{1-\frac1{2(N+1)}}^{-N}\le e.
}
If $\ell_i\ge1$, then $g_{i,\ell_i}\ge g_1/2$ and
\EQ{
&\brko{\prod_{h=1}^{\ell_i}\delta_{i,h}^{-1}}
\brko{\frac{g_1}{g_{i,\ell_i}}}^{N_i}
(N_i+1)_{\ell_i}^{-1}
\\
&\quad=
2^{\ell_i}g_1^{-\ell_i}
\brko{1-\frac{\ell_i}{2(N_i+\ell_i)}}^{-N_i}
\frac{(N_i+\ell_i)^{\ell_i}}
{(N_i+1)_{\ell_i}}
\\
&\quad\le C^{\ell_i}g_1^{-\ell_i}.
}
The same estimate holds when $\ell_i=0$, with both sides equal to one.
Therefore,
\EQ{
&\frac{\delta_0^{-1}}{N+1}g_1^{-N_0}
\prod_{i=1}^B\mbrkbb{
\brko{\prod_{h=1}^{\ell_i}\delta_{i,h}^{-1}}
g_{i,\ell_i}^{-N_i}
(N_i+1)_{\ell_i}^{-1}}
\\
&\quad=
\frac2{g_0}g_1^{-N}
\prod_{i=1}^B\mbrkbb{
\brko{\prod_{h=1}^{\ell_i}\delta_{i,h}^{-1}}
\brko{\frac{g_1}{g_{i,\ell_i}}}^{N_i}
(N_i+1)_{\ell_i}^{-1}}
\\
&\quad\le
C^{q}g_0^{-1}g_0^{-N}g_1^{-(q-1)}
\le C^qg_0^{-N-q}.
}
This proves \textup{(v)}.
\end{proof}

\begin{prop}[Estimate for $\mathcal P_{m,q}$, $q\ge2$]\label{prop:eikonal-higher-layer-estimate}
Let $q\ge2$, $0\le\rho<\rho_+<\rho_1$, and assume that
\EQn{\label{eq:eikonal-higher-layer-input}
\norm{G_j}_{\mathcal F_\theta^\kappa(\zeta)}
\le A_j\brko{\frac{\theta^{-\frac\al2}}
{\rho_+-\zeta}}^{n_j},
\qquad \theta\ge t,\quad \rho\le\zeta<\rho_+,
}
where $n_j\in\N$.  Set $N=\sum_{j=1}^m n_j$.  Then there exists $C\ge1$, depending only on the fixed parameters, such that
\EQn{\label{eq:eikonal-higher-layer-bound}
\norm{\mathcal P_{m,q}(G_1,\ldots,G_m)}_{\mathcal F_t^\kappa(\rho)}
&\lsm_{\mu,R} C^{m+q}a_m
(\rho_+-\rho)^{-N-q}
t^{-(q-1)\al-\frac{\al N}{2}}
\prod_{j=1}^m A_j.
}
\end{prop}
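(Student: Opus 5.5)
We treat the four pieces of $\mathcal P_{m,q}$ in \eqref{eq:eikonal-Pmq-explicit} (or \eqref{eq:eikonal-P1q-explicit} when $m=1$) one at a time and bound each by the right-hand side of \eqref{eq:eikonal-higher-layer-bound}:
\begin{itemize}
\item the $N_{\rm fir}^{\rm sh}(iK_{m,q-1})$ term;
\item the mixed term $\nabla G_1\cdot\nabla K_{m-1,q-1}$;
\item the double term $\nabla K_{m_1,\ell_1}\cdot\nabla K_{m_2,\ell_2}$ with $\ell_1+\ell_2=q-1$;
\item the dispersive term $s^{-2}\Dy K_{m,q-2}$.
\end{itemize}
In every piece $U(t,s)$ is removed by \eqref{eq:U-isometry}. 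The outer spatial derivative (one for the first three pieces, two for $\Dy$) is then paid by \eqref{eq:analytic-first-order-product}, \eqref{eq:analytic-gradient-product} or \eqref{eq:radius-gap-estimate}, which costs one radius gap per derivative. Each $K_{m',\ell}$ is estimated by the flexible bound \eqref{eq:eikonal-K-flexible-estimate}, inserting \eqref{eq:eikonal-higher-layer-input} at the terminal time $\theta$ and terminal radius. This turns $\prod_j\norm{G_j(\theta)}_{\A^4}$ into
\EQ{
\prod_j A_j\,\theta^{-m'\kappa}\,\theta^{-\frac{\al n}{2}}\,(\rho_+-\zeta_\ell)^{-n},
}
with $n$ the sum of the $n_j$ entering that $K$. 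The ordered time integrals then become exactly the quantities $\J^{b,n}_\ell(s;t_0)$ of Lemma \ref{lem:eikonal-ordered-time-integrals}, with $t_0$ the time at which the $\mathcal F^\kappa_{t_0}$ supremum is taken. To produce the factor $(\theta/t_0)^{-\al n/2}$ we write $\theta^{-\al n/2}=t_0^{-\al n/2}(\theta/t_0)^{-\al n/2}$. For the $G_1$ factor in the mixed term we use the input bound at time $s$ in the same way.

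For the outer $s$-integral we apply Lemma \ref{lem:eikonal-ordered-time-integrals}(ii) to the single-$K$ terms and (iii) to the double-$K$ term. The first-order and mixed terms carry $s^{-2}$ directly. The dispersive term has $s^{-2}\Dy K_{m,q-2}$, so it has one fewer $J_P$ but two outer derivatives; we treat it as (ii) with $\ell=q-2$ and charge the second derivative as an extra radius gap. These identities yield $t_0^{-q\al}$ or $t_0^{-(q-1)\al}$ times $t_0^{-\al N/2}$, together with the denominators $(N+\ell+2)(N_1+1)_{\ell+1}$. The power $t_0^{-(q-1)\al-\al N/2}$ in \eqref{eq:eikonal-higher-layer-bound} then follows, and since $t_0\ge t$ the supremum over $t_0\ge t$ is harmless.

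The radii are chosen according to parts (iv) and (v) of Lemma \ref{lem:eikonal-ordered-time-integrals}. Set $g_0=\rho_+-\rho$. Take a first gap $\delta_0=g_0/(2(N+1))$ for the outer derivative (split in two for $\Dy$). Then, inside each $K_{m_i,\ell_i}$, take $\ell_i$ equal gaps $\delta_{i,h}=g_1/(2(N_i+\ell_i))$, so that the terminal radius sits at distance $g_{i,\ell_i}\ge g_0/4$ from $\rho_+$. The gap factors $\prod\delta^{-1}$ combine with $(\rho_+-\zeta)^{-n}$ and with the Pochhammer denominators $(N_i+1)_{\ell_i}^{-1}$ and $(N+1)^{-1}$ from the time integrals. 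Part (v) (with $B=1$ or $B=2$) gives $C^q g_0^{-N-q}$, which is the claimed $(\rho_+-\rho)^{-N-q}$.

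The combinatorial constants are handled last. Each $K$ contributes $C_{\mu,R}^{\ell+1}a_{m'}$, and the $\ell$-sum in the double term has at most $q$ entries. The $m_1$-sum is absorbed by \eqref{eq:eikonal-am-combinations} into $2^m a_m$, and $a_{m-1}\le C^m a_m$ up to a factor $m$ that $C^m$ absorbs. Together these give $C^{m+q}a_m$.

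The main obstacle is the bookkeeping in this step. The $1/(N+1)$ and Pochhammer gains coming from the exact time integrals must exactly compensate the $(N+q)$-dependent radius gaps, so that no $N!$ or $q!$ growth survives. In particular, for the $\Dy K_{m,q-2}$ term with two outer derivatives, one must check that part (v) still applies after splitting $\delta_0$ in half, using the extra $1/(N+\ell+2)$ factor from (ii).
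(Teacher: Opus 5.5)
Your proposal is correct and follows the paper's proof essentially step for step. It uses the same four-term decomposition, removes $U(t,s)$ by isometry, applies the flexible $K$ bound with the input inserted at the terminal radius, uses Lemma \ref{lem:eikonal-ordered-time-integrals}(ii)/(iii) for the time integrals and (iv)/(v) for the radius allocation, and handles the combinatorics the same way. The only deviation is cosmetic: the paper uses the equal-gap choice of part (iv) for the first-order and Laplacian terms, whereas you use the part (v) scheme throughout. Your choice also works for the $\Dy K_{m,q-2}$ term, because the factors $(N+q)$ and $(N+q-1)$ supplied by Lemma \ref{lem:eikonal-ordered-time-integrals}(ii) satisfy $(N+q)(N+q-1)\ge(N+1)^2$. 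This exactly absorbs the cost of splitting the first gap in half.
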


\begin{proof}
We begin with the four terms which have to be estimated.  For $q\ge2$, define $V_1,V_3,V_4$ for $m\ge1$ and $V_2$ for $m\ge2$ by
\EQ{
%\label{eq:eikonal-higher-V1}
V_1(t_0)&:=-\int_{t_0}^\I U(t_0,s)N_{\rm fir}^{\rm sh}\brko{s,iK_{m,q-1}(G_1,\ldots,G_m)(s)}\dd s,
\\
%\label{eq:eikonal-higher-V2}
V_2(t_0)&:=-2\int_{t_0}^\I U(t_0,s) N_{\rm qua}\brko{s;G_1(s),iK_{m-1,q-1}(G_2,\ldots,G_m)(s)}\dd s,
\\
%\label{eq:eikonal-higher-V3}
V_3(t_0)&:=\frac i2\sum_{\substack{\ell_1,\ell_2\ge0\\\ell_1+\ell_2+1=q}}
\sum_{\substack{m_1,m_2\ge1\\m_1+m_2=m}}\int_{t_0}^\I U(t_0,s)s^{-2}
\nabla K_{m_1,\ell_1}(G_1,\ldots,G_{m_1})(s)
\\
&\quad\cdot\nabla K_{m_2,\ell_2}(G_{m_1+1},\ldots,G_m)(s)\dd s,
\\
%\label{eq:eikonal-higher-V4}
V_4(t_0)&:=\frac12\int_{t_0}^\I U(t_0,s)s^{-2}\Dy K_{m,q-2}(G_1,\ldots,G_m)(s)\dd s.
}
When $m=1$, let $V_2=0$.  By \eqref{eq:eikonal-P1q-explicit} and \eqref{eq:eikonal-Pmq-explicit},
\EQn{\label{eq:eikonal-higher-V-decomposition}
\mathcal P_{m,q}(G_1,\ldots,G_m)(t_0)
=V_1(t_0)+V_2(t_0)+V_3(t_0)+V_4(t_0).
}

Fix $t_0\ge t$ and set, once for all four terms,
\EQ{
A:=\prod_{j=1}^m A_j,
\qquad
g_0:=\rho_+-\rho.
}
Taking the lower endpoint in \eqref{eq:eikonal-higher-layer-input} to be $\theta$ yields, for $\theta\ge t_0$ and $\rho\le\zeta<\rho_+$,
\EQn{\label{eq:eikonal-input-pointwise}
\norm{G_j(\theta)}_{\A^4_{\zeta}}
&\le A_j(\rho_+-\zeta)^{-n_j}
\theta^{-\kappa-\frac{\al n_j}{2}}
\\
&=A_jt_0^{-\frac{\al n_j}{2}}
(\rho_+-\zeta)^{-n_j}\theta^{-\kappa}
\brko{\frac{\theta}{t_0}}^{-\frac{\al n_j}{2}}.
}
In each estimate below, denote the first radius and its two associated gaps by
\EQn{\label{eq:eikonal-radius-sequence}
\rho=\zeta_0<\zeta_1<\rho_+,
\qquad
\delta_1:=\zeta_1-\rho,
\qquad
g_1:=\rho_+-\zeta_1.
}
For $V_1,V_2,V_4$, extend this sequence by
\EQ{
\zeta_1<\zeta_2<\cdots<\zeta_q<\rho_+,
\qquad
\delta_h:=\zeta_h-\zeta_{h-1},
\qquad
g_h:=\rho_+-\zeta_h,
\quad 2\le h\le q.
}
The gaps are chosen separately in the four estimates.

\emph{Estimate of $V_1$.} The isometry property of $U(t_0,s)$, the definition of $N_{\rm fir}^{\rm sh}$, \eqref{eq:eikonal-profile-bounds}, and \eqref{eq:analytic-first-order-product} give
\EQn{\label{eq:eikonal-V1-outer-estimate}
\norm{V_1(t_0)}_{\A^4_{\rho}}
\lsm_{\mu,R}\delta_1^{-1}
\int_{t_0}^\I s^{-2}
\norm{K_{m,q-1}(G_1,\ldots,G_m)(s)}_{\A^4_{\zeta_1}}\dd s .
}
Applying \eqref{eq:eikonal-K-flexible-estimate} along $\zeta_1<\cdots<\zeta_q$ gives
\EQn{\label{eq:eikonal-V1-K-ordered-estimate}
&\norm{K_{m,q-1}(G_1,\ldots,G_m)(s)}_{\A^4_{\zeta_1}}
\\
&\lsm_{\mu,R}
C^qa_m
\prod_{h=2}^{q}\delta_h^{-1}
\\
&\quad\cdot
\int_{s\le s_1\le\cdots\le s_{q-1}\le \theta<\I}
\brko{\prod_{h=1}^{q-1}s_h^{-1-\al}}\theta^{-\al}
\prod_{j=1}^m\norm{G_j(\theta)}_{\A^4_{\zeta_q}}
\dd \theta\dd s_{q-1}\cdots\dd s_1 .
}
At the input radius $\zeta_q$, \eqref{eq:eikonal-input-pointwise} gives
\EQn{\label{eq:eikonal-V1-terminal-input}
\prod_{j=1}^m\norm{G_j(\theta)}_{\A^4_{\zeta_q}}
\quad\le
A t_0^{-\frac{\al N}{2}}g_q^{-N}\theta^{-m\kappa}
\brko{\frac{\theta}{t_0}}^{-\frac{\al N}{2}}.
}
Combining \eqref{eq:eikonal-V1-K-ordered-estimate} and \eqref{eq:eikonal-V1-terminal-input} with \eqref{eq:eikonal-V1-outer-estimate} gives
\EQ{%\label{eq:eikonal-V1-before-time}
&t_0^\kappa\norm{V_1(t_0)}_{\A^4_{\rho}}
\\
&\quad\lsm_{\mu,R}
C^qa_m A t_0^{-\frac{\al N}{2}}g_q^{-N}
\prod_{h=1}^{q}\delta_h^{-1}
\\
&\qquad\cdot
t_0^\kappa
\int_{t_0\le s\le s_1\le\cdots\le s_{q-1}\le \theta<\I}
s^{-2}\brko{\prod_{h=1}^{q-1}s_h^{-1-\al}}
\theta^{-\al-m\kappa}
\brko{\frac \theta{t_0}}^{-\frac{\al N}{2}}
\dd \theta\dd s_{q-1}\cdots\dd s_1\dd s .
}
We apply Lemma \ref{lem:eikonal-ordered-time-integrals}\textup{(ii)} with
\EQ{
\ell=q-1,
\qquad
(m_0,N_0)=(0,0),
\qquad
(m_1,N_1)=(m,N).
}
The resulting time estimate is
\EQ{%\label{eq:eikonal-V1-time}
&t_0^\kappa
\int_{t_0\le s\le s_1\le\cdots\le s_{q-1}\le \theta<\I}
s^{-2}\brko{\prod_{h=1}^{q-1}s_h^{-1-\al}}
\theta^{-\al-m\kappa}
\brko{\frac \theta{t_0}}^{-\frac{\al N}{2}}
\dd \theta\dd s_{q-1}\cdots\dd s_1\dd s
\\
&\quad=
t_0^\kappa\int_{t_0}^\I
s^{-2}\J_{q-1}^{m,N}(s;t_0)\dd s
\\
&\quad\le
\frac{C^qt_0^{-q\al}}{(N+1)_q}.
}
After the time integration,
\EQ{%\label{eq:eikonal-V1-after-time}
t_0^\kappa\norm{V_1(t_0)}_{\A^4_{\rho}}\lsm_{\mu,R}
C^qa_m A
t_0^{-q\al-\frac{\al N}{2}}g_q^{-N}
\prod_{h=1}^{q}\delta_h^{-1}
(N+1)_q^{-1}.
}

Choose the radii for $V_1$ by taking
\EQn{\label{eq:eikonal-V1-radius-choice}
\delta_h:=\frac{g_0}{2(N+q)},
\qquad
1\le h\le q.
}
Then
\EQ{
g_q=g_0-\sum_{h=1}^q\delta_h
=g_0\brko{1-\frac{q}{2(N+q)}}\ge\frac{g_0}{2}.
}
Moreover,
\EQn{\label{eq:eikonal-V1-radius-bound}
\brko{\prod_{h=1}^{q}\delta_h^{-1}}
g_q^{-N}
(N+1)_q^{-1} \le
C^qg_0^{-N-q}\frac{(N+q)^q}{(N+1)_q}
\le C^qg_0^{-N-q}\frac{q^q}{q!}
\le C^qg_0^{-N-q}.
}
Here $-N\log(1-q/(2(N+q)))\le q$, while $(N+q)/(N+a)\le q/a$ for $1\le a\le q$. Thus
\EQ{%\label{eq:eikonal-V1-pointwise-bound}
t_0^\kappa\norm{V_1(t_0)}_{\A^4_{\rho}}
\lsm_{\mu,R}
C^qa_m A g_0^{-N-q}
t_0^{-q\al-\frac{\al N}{2}}.
}
Since $t_0\ge t$, taking the supremum over $t_0$ gives
\EQ{%\label{eq:eikonal-V1-bound}
\norm{V_1}_{\mathcal F_t^\kappa(\rho)}
\lsm_{\mu,R}
C^qa_m A
(\rho_+-\rho)^{-N-q}
t^{-q\al-\frac{\al N}{2}}.
}

\emph{Estimate of $V_2$.} Assume $m\ge2$ and use the radii in \eqref{eq:eikonal-radius-sequence}. The isometry property of $U$ and \eqref{eq:analytic-gradient-product} give
\EQ{%\label{eq:eikonal-V2-outer-estimate}
&\norm{V_2(t_0)}_{\A^4_{\rho}}
\\
&\quad\lsm
\delta_1^{-1}\int_{t_0}^\I s^{-2}
\norm{G_1(s)}_{\A^4_{\zeta_1}}
\norm{K_{m-1,q-1}(G_2,\ldots,G_m)(s)}_{\A^4_{\zeta_1}}
\dd s .
}
The quadratic term uses the gap from $\rho$ to $\zeta_1$, and the $K$ estimate uses $\zeta_1<\cdots<\zeta_q$.  Hence \eqref{eq:eikonal-K-flexible-estimate} and \eqref{eq:eikonal-input-pointwise} yield
\EQ{%\label{eq:eikonal-V2-before-time}
t_0^\kappa\norm{V_2(t_0)}_{\A^4_{\rho}} 
&\lsm_{\mu,R}
C^qa_{m-1}A t_0^{-\frac{\al N}{2}}
\prod_{h=1}^{q}\delta_h^{-1}
g_1^{-n_1}g_q^{-(N-n_1)}
\\
&\qquad\cdot
t_0^\kappa
\int_{t_0\le s\le s_1\le\cdots\le s_{q-1}\le \theta<\I}
s^{-2-\kappa}
\brko{\frac{s}{t_0}}^{-\frac{\al n_1}{2}}
\\
&\qquad\qquad\cdot
\brko{\prod_{h=1}^{q-1}s_h^{-1-\al}}
\theta^{-\al-(m-1)\kappa}
\brko{\frac \theta{t_0}}^{-\frac{\al(N-n_1)}{2}} 
\dd \theta\dd s_{q-1}\cdots\dd s_1\dd s .
}
We apply Lemma \ref{lem:eikonal-ordered-time-integrals}\textup{(ii)} with
\EQ{
\ell=q-1,
\qquad
(m_0,N_0)=(1,n_1),
\qquad
(m_1,N_1)=(m-1,N-n_1).
}
The resulting time estimate is
\EQ{%\label{eq:eikonal-V2-time}
&t_0^\kappa
\int_{t_0\le s\le s_1\le\cdots\le s_{q-1}\le \theta<\I}
s^{-2-\kappa}
\brko{\frac{s}{t_0}}^{-\frac{\al n_1}{2}}
\\
&\qquad\cdot
\brko{\prod_{h=1}^{q-1}s_h^{-1-\al}}
\theta^{-\al-(m-1)\kappa}
\brko{\frac \theta{t_0}}^{-\frac{\al(N-n_1)}{2}} 
\dd \theta\dd s_{q-1}\cdots\dd s_1\dd s
\\
&\quad=
t_0^\kappa\int_{t_0}^\I
s^{-2-\kappa}\brko{\frac{s}{t_0}}^{-\frac{\al n_1}{2}}
\J_{q-1}^{m-1,N-n_1}(s;t_0)\dd s
\\
&\quad\le
\frac{C^qt_0^{-q\al}}
{(N+1)(N-n_1+1)_{q-1}}.
}
After the time integration,
\EQ{%\label{eq:eikonal-V2-after-time}
t_0^\kappa\norm{V_2(t_0)}_{\A^4_{\rho}} \lsm_{\mu,R}
C^qa_{m-1}A
t_0^{-q\al-\frac{\al N}{2}}
\frac{\delta_1^{-1}}{N+1}
g_1^{-n_1}
\brko{\prod_{h=2}^{q}\delta_h^{-1}}
g_q^{-(N-n_1)}
(N-n_1+1)_{q-1}^{-1}.
}
Separate the two radius scales as
\EQ{%\label{eq:eikonal-V2-radius-split}
g_1^{-n_1}g_q^{-(N-n_1)}
=
g_1^{-N}
\brko{\frac{g_1}{g_q}}^{N-n_1}.
}
Choose the outer gap by
\EQ{
\delta_1:=\frac{g_0}{2(N+1)}.
}
Then $g_1=g_0-\delta_1\ge g_0/2$ and
\EQ{
\frac{\delta_1^{-1}}{N+1}
g_1^{-N}
\lsm g_0^{-N-1}.
}
For the remaining gaps, take
\EQ{
\delta_h:=\frac{g_1}{2(N-n_1+q-1)},
\qquad 2\le h\le q.
}
Then
\EQ{
g_q=g_1-\sum_{h=2}^q\delta_h\ge\frac{g_1}{2},
}
and the same elementary estimate used in \eqref{eq:eikonal-V1-radius-bound} gives
\EQ{
\brko{\prod_{h=2}^{q}\delta_h^{-1}}
\brko{\frac{g_1}{g_q}}^{N-n_1}
(N-n_1+1)_{q-1}^{-1}
\le C^{q-1}g_1^{-(q-1)}.
}
Combining the last two bounds, using $g_1\ge g_0/2$, and taking the supremum over $t_0\ge t$, we obtain
\EQ{%\label{eq:eikonal-V2-bound}
\norm{V_2}_{\mathcal F_t^\kappa(\rho)}
\lsm_{\mu,R}
C^qa_{m-1}A
(\rho_+-\rho)^{-N-q}
t^{-q\al-\frac{\al N}{2}}.
}

For $m\ge2$,
\EQ{
\frac{a_{m-1}}{a_m}=\frac{m}{C_4p}\le C^m,
}
so the coefficient $a_{m-1}$ in the $V_2$ estimate is absorbed into
$C^{m+q}a_m$.

\emph{Estimate of $V_3$.} When $m=1$, the sum defining $V_3$ is empty and $V_3=0$.  Assume $m\ge2$ and fix $m_1,m_2\ge1$ and $\ell_1,\ell_2\ge0$ with
\EQ{
m_1+m_2=m,
\qquad
\ell_1+\ell_2+1=q.
}
Set
\EQ{
N_1:=\sum_{j=1}^{m_1}n_j,
\qquad
N_2:=\sum_{j=m_1+1}^{m}n_j.
}
Thus $N_1+N_2=N$.  Use the first radius from \eqref{eq:eikonal-radius-sequence} as the common outer radius. For each $i$ with $\ell_i\ge1$, choose an increasing radius sequence
\EQ{
&\zeta_{i,0}=\zeta_1<\zeta_{i,1}<\cdots
<\zeta_{i,\ell_i}<\rho_+,
\\
&
\delta_{i,h}:=\zeta_{i,h}-\zeta_{i,h-1},
\qquad
g_{i,h}:=\rho_+-\zeta_{i,h},
\quad 1\le h\le\ell_i.
}
When $\ell_i=0$, set $\zeta_{i,0}:=\zeta_1$ and $g_{i,0}:=g_1$. No internal radius or gap is introduced, and the corresponding products below are empty. Denote the corresponding summand in the definition of $V_3$ by $V_3^{m_1,m_2,\ell_1,\ell_2}$. The isometry property of $U$ and \eqref{eq:analytic-gradient-product} give
\EQ{%\label{eq:eikonal-V3-outer-estimate}
t_0^\kappa
\norm{V_3^{m_1,m_2,\ell_1,\ell_2}(t_0)}_{\A^4_{\rho}} 
&\lsm
\delta_1^{-1}t_0^\kappa\int_{t_0}^\I s^{-2}
\norm{K_{m_1,\ell_1}(G_1,\ldots,G_{m_1})(s)}_{\A^4_{\zeta_1}}
\\
&\quad\cdot
\norm{K_{m_2,\ell_2}(G_{m_1+1},\ldots,G_m)(s)}_{\A^4_{\zeta_1}}
\dd s .
}
The two gradients use the common outer gap.  After that, the two $K$ estimates use independent radius and time variables.  Hence \eqref{eq:eikonal-K-flexible-estimate} and \eqref{eq:eikonal-input-pointwise} yield
\EQ{%\label{eq:eikonal-V3-before-time}
&t_0^\kappa
\norm{V_3^{m_1,m_2,\ell_1,\ell_2}(t_0)}_{\A^4_{\rho}}
\\
&\quad\lsm_{\mu,R}
C^qa_{m_1}a_{m_2}A t_0^{-\frac{\al N}{2}}
\delta_1^{-1}
\prod_{i=1}^2\mbrkbb{
\brko{\prod_{h=1}^{\ell_i}\delta_{i,h}^{-1}}
g_{i,\ell_i}^{-N_i}} 
t_0^\kappa\int_{t_0}^\I s^{-2}
\\
&\qquad\cdot
\prod_{i=1}^2\mbrkbb{
\int_{s\le s_{i,1}\le\cdots\le s_{i,\ell_i}\le \theta_i<\I}
\brko{\prod_{h=1}^{\ell_i}s_{i,h}^{-1-\al}}
\theta_i^{-\al-m_i\kappa}
\brko{\frac{\theta_i}{t_0}}^{-\frac{\al N_i}{2}}
\dd \theta_i\dd s_{i,\ell_i}\cdots\dd s_{i,1}}
\dd s .
}
We apply Lemma \ref{lem:eikonal-ordered-time-integrals}\textup{(iii)} with
\EQ{
(m_0,N_0)=(0,0),
\qquad
L=\ell_1+\ell_2=q-1.
}
The resulting time estimate is
\EQ{%\label{eq:eikonal-V3-time}
&t_0^\kappa\int_{t_0}^\I s^{-2} 
\prod_{i=1}^2\mbrkbb{
\int_{s\le s_{i,1}\le\cdots\le s_{i,\ell_i}\le \theta_i<\I}
\brko{\prod_{h=1}^{\ell_i}s_{i,h}^{-1-\al}}
\theta_i^{-\al-m_i\kappa}
\brko{\frac{\theta_i}{t_0}}^{-\frac{\al N_i}{2}}
\dd \theta_i\dd s_{i,\ell_i}\cdots\dd s_{i,1}}
\dd s
\\
&\quad=
t_0^\kappa\int_{t_0}^\I s^{-2}
\prod_{i=1}^2\J_{\ell_i}^{m_i,N_i}(s;t_0)\dd s
\\
&\quad\le
\frac{C^qt_0^{-q\al}}
{(N+1)\displaystyle\prod_{i=1}^2(N_i+1)_{\ell_i}}.
}
After the time integration,
\EQn{\label{eq:eikonal-V3-after-time}
&t_0^\kappa
\norm{V_3^{m_1,m_2,\ell_1,\ell_2}(t_0)}_{\A^4_{\rho}}
\\
&\quad\lsm_{\mu,R}
C^qa_{m_1}a_{m_2}A
t_0^{-q\al-\frac{\al N}{2}}
\frac{\delta_1^{-1}}{N+1}
\prod_{i=1}^2\mbrkbb{
\brko{\prod_{h=1}^{\ell_i}\delta_{i,h}^{-1}}
g_{i,\ell_i}^{-N_i}
(N_i+1)_{\ell_i}^{-1}}.
}
The radius factors split as
\EQ{
\prod_{i=1}^2
g_{i,\ell_i}^{-N_i}
=g_1^{-N}
\prod_{i=1}^2
\brko{\frac{g_1}{g_{i,\ell_i}}}^{N_i}.
}
Choose
\EQ{
\delta_1:=\frac{g_0}{2(N+1)}.
}
Then $g_1=g_0-\delta_1\ge g_0/2$ and
\EQ{
\frac{\delta_1^{-1}}{N+1}
g_1^{-N}
\lsm g_0^{-N-1}.
}
For each $i$ with $\ell_i\ge1$, choose
\EQ{
\delta_{i,h}:=\frac{g_1}{2(N_i+\ell_i)},
\qquad
1\le h\le\ell_i.
}
Then
\EQ{
g_{i,\ell_i}
=g_1-\sum_{h=1}^{\ell_i}\delta_{i,h}
\ge\frac{g_1}{2}.
}
The corresponding radius factor satisfies
\EQ{
\brko{\prod_{h=1}^{\ell_i}\delta_{i,h}^{-1}}
\brko{\frac{g_1}{g_{i,\ell_i}}}^{N_i}
(N_i+1)_{\ell_i}^{-1} \le C^{\ell_i}g_1^{-\ell_i},
\qquad i=1,2.
}
Since $1+\ell_1+\ell_2=q$, the complete radius factor satisfies
\EQn{\label{eq:eikonal-V3-radius-bound}
\frac{\delta_1^{-1}}{N+1}
g_1^{-N}
\prod_{i=1}^2\mbrkbb{
\brko{\prod_{h=1}^{\ell_i}\delta_{i,h}^{-1}}
\brko{\frac{g_1}{g_{i,\ell_i}}}^{N_i}
(N_i+1)_{\ell_i}^{-1}}
\le C^qg_0^{-N-q}.
}
Combining \eqref{eq:eikonal-V3-after-time} and \eqref{eq:eikonal-V3-radius-bound}, taking the supremum over $t_0\ge t$, and summing over the $q$ choices of $(\ell_1,\ell_2)$ gives
\EQ{%\label{eq:eikonal-V3-bound}
\norm{V_3}_{\mathcal F_t^\kappa(\rho)}
\lsm_{\mu,R} C^qq
\sum_{\substack{m_1,m_2\ge1\\m_1+m_2=m}}a_{m_1}a_{m_2} A(\rho_+-\rho)^{-N-q}
t^{-q\al-\frac{\al N}{2}}.
}

\emph{Estimate of $V_4$.} Return to the radius notation in \eqref{eq:eikonal-radius-sequence}.  The two radius losses in \eqref{eq:analytic-second-derivative} are used first:
\EQ{%\label{eq:eikonal-V4-Laplacian-estimate}
\norm{\Dy K_{m,q-2}(G_1,\ldots,G_m)(s)}_{\A^4_{\rho}}
\lsm
\delta_1^{-1}\delta_2^{-1}
\norm{K_{m,q-2}(G_1,\ldots,G_m)(s)}_{\A^4_{\zeta_2}}.
}
The remaining $q-2$ gaps are allocated to the $K$ estimate.  Hence \eqref{eq:eikonal-K-flexible-estimate} and \eqref{eq:eikonal-input-pointwise} yield
\EQ{%\label{eq:eikonal-V4-before-time}
&t_0^\kappa\norm{V_4(t_0)}_{\A^4_{\rho}}
\\
&\quad\lsm_{\mu,R}
C^qa_mA t_0^{-\frac{\al N}{2}}g_q^{-N}
\prod_{h=1}^{q}\delta_h^{-1}
\\
&\qquad\cdot
t_0^\kappa
\int_{t_0\le s\le s_1\le\cdots\le s_{q-2}\le \theta<\I}
s^{-2}\brko{\prod_{h=1}^{q-2}s_h^{-1-\al}} 
\theta^{-\al-m\kappa}
\brko{\frac \theta{t_0}}^{-\frac{\al N}{2}}
\dd \theta\dd s_{q-2}\cdots\dd s_1\dd s .
}
We apply Lemma \ref{lem:eikonal-ordered-time-integrals}\textup{(ii)} with
\EQ{
\ell=q-2,
\qquad
(m_0,N_0)=(0,0),
\qquad
(m_1,N_1)=(m,N).
}
The resulting time estimate is
\EQ{%\label{eq:eikonal-V4-time}
&t_0^\kappa
\int_{t_0\le s\le s_1\le\cdots\le s_{q-2}\le \theta<\I}
s^{-2}\brko{\prod_{h=1}^{q-2}s_h^{-1-\al}} 
\theta^{-\al-m\kappa}
\brko{\frac \theta{t_0}}^{-\frac{\al N}{2}}
\dd \theta\dd s_{q-2}\cdots\dd s_1\dd s
\\
&\quad=
t_0^\kappa\int_{t_0}^\I
s^{-2}\J_{q-2}^{m,N}(s;t_0)\dd s
\\
&\quad\le
\frac{C^qt_0^{-(q-1)\al}}{(N+1)_q}.
}
After the time integration,
\EQ{%\label{eq:eikonal-V4-after-time}
t_0^\kappa\norm{V_4(t_0)}_{\A^4_{\rho}} \lsm_{\mu,R}
C^qa_mA
t_0^{-(q-1)\al-\frac{\al N}{2}}g_q^{-N}
\prod_{h=1}^{q}\delta_h^{-1}
(N+1)_q^{-1}.
}
Choose the gaps exactly as in \eqref{eq:eikonal-V1-radius-choice}.  Then \eqref{eq:eikonal-V1-radius-bound} gives
\EQ{%\label{eq:eikonal-V4-bound}
\norm{V_4}_{\mathcal F_t^\kappa(\rho)}
\lsm_{\mu,R} C^qa_mA
(\rho_+-\rho)^{-N-q}
t^{-(q-1)\al-\frac{\al N}{2}}.
}

It remains to combine the four estimates.  The first three terms have the stronger decay $t^{-q\al}$, whereas $V_4$ has the decay $t^{-(q-1)\al}$ in the statement.  By \eqref{eq:eikonal-am-combinations},
\EQ{
\sum_{\substack{m_1,m_2\ge1\\m_1+m_2=m}}a_{m_1}a_{m_2}
=(2^m-2)a_m\le2^ma_m.
}
Using also $q\le2^q$, the decomposition \eqref{eq:eikonal-higher-V-decomposition} and the four estimates above give
\EQ{
\norm{\mathcal P_{m,q}(G_1,\ldots,G_m)}_{\mathcal F_t^\kappa(\rho)}
&\lsm_{\mu,R} C^{m+q}a_mA
(\rho_+-\rho)^{-N-q}
t^{-(q-1)\al-\frac{\al N}{2}}.
}
This proves \eqref{eq:eikonal-higher-layer-bound}.
\end{proof}

\begin{cor}[Estimate for one Picard layer]\label{cor:eikonal-picard-layer}
There exists $C_0\ge1$, depending only on the fixed parameters, with the following property.  Let $q\ge1$, $0\le\rho<\rho_+<\rho_1$, and assume \eqref{eq:eikonal-higher-layer-input}.  If $N=\sum_{j=1}^m n_j$, then
\EQn{\label{eq:eikonal-picard-layer-bound}
\norm{\mathcal P_{m,q}(G_1,\ldots,G_m)}_{\mathcal F_t^\kappa(\rho)}
\le C_0^{m+q}a_m
(\rho_+-\rho)^{-N-q}t^{-\frac{\al(N+q)}2}
\prod_{j=1}^m A_j.
}
\end{cor}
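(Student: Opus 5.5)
The plan is to treat the two cases $q\ge2$ and $q=1$ separately. In each case we convert the already proved layer bounds into the uniform form \eqref{eq:eikonal-picard-layer-bound}. The only changes are in the time factor and in absorbing constants.

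For $q\ge2$, Proposition \ref{prop:eikonal-higher-layer-estimate} gives the bound with time factor $t^{-(q-1)\al-\frac{\al N}{2}}$. Since $q\ge2$ we have $q-1\ge q/2$, and since $t\ge T\ge1$, it follows that $t^{-(q-1)\al}\le t^{-\frac{\al q}{2}}$. Hence $t^{-(q-1)\al-\frac{\al N}2}\le t^{-\frac{\al(N+q)}2}$. The implicit constant in $\lsm_{\mu,R}$ together with $C^{m+q}$ is absorbed into $C_0^{m+q}$, using $m+q\ge1$ and taking $C_0\ge C\cdot C_{\mu,R}$.

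For $q=1$, we start from Proposition \ref{prop:eikonal-Pm1-estimate} with the pair of radii $\rho<\zeta$, where $\zeta$ is to be chosen in $(\rho,\rho_+)$. The hypothesis \eqref{eq:eikonal-higher-layer-input} at radius $\zeta$ and times $s\ge t$ gives
\EQ{
\prod_{j}\norm{G_j}_{\mathcal F_s^\kappa(\zeta)}\le A\,(\rho_+-\zeta)^{-N}s^{-\frac{\al N}{2}}.
}
The time integral is then
\EQ{
\int_t^\I s^{-1-\al-\frac{\al N}2}\dd s=\brko{\al+\tfrac{\al N}{2}}^{-1}t^{-\al-\frac{\al N}2}\lsm (N+1)^{-1}t^{-\frac{\al(N+1)}2}.
}
We choose the gap exactly as in Lemma \ref{lem:eikonal-ordered-time-integrals}(iv) with $q=1$, namely $\zeta-\rho=g_0/(2(N+1))$ with $g_0=\rho_+-\rho$. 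This gives
\EQ{
(\zeta-\rho)^{-1}(\rho_+-\zeta)^{-N}(N+1)^{-1}\le C g_0^{-N-1}.
}
The factor $(N+1)^{-1}$ gained from the time integration is what compensates the $N+1$ coming from the small gap. Combining these with $C^ma_m$ yields the claimed bound after enlarging $C_0$.

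The only delicate point is this $q=1$ radius bookkeeping. One must apply the hypothesis at an intermediate radius $\zeta$ rather than at $\rho_+$, and check that the sup-in-time norm $\mathcal F_s^\kappa(\zeta)$ on the right of Proposition \ref{prop:eikonal-Pm1-estimate} is controlled by \eqref{eq:eikonal-higher-layer-input} for all $\theta\ge s\ge t$. This is immediate, since $\theta^{-\al/2}\le s^{-\al/2}$.
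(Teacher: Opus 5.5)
Your proposal is correct and follows the paper's proof. For $q\ge2$ you weaken the decay $t^{-(q-1)\al-\frac{\al N}{2}}$ from Proposition \ref{prop:eikonal-higher-layer-estimate} to $t^{-\frac{\al(N+q)}{2}}$ using $t\ge1$; for $q=1$ you apply Proposition \ref{prop:eikonal-Pm1-estimate} at an intermediate radius $\zeta$, so that the factor $(N+2)^{-1}$ from the time integral offsets the small gap $\zeta-\rho$. The only difference is cosmetic: the paper takes $\zeta-\rho=(\rho_+-\rho)/(N+2)$ instead of your $(\rho_+-\rho)/(2(N+1))$, and either choice keeps $(\rho_+-\zeta)^{-N}\lesssim(\rho_+-\rho)^{-N}$ with an absolute constant.
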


\begin{proof}
For $q\ge2$, Proposition \ref{prop:eikonal-higher-layer-estimate} gives \eqref{eq:eikonal-picard-layer-bound}, since $t\ge1$ and $(q-1)\al+\frac{\al N}{2}\ge\frac{\al(N+q)}2$, provided that $C_0$ is sufficiently large.

Let $q=1$ and set
\EQ{
\zeta=\rho+\frac{\rho_+-\rho}{N+2}.
}
The time-radius factor in Proposition \ref{prop:eikonal-Pm1-estimate}, applied at the radius $\zeta$, is
\EQ{
&(\zeta-\rho)^{-1}\int_t^\I s^{-1-\al}
\brko{\frac{s^{-\frac\al2}}
{\rho_+-\zeta}}^N\dd s
\\
&\quad=
\frac2\al\brko{\frac{N+2}{N+1}}^N
(\rho_+-\rho)^{-N-1}
t^{-\frac{\al(N+2)}2}
\\
&\quad\lsm
(\rho_+-\rho)^{-N-1}
t^{-\frac{\al(N+1)}2}.
}
Increasing $C_0$ proves \eqref{eq:eikonal-picard-layer-bound} for $q=1$ as well.
\end{proof}
\subsection{The normal-form fixed point}%\label{sec:eikonal-picard-fixed}

We now prove convergence of the Picard series defined by \eqref{eq:eikonal-picard-recursion}. Its sum is $\widetilde Z$, and the correction $Z$ is then recovered by \eqref{eq:eikonal-normal-decomposition}.
\begin{lem}[Graded analytic iteration]\label{lem:eikonal-picard-convergence}
Assume the layer estimate \eqref{eq:eikonal-picard-layer-bound}.
\begin{enumerate}[label=\textup{(\roman*)},leftmargin=*]
\item \emph{(Convergence of the graded Picard series)}.
Let $0\le\rho<\rho_+<\rho_1$, let $H_0=D_0$, and define $H_n$, $n\ge1$, successively by \eqref{eq:eikonal-picard-recursion}. Assume that
\EQ{
\norm{D_0}_{\mathcal F_T^\kappa(\rho_+)}\le M_0.
}
The sums defining every $H_n$ converge absolutely, and there are nonnegative coefficients $B_n$ such that
\EQn{\label{eq:eikonal-picard-order-bound}
\norm{H_n}_{\mathcal F_t^\kappa(\rho')}
\le B_n\brko{\frac{t^{-\frac\al2}}{\rho_+-\rho'}}^n,
\qquad t\ge T,\qquad 0\le\rho'<\rho_+,
\qquad n\ge0.
}
There exists $z_0>0$ such that, if
\EQn{\label{eq:eikonal-picard-smallness}
\frac{T^{-\frac\al2}}{\rho_+-\rho}\le z_0,
}
then $\sum_{n\ge0}H_n$ converges absolutely in $\mathcal F_T^\kappa(\rho)$ and
\EQn{\label{eq:eikonal-picard-absolute-bound}
\sum_{n\ge0}\norm{H_n}_{\mathcal F_T^\kappa(\rho)}
\le 2M_0,
}
where the bound is independent of $T$.

\item \emph{(Uniqueness of bounded fixed points)}.
Let
\EQ{
0\le\rho_-<\rho<\rho_+<\rho_1,
}
and let $\widetilde Z_1,\widetilde Z_2\in
\mathcal F_T^\kappa(\rho_+)$ satisfy
\EQ{
\widetilde Z_j=\widetilde{\TT}\widetilde Z_j
\quad\hbox{in }\mathcal F_T^\kappa(\rho),
\qquad j=1,2,
}
as well as
\EQ{
\norm{\widetilde Z_j}_{\mathcal F_T^\kappa(\rho_+)}
\le M,
\qquad j=1,2,
}
for some $M\ge1$. Assume that, at every radius below $\rho$, the series
\EQ{
D_0+\sum_{q\ge1}\sum_{m\ge1}
\mathcal P_{m,q}(\widetilde Z_j,\ldots,\widetilde Z_j)
}
converges absolutely and represents $\widetilde{\TT}\widetilde Z_j$.
Set
\EQ{
d_q(M)&:=C\sum_{m\ge1}mC_0^{m+q}a_mM^{m-1}
=C C_0^{q+1}C_4p e^{C_4pC_0M},
\\
d_M(z)&:=\sum_{q\ge1}d_q(M)z^q
=\frac{C C_0^2C_4p e^{C_4pC_0M}z}{1-C_0z},
\qquad |z|<C_0^{-1}.
}
Fix $0<z_M<C_0^{-1}$. If
\EQn{\label{eq:eikonal-uniqueness-series-condition}
\frac{T^{-\frac\al2}}{\rho_+-\rho}<z_M,
\qquad
d_M\brko{\frac{T^{-\frac\al2}}{\rho-\rho_-}}<1,
}
then $\widetilde Z_1=\widetilde Z_2$ at every radius below $\rho_-$.
\end{enumerate}
\end{lem}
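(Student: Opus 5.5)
For part (i) we follow the scheme of Lemma \ref{lem:ode-picard-convergence}, but the radius bookkeeping is now encoded in the graded ansatz \eqref{eq:eikonal-picard-order-bound}. We prove \eqref{eq:eikonal-picard-order-bound} by induction on $n$, with the coefficients defined by
\EQ{
B_0=M_0,\qquad B_n=\sum_{q=1}^n\sum_{m\ge1}C_0^{m+q}a_m\sum_{\substack{n_1+\cdots+n_m=n-q}}B_{n_1}\cdots B_{n_m}.
}
The case $n=0$ follows from monotonicity of the norm in the radius, because $(\rho_+-\rho')^{0}=1$.

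For the inductive step we fix $\rho'<\rho_+$. Each term $\mathcal P_{m,q}(H_{n_1},\ldots,H_{n_m})$ in \eqref{eq:eikonal-picard-recursion} has inputs satisfying \eqref{eq:eikonal-higher-layer-input} with $A_j=B_{n_j}$, on the whole range $\rho'\le\zeta<\rho_+$. Corollary \ref{cor:eikonal-picard-layer}, applied at $\rho=\rho'$, then gives the factor
\EQ{
C_0^{m+q}a_m(\rho_+-\rho')^{-(n-q)-q}\,t^{-\al n/2}\prod_j B_{n_j}.
}
The exponents match: $N+q=n$. Summing over $q$, $m$ and the $n_j$ yields the bound with $B_n$. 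The same sums show that the series defining $H_n$ converge absolutely, since for fixed $n$ the $m$-sum is controlled by $\sum_m a_m(\cdots)^m$ and $a_m=(C_4p)^m/m!$.

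For summability we set $A(z)=\sum_n B_nz^n$. The recursion is equivalent to the formal identity
\EQ{
A=M_0+\sum_{q\ge1}(C_0z)^q\brko{e^{C_4pC_0A}-1}=M_0+\frac{C_0z}{1-C_0z}\brko{e^{C_4pC_0A}-1}.
}
Lemma \ref{lem:real-analytic-ift} applies at $(0,M_0)$, since the $A$-derivative there equals $1$. It gives an analytic solution that coincides with the formal one, so the $B_n$ are its Taylor coefficients. Since the solution is continuous with $A(0)=M_0$, we can choose $z_0$ so that $A(z)\le 2M_0$ for $0\le z\le z_0$. Then \eqref{eq:eikonal-picard-smallness} gives \eqref{eq:eikonal-picard-absolute-bound}. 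The bound is independent of $T$ because $t^{-\al/2}\le T^{-\al/2}$.

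For part (ii) we subtract the two series representations and telescope multilinearly:
\EQ{
\mathcal P_{m,q}(\wt Z_1,\ldots)-\mathcal P_{m,q}(\wt Z_2,\ldots)=\sum_{k=1}^m\mathcal P_{m,q}(\wt Z_2,\ldots,\wt Z_2,\wt Z_1-\wt Z_2,\wt Z_1,\ldots).
}
In each summand we apply Corollary \ref{cor:eikonal-picard-layer} between two radii $\rho_a<\rho_b$ in $[\rho_-,\rho]$. The factors $\wt Z_j$ are inputs with $n_j=0$ and $A_j=M$; this is legitimate since $\rho\le\rho_+$ and the $n_j=0$ input condition needs only the $\rho_+$ bound. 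The difference is also an input with $n=0$, with $A=\norm{\wt Z_1-\wt Z_2}_{\mathcal F_T^\kappa(\rho_b)}$. Summing over $k$ produces the factor $m$, and summing over $m$ produces $d_q(M)$. This gives
\EQ{
\norm{\wt Z_1-\wt Z_2}_{\mathcal F^\kappa_T(\rho_a)}\le\sum_q d_q(M)\brko{\frac{T^{-\al/2}}{\rho_b-\rho_a}}^q\norm{\wt Z_1-\wt Z_2}_{\mathcal F^\kappa_T(\rho_b)}.
}

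The main obstacle is that this estimate loses radius, so it cannot be iterated indefinitely along a shrinking chain of radii. Our first attempt is a single step with $\rho_b=\rho$ and $\rho_a$ slightly below $\rho_-$. We then use the second condition in \eqref{eq:eikonal-uniqueness-series-condition}, $d_M(T^{-\al/2}/(\rho-\rho_-))<1$, to conclude that the difference at $\rho_a$ is at most $\theta$ times its norm at $\rho$, for some $\theta<1$. To close the argument without a further radius loss, we should instead run the estimate in the supremum-over-$t\ge T'$ norm and exploit the extra decay $t^{-\al q/2}$: applying the bound on tails $[T',\I)$ gives smallness $d_M(T'^{-\al/2}/(\rho-\rho_-))\to0$ with no change of radius. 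The difference is already finite at radius $\rho$, so we absorb it and obtain $\wt Z_1=\wt Z_2$ at radii below $\rho_-$ for large times. Finally we propagate this down to $T$ by a continuation (bootstrap) argument in time, which is the step we expect to require the most care.
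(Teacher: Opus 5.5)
Part (i) is correct and follows the paper's argument: the same recursion for $B_n$, the same generating function $B_0+\frac{C_0z}{1-C_0z}\brko{e^{C_4pC_0w}-1}$, and the same use of Lemma \ref{lem:real-analytic-ift}. Part (ii), however, has a genuine gap.

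\textbf{Where part (ii) fails.} Your displayed one-step estimate is correct, but it does not close the argument.
\begin{itemize}
\item Every application of that estimate passes from a radius $\rho_b$ to a strictly smaller radius $\rho_a$. The factor $d_M(T'^{-\al/2}/(\rho-\rho_-))$ is exactly the price of moving from $\rho$ down to $\rho_-$; there is no version ``with no change of radius''.
\item Consequently, with $V=\wt Z_1-\wt Z_2$, the tail argument only gives $\norm{V}_{\mathcal F^\kappa_{T'}(\rho_-)}\le\theta(T')\norm{V}_{\mathcal F^\kappa_{T'}(\rho)}$. The right side is a strictly stronger norm than the left, so it cannot be absorbed, however small $\theta(T')$ is.
\item Letting $T'\to\I$ only yields extra decay of $V$ at the smaller radius, not $V=0$.
\item Iterating the degree-zero bound along a chain of $k$ radii does not help either. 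The $k$-fold gap loss gives factors of size $d_M(kz)^k$, and these are not even finite once $kz\ge C_0^{-1}$.
\item The final backward continuation from $[T',\I)$ to $[T,\I)$ therefore has nothing to start from. It would also be a nontrivial backward-uniqueness problem in its own right, with derivative loss and the time-nonlocal $K_\mu$.
\end{itemize}

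\textbf{The missing idea.} Use the graded bookkeeping of part (i) for the difference as well, i.e.\ feed $V$ into Corollary \ref{cor:eikonal-picard-layer} with nonzero degree.
\begin{itemize}
\item Set $\mathcal L_qF:=\sum_{m\ge1}\sum_{h=1}^m\mathcal P_{m,q}(\wt Z_2,\dots,\wt Z_2,F,\wt Z_1,\dots,\wt Z_1)$, with $F$ in the $h$-th slot. Then $V=\sum_{q\ge1}\mathcal L_qV$.
\item Suppose $\norm{F}_{\mathcal F^\kappa_{t_0}(\zeta)}\le A\brko{t_0^{-\al/2}/(\rho-\zeta)}^n$ for all $t_0\ge t$ and $\rho'\le\zeta<\rho$. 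Apply the corollary with upper radius $\rho$, the $\wt Z_j$ entering with degree zero and constant $M$. This gives $\norm{\mathcal L_qF}_{\mathcal F^\kappa_t(\rho')}\le d_q(M)A\brko{t^{-\al/2}/(\rho-\rho')}^{n+q}$.
\item Since this holds for every $\rho'$, the output is again graded, now of degree $n+q$, and the radius losses do not compound.
\item By induction, $\norm{\mathcal L_{q_1}\cdots\mathcal L_{q_k}V}_{\mathcal F^\kappa_T(\rho_-)}\le\prod_i d_{q_i}(M)\brko{T^{-\al/2}/(\rho-\rho_-)}^{q_1+\cdots+q_k}\norm{V}_{\mathcal F^\kappa_T(\rho_+)}$.
\item Your one-step estimate is needed only as a continuity statement. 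It justifies $V=\sum_{q_1,\dots,q_k}\mathcal L_{q_1}\cdots\mathcal L_{q_k}V$ along an intermediate chain of radii.
\item Summing gives $\norm{V}_{\mathcal F^\kappa_T(\rho_-)}\le d_M\brko{T^{-\al/2}/(\rho-\rho_-)}^k\norm{V}_{\mathcal F^\kappa_T(\rho_+)}$ for every $k$.
\end{itemize}
The second condition in \eqref{eq:eikonal-uniqueness-series-condition} then gives $V=0$ on all of $[T,\I)$ at once. No tail absorption or continuation in time is needed.
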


\begin{proof}
For \textup{(i)}, set $B_0=M_0$. We prove
\eqref{eq:eikonal-picard-order-bound}, with the remaining scalar
coefficients fixed recursively below.
Suppose that \eqref{eq:eikonal-picard-order-bound} holds at all orders below $n$.  If $n_1+\cdots+n_m+q=n$, then $q\ge1$ implies $n_j<n$ for every $j$.  Hence Corollary \ref{cor:eikonal-picard-layer} gives
\EQn{\label{eq:eikonal-picard-term-induction}
\norm{\mathcal P_{m,q}(H_{n_1},\ldots,H_{n_m})}_{\mathcal F_t^\kappa(\rho')}
\le C_0^{m+q}a_mB_{n_1}\cdots B_{n_m}
\brko{\frac{t^{-\frac\al2}}{\rho_+-\rho'}}^n.
}
The induction closes with the recursive choice
\EQn{\label{eq:eikonal-picard-coefficient-recursion}
B_0&:=M_0,
\\
B_n&:=\sum_{q=1}^{n}\sum_{m\ge1}C_0^{m+q}a_m
\sum_{\substack{n_1,\ldots,n_m\ge0\\ n_1+\cdots+n_m=n-q}}
B_{n_1}\cdots B_{n_m},
\qquad n\ge1.
}
It remains to show that the constants in \eqref{eq:eikonal-picard-coefficient-recursion} are finite and summable. Recall that $C_4$ is the constant in the analytic Wiener algebra estimate \eqref{eq:fourier-algebra}. Since $a_m=(C_4 p)^m/m!$ by \eqref{eq:eikonal-am-definition}, introduce the real analytic function
\EQ{
\mathcal C(w,z)
:=\sum_{q\ge1}\sum_{m\ge1}C_0^{m+q}a_mw^mz^q=\frac{C_0z}{1-C_0z}\brko{e^{C_4 p C_0w}-1}.
}
This function is real analytic for $w\in\R$ and $|z|<C_0^{-1}$.  The function
\EQ{
\mathcal G(w,z):=w-B_0-\mathcal C(w,z)
}
satisfies
\EQ{
\mathcal G(B_0,0)=0,
\qquad
\pd_w\mathcal G(B_0,0)=1.
}
Lemma \ref{lem:real-analytic-ift} therefore gives a real analytic solution $B(z)$ of
\EQ{%\label{eq:eikonal-picard-generating-equation}
B(z)=B_0+\mathcal C(B(z),z),
\qquad B(0)=B_0,
}
for $|z|<z_0$, where $z_0>0$ is sufficiently small.  Write its convergent Taylor series as $B(z)=\sum_{n\ge0}\widehat B_nz^n$.  Absolute convergence permits the expansion
\EQ{%\label{eq:eikonal-scalar-coefficient-expansion}
\sum_{n\ge0}\widehat B_nz^n
=B_0+
\sum_{n\ge1}z^n\sum_{q=1}^{n}\sum_{m\ge1}
C_0^{m+q}a_m
\sum_{\substack{n_1,\ldots,n_m\ge0\\ n_1+\cdots+n_m=n-q}}
\widehat B_{n_1}\cdots\widehat B_{n_m}.
}
Comparing coefficients shows that $\widehat B_0=B_0$ and that $\widehat B_n$ satisfies \eqref{eq:eikonal-picard-coefficient-recursion}.  Since $q\ge1$, the right-hand side at order $n$ involves only the coefficients of lower order. The recursion is therefore uniquely determined, and induction gives $\widehat B_n=B_n$ for every $n$.  In particular, the coefficients $B_n$ are finite and nonnegative.  After decreasing $z_0$ if necessary, we have
\EQn{\label{eq:eikonal-picard-coefficient-sum}
\sum_{n\ge0}B_nz^n=B(z)\le 2B_0,
\qquad 0\le z\le z_0.
}

We now establish \eqref{eq:eikonal-picard-order-bound} and the absolute
convergence of the sums defining $H_n$. For $n=0$, the assumed bound on
$D_0$ and monotonicity in time and radius give
\EQ{
\norm{H_0}_{\mathcal F_t^\kappa(\rho')}
=\norm{D_0}_{\mathcal F_t^\kappa(\rho')}
\le M_0=B_0.
}
Assume that $H_0,\ldots,H_{n-1}$ have been defined and satisfy \eqref{eq:eikonal-picard-order-bound}.  Summing \eqref{eq:eikonal-picard-term-induction} and using \eqref{eq:eikonal-picard-coefficient-recursion}, we obtain
\EQ{%\label{eq:eikonal-picard-order-absolute}
\sum_{q=1}^{n}\sum_{m\ge1}
\sum_{\substack{n_1,\ldots,n_m\ge0\\ n_1+\cdots+n_m=n-q}}
\norm{\mathcal P_{m,q}(H_{n_1},\ldots,H_{n_m})}_{\mathcal F_t^\kappa(\rho')} \le
B_n\brko{\frac{t^{-\frac\al2}}{\rho_+-\rho'}}^n<\I.
}
Hence the sum defining $H_n$ in \eqref{eq:eikonal-picard-recursion} converges absolutely, and its sum satisfies \eqref{eq:eikonal-picard-order-bound}.  Induction proves both assertions for every $n\ge0$.

Evaluating \eqref{eq:eikonal-picard-order-bound} at $t=T$ and $\rho'=\rho$ and using \eqref{eq:eikonal-picard-smallness} and \eqref{eq:eikonal-picard-coefficient-sum} gives
\EQ{
\sum_{n\ge0}\norm{H_n}_{\mathcal F_T^\kappa(\rho)}\le 2B_0,
}
This proves \eqref{eq:eikonal-picard-absolute-bound} and the absolute convergence of the Picard series.

For \textup{(ii)}, \eqref{eq:eikonal-am-sum} and the choice of $z_M$ give
\EQ{%
\sum_{q\ge1}\sum_{m\ge1}
C_0^{m+q}a_mM^mz_M^q
=\frac{C_0z_M}{1-C_0z_M}
\brko{e^{C_4pC_0M}-1}<\I.
}
Hence Corollary \ref{cor:eikonal-picard-layer}, with degree zero
assigned to every input, gives, for $j=1,2$ and every
$\rho'\le\rho$,
\EQ{%
\sum_{q\ge1}\sum_{m\ge1}
\norm{\mathcal P_{m,q}(\widetilde Z_j,\ldots,\widetilde Z_j)}
_{\mathcal F_T^\kappa(\rho')}
\le\sum_{q\ge1}\sum_{m\ge1}C_0^{m+q}a_mM^m
\brko{\frac{T^{-\frac\al2}}{\rho_+-\rho'}}^q<\I.
}
By hypothesis, these series represent the two fixed-point equations.
For $q\ge1$, define
\EQ{%
\mathcal L_qF
:=\sum_{m\ge1}\sum_{h=1}^{m}
\mathcal P_{m,q}\brko{
\underbrace{\widetilde Z_2,\ldots,\widetilde Z_2}_{h-1},
F,
\underbrace{\widetilde Z_1,\ldots,\widetilde Z_1}_{m-h}}.
}
If $V=\widetilde Z_1-\widetilde Z_2$, the multilinear identity
\EQ{
&\mathcal M(A_1,\ldots,A_m)-\mathcal M(B_1,\ldots,B_m)
\\
&\quad=\sum_{h=1}^{m}
\mathcal M(B_1,\ldots,B_{h-1},A_h-B_h,A_{h+1},\ldots,A_m)
}
and absolute convergence permit termwise subtraction. Thus
\EQn{\label{eq:eikonal-uniqueness-difference-expansion}
V=\sum_{q\ge1}\mathcal L_qV
}
with absolute convergence at every radius below $\rho$.

Let $\rho_-\le\rho'<\rho$ and suppose that
\EQ{
\norm{F}_{\mathcal F_{t_0}^\kappa(\zeta)}
\le A\brko{\frac{t_0^{-\frac\al2}}
{\rho-\zeta}}^n,
\qquad
t_0\ge t,\qquad \rho'\le\zeta<\rho.
}
Since the two fixed points are bounded by $M$ at all lower radii,
Corollary \ref{cor:eikonal-picard-layer}, with degree zero assigned to
the remaining entries, yields
\EQn{\label{eq:eikonal-uniqueness-Lq-bound}
\norm{\mathcal L_qF}_{\mathcal F_t^\kappa(\rho')}
\le d_q(M)A\brko{\frac{t^{-\frac\al2}}
{\rho-\rho'}}^{n+q}.
}
For $\rho_-\le\zeta_0<\zeta_1<\rho$, the same estimate with
upper radius $\zeta_1$ and $n=0$ gives
\EQn{\label{eq:eikonal-uniqueness-Lq-continuity}
\norm{\mathcal L_qF}_{\mathcal F_t^\kappa(\zeta_0)}
\le d_q(M)\brko{\frac{t^{-\frac\al2}}
{\zeta_1-\zeta_0}}^q
\norm{F}_{\mathcal F_t^\kappa(\zeta_1)}.
}
In particular,
$\mathcal L_q:\mathcal F_t^\kappa(\zeta_1)
\ra
\mathcal F_t^\kappa(\zeta_0)$ is continuous.

For $q_1,\ldots,q_k\ge1$, induction using
\eqref{eq:eikonal-uniqueness-Lq-bound} gives, for
$\rho_-\le\rho'<\rho$ and $t\ge T$,
\EQn{\label{eq:eikonal-uniqueness-composition-bound}
\norm{\mathcal L_{q_1}\cdots\mathcal L_{q_k}V}
_{\mathcal F_t^\kappa(\rho')} \le d_{q_1}(M)\cdots d_{q_k}(M)
\brko{\frac{t^{-\frac\al2}}
{\rho-\rho'}}^{q_1+\cdots+q_k}
\norm{V}_{\mathcal F_T^\kappa(\rho_+)}.
}
Indeed, the case $k=1$ follows from
\eqref{eq:eikonal-uniqueness-Lq-bound} with $n=0$. If the bound holds
with $k-1$ factors, then, for $t_0\ge t$ and
$\rho'\le\zeta<\rho$,
\EQ{
\norm{\mathcal L_{q_2}\cdots\mathcal L_{q_k}V}
_{\mathcal F_{t_0}^\kappa(\zeta)} \le d_{q_2}(M)\cdots d_{q_k}(M)
\brko{\frac{t_0^{-\frac\al2}}
{\rho-\zeta}}^{q_2+\cdots+q_k}
\norm{V}_{\mathcal F_T^\kappa(\rho_+)},
}
and another application of \eqref{eq:eikonal-uniqueness-Lq-bound}
proves the induction step. Summing
\eqref{eq:eikonal-uniqueness-composition-bound} gives
\EQn{\label{eq:eikonal-uniqueness-composition-series}
\sum_{q_1,\ldots,q_k\ge1}
\norm{\mathcal L_{q_1}\cdots\mathcal L_{q_k}V}
_{\mathcal F_T^\kappa(\rho_-)} \le
d_M\brko{\frac{T^{-\frac\al2}}
{\rho-\rho_-}}^k
\norm{V}_{\mathcal F_T^\kappa(\rho_+)}<\I.
}
For $k\ge2$, choose
\EQ{
\rho_-=\zeta_0<\zeta_1<\cdots<\zeta_{k-1}<\rho.
}
By \eqref{eq:eikonal-uniqueness-difference-expansion},
\eqref{eq:eikonal-uniqueness-Lq-continuity}, and
\eqref{eq:eikonal-uniqueness-composition-series},
\EQ{%
V
=\sum_{q_1\ge1}\mathcal L_{q_1}V
=\cdots
=\sum_{q_1,\ldots,q_k\ge1}
\mathcal L_{q_1}\cdots\mathcal L_{q_k}V
\quad\hbox{in }\mathcal F_T^\kappa(\rho_-).
}
Consequently,
\EQn{\label{eq:eikonal-uniqueness-iteration-bound}
\norm{V}_{\mathcal F_T^\kappa(\rho_-)}
\le d_M\brko{\frac{T^{-\frac\al2}}
{\rho-\rho_-}}^k
\norm{V}_{\mathcal F_T^\kappa(\rho_+)},
\qquad k\ge1.
}
The second condition in
\eqref{eq:eikonal-uniqueness-series-condition} allows us to let
$k\ra\I$ in \eqref{eq:eikonal-uniqueness-iteration-bound}; hence $V=0$ in
$\mathcal F_T^\kappa(\rho_-)$. Radius monotonicity proves the same
conclusion at every lower radius.
\end{proof}
\begin{prop}[Normal-form fixed point around the Fuchsian profile]\label{prop:eikonal-fixed}
Under the assumptions of Theorem \ref{thm:eikonal}, set
\EQ{
\rho_-:=\frac{\rho_*+\rho_1}{2},
\qquad
\rho:=\frac{2\rho_-+\rho_1}{3},
\qquad
\rho_+:=\frac{\rho+\rho_1}{2}.
}
There exist $\ep_0>0$, $T\ge2$, and $R_0>1$ such that, if $0<\ep\le\ep_0$, there is $\widetilde Z\in\mathcal F_T^\kappa(\rho)$ satisfying
\EQ{
\normb{\widetilde Z}_{\mathcal F_T^\kappa(\rho)}\le R_0.
}
The fixed-point identity holds at every lower radius:
\EQ{
\widetilde Z=\widetilde{\TT}\widetilde Z
\quad\hbox{in }\mathcal F_T^\kappa(\rho')
\quad\hbox{for every }\rho_*<\rho'<\rho_-.
}
The reconstructed correction
\EQ{
Z=\widetilde Z+iK_\mu(\re\widetilde Z)
}
is a final-state correction in the sense of Definition \ref{def:eikonal-asymptotic} and satisfies
\EQn{\label{eq:eikonal-main-Z-bound}
t^\kappa\norm{\re Z(t)}_{\A^4_{\rho_-}}+t^{\kappa+\al-1}\norm{\im Z(t)}_{\A^4_{\rho_-}}\lsm_{\mu,R}1,\qquad t\ge T.
}
Suppose that, for some $T_0\ge2$ and $\rho_*<\zeta<\rho_1$, two functions $\widetilde Z_j$, $j=1,2$, satisfy
\EQ{
\widetilde Z_j&\in\mathcal F_{T_0}^\kappa(\zeta),
\\
\widetilde Z_j&=\widetilde{\TT}\widetilde Z_j
\quad\hbox{in }\mathcal F_{T_0}^\kappa(\zeta')
\quad\hbox{for every }\rho_*<\zeta'<\zeta.
}
If both fixed-point identities are associated with the same truncated Fuchsian profile, then there exists $T_*\ge T_0$ such that
\EQ{
\widetilde Z_1(t)=\widetilde Z_2(t),
\qquad t\ge T_*.
}
\end{prop}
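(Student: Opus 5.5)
The construction runs through the Picard machinery of Lemma \ref{lem:eikonal-picard-convergence}\textup{(i)}. It is followed by the reconstruction $Z=\widetilde Z+iK_\mu(\re\widetilde Z)$ via Lemma \ref{lem:eikonal-K-tail} and Lemma \ref{lem:eikonal-Duhamel-conjugacy}, and then a uniqueness step based on Lemma \ref{lem:eikonal-picard-convergence}\textup{(ii)}.

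\textbf{Existence.} By \eqref{eq:eikonal-D0-bound}, $\norm{D_0}_{\mathcal F_T^\kappa(\rho_+)}\le M_0:=C_{\mu,R}$, uniformly in $T$. I take $H_0=D_0$, define $H_n$ by \eqref{eq:eikonal-picard-recursion}, and then choose $T$ large enough that several conditions hold simultaneously:
\begin{itemize}
\item $T^{-\al/2}(\rho_+-\rho)^{-1}\le z_0$;
\item the smallness condition \eqref{eq:eikonal-K-smallness-radius} holds on each pair of radii used below;
\item the threshold from Corollary \ref{cor:eikonal-auxiliary-bounds} is met.
\end{itemize}
Lemma \ref{lem:eikonal-picard-convergence}\textup{(i)} then gives absolute convergence of $\widetilde Z=\sum_n H_n$ in $\mathcal F_T^\kappa(\rho)$ with bound $2M_0=:R_0$.

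To verify the fixed-point identity at a radius $\rho'<\rho_-$, I rearrange the absolutely convergent double series. The order-by-order bounds \eqref{eq:eikonal-picard-order-bound}, together with Corollary \ref{cor:eikonal-picard-layer} applied with degree-zero inputs bounded by $R_0$, show that $D_0+\sum_{q,m}\mathcal P_{m,q}(\widetilde Z,\ldots,\widetilde Z)$ converges absolutely. They also show that this sum equals $\sum_n H_n$ after regrouping by $n=q+n_1+\cdots+n_m$. Moreover, $K_\mu(\re\widetilde Z)$ is given by the convergent series \eqref{eq:eikonal-K-double-series}, since $\re\widetilde Z\in\mathcal F_T^\kappa(\rho)$. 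Hence \eqref{eq:eikonal-q-index-expansion} is a genuine identity, and $\widetilde Z=\widetilde{\TT}\widetilde Z$ holds at every radius below $\rho_-$. Because $\rho_-<\rho$, this leaves the needed room in the radius.

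\textbf{Reconstruction.} I apply Lemma \ref{lem:eikonal-K-tail}\textup{(ii)} between $\rho$ and an intermediate radius $\rho''\in(\rho_-,\rho)$ to get
\[
K=K_\mu(\re\widetilde Z)\in\mathcal F_T^{\kappa+\al-1}(\rho''),\qquad \norm{K}\lsm\ep,
\]
and $K$ solves \eqref{eq:eikonal-K-integral}. Lemma \ref{lem:eikonal-Duhamel-conjugacy} then converts $\widetilde Z=\widetilde{\TT}\widetilde Z$ into $Z=\TT_\mu Z$ at every $\rho'<\rho''$, in particular for every $\rho_*<\rho'<\rho_-$.

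For the bound \eqref{eq:eikonal-main-Z-bound}, I use that $K$ is real:
\begin{itemize}
\item $\re Z=\re\widetilde Z$, which is $O(t^{-\kappa})$;
\item $\im Z=\im\widetilde Z+K$, which is $O(t^{-\kappa})+O(t^{1-\al-\kappa})=O(t^{1-\al-\kappa})$.
\end{itemize}
This gives \eqref{eq:eikonal-main-Z-bound} and membership in $\mathcal Z_T^\kappa(\rho_-)$. Continuity in time follows from the Duhamel formulas. The small parameter $\ep$ enters only through the constants of $K$, so I would simply take $\ep_0\le1$ and, if convenient, use $\ep$ to make $C_{\mu,R,M}\ep\le1$.

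\textbf{Uniqueness.} Given $\widetilde Z_1,\widetilde Z_2\in\mathcal F_{T_0}^\kappa(\zeta)$, I set $M:=1+\max_j\norm{\widetilde Z_j}$ and fix radii $\rho_*<\zeta_-<\zeta'<\zeta''<\zeta$. I then choose $T_*\ge T_0$ so large that \eqref{eq:eikonal-uniqueness-series-condition} holds with $(\rho_-,\rho,\rho_+)=(\zeta_-,\zeta',\zeta'')$. This is possible because both conditions only require $T^{-\al/2}$ to be small. The remaining hypothesis of Lemma \ref{lem:eikonal-picard-convergence}\textup{(ii)} is that the layer series represents $\widetilde{\TT}\widetilde Z_j$, and I expect this to be the main obstacle. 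It requires that $K_\mu(\re\widetilde Z_j)$, used inside $\widetilde{\TT}$, is given by the double series on $[T_*,\I)$ independently of how it was constructed. I would handle this with Lemma \ref{lem:eikonal-K-tail}\textup{(ii)} on $[T_*,\I)$, combined with the compatibility statement \textupad{(v)} of the same lemma, so that the $K$ used on the larger interval restricts correctly. Once this is in place, absolute convergence and the expansion \eqref{eq:eikonal-q-index-expansion} apply, and Lemma \ref{lem:eikonal-picard-convergence}\textup{(ii)} yields $\widetilde Z_1=\widetilde Z_2$ for $t\ge T_*$.
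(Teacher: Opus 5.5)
Your proposal is correct and follows the paper's proof essentially step for step. The ingredients match: the Picard series from Lemma~\ref{lem:eikonal-picard-convergence}\textup{(i)}; the rearrangement justified by the order-by-order bounds together with absolute convergence of the $K$ double series; the reconstruction via Lemma~\ref{lem:eikonal-K-tail}\textup{(ii)} and Lemma~\ref{lem:eikonal-Duhamel-conjugacy}; and uniqueness from Lemma~\ref{lem:eikonal-picard-convergence}\textup{(ii)}, with $K_\mu(\re\widetilde Z_j)$ on $[T_*,\I)$ identified through parts \textup{(ii)} and \textup{(v)} of Lemma~\ref{lem:eikonal-K-tail}, exactly as the paper does. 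Two cosmetic points: take $M_0\ge1$ so that $R_0=2M_0>1$, and note that $Z=\TT_\mu Z$ follows only at radii where $\widetilde Z=\widetilde{\TT}\widetilde Z$ is already known (below $\rho_-$), which is all Definition~\ref{def:eikonal-asymptotic} requires.
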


\begin{proof}
Let
\EQ{
\delta_*=\min\fbrko{\rho-\rho_-,\rho_+-\rho}.
}
Let $H_n$ be defined by \eqref{eq:eikonal-picard-recursion}. By
\eqref{eq:eikonal-D0-bound} and radius monotonicity, fix $M_0\ge1$ such that
$\norm{D_0}_{\mathcal F_T^\kappa(\rho_+)}\le M_0$. Let $z_0$ be
the constant in Lemma \ref{lem:eikonal-picard-convergence}\textup{(i)}.
Choose $\ep_0>0$ small enough for the preceding estimates, choose
$R_0>2M_0$, and then choose $T\ge2$ so that
\EQn{\label{eq:eikonal-normal-smallness}
\frac{T^{-\frac\al2}}{\delta_*}\le z_0,
\qquad
\frac{C_{\mu,R}T^{-\al}}{\delta_*}\le\frac14.
}
Since $\delta_*\le\rho_+-\rho$, Lemma
\ref{lem:eikonal-picard-convergence}\textup{(i)} gives
\EQ{
\sum_{n\ge0}\norm{H_n}_{\mathcal F_T^\kappa(\rho)}
\le2M_0<R_0.
}
Let
\EQ{
\widetilde Z:=\sum_{n\ge0}H_n.
}
Corollary \ref{cor:eikonal-picard-layer} and the order-by-order estimates
in Lemma \ref{lem:eikonal-picard-convergence}\textup{(i)} give
\EQn{\label{eq:eikonal-fixed-nonlinear-sum}
\sum_{q\ge1}\sum_{m\ge1}\sum_{n_1,\ldots,n_m\ge0}
\norm{\mathcal P_{m,q}(H_{n_1},\ldots,H_{n_m})}_{\mathcal F_T^\kappa(\rho_-)}<\I.
}
The second inequality in \eqref{eq:eikonal-normal-smallness} and Lemma \ref{lem:eikonal-K-tail}\textup{(ii)}, with lower radius $\rho_-$ and upper radius $\rho$, show that the double-series expansion \eqref{eq:eikonal-K-double-series} for $K_\mu(\re\widetilde Z)$ converges absolutely. The absolute convergence in \eqref{eq:eikonal-fixed-nonlinear-sum} permits the resulting products to be rearranged as
\EQn{\label{eq:eikonal-fixed-series-product}
\sum_{q\ge1}\sum_{m\ge1}
\mathcal P_{m,q}(\widetilde Z,\ldots,\widetilde Z)=\sum_{q\ge1}\sum_{m\ge1}\sum_{n_1,\ldots,n_m\ge0}
\mathcal P_{m,q}(H_{n_1},\ldots,H_{n_m})
=\sum_{n\ge1}H_n
}
in $\mathcal F_T^\kappa(\rho_-)$. Thus \eqref{eq:eikonal-fixed-series-product}, \eqref{eq:eikonal-q-index-expansion}, and \eqref{eq:eikonal-picard-recursion} identify the formal expansion with the actual map and give
\EQn{\label{eq:eikonal-normal-fixed-equation}
\widetilde{\TT}\widetilde Z
=D_0+\sum_{q\ge1}\sum_{m\ge1}\mathcal P_{m,q}(\widetilde Z,\ldots,\widetilde Z)
=H_0+\sum_{n\ge1}H_n
=\widetilde Z
}
in $\mathcal F_T^\kappa(\rho_-)$. By radius monotonicity, the identity holds in $\mathcal F_T^\kappa(\rho')$ for every $\rho_*<\rho'<\rho_-$. Moreover,
\EQn{\label{eq:eikonal-fixed-point-bound}
\normb{\widetilde Z}_{\mathcal F_T^\kappa(\rho)}\le R_0.
}
Lemma \ref{lem:eikonal-K-tail}\textup{(ii)}, with lower radius $\rho_-$ and upper radius $\rho$, gives
\EQn{\label{eq:eikonal-K-final-bound}
\normb{K_\mu(\re\widetilde Z)(t)}_{\A^4_{\rho_-}}
\lsm_{\mu,R}\ep t^{1-\al-\kappa}.
}
Let $Z=\widetilde Z+iK_\mu(\re\widetilde Z)$.  Since $K_\mu(\re\widetilde Z)$ is real-valued,
\EQ{
\re Z=\re\widetilde Z,
\qquad
\im Z=\im\widetilde Z+K_\mu(\re\widetilde Z).
}
Combining this identity with \eqref{eq:eikonal-fixed-point-bound} and \eqref{eq:eikonal-K-final-bound} gives \eqref{eq:eikonal-main-Z-bound}.

Fix $\rho_*<\rho'<\rho''<\rho_-$. Lemma \ref{lem:eikonal-K-tail}\textup{(ii)} gives \eqref{eq:eikonal-K-integral} at radius $\rho''$. Hence Lemma \ref{lem:eikonal-Duhamel-conjugacy} and \eqref{eq:eikonal-normal-fixed-equation} give
\EQ{
Z=\TT_\mu Z
\quad\hbox{in }\mathcal Z_T^\kappa(\rho').
}
Since $\rho'$ is arbitrary, \eqref{eq:eikonal-main-Z-bound} proves that $Z$ is a final-state correction with $\rho=\rho_-$ in Definition \ref{def:eikonal-asymptotic}.

For uniqueness, let $\widetilde Z_j$, $j=1,2$, satisfy the assumptions in the proposition. Choose
\EQ{
\rho_*<\zeta_-<\zeta'<\zeta_+<\zeta.
}
Set
\EQ{
M:=1+\sum_{j=1}^2\norm{\widetilde Z_j}_{\mathcal F_{T_0}^\kappa(\zeta_+)}.
}
Fix $0<z_M<C_0^{-1}$. Increase $T$ so that
\EQ{
T\ge T_0,\quad
\frac{C_{\mu,R}T^{-\al}}{\zeta_+-\zeta'}\le\frac14,\quad
\frac{T^{-\frac\al2}}{\zeta_+-\zeta'}<z_M,\quad
d_M\brko{\frac{T^{-\frac\al2}}{\zeta'-\zeta_-}}<1.
}
The assumptions give the two fixed-point identities in $\mathcal F_T^\kappa(\zeta')$. Lemma \ref{lem:eikonal-K-tail}\textup{(ii), (v)}, \eqref{eq:eikonal-q-index-expansion}, and Corollary \ref{cor:eikonal-picard-layer} show that both transformed-map series converge absolutely and represent $\widetilde{\TT}$ below $\zeta'$. Lemma \ref{lem:eikonal-picard-convergence}\textup{(ii)}, applied with $(\rho_-,\rho,\rho_+)=(\zeta_-,\zeta',\zeta_+)$, gives $\widetilde Z_1=\widetilde Z_2$ below $\zeta_-$. Hence the two functions agree on $[T,\I)$.
\end{proof}

\begin{lem}[Automatic normal-form reduction]\label{lem:eikonal-automatic-normal-form}
Under the assumptions of Theorem \ref{thm:eikonal}, let $(L^{(\mu)},S^{(\mu)})$ be the truncated Fuchsian profile constructed in Proposition \ref{prop:eikonal-prep}. Suppose that $Z$ is a final-state correction associated with this profile and that $(T_0,\zeta_+)$ realizes Definition \ref{def:eikonal-asymptotic}. Then there exist $T\ge T_0$ and $\rho_*<\rho<\zeta_+$ such that the functions
\EQ{
K:=K_\mu(\re Z),
\qquad
\widetilde Z:=Z-iK
}
satisfy
\EQ{
\widetilde Z&\in\mathcal F_T^\kappa(\rho),
\\
K&\in\mathcal F_T^{\kappa+\al-1}(\rho),
\\
\widetilde Z&=\widetilde{\TT}\widetilde Z
\quad\hbox{in }\mathcal F_T^\kappa(\rho')
\quad\hbox{for every }\rho_*<\rho'<\rho.
}
\end{lem}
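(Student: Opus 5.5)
The plan is to obtain $K$ from Lemma \ref{lem:eikonal-K-tail}, transfer the fixed-point identity to $\widetilde Z$ via Lemma \ref{lem:eikonal-Duhamel-conjugacy}, and then upgrade the decay of $\widetilde Z$ to the fast class $\mathcal F^\kappa$ by a finite bootstrap through the map $\widetilde{\TT}$.

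\emph{Step 1: construction of $K$.} By Definition \ref{def:eikonal-asymptotic}, $Z\in\mathcal Z_{T_0}^\kappa(\zeta_+)$. Hence
\[
X:=\re Z\in\mathcal F_{T_0}^\kappa(\zeta_+),\qquad
\im Z\in\mathcal F_{T_0}^{\kappa+\al-1}(\zeta_+),
\]
with some bound $M$. Fix radii $\rho_*<\rho<\cdots<\zeta_+$; the number of intermediate radii is determined in Step 3. Increase $T\ge T_0$ so that the smallness condition \eqref{eq:eikonal-K-smallness-radius} holds for the top gap. Lemma \ref{lem:eikonal-K-tail}\textup{(ii)} then gives
\[
K=K_\mu(X)\in\mathcal F_T^{\kappa+\al-1}
\]
at a slightly smaller radius. $K$ is real, solves \eqref{eq:eikonal-K-integral}, and satisfies $\norm{K}\lsm\ep$. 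This proves the claim about $K$ once $\rho$ is placed below this radius.

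\emph{Step 2: fixed-point identity for $\widetilde Z$.} Set $\widetilde Z=Z-iK$. Since $K$ is real, $\re\widetilde Z=X\in\mathcal F^\kappa$. A priori we only have
\[
\im\widetilde Z=\im Z-K\in\mathcal F^{\sigma_0},\qquad \sigma_0:=\kappa+\al-1>0 .
\]
Apply Lemma \ref{lem:eikonal-Duhamel-conjugacy} with $H=\widetilde Z$ and $\sigma=\sigma_0$; its hypotheses only require $\sigma>0$ and $\re H\in\mathcal F^\kappa$. Combined with $Z=\TT_\mu Z$, it gives $\widetilde Z=\widetilde{\TT}\widetilde Z$ in $C([T,\I);\A^4_{\rho'})$ at every smaller radius. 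It remains to upgrade $\widetilde Z\in\mathcal F^{\sigma_0}$ to $\widetilde Z\in\mathcal F^\kappa$. This is the main obstacle. When $\al\le1/2$ the upgrade cannot be done in one step, so we bootstrap.

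\emph{Step 3: bootstrap.} Suppose $\widetilde Z\in\mathcal F^\sigma(\zeta)$ with $\sigma_0\le\sigma<\kappa$. We estimate each term of $\widetilde{\TT}\widetilde Z$ in \eqref{eq:eikonal-normal-map-parts}, using the isometry \eqref{eq:U-isometry} (no dispersive gain is needed) and one radius gap per derivative.
\begin{itemize}
\item[(a)] $D_0=O(t^{-\kappa})$ by \eqref{eq:eikonal-D0-bound}.
\item[(b)] The transport term: $\int_t^\I s^{-1-\al}s^{-\sigma}\,ds\lsm t^{-\sigma-\al}$.
\item[(c)] $N_{\rm fir}^{\rm sh}$ and $N_{\rm qua}$ with arguments $\widetilde Z+iK$: these contribute at most $t^{-1-\sigma_0}$, and $1+\sigma_0=\kappa+\al\ge\kappa$.
\item[(d)] The term $s^{-2}\Dy K$: it contributes $t^{-\kappa-\al}$.
\end{itemize}
Hence $\widetilde Z\in\mathcal F^{\min(\sigma+\al,\kappa)}$ at a slightly smaller radius. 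After $J=\lceil(1-\al)/\al\rceil$ iterations we reach $\widetilde Z\in\mathcal F_T^\kappa(\rho)$. Choosing $J$ equally spaced radii in advance keeps all constants finite, with no summation over infinitely many steps.

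Finally, since the identity $\widetilde Z=\widetilde{\TT}\widetilde Z$ holds pointwise in $t$ in $\A^4_{\rho'}$, and both sides now lie in $\mathcal F_T^\kappa(\rho')$, it holds in $\mathcal F_T^\kappa(\rho')$ for every $\rho_*<\rho'<\rho$. Continuity in time follows from the Duhamel formulas. The delicate points to check are that the transport term (b) is the only one limiting the gain, and that each radius loss is a fixed finite amount.
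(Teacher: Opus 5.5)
Your proposal is correct and follows the paper's proof. You obtain $K$ from Lemma~\ref{lem:eikonal-K-tail}\textup{(ii)} and transfer $Z=\TT_\mu Z$ to $\widetilde Z=\widetilde{\TT}\widetilde Z$ by Lemma~\ref{lem:eikonal-Duhamel-conjugacy}, which needs only $\widetilde Z\in\mathcal F^{\kappa+\al-1}$ and $\re\widetilde Z\in\mathcal F^\kappa$; then $\lceil\gamma\rceil=\lceil(1-\al)/\al\rceil$ bootstrap steps along a pre-chosen finite radius chain, each gaining $t^{-\al}$ and limited only by the transport term $P_\mu\widetilde Z$, reach $\mathcal F^\kappa$. (Minor point: at $\al=1/2$ one step already suffices, as your formula for the number of iterations shows, so the remark that the upgrade ``cannot be done in one step'' should say $\al<1/2$.)
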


\begin{proof}
Set
\EQ{
N_*:=\lceil\gamma\rceil.
}
Choose
\EQ{
\rho_*<\zeta_0<\zeta_1<\cdots<\zeta_{N_*+2}<\zeta_+.
}
Writing $Z=X+iY$, the defining bound gives
\EQ{
X\in\mathcal F_{T_0}^\kappa(\zeta_+),
\qquad
Y\in\mathcal F_{T_0}^{\kappa+\al-1}(\zeta_+).
}
After choosing $T\ge T_0$ sufficiently large, Lemma \ref{lem:eikonal-K-tail}\textup{(ii)} gives
\EQ{
K_\mu(X)\in\mathcal F_T^{\kappa+\al-1}(\zeta_{N_*+2})
}
and \eqref{eq:eikonal-K-integral} at every lower radius. Hence
\EQ{
\widetilde Z=Z-iK_\mu(X)\in\mathcal F_T^{\kappa+\al-1}(\zeta_{N_*+2}),
\qquad
\re\widetilde Z=X\in\mathcal F_T^\kappa(\zeta_{N_*+2}).
}
The defining identity $Z=\TT_\mu Z$ and Lemma \ref{lem:eikonal-Duhamel-conjugacy} give
\EQ{
\widetilde Z=\widetilde{\TT}\widetilde Z
}
at every radius below $\zeta_{N_*+1}$.

We now record the decay improvement obtained from this integral equation. For $1\le j\le N_*$, if
\EQ{
\widetilde Z\in\mathcal F_T^\sigma(\zeta_j),
\qquad
\kappa+\al-1\le\sigma\le\kappa,
}
then \eqref{eq:U-isometry}, \eqref{eq:Rmu-bound}, \eqref{eq:eikonal-P-estimate}, \eqref{eq:eikonal-direct-one-index-estimates}, and the radius-gap estimate give
\EQn{\label{eq:eikonal-automatic-normal-step}
\normb{\widetilde Z(t)}_{\A^4_{\zeta_{j-1}}}
&\lsm t^{-\min\{\kappa,\sigma+\al\}}.
}
Starting with $\sigma_0=\kappa+\al-1$ and applying \eqref{eq:eikonal-automatic-normal-step} successively along the radius chain gives
\EQ{
\widetilde Z\in\mathcal F_T^{\sigma_j}(\zeta_{N_*-j}),
\qquad
\sigma_j:=\min\fbrko{\kappa,\kappa+(j+1)\al-1},
\qquad
j=0,\ldots,N_*.
}
Since $1-\al=\al\gamma$,
\EQ{
\kappa+(N_*+1)\al-1
=\kappa+\al(N_*-\gamma)\ge\kappa.
}
Thus $\widetilde Z\in\mathcal F_T^\kappa(\zeta_0)$. Together with the fixed-point identity already obtained at every radius below $\zeta_{N_*+1}$, this proves the assertion with $\rho=\zeta_0$.
\end{proof}

\subsection{Construction, convergence, and uniqueness for a fixed truncation order}%\label{sec:eikonal-construction}

\begin{proof}[Proof of Theorem \ref{thm:eikonal}\textup{(1)--(3)}]

Proposition \ref{prop:eikonal-prep} gives the coefficients and the truncated Fuchsian profile in Theorem \ref{thm:eikonal}.
Let $\widetilde Z$ be the fixed point in Proposition \ref{prop:eikonal-fixed}, set
\EQ{
K=K_\mu(\re\widetilde Z),
\qquad
Z=\widetilde Z+iK,
\qquad
a=e^{\Phi^{(\mu)}+Z},
}
and define $u$ from $a$ by the ray transform \eqref{eq:ray-transform}.

On every compact interval $I\subset[T,\I)$, \eqref{eq:eikonal-amplitude-comparable}, \eqref{eq:wiener-L2-product}, and the $L^2$ isometry of the ray transform give
\EQ{
a=e^{\Phi^{(\mu)}+Z}
&\in C(I;L^2_y).
}
Moreover, $u\in C(I;L^2_x)$ solves \eqref{eq:nls} on $[T,\I)$, satisfying
\EQ{
\norm{u(t)}_{L^2_x}=\norm{a(t)}_{L^2_y}
&\lsm_{\mu,R}e^{\norm{\re Z(t)}_{L^\I_y}}\norm{W}_{L^2_y}.
}
The defining identity $Z=\TT_\mu Z$ is the integral form of the phase correction equation. By Proposition \ref{prop:eikonal-fixed}, $Z$ is a final-state correction in the sense of Definition \ref{def:eikonal-asymptotic}.

Since $W\in L^2\cap\A^6_{\rho_0}$, one has $W\in L^q$ for $2\le q\le\I$. The bound \eqref{eq:eikonal-main-Z-bound} and the Wiener embedding give
\EQn{\label{eq:eikonal-Z-Linfty-decay}
\norm{\re Z(t)}_{L^\I_y}
&\lsm_{\mu,R}t^{-\kappa},
\\
\norm{Z(t)}_{L^\I_y}
&\lsm_{\mu,R}t^{-(\kappa+\al-1)}.
}
By \eqref{eq:eikonal-amplitude-comparable},
\EQn{\label{eq:eikonal-profile-Lq}
\norm{e^{\Phi^{(\mu)}(t)}}_{L^q_y}
=\norm{e^{\re L^{(\mu)}(t^{-\al})}}_{L^q_y}
\lsm_{\mu,R}\norm{W}_{L^q_y}.
}
Since $|e^z-1|\le e^{|\re z|}|z|$, \eqref{eq:ray-Lq-scaling}, \eqref{eq:eikonal-Z-Linfty-decay}, and \eqref{eq:eikonal-profile-Lq} give
\EQ{
\norm{u(t)-u_{\rm eik}^{(\mu)}(t)}_{L^q_x}
&\le t^{-d\brko{\frac12-\frac1q}}\norm{e^{\Phi^{(\mu)}(t)}}_{L^q_y}
e^{\norm{\re Z(t)}_{L^\I_y}}\norm{Z(t)}_{L^\I_y}
\\
&\lsm_{\mu,R}\norm{W}_{L^q_y}t^{-d\brko{\frac12-\frac1q}-(\kappa+\al-1)}.
}
Moreover,
\EQ{
|u(t,x)|-|u_{\rm eik}^{(\mu)}(t,x)|
=t^{-\frac d2}e^{\re L^{(\mu)}(t^{-\al},x/t)}
\brko{e^{\re Z(t,x/t)}-1},
}
and hence
\EQ{
\norm{|u(t)|-|u_{\rm eik}^{(\mu)}(t)|}_{L^q_x}
\lsm_{\mu,R}\norm{W}_{L^q_y}t^{-d\brko{\frac12-\frac1q}-\kappa}.
}
Finally, $L^{(\mu)}=\Psi+\Omega^{(\mu)}$, so \eqref{eq:eikonal-positive-order} and \eqref{eq:ray-Lq-scaling} yield
\EQ{
\norm{|u_{\rm eik}^{(\mu)}(t)|
-t^{-\frac d2}|W(x/t)|}_{L^q_x} \lsm_{\mu,R}\norm{W}_{L^q_y}
t^{-d\brko{\frac12-\frac1q}-\al}(1+\log t)^{M_\mu}.
}
Since $\kappa>1-\al\ge\al$,
\EQ{
\absb{t^{d\brko{\frac12-\frac1q}}\norm{u(t)}_{L^q_x}-\norm{W}_{L^q_y}}
&\le t^{d\brko{\frac12-\frac1q}}\norm{|u(t)|-|u_{\rm eik}^{(\mu)}(t)|}_{L^q_x}
\\
&\quad+t^{d\brko{\frac12-\frac1q}}\norm{|u_{\rm eik}^{(\mu)}(t)|-t^{-\frac d2}|W(x/t)|}_{L^q_x}
\\
&\lsm_{\mu,R}\norm{W}_{L^q_y}t^{-\al}(1+\log t)^{M_\mu}.
}
This proves \eqref{eq:eikonal-Lq-remainder}--\eqref{eq:eikonal-Lq-leading}, and hence the existence and convergence assertions of Theorem \ref{thm:eikonal} follow.

We now prove uniqueness for the fixed truncated Fuchsian profile $(L^{(\mu)},S^{(\mu)})$. Let $Z_j$, $j=1,2$, be two final-state corrections associated with this profile.
Define
\EQ{
a_j(t,y)&:=e^{\Phi^{(\mu)}(t,y)+Z_j(t,y)},
\\
u_j(t,x)&:=\frac{1}{(it)^{\frac d2}}e^{\frac{i|x|^2}{2t}}a_j\brko{t,\frac{x}{t}}.
}
Applying Lemma \ref{lem:eikonal-automatic-normal-form} to each $Z_j$, then increasing $T$ and decreasing the radius, we obtain
\EQ{
T\ge2,
\qquad
\rho_*<\rho<\rho_1,
}
and, for $j=1,2$,
\EQ{
K_j&:=K_\mu(\re Z_j),
\qquad
\widetilde Z_j:=Z_j-iK_j,
\\
\widetilde Z_j&\in\mathcal F_T^\kappa(\rho),
\qquad
K_j\in\mathcal F_T^{\kappa+\al-1}(\rho),
\\
\widetilde Z_j&=\widetilde{\TT}\widetilde Z_j
\quad\hbox{in }\mathcal F_T^\kappa(\rho')
\quad\hbox{for every }\rho_*<\rho'<\rho.
}
After increasing $T$ if necessary, Proposition \ref{prop:eikonal-fixed} gives
\EQ{
\widetilde Z_1(t)=\widetilde Z_2(t),
\qquad t\ge T.
}
Fix $\rho_*<\rho'<\rho''<\rho$. Since $K_j$ is real-valued,
\EQ{
X:=\re Z_1=\re\widetilde Z_1=\re\widetilde Z_2=\re Z_2
\in\mathcal F_T^\kappa(\rho'').
}
Lemma \ref{lem:eikonal-K-tail}\textup{(v)} therefore gives
\EQ{
K_1=K_2
\quad\hbox{in }C([T,\I);\A^4_{\rho'}).
}
Consequently,
\EQ{
Z_1=\widetilde Z_1+iK_1
=\widetilde Z_2+iK_2=Z_2
\quad\hbox{in }C([T,\I);\A^4_{\rho'}),
\\
a_1=a_2,
\qquad
u_1=u_2,
\qquad t\ge T.
}
Since $\rho'$ is arbitrary, this proves the uniqueness assertion of Theorem \ref{thm:eikonal}.
\end{proof}

\subsection{Compatibility of truncation orders}

\begin{lem}[Compatibility under a change of Fuchsian truncation]
\label{lem:eikonal-truncation-change}
Let $\mu_j>2\gamma$, $j=1,2$, be integers, and let $(L^{(\mu_j)},S^{(\mu_j)})$ be the truncated Fuchsian profiles of orders $\mu_j$ constructed from the same parameters $(\Psi,C_+)$. Let $\Phi^{(\mu_j)}$ be the corresponding logarithmic phases, and assume that
\EQ{
1-\al<\kappa<\al\brko{\min\fbrko{\mu_1,\mu_2}+1}-1.
}
Assume that $Z_2$ satisfies \eqref{eq:eikonal-final-state-correction} with $\mu=\mu_2$ for the profile $(L^{(\mu_2)},S^{(\mu_2)})$. Then
\EQ{
Z_{21}:=Z_2+\Phi^{(\mu_2)}-\Phi^{(\mu_1)}
}
satisfies \eqref{eq:eikonal-final-state-correction} with $\mu=\mu_1$ for the profile $(L^{(\mu_1)},S^{(\mu_1)})$ and the same exponent $\kappa$.
\end{lem}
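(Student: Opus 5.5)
The plan is to exploit the fact that $\Phi^{(\mu_2)}+Z_2=\Phi^{(\mu_1)}+Z_{21}$: both sides are the same full logarithmic phase, and only its splitting into profile plus correction changes. Set
\EQ{
D(t):=\Phi^{(\mu_2)}(t)-\Phi^{(\mu_1)}(t)
=\brko{L^{(\mu_2)}-L^{(\mu_1)}}(t^{-\al})
-it^{1-\al}\brko{S^{(\mu_2)}-S^{(\mu_1)}}(t^{-\al}),
}
so that $Z_{21}=Z_2+D$. Since $S^{(\mu_j)}$ are real, $\re D=\re(L^{(\mu_2)}-L^{(\mu_1)})$ and $\im D$ carries the extra factor $t^{1-\al}$. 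The proof has two parts: membership of $Z_{21}$ in $\mathcal Z_T^\kappa(\rho)$, and the fixed-point identity $Z_{21}=\TT_{\mu_1}Z_{21}$ at every lower radius.

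\emph{Step 1: size of $D$.} Corollary \ref{cor:eikonal-auxiliary-bounds}(ii), with $\sigma_0=\al\bar\mu$ and $\bar\mu=\min\fbrko{\mu_1,\mu_2}$, gives
\EQ{
\norm{\re D(t)}_{\A^5_{\rho_1}}\lsm t^{-\sigma_0}(1+\log t)^N,
\qquad
\norm{\im D(t)}_{\A^5_{\rho_1}}\lsm t^{1-\al-\sigma_0}(1+\log t)^N.
}
The hypothesis $\kappa<\al(\bar\mu+1)-1=\sigma_0-(1-\al)<\sigma_0$ leaves a positive power to absorb the logarithms. Hence $t^\kappa\norm{\re D}_{\A^4_{\rho}}$ and $t^{\kappa+\al-1}\norm{\im D}_{\A^4_\rho}$ are bounded for $t\ge T$ and $\rho<\rho_1$. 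Continuity in time is clear because $D$ is a finite sum of terms $t^{-\al q}(\log t)^j$ with fixed analytic coefficients. Therefore $Z_{21}\in\mathcal Z_T^\kappa(\rho)$ for the same $T$ and any admissible $\rho$ at which $Z_2$ lies.

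\emph{Step 2: algebraic identity.} Write the definition \eqref{eq:Rmu-def} of $R^{(\mu_j)}$ and the derivation of \eqref{eq:eikonal-Z-equation} for both profiles. Since the full phase is the same, the expression
\EQ{
i\pd_t(\Phi+Z)+\frac1{2t^2}Q(\Phi+Z)-t^{-\al}e^{p\re(\Phi+Z)}
}
is identical for the two splittings. Subtracting the two versions gives the exact pointwise identity
\EQ{
N^{(\mu_1)}(t,Z_{21})=N^{(\mu_2)}(t,Z_2)+\pd_tD-\frac{i}{2t^2}\Dy D .
}
Every term is a product of analytic functions at a slightly smaller radius, so the identity holds in $\A^4_{\rho'}$ for each $\rho'<\rho$ by Lemma \ref{lem:fourier-algebra} and \eqref{eq:radius-gap-estimate}.

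\emph{Step 3: Duhamel integration.} Since $\pd_sU(t,s)=-\frac{i}{2s^2}\Dy U(t,s)$, one has
\EQ{
\pd_s\mbrkb{U(t,s)D(s)}=U(t,s)\brko{\pd_sD-\frac{i}{2s^2}\Dy D}.
}
Integrating from $t$ to $\I$ therefore gives
\EQ{
\TT_{\mu_1}Z_{21}(t)=\TT_{\mu_2}Z_2(t)+D(t)-\lim_{s\to\I}U(t,s)D(s)=Z_2(t)+D(t)=Z_{21}(t),
}
using $\TT_{\mu_2}Z_2=Z_2$ in $\mathcal Z_T^\kappa(\rho')$.

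The main obstacle is justifying this computation in the weighted norms. Three facts are needed, and each reduces to $\sigma_0>1-\al$, which holds because $\mu>2\gamma$ gives $\al\bar\mu>2(1-\al)$.
\begin{itemize}
\item The boundary term vanishes: $\norm{D(s)}_{\A^4_{\rho'}}\lsm s^{1-\al-\sigma_0}(1+\log s)^N\to0$, and $U(t,s)$ is an isometry by \eqref{eq:U-isometry}.
\item The forcing $\pd_sD-\frac{i}{2s^2}\Dy D$ is absolutely integrable in $\A^4_{\rho'}$. Differentiating $\tau^q(\log\tau)^j$ in $t$ costs a factor $t^{-1}$, so $\norm{\pd_s\im D}\lsm s^{-\al-\sigma_0}(1+\log s)^N$. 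The Laplacian costs only a radius gap, via Lemma \ref{lem:fourier-algebra}(vii).
\item The weighted bounds transfer: the resulting integral $\int_t^\I$ is bounded by $t^{1-\al-\sigma_0}(\log t)^N$ for the imaginary part and by $t^{-\sigma_0}(\log t)^N$ for the real part. These are compatible with the weights $t^{\kappa+\al-1}$ and $t^\kappa$ in $\mathcal Z_T^\kappa(\rho')$.
\end{itemize}
Because the integrand is integrable and $\TT_{\mu_2}Z_2$ converges by assumption, $\TT_{\mu_1}Z_{21}$ converges in $\mathcal Z_T^\kappa(\rho')$. The identity above then holds there for every $\rho_*<\rho'<\rho$, which is exactly \eqref{eq:eikonal-final-state-correction} for $\mu=\mu_1$.
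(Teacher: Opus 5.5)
Your proposal is correct and follows the paper's proof essentially step by step. It uses the same difference $D=\Phi^{(\mu_2)}-\Phi^{(\mu_1)}$, the separated-profile bound of Corollary \ref{cor:eikonal-auxiliary-bounds}(ii) with $\kappa<\sigma_0+\al-1$ to get $Z_{21}\in\mathcal Z_T^\kappa(\rho)$, and the exact identity $N^{(\mu_1)}(t,Z_{21})-N^{(\mu_2)}(t,Z_2)=\pd_tD-\frac{i}{2t^2}\Dy D$, followed by Duhamel integration to infinity with a vanishing boundary term, all driven by $\al+\sigma_0>1$. Your third bullet is not needed: once $\TT_{\mu_1}Z_{21}=Z_{21}$ holds pointwise in $\A^4_{\rho'}$, the weighted bounds already follow from Step 1.
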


\begin{proof}
Choose $T_0\ge2$ and $\rho_*<\zeta_+<\rho_1$ so that \eqref{eq:eikonal-final-state-correction} holds for $Z=Z_2$ and $\mu=\mu_2$. Choose $T\ge T_0$ sufficiently large and fix $\rho_*<\rho<\zeta_+$. Set
\EQ{
\bar\mu&:=\min\fbrko{\mu_1,\mu_2},
\qquad
\sigma_0:=\al\bar\mu,
\\
\Phi_{21}&:=\Phi^{(\mu_2)}-\Phi^{(\mu_1)},
\\
L_{21}(t)&:=L^{(\mu_2)}(t^{-\al})-L^{(\mu_1)}(t^{-\al}),
\\
S_{21}(t)&:=S^{(\mu_2)}(t^{-\al})-S^{(\mu_1)}(t^{-\al}).
}
Then
\EQ{
\re\Phi_{21}=\re L_{21},
\qquad
t^{\al-1}\im\Phi_{21}=t^{\al-1}\im L_{21}-S_{21}.
}
By \eqref{eq:eikonal-separated-profile-diff} and radius monotonicity, there exists $N\in\N$ such that
\EQn{\label{eq:eikonal-Phi-truncation-diff}
\norm{\re\Phi_{21}(t)}_{\A^4_\rho}
+t^{\al-1}\norm{\im\Phi_{21}(t)}_{\A^4_\rho}
\lsm t^{-\sigma_0}(1+\log t)^N.
}
Since $\bar\mu>2\gamma$, one has $\sigma_0>2(1-\al)$. The upper bound for $\kappa$ gives $\kappa<\sigma_0+\al-1<\sigma_0$, and hence
\EQ{
\Phi_{21}\in\mathcal Z_T^\kappa(\rho),
\qquad
Z_{21}=Z_2+\Phi_{21}\in\mathcal Z_T^\kappa(\rho).
}
Fix $\rho_*<\rho'<\rho$. By \eqref{eq:Rmu-def}, \eqref{eq:Nmu-def}, and the definition of $Z_{21}$, one has
\EQ{
\pd_t\Phi_{21}
=\frac{i}{2t^2}\Dy\Phi_{21}
+N^{(\mu_1)}(t,Z_{21})-N^{(\mu_2)}(t,Z_2).
}
Differentiating the finite Fuchsian expansions gives, after increasing $N$ if necessary,
\EQ{
\norm{\pd_t\Phi_{21}(t)}_{\A^4_{\rho'}}
\lsm t^{-\al-\sigma_0}(1+\log t)^N.
}
On the other hand, \eqref{eq:radius-gap-estimate} and \eqref{eq:eikonal-Phi-truncation-diff} give
\EQ{
t^{-2}\norm{\Dy\Phi_{21}(t)}_{\A^4_{\rho'}}
\lsm \frac{t^{-2}}{(\rho-\rho')^2}
\norm{\Phi_{21}(t)}_{\A^4_\rho}
\lsm t^{-1-\al-\sigma_0}(1+\log t)^N.
}
Since $\al+\sigma_0>1$, the preceding estimates and the differential identity imply
\EQ{
N^{(\mu_1)}(\cdot,Z_{21})-N^{(\mu_2)}(\cdot,Z_2)
\in L^1([T,\I);\A^4_{\rho'}).
}
For $T'>t$, Duhamel's formula gives
\EQ{
\Phi_{21}(t)
=U(t,T')\Phi_{21}(T') -\int_t^{T'}U(t,s)\mbrko{
N^{(\mu_1)}(s,Z_{21}(s))-N^{(\mu_2)}(s,Z_2(s))}\dd s.
}
Moreover, \eqref{eq:eikonal-Phi-truncation-diff} and \eqref{eq:U-isometry} give
\EQ{
\norm{U(t,T')\Phi_{21}(T')}_{\A^4_{\rho'}}
\lsm (T')^{1-\al-\sigma_0}(1+\log T')^N\ra0
}
as $T'\ra\I$. Therefore
\EQ{
\Phi_{21}(t)
=-\int_t^\I U(t,s)\mbrko{
N^{(\mu_1)}(s,Z_{21}(s))-N^{(\mu_2)}(s,Z_2(s))}\dd s.
}
Adding the defining Duhamel identity for $Z_2$ yields
\EQ{
Z_{21}=\TT_{\mu_1}Z_{21}
\quad\hbox{in }\mathcal Z_T^\kappa(\rho').
}
Since $\rho'$ is arbitrary, this identity and $Z_{21}\in\mathcal Z_T^\kappa(\rho)$ prove \eqref{eq:eikonal-final-state-correction} with $Z=Z_{21}$ and $\mu=\mu_1$.
\end{proof}

\begin{prop}[Compatibility of truncation orders]
\label{prop:eikonal-truncation-compatibility}
For $j=1,2$, let $\mu_j>2\gamma$ be an integer, let $\kappa_j$ be an admissible exponent for the construction of order $\mu_j$, and let $u_j$ be the resulting solution for the same Fuchsian profile parameters. Then there exists $T\ge2$ such that $u_1=u_2$ on $[T,\I)$.
\end{prop}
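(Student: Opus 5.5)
The plan is to reduce the statement to the uniqueness assertion of Theorem \ref{thm:eikonal}(3), with Lemma \ref{lem:eikonal-truncation-change} used to move a correction from one truncation order to the other. Let $Z_j$ be the final-state correction producing $u_j$, so $Z_j$ satisfies \eqref{eq:eikonal-final-state-correction} with $(\mu_j,\kappa_j)$. By symmetry we may assume $\mu_1\le\mu_2$. The two admissible exponents may differ, so the first step is to place both corrections in a common decay class. Set $\kappa:=\min\{\kappa_1,\kappa_2\}$. Since $\mathcal Z_T^{\kappa'}(\rho)\hookrightarrow\mathcal Z_T^{\kappa}(\rho)$ whenever $\kappa\le\kappa'$, each $Z_j$ is also a final-state correction for order $\mu_j$ with exponent $\kappa$. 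Moreover $1-\al<\kappa<\al(\mu_j+1)-1$ for both $j$, because $\kappa\le\kappa_j$. Hence $\kappa$ is admissible for both orders, and in particular it satisfies the hypothesis $1-\al<\kappa<\al(\min\{\mu_1,\mu_2\}+1)-1$ of Lemma \ref{lem:eikonal-truncation-change}.

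Next, apply Lemma \ref{lem:eikonal-truncation-change} to $Z_2$ with this $\kappa$. It yields that $Z_{21}=Z_2+\Phi^{(\mu_2)}-\Phi^{(\mu_1)}$ is a final-state correction associated with the profile $(L^{(\mu_1)},S^{(\mu_1)})$ with exponent $\kappa$. The ray profile built from it is
\EQ{
e^{\Phi^{(\mu_1)}+Z_{21}}=e^{\Phi^{(\mu_2)}+Z_2},
}
which is exactly the ray profile of $u_2$. So $u_2$ belongs to the Fuchsian solution class for order $\mu_1$ with exponent $\kappa$, and $u_1$ belongs to the same class by the first step.

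The uniqueness part of Theorem \ref{thm:eikonal}(3), applied for fixed $(\mu_1,\kappa)$, then gives $Z_1=Z_{21}$, and hence $u_1=u_2$, on $[T,\I)$ for some $T\ge2$. Concretely, this goes through Lemma \ref{lem:eikonal-automatic-normal-form}, Proposition \ref{prop:eikonal-fixed} and Lemma \ref{lem:eikonal-K-tail}(v), after enlarging $T$ and shrinking the radius. This uniqueness statement needs $\ep$ below the threshold $\ep_0(\mu_1,\kappa)$. That holds because the assumption that both constructions are admissible means $\ep$ is below the thresholds for both $(\mu_j,\kappa_j)$, and I will shrink $\ep_0$ if needed so that the threshold for $(\mu_1,\min\{\kappa_1,\kappa_2\})$ is also covered; $\ep_0$ depends only on finitely many fixed parameters.

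I expect this smallness bookkeeping to be the main obstacle, namely that uniqueness in Theorem \ref{thm:eikonal}(3) is only available in the regime where the $(\mu_1,\kappa)$ construction is valid. If the threshold could not simply be taken as the minimum, I would instead run the uniqueness argument of Proposition \ref{prop:eikonal-fixed} directly. That argument only uses smallness of $T^{-\al/2}$ and of $\ep$ through $K_\mu$, and both can be arranged by enlarging $T$.
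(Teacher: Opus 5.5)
Your route matches the paper's. You pass to a common exponent, use Lemma \ref{lem:eikonal-truncation-change} to turn $Z_2$ into a final-state correction $Z_{21}=Z_2+\Phi^{(\mu_2)}-\Phi^{(\mu_1)}$ for the order-$\mu_1$ profile, and conclude $Z_1=Z_{21}$ from Theorem \ref{thm:eikonal}\textup{(3)}.

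The one thing to fix is your normalization, which is what creates the ``main obstacle'' you flag.

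\begin{itemize}
\item Lemma \ref{lem:eikonal-truncation-change} needs no ordering of $\mu_1,\mu_2$. It transfers from order $\mu_2$ to order $\mu_1$ either way and only requires $\kappa<\al(\min\{\mu_1,\mu_2\}+1)-1$. So assuming $\mu_1\le\mu_2$ buys nothing.
\item When $\kappa_2<\kappa_1$, your choice forces uniqueness for the pair $(\mu_1,\kappa_2)$. This is not one of the two constructions whose smallness hypothesis is given.
\item Shrinking $\ep_0$ at that point is not legitimate. The data are fixed, and the proposition only assumes $\ep$ below the thresholds for $(\mu_1,\kappa_1)$ and $(\mu_2,\kappa_2)$. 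Adding a third threshold proves a weaker statement.
\end{itemize}

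The paper instead interchanges indices so that $\kappa_1\le\kappa_2$. Then:
\begin{itemize}
\item $\kappa_1$ is admissible for both orders;
\item $Z_2$ is a final-state correction with exponent $\kappa_1$;
\item $Z_{21}$ lies in the $(\mu_1,\kappa_1)$ class.
\end{itemize}
The uniqueness invoked is then exactly the one already available for the construction of $u_1$, so no new smallness is needed.

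Your fallback of rerunning the uniqueness argument of Proposition \ref{prop:eikonal-fixed} and Lemma \ref{lem:eikonal-automatic-normal-form} would likely also work, since those steps only enlarge $T$. It becomes unnecessary once you order by $\kappa$ rather than $\mu$.
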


\begin{proof}
Let $Z_j$ be the final-state correction defining $u_j$. After interchanging the indices if necessary, assume that $\kappa_1\le\kappa_2$. Then
\EQ{
1-\al<\kappa_1
<\al\brko{\min\fbrko{\mu_1,\mu_2}+1}-1.
}
After restricting the two corrections to a common interval $[T,\I)$ and a common analytic radius, $Z_2$ is also a final-state correction with exponent $\kappa_1$. Lemma \ref{lem:eikonal-truncation-change} gives
\EQ{
Z_{21}:=Z_2+\Phi^{(\mu_2)}-\Phi^{(\mu_1)}
}
as a final-state correction for the truncation of order $\mu_1$ with exponent $\kappa_1$. After increasing $T$ if necessary, the uniqueness assertion in Theorem \ref{thm:eikonal}\textup{(3)}, which has already been proved, gives $Z_1=Z_{21}$. By the definition of $Z_{21}$,
\EQ{
\Phi^{(\mu_1)}+Z_1
=\Phi^{(\mu_2)}+Z_2.
}
Hence $u_1=u_2$ on $[T,\I)$.
\end{proof}

\begin{proof}[Proof of Theorem \ref{thm:eikonal}\textup{(4)}]
Let $u_j$, $j=1,2$, be the solutions corresponding to $(\mu_j,\kappa_j)$ in \textup{(4)}. Proposition \ref{prop:eikonal-truncation-compatibility} gives $T\ge2$ such that
\EQ{
u_1(t)=u_2(t),
\qquad t\ge T.
}
Since $0<p\le1/d<4/d$, the equation is mass-subcritical. The standard $L^2(\R^d)$ theory \cite{Tsutsumi1987} extends each $u_j$ uniquely to $[0,\I)$. Uniqueness of the $L^2$ flow gives
\EQ{
u_1=u_2
\quad\hbox{in }C([0,\I);L^2).
}
In particular, $u_1(0)=u_2(0)$, which proves the independence of the truncation order $\mu$ and the exponent $\kappa$.
\end{proof}

\subsection{Independence of the final-data decomposition}

\begin{prop}[Independence of the parametrization]
\label{prop:eikonal-final-datum-invariance}
Fix $d,p,\rho_*,\rho_1,\rho_0,R$ as in Theorem \ref{thm:eikonal}. Let $\Theta_+$ admit two representations
\EQ{
\Theta_+=\Psi_j-iC_{+,j},
\qquad j=1,2,
}
where $C_{+,j}$ is real-valued. For each $j$, choose an integer $\mu_j>2\gamma$ and an exponent $\kappa_j$ satisfying
\EQ{
1-\al<\kappa_j<\al(\mu_j+1)-1,
}
and assume that $W_j=e^{\Psi_j}$ and $C_{+,j}$ satisfy the data and smallness assumptions of Theorem \ref{thm:eikonal} for $(\mu_j,\kappa_j)$. Let $(L_j^{(\mu_j)},S_j^{(\mu_j)})$ be the truncated Fuchsian profile and let $Z_j$ be the final-state correction given by that theorem. Set
\EQ{
\Phi_j(t,y)
:=L_j^{(\mu_j)}(t^{-\al},y)
-it^{1-\al}S_j^{(\mu_j)}(t^{-\al},y).
}
Define the associated NLS solution for large times by
\EQ{
u_j(t,x)
:=\frac1{(it)^{\frac d2}}e^{\frac{i|x|^2}{2t}}
\exp\mbrko{
\Phi_j\brko{t,\frac{x}{t}}
+Z_j\brko{t,\frac{x}{t}}},
}
and use the same notation for its unique global $L^2$ extension. Then there exists $T\ge2$ such that
\EQ{
\Phi_1(t)+Z_1(t)=\Phi_2(t)+Z_2(t),
\qquad t\ge T,
}
and
\EQ{
u_1=u_2
\quad\hbox{in }C([0,\I);L^2).
}
Consequently, the solution obtained from the Fuchsian construction depends only on
\EQ{
\Theta_+=\Psi-iC_+.
}
For real-valued $\chi$, the change
\EQ{
(\Psi,C_+)\longmapsto(\Psi+i\chi,C_++\chi)
}
leaves the global solution unchanged whenever both parameter pairs satisfy the assumptions above.
\end{prop}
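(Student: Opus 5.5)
The plan is to reduce the statement to the uniqueness part of Theorem \ref{thm:eikonal}(3), in the same way as in Lemma \ref{lem:eikonal-truncation-change} and Proposition \ref{prop:eikonal-truncation-compatibility}. Set $\Phi_{21}:=\Phi_2-\Phi_1$ and $Z_{21}:=Z_2+\Phi_{21}$. Then $\Phi_1+Z_{21}=\Phi_2+Z_2$ solves the phase equation \eqref{eq:phase-equation} exactly. Hence $Z_{21}$ formally satisfies the correction equation \eqref{eq:eikonal-Z-equation} built on the profile $\Phi_1$. If $Z_{21}$ is shown to be a final-state correction for $(L_1^{(\mu_1)},S_1^{(\mu_1)})$ with exponent $\kappa:=\min\{\kappa_1,\kappa_2\}$, then uniqueness gives $Z_1=Z_{21}$ for large $t$, that is $\Phi_1+Z_1=\Phi_2+Z_2$. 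The identity $u_1=u_2$ on $[0,\I)$ then follows from uniqueness of the $L^2$ flow \cite{Tsutsumi1987}, exactly as in the proof of Theorem \ref{thm:eikonal}(4). The last sentence of the proposition is the special case $\Psi_2=\Psi_1+i\chi$, $C_{+,2}=C_{+,1}+\chi$, since both pairs give the same $\Theta_+$.

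The first step is to compute $\Phi_{21}$. Since $t^{1-\al}\tau^\gamma=1$, the terms $-it^{1-\al}C_{+,j}\tau^\gamma=-iC_{+,j}$ combine with $\Psi_j$ into $\Theta_+$, so the order-zero parts cancel. Moreover $\re\Psi_1=\re\Psi_2$ because the $C_{+,j}$ are real, so $S_*(\Psi_1)=S_*(\Psi_2)$. Hence
\EQ{
\Phi_{21}=\brko{\Omega_2^{(\mu_2)}-\Omega_1^{(\mu_1)}}-it^{1-\al}\brko{\Sigma_2^{(\mu_2)}-\Sigma_1^{(\mu_1)}},
}
which is a finite sum of terms $\tau^q(\log\tau)^j$ with $q\in E_{\gamma,\max\mu_j}$. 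The coefficients themselves need not agree: for example $\Omega_1$ involves $\nabla\Psi$, which changes under $\Psi\mapsto\Psi+i\chi$. This change is compensated by the $C_+$-forcing at exponent $1+\gamma$ in \eqref{eq:eikonal-Cplus-first-forcing}, which after multiplication by $t^{1-\al}$ contributes at order $\tau$ in $\Phi$.

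The second step, and the main obstacle, is to show that $\Phi_{21}$ decays like $t^{-\sigma_0}(1+\log t)^N$ with $\sigma_0=\al\min\{\mu_1,\mu_2\}$, in the weighted sense of \eqref{eq:eikonal-Phi-truncation-diff}. This estimate places $\Phi_{21}$ in $\mathcal Z^\kappa_T$. I would prove it by induction on the exponents, comparing the two recursions at the level of the total phase $\Phi$ rather than of $(L,S)$ separately. Concretely, write $\Phi=\Theta_++\Omega-it^{1-\al}\Sigma$, where $\Theta_+$ is common to both profiles. Both $\Phi_j$ have residuals \eqref{eq:Rmu-def} of size $O(t^{-\al(\bar\mu+1)}\log^N)$. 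Subtracting and expanding $R(\Phi_1+\Phi_{21})-R(\Phi_1)$, suppose some coefficient of $\Phi_{21}$ below the threshold is nonzero and take the lowest one. The lowest-order part of the linearization then acts on it through the operators of the polynomial-equations lemma: the transport operator on the amplitude part, and the Fuchsian operator $\mathcal L_\al$ on the phase part. Every other contribution is of higher order, including the triangular term $p\,e^{p\re L}\re\Phi_{21}$, which is smaller by a factor $\tau^\gamma$.

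Injectivity of these operators on polynomials in $\log\tau$ forces that coefficient to vanish. At the resonant exponent $q=\gamma$, injectivity uses the normalization $\Sigma_{\gamma,0}=0$ fixed by \eqref{eq:eikonal-poly-inversion-resonant} for both constructions. This normalization is exactly where the assumption that the two data give the same $\Theta_+$ enters: the homogeneous mode has already been absorbed into $\Theta_+$. This bookkeeping, especially the recombination of the $C_+$-generated terms with the $\nabla\Psi$-dependent terms, is the delicate part. If it becomes unwieldy, I would instead substitute $\Psi_2=\Psi_1+i\chi$ into the recursion of Proposition \ref{prop:eikonal-prep} and verify order by order that $\Phi$ depends on $(\Psi,C_+)$ only through $\Theta_+$ up to order $\tau^{\bar\mu}$.

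The third step is the Duhamel identity, which copies the proof of Lemma \ref{lem:eikonal-truncation-change}. Using the decay of $\Phi_{21}$ and of $\pd_t\Phi_{21}$ together with the radius-gap estimate, one shows that $N^{(\mu_1)}(\cdot,Z_{21})-N^{(\mu_2)}(\cdot,Z_2)$ lies in $L^1([T,\I);\A^4_{\rho'})$. Duhamel's formula from $t$ to $T'$, with $T'\ra\I$, then expresses $\Phi_{21}$ as a Duhamel integral. Adding the Duhamel identity for $Z_2$ gives $Z_{21}=\TT_{\mu_1}Z_{21}$. Theorem \ref{thm:eikonal}(3) then yields $Z_1=Z_{21}$, and the $L^2$ extension finishes the proof.
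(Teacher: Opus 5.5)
Your overall architecture matches the paper's. You transfer $Z_{21}=Z_2+\Phi_2-\Phi_1$, show $\Phi_2-\Phi_1\in\mathcal Z^\kappa_T$, pass to the Duhamel formula at infinity, invoke uniqueness, and extend by the $L^2$ flow; your Step~3 is the paper's Step~3. (Relabel, as the paper does, so that the base profile carries the smaller $\kappa_j$, since smallness is assumed only for $(\mu_j,\kappa_j)$.)

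\textbf{The gap is in Step~2: the leading-order mechanism you describe is wrong.} The triangular term is not smaller by $\tau^\gamma$; the comparison is reversed. Write $\Phi_{21}=\delta A+i\delta B$ with $\delta A,\delta B$ real. The real part of $R(\Phi_1+\Phi_{21})-R(\Phi_1)$ contains $-\pd_t\delta B=\al t^{-1}\tau\pd_\tau\delta B$ and $-t^{-\al}pe^{p\re\Psi}\delta A$, and $t^{-1}=t^{-\al}\tau^\gamma$. Suppose the lowest nonzero coefficient of $\Phi_{21}$ is a phase coefficient $\delta B_b$ at $\tau^b$. Then the coefficient of $t^{-1}\tau^b$ in the real residual is $\al(b+\pd_r)\delta B_b-pe^{p\re\Psi}\delta A_{b+\gamma}$. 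Here $\delta A_{b+\gamma}$ is a higher amplitude coefficient not yet known to vanish, and an amplitude coefficient at the same exponent $b$ would even be larger by $\tau^{-\gamma}$. So injectivity of $\al(b+\pd_r)$ does not give $\delta B_b=0$, and the induction on the lowest coefficient of $\Phi_{21}$ does not close as stated. (Also, $-it^{1-\al}(\Sigma_2-\Sigma_1)$ produces exponents $q-\gamma$, possibly $\le 0$ and with logarithms, not only $q\in E_{\gamma,\mu}$.)

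\textbf{How to repair it.} Use a different grading: pair the coefficient of $\delta A$ at $\tau^q$ with the coefficient of $-\tau^\gamma\delta B=\delta\Sigma-\tau^\gamma\im\delta\Omega$ at $\tau^q$. Here $\delta\Omega,\delta\Sigma$ are the differences of the $\Omega$'s and $\Sigma$'s, and all these exponents are positive. Since $\im R=-t^{-1}\re\mathcal R_L$ and $\re R=t^{-\al}\brko{-\mathcal R_S+\tau^\gamma\im\mathcal R_L}$, both coefficient equations are available for every $q<\bar\mu$. Then proceed at each order $q$ as follows.
\begin{enumerate}
\item The imaginary residual contains no triangular term. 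It gives $\al(q+\pd_r)\delta A_q$ equal to lower-order terms, so $\delta A_q=0$.
\item Only then does the real residual give $\brko{\al(q+1)-1+\al\pd_r}$ acting on the phase coefficient. The term $pe^{p\re\Psi}\delta A_q$ is of the same order but already vanishes.
\item At $q=\gamma$ the kernel is the $r^0$ coefficient. It vanishes because $\chi$ has cancelled, $\Sigma_{\gamma,0}=0$ for both profiles, and $\delta\Omega$ has no exponent-zero term.
\end{enumerate}
This gives exactly your weighted target.

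\textbf{Comparison with the paper.} With this correction your route is valid. It proves directly that the $\Phi$-level expansion is determined by $\Theta_+$, but it must redo the triangular Fuchsian analysis in the variables $\brko{\re\Phi,-\tau^\gamma\im\Phi}$. The paper avoids this with an exact gauge identity. For real $h$,
\[
\mathcal R_S(L+ih,S+\tau^\gamma h)=\mathcal R_S(L,S)+\tau^\gamma\E(h),\qquad \mathcal R_L(L+ih,S+\tau^\gamma h)=\mathcal R_L(L,S)+i\E(h),
\]
while $L-i\tau^{-\gamma}S$ is unchanged. The paper solves $\mbrk{\E(h)}_q=0$ for $h=\chi+\sum_q\tau^qh_q$ using only the nonresonant transport inversion. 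It then identifies $(L_1+ih,S_1+\tau^\gamma h)$ with $(L_2,S_2)$ below $\bar\mu$ by uniqueness of the recursion in Proposition~\ref{prop:eikonal-prep}. This reuses the existing recursion and gives an explicit relation between the two coefficient families.
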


\begin{proof}
After interchanging the indices if necessary, assume that $\kappa_1\le\kappa_2$, and set
\EQ{
\bar\mu:=\min\fbrko{\mu_1,\mu_2},
\qquad
\chi:=C_{+,2}-C_{+,1}.
}
Then $\Psi_2=\Psi_1+i\chi$.

\emph{Step 1: relation between the finite profiles.} For $j=1,2$, let $(L_{j,<\bar\mu},S_{j,<\bar\mu})$ be the coefficient truncation of $(L_j^{(\mu_j)},S_j^{(\mu_j)})$ obtained by retaining the terms with $\tau$-exponent below $\bar\mu$. For $b>0$, let $\Pi_{<b}$ retain the terms with $\tau$-exponent strictly below $b$, including the exponent-zero term. We construct a finite real-valued expansion $h$ such that
\EQ{
L_{2,<\bar\mu}&=L_{1,<\bar\mu}+ih,
\\
S_{2,<\bar\mu}&=S_{1,<\bar\mu}
+\tau^\gamma\Pi_{<\bar\mu-\gamma}h.
}
The coefficient construction of Proposition \ref{prop:eikonal-prep} gives
\EQ{
\mbrk{\mathcal R_L(L_{1,<\bar\mu},S_{1,<\bar\mu})}_q
=\mbrk{\mathcal R_S(L_{1,<\bar\mu},S_{1,<\bar\mu})}_q=0,
\qquad q\in E_{\gamma,\bar\mu}.
}
For real-valued $h$, define
\EQ{
\E(h)
:=\al\tau\pd_\tau h
+\tau\nabla S_{1,<\bar\mu}\cdot\nabla h
+\frac12\tau^{\gamma+1}|\nabla h|^2.
}
Since $\al\gamma=1-\al$, direct substitution into \eqref{eq:eikonal-ES}--\eqref{eq:eikonal-EL} gives
\EQ{
\mathcal R_S\brko{L_{1,<\bar\mu}+ih,S_{1,<\bar\mu}+\tau^\gamma h}
&=\mathcal R_S\brko{L_{1,<\bar\mu},S_{1,<\bar\mu}}+\tau^\gamma\E(h),
\\
\mathcal R_L\brko{L_{1,<\bar\mu}+ih,S_{1,<\bar\mu}+\tau^\gamma h}
&=\mathcal R_L\brko{L_{1,<\bar\mu},S_{1,<\bar\mu}}+i\E(h),
}
while
\EQ{
\brko{L_{1,<\bar\mu}+ih}
-i\tau^{-\gamma}\brko{S_{1,<\bar\mu}+\tau^\gamma h}
=L_{1,<\bar\mu}-i\tau^{-\gamma}S_{1,<\bar\mu}.
}
Write
\EQ{
h(\tau,y)=\chi(y)
+\sum_{q\in E_{\gamma,\bar\mu}}
\tau^qh_q(\log\tau,y).
}
The exponents in $\E(h)$ have the forms
\EQ{
q,
\qquad
1+a+b,
\qquad
1+\gamma+b+c,
\qquad
q\in E_{\gamma,\bar\mu},
\quad
a,b,c\in E_{\gamma,\bar\mu}\cup\fbrko{0}.
}
Thus every such exponent below $\bar\mu$ belongs to $E_{\gamma,\bar\mu}$. For $q\in E_{\gamma,\bar\mu}$, suppose that the coefficients below $q$ have been determined and set
\EQ{
h_{<q}:=\chi+
\sum_{\substack{q'\in E_{\gamma,\bar\mu}\\q'<q}}
\tau^{q'}h_{q'}(r,y),
\qquad r=\log\tau.
}
The equation $\mbrk{\E(h)}_q=0$ is
\EQ{
\al\brko{q+\pd_r}h_q
=-\mbrk{
\tau\nabla S_{1,<\bar\mu}\cdot\nabla h_{<q}
+\frac12\tau^{\gamma+1}|\nabla h_{<q}|^2
}_q.
}
The right-hand side contains only the previously determined coefficients. Since $q>0$, \eqref{eq:eikonal-poly-inversion-transport} determines a unique real polynomial $h_q$. Proceeding through the finite set $E_{\gamma,\bar\mu}$ determines all the coefficients and gives
\EQ{
\mbrk{\E(h)}_q=0,
\qquad q\in E_{\gamma,\bar\mu}.
}
For every $\rho<\rho_1$, the finite allocation of analytic radii used in Proposition \ref{prop:eikonal-prep} gives
\EQ{
h_q(r,y)=\sum_{j=0}^{J_q^h}h_{q,j}(y)r^j,
\qquad
h_{q,j}\in\A^6_\rho,
\quad q\in E_{\gamma,\bar\mu},\quad 0\le j\le J_q^h.
}
The residual identities and the coefficient cancellation above show that $(L_{1,<\bar\mu}+ih,S_{1,<\bar\mu}+\tau^\gamma h)$ has no residual coefficient below $\bar\mu$. Removing from its second component the terms with exponents at least $\bar\mu$ gives
\EQ{
L_{1,<\bar\mu}+ih,
\qquad
S_{1,<\bar\mu}+\tau^\gamma\Pi_{<\bar\mu-\gamma}h
}
and does not create a residual coefficient below $\bar\mu$. Moreover,
\EQ{
\Psi_2=\Psi_1+i\chi,
\qquad
C_{+,2}=C_{+,1}+\chi,
\qquad
S_*(\Psi_2)=S_*(\Psi_1).
}
The exponent-zero term of $h$ changes the homogeneous coefficient from $C_{+,1}$ to $C_{+,2}$, while the normalization $\Sigma_{\gamma,0}=0$ is preserved. The uniqueness of the coefficient recursion in Proposition \ref{prop:eikonal-prep} now proves the claimed relations between $(L_{1,<\bar\mu},S_{1,<\bar\mu})$ and $(L_{2,<\bar\mu},S_{2,<\bar\mu})$.

\emph{Step 2: decay of the difference of the logarithmic phases.} Define
\EQ{
\Phi_{j,<\bar\mu}(t)
:=L_{j,<\bar\mu}(t^{-\al})
-it^{1-\al}S_{j,<\bar\mu}(t^{-\al}).
}
The relations from Step 1 give
\EQ{
\Phi_{2,<\bar\mu}(t)-\Phi_{1,<\bar\mu}(t)
=i\mbrk{(I-\Pi_{<\bar\mu-\gamma})h}(t^{-\al}).
}
Set
\EQ{
D:=\Phi_2-\Phi_1.
}
Then
\EQ{
D
=\brko{\Phi_2-\Phi_{2,<\bar\mu}}
+\brko{\Phi_{2,<\bar\mu}-\Phi_{1,<\bar\mu}}
+\brko{\Phi_{1,<\bar\mu}-\Phi_1}.
}
The first and third terms contain amplitude exponents at least $\bar\mu$ and phase exponents at least $\bar\mu-\gamma$, while every exponent in the middle term is at least $\bar\mu-\gamma$. Hence, with
\EQ{
\lambda:=\al(\bar\mu-\gamma)
=\al(\bar\mu+1)-1,
}
there exists an integer $J\ge0$ such that, for every fixed $\rho<\rho_1$,
\EQ{
\norm{D(t)}_{\A^6_\rho}
+t\norm{\pd_tD(t)}_{\A^4_\rho}
\lsm_{\mu_1,\mu_2,R,\rho}
t^{-\lambda}(1+\log t)^J.
}
Choose $j_0\in\fbrko{1,2}$ such that $\mu_{j_0}=\bar\mu$. Then
\EQ{
1-\al<\kappa_1
\le\kappa_{j_0}
<\al(\bar\mu+1)-1
=\lambda.
}
Consequently, after increasing $T$ if necessary,
\EQ{
t^{\kappa_1}\norm{\re D(t)}_{\A^4_\rho}
+t^{\kappa_1+\al-1}\norm{\im D(t)}_{\A^4_\rho}
\lsm_{\mu_1,\mu_2,R,\rho}
t^{\kappa_1-\lambda}(1+\log t)^J,
}
and hence
\EQ{
D\in\mathcal Z_T^{\kappa_1}(\rho).
}

\emph{Step 3: transfer of the final-state correction and uniqueness.} Let $N_j$ and $\TT_j$ denote the nonlinearity and the Duhamel map associated with $\Phi_j$. Since $\kappa_1\le\kappa_2$,
\EQ{
\mathcal Z_T^{\kappa_2}(\rho)
\subset\mathcal Z_T^{\kappa_1}(\rho).
}
By Definition \ref{def:eikonal-asymptotic}, after increasing $T$ if necessary, choose $\rho_*<\rho<\rho_1$ such that, for $j=1,2$,
\EQ{
Z_j&\in\mathcal Z_T^{\kappa_1}(\rho),
\\
Z_j&=\TT_jZ_j
\quad\hbox{in }\mathcal Z_T^{\kappa_1}(\rho')
\quad\hbox{for every }\rho_*<\rho'<\rho.
}
Set
\EQ{
Z_{21}:=Z_2+D.
}
Then $Z_{21}\in\mathcal Z_T^{\kappa_1}(\rho)$. For every $\rho_*<\rho'<\rho$, the definitions \eqref{eq:Rmu-def} and \eqref{eq:Nmu-def} give
\EQ{
\pd_tD
=\frac{i}{2t^2}\Dy D
+N_1(t,Z_{21})-N_2(t,Z_2).
}
By the estimate from Step 2,
\EQ{
\norm{N_1(t,Z_{21})-N_2(t,Z_2)}_{\A^4_{\rho'}}
&\le\norm{\pd_tD(t)}_{\A^4_{\rho'}}
+\frac1{2t^2}\norm{\Dy D(t)}_{\A^4_{\rho'}}
\\
&\lsm_{\mu_1,\mu_2,R,\rho}
t^{-1-\lambda}(1+\log t)^J.
}
The right-hand side is integrable on $[T,\I)$, and $D(t)\ra0$ in $\A^4_{\rho'}$. The Duhamel formula at infinity therefore gives
\EQ{
D(t)=-\int_t^\I U(t,s)
\mbrko{N_1(s,Z_{21}(s))-N_2(s,Z_2(s))}\dd s.
}
Combining this identity with $Z_2=\TT_2Z_2$ gives
\EQ{
Z_{21}(t)
&=-\int_t^\I U(t,s)N_2(s,Z_2(s))\dd s
-\int_t^\I U(t,s)
\mbrko{N_1(s,Z_{21}(s))-N_2(s,Z_2(s))}\dd s
\\
&=-\int_t^\I U(t,s)N_1(s,Z_{21}(s))\dd s
=\TT_1Z_{21}(t).
}
Thus $Z_{21}$ is a final-state correction for the first truncated profile with exponent $\kappa_1$. The uniqueness argument in the proof of Theorem \ref{thm:eikonal}\textup{(3)} gives, after increasing $T$ once more,
\EQ{
Z_1=Z_{21}=Z_2+D,
\qquad t\ge T.
}
It follows that
\EQ{
\Phi_1+Z_1=\Phi_2+Z_2,
\qquad t\ge T.
}
Hence $u_1=u_2$ for $t\ge T$. Uniqueness of the global $L^2$ flow propagates this equality to $[0,\I)$, as in the proof of Theorem \ref{thm:eikonal}\textup{(4)}.
\end{proof}

Taking $C_+=0$ in Theorem \ref{thm:eikonal} proves the Fuchsian assertion of Theorem \ref{thm:main}.
\appendix

\section{On the admissible data}\label{app:admissible-data}

For $\nu\in\R$ and $z>0$, define the modified Bessel function of the second kind by
\EQn{\label{eq:modified-bessel-def}
K_\nu(z):=\int_0^\I e^{-z\cosh r}\cosh(\nu r)\dd r.
}

\begin{lem}[A Fourier-Bessel family in analytic Wiener spaces]
\label{lem:admissible-fourier-bessel}
Let $a>0$ and $L>0$, and set
\EQ{
f_{a,L}(x):=\brko{1+\frac{|x|^2}{L^2}}^{-a},
\qquad x\in\R^d.
}
Then $\wh f_{a,L}$ is well-defined as tempered distribution, and it is locally integrable satisfying
\EQn{\label{eq:admissible-bessel-formula}
\widehat{f_{a,L}}(\xi)
=\frac{2^{1-a}}{\Gamma(a)}L^d
\brko{L|\xi|}^{a-\frac d2}
K_{a-\frac d2}\brko{L|\xi|},
\qquad \xi\ne0,
}
Moreover,
\EQn{\label{eq:admissible-bessel-low}
\absb{\widehat{f_{a,L}}(\xi)}
\lsm_{a,d}L^d
\begin{cases}
\brko{L|\xi|}^{2a-d},&0<a<d/2,\\
1+\abs{\log\brko{L|\xi|}},&a=d/2,\\
1,&a>d/2,
\end{cases}
\qquad 0<L|\xi|\le1,
}
and
\EQn{\label{eq:admissible-bessel-high}
\absb{\widehat{f_{a,L}}(\xi)}
\lsm_{a,d}L^d
\brko{L|\xi|}^{a-\frac{d+1}{2}}e^{-L|\xi|},
\qquad L|\xi|\ge1.
}
Consequently, for every $\sigma\ge0$ and $0\le\rho<L$,
\EQn{\label{eq:admissible-bessel-wiener}
f_{a,L}\in\A^\sigma_\rho.
}
Moreover,
\EQn{\label{eq:admissible-bessel-L2}
f_{a,L}\in L^2(\R^d)
\quad\Longleftrightarrow\quad
a>\frac d4,
}
\end{lem}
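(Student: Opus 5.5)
The plan is to use the subordination (Gamma-function) representation of the negative power and then apply Gaussian Fourier transforms. For $a>0$ and $u>0$,
\EQ{
u^{-a}=\frac{1}{\Gamma(a)}\int_0^\I s^{a-1}e^{-su}\dd s .
}
Apply this with $u=1+|x|^2/L^2$. The Fourier transform of $e^{-s|x|^2/L^2}$ with the paper's normalization is $(L^2/(2s))^{d/2}e^{-L^2|\xi|^2/(4s)}$. Since the resulting integrand is nonnegative, we may interchange the integrals against Schwartz test functions by Fubini; this simultaneously shows that $\wh f_{a,L}$ is a nonnegative locally integrable function. We obtain
\EQ{
\wh f_{a,L}(\xi)=\frac{L^d}{2^{d/2}\Gamma(a)}\int_0^\I s^{a-\frac d2-1}e^{-s-\frac{L^2|\xi|^2}{4s}}\dd s .
}
Substituting $s=\frac{L|\xi|}{2}e^{r}$ turns this into the integral $\int_{\R}e^{-L|\xi|\cosh r}e^{\nu r}\dd r=2K_\nu(L|\xi|)$ with $\nu=a-\frac d2$. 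Collecting the powers of $2$ and of $L|\xi|$ then yields \eqref{eq:admissible-bessel-formula}. Local integrability near $\xi=0$ follows from the low-frequency bound below.

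For the bounds we work with $z=L|\xi|$ and use \eqref{eq:modified-bessel-def} directly. For $z\ge1$, write $\cosh r\ge 1+r^2/2$ to get $K_\nu(z)\le e^{-z}\int_0^\I e^{-zr^2/2}\cosh(\nu r)\dd r\lsm_\nu z^{-1/2}e^{-z}$; here $\cosh(\nu r)\le e^{|\nu| r}$ is absorbed after completing the square, which is valid for $z\ge1$. This gives \eqref{eq:admissible-bessel-high}. For $0<z\le1$, use the $s$-integral form instead. When $\nu<0$, i.e. $a<d/2$, the integral is dominated by the region $s\sim z^2$ and is $\lsm \int_0^\I s^{\nu-1}e^{-z^2/(4s)}\dd s\cdot$ (cutoff), which scales as $z^{2\nu}$. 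Multiplying by $z^{\nu}$ from the formula gives $z^{2a-d}$. The more direct route is to bound $\wh f$ itself: when $\nu>0$ the integral is bounded by $\Gamma(\nu)$; when $\nu=0$ it diverges like $\log(1/z)$; when $\nu<0$ it is $\lsm z^{2\nu}$. This gives the three cases of \eqref{eq:admissible-bessel-low}.

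The membership \eqref{eq:admissible-bessel-wiener} follows by splitting the $\A^\sigma_\rho$ integral. On $|\xi|\le1/L$ the low bound is integrable, since $2a-d>-d$. On $|\xi|\ge1/L$ the weight $e^{\rho\jb{\xi}}\jb{\xi}^\sigma$ is beaten by $e^{-L|\xi|}$ because $\rho<L$. For \eqref{eq:admissible-bessel-L2}, since $f_{a,L}$ is radial and continuous, $f_{a,L}\in L^2$ iff $\int^\I r^{-4a}r^{d-1}\dd r<\I$, i.e. iff $4a>d$. The main obstacle is getting the low-frequency constants right uniformly, particularly the logarithmic case $\nu=0$, which I will handle by splitting the $s$-integral at $s=1$.
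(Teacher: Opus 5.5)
Your proposal is correct and follows essentially the paper's route: Gamma-function subordination, the Gaussian Fourier transform, the substitution $s=\frac{L|\xi|}{2}e^{r}$ producing $K_{a-d/2}$, and the split of the Wiener integral at $|\xi|=1/L$. The differences are that you justify the interchange by positivity and Tonelli, which even gives $\widehat{f_{a,L}}\ge0$ in $L^1$, instead of truncating to $[\varepsilon,R]$ and passing to the limit in $\mathcal S'$, and that you derive the Bessel bounds directly from the integral representations instead of citing them.

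One slip to fix: for $\nu<0$ the $s$-integral is already $\widehat{f_{a,L}}$ up to the constant $L^d2^{-d/2}/\Gamma(a)$, and the substitution $u=z^2/(4s)$ bounds it by $(z^2/4)^{\nu}\Gamma(-\nu)$. So no extra factor $z^{\nu}$ should be multiplied in, since that would give $z^{3\nu}$; your ``more direct route'' sentence is the correct version.
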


\begin{proof}
Set $\nu=a-d/2$. The Gamma integral representation and the Fourier transform of a Gaussian give
\EQ{
f_{a,L}(x)
&=\frac1{\Gamma(a)}
\int_0^\I t^{a-1}e^{-t}e^{-t|x|^2/L^2}\dd t,
\\
\widehat{e^{-t|\cdot|^2/L^2}}(\xi)
&=\frac{L^d}{2^{d/2}t^{d/2}}
e^{-L^2|\xi|^2/(4t)},
}
and hence
\EQ{
\widehat{f_{a,L}}(\xi)
=\frac{L^d}{2^{d/2}\Gamma(a)}
\int_0^\I t^{\nu-1}e^{-t-L^2|\xi|^2/(4t)}\dd t.
}
Under the change of variables $t=(L|\xi|/2)e^r$, the definition \eqref{eq:modified-bessel-def} yields
\EQ{
\int_0^\I t^{\nu-1}e^{-t-L^2|\xi|^2/(4t)}\dd t
&=\brko{\frac{L|\xi|}{2}}^{\nu}
\int_{-\I}^{\I}e^{-L|\xi|\cosh r}e^{\nu r}\dd r
\\
&=2\brko{\frac{L|\xi|}{2}}^{\nu}K_\nu\brko{L|\xi|}.
}
This proves \eqref{eq:admissible-bessel-formula} formally. For $a>d/2$, the calculation for Fourier transform is justified in $f_{a,L}\in L^1$. For general $a>0$, one first restricts the $t$-integral to $[\ep,R]$ and then passes to the limit in $\mathcal S'(\R^d)$. 

Next, we prove that the right hand side of \eqref{eq:admissible-bessel-formula} is integrable. The identity $K_{-\nu}=K_\nu$, which follows directly from \eqref{eq:modified-bessel-def}, and the classical bounds, which follow from asymptotic expansions in \cite{Temme1996},
\EQ{
K_\nu(z)&\lsm_\nu z^{-|\nu|},
\qquad 0<z\le1,\quad \nu\ne0,
\\
K_0(z)&\lsm 1+|\log z|,
\qquad 0<z\le1,
\\
K_\nu(z)&\lsm_\nu z^{-\frac12}e^{-z},
\qquad z\ge1,
}
imply \eqref{eq:admissible-bessel-low} and \eqref{eq:admissible-bessel-high}. These bounds also yield that right hand side of \eqref{eq:admissible-bessel-formula} is locally integrable. Now, we prove \eqref{eq:admissible-bessel-wiener}. To verify the low-frequency integrability for $\{|\xi|\le L^{-1}\}$, observe first that the weight $e^{\rho\jb{\xi}}\jb{\xi}^{\sigma}$ is bounded. By \eqref{eq:admissible-bessel-low} and polar coordinates, we can prove that for every $a>0$,
\EQ{
\int_{|\xi|\le L^{-1}} e^{\rho\jb{\xi}}\jb{\xi}^{\sigma}
\absb{\widehat{f_{a,L}}(\xi)}\dd\xi <\I.
}
On $\{|\xi|\ge L^{-1}\}$, the exponential decay $e^{-L|\xi|}$ in \eqref{eq:admissible-bessel-high} dominates the same weight whenever $\rho<L$. This proves \eqref{eq:admissible-bessel-wiener}. Finally, the change of variables $x=Ly$ shows that
\EQ{
\norm{f_{a,L}}_{L^2}^2
=L^d\int_{\R^d}(1+|y|^2)^{-2a}\dd y.
}
The last integral is finite exactly when $4a>d$, which proves \eqref{eq:admissible-bessel-L2}.
\end{proof}

\begin{prop}[Non-emptiness of the admissible class in every dimension]
\label{prop:admissible-data-nonempty}
Fix $d\ge1$, $p>0$, $\rho_0>0$, $R>0$, and $0<\ep\le1$. Let $A>d/4$. Then there exist $L>\rho_0$ and $M>0$ such that
\EQ{%\label{eq:admissible-log-profile}
\Psi_{A,L,M}(x)
:=-M-A\log\brko{1+\frac{|x|^2}{L^2}}
}
satisfies
\EQn{\label{eq:admissible-log-gradient-bound}
\norm{\nabla\Psi_{A,L,M}}_{\A^6_{\rho_0}}
\le R
}
and
\EQn{\label{eq:admissible-log-smallness}
\norm{e^{\Psi_{A,L,M}}}_{L^2}
+\norm{e^{\Psi_{A,L,M}}}_{\A^6_{\rho_0}}
+\norm{e^{p\re\Psi_{A,L,M}}}_{\A^6_{\rho_0}}
\le\ep.
}
In particular, the admissible final-profile class in Definition \ref{def:final-data} is nonempty for every $d\ge1$.
\end{prop}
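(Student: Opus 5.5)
The plan is to reduce everything to Lemma \ref{lem:admissible-fourier-bessel} with $L>\rho_0$, and then tune the two free parameters in order: first $L$ (and implicitly $A$) to control the gradient, then $M$ to make the smallness conditions hold.

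\emph{Step 1: the exponentials.} Since $\Psi_{A,L,M}$ is real,
\begin{align*}
e^{\Psi_{A,L,M}}=e^{-M}f_{A,L},\qquad e^{p\re\Psi_{A,L,M}}=e^{-pM}f_{pA,L}.
\end{align*}
For any $L>\rho_0$, \eqref{eq:admissible-bessel-wiener} gives $f_{A,L},f_{pA,L}\in\A^6_{\rho_0}$ (with $\sigma=6$, $\rho=\rho_0<L$). Because $A>d/4$, \eqref{eq:admissible-bessel-L2} gives $f_{A,L}\in L^2$. So all three quantities in \eqref{eq:admissible-log-smallness} are finite constants depending on $(A,L,p,d,\rho_0)$, multiplied by $e^{-M}$ or $e^{-pM}$. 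Once $L$ is fixed, taking $M$ large makes the left side of \eqref{eq:admissible-log-smallness} at most $\ep$. The gradient does not depend on $M$, so this last choice leaves Step 2 unaffected.

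\emph{Step 2: the gradient bound.} We have
\begin{align*}
\nabla\Psi_{A,L,M}(x)=-\frac{2A}{L^2}\,x\,f_{1,L}(x).
\end{align*}
The difficulty is that $x f_{1,L}(x)$ decays only like $|x|^{-1}$, so we cannot simply appeal to integrability of $f$. Instead I would use scaling. Write $x_jf_{1,L}(x)=L\,g_j(x/L)$ with $g_j(y)=y_j(1+|y|^2)^{-1}$. This gives $\widehat{x_jf_{1,L}}(\xi)=L^{d+1}\widehat{g_j}(L\xi)$, and therefore
\begin{align*}
\norm{\pd_j\Psi}_{\A^6_{\rho_0}}=\frac{2A}{L}\int e^{\rho_0\jb{\eta/L}}\jb{\eta/L}^6|\widehat{g_j}(\eta)|\dd\eta .
\end{align*}
Now use $\widehat{g_j}=i\pd_{\eta_j}\widehat{f_{1,1}}$. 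Differentiating \eqref{eq:admissible-bessel-formula} with the identity $\frac{d}{dz}(z^\nu K_\nu)=-z^\nu K_{\nu-1}$ yields $|\widehat{g_j}(\eta)|\lsm |\eta|^{1-d/2}|K_{-d/2}(|\eta|)|$. By the Bessel bounds quoted in the proof of Lemma \ref{lem:admissible-fourier-bessel}, this is $O(|\eta|^{1-d})$ near $0$, which is integrable, and it decays like $e^{-|\eta|}$ times a power at infinity. For $L\ge 2\rho_0$ the weight is at most $e^{\rho_0}\jb{\eta}^6e^{|\eta|/2}$, so the integral is bounded uniformly in $L\ge 2\rho_0$. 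Hence $\norm{\nabla\Psi}_{\A^6_{\rho_0}}\lsm_{A,d,\rho_0}L^{-1}$, and choosing $L$ large gives \eqref{eq:admissible-log-gradient-bound}. After that, $M$ is chosen as in Step 1.

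\emph{Main obstacle.} The delicate point is the low-frequency integrability of $\widehat{g_j}$, together with justifying the derivative formula in $\mathcal S'$ when $1\le d/2$, i.e.\ when $f_{1,1}\notin L^1$. I would handle this in the same way as in the lemma: derive $\widehat{g_j}$ from the Gamma-integral representation $x_je^{-t|x|^2}$, truncate the $t$-integral to $[\ep,R]$, and pass to the limit. The same computation produces $|\eta|^{1-d/2}K_{d/2}$ directly.
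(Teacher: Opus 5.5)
Your proposal is correct and follows essentially the same route as the paper: the scaling $\nabla\Psi_{A,L,M}=L^{-1}G_A(\cdot/L)$, the Bessel formula $\widehat{G_{A,j}}(\eta)\propto \eta_j|\eta|^{-d/2}K_{d/2}(|\eta|)$, the resulting $L^{-1}$ bound for the gradient, and then $M$ chosen large once $L$ is fixed. Your Gamma-integral derivation of $\widehat{x_je^{-t|x|^2}}$ is a clean substitute for the paper's integration by parts on $\{|\eta|>\varepsilon\}$. One small fix: the weight bound $\jb{\eta/L}^6\le\jb{\eta}^6$ needs $L\ge1$, so take $L\ge\max\{1,2\rho_0\}$, as the paper does.
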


\begin{proof}
We first control the logarithmic derivative. Let
\EQ{
G_A(y):=-\frac{2Ay}{1+|y|^2}.
}
Then
\EQn{\label{eq:admissible-log-gradient-scaling}
\nabla\Psi_{A,L,M}(x)
=L^{-1}G_A\brko{\frac{x}{L}},
\qquad
\widehat{\nabla\Psi_{A,L,M}}(\xi)
=L^{d-1}\widehat G_A(L\xi).
}
For $1\le j\le d$,
\EQ{
G_{A,j}(y)=-2Ay_jf_{1,1}(y),
\qquad
\widehat{G_{A,j}}=-2Ai\partial_{\eta_j}\widehat{f_{1,1}}
\quad\text{in }\mathcal S'(\R^d).
}
By \eqref{eq:admissible-bessel-formula} with $a=L=1$ and the identity $K_{-\nu}=K_\nu$,
\EQ{
\widehat{f_{1,1}}(\eta)
=|\eta|^{1-\frac d2}K_{\frac d2-1}(|\eta|),
\qquad \eta\ne0.
}
Using
\EQ{
\frac{\dd}{\dd r}\brko{r^{-\nu}K_\nu(r)}
=-r^{-\nu}K_{\nu+1}(r)
}
gives
\EQ{
\partial_{\eta_j}\widehat{f_{1,1}}(\eta)
=-\eta_j|\eta|^{-\frac d2}K_{\frac d2}(|\eta|),
\qquad \eta\ne0.
}
For $\varphi\in\mathcal S(\R^d)$, integration by parts on $\{|\eta|>\varepsilon\}$ gives
\EQ{
-\int_{|\eta|>\varepsilon}
\widehat{f_{1,1}}(\eta)\partial_{\eta_j}\varphi(\eta)\dd\eta
&=\int_{|\eta|>\varepsilon}
\partial_{\eta_j}\widehat{f_{1,1}}(\eta)\varphi(\eta)\dd\eta
\\
&\quad+
\int_{|\eta|=\varepsilon}
\widehat{f_{1,1}}(\eta)\varphi(\eta)
\frac{\eta_j}{|\eta|}\dd S(\eta).
}
The low-frequency estimates in Lemma \ref{lem:admissible-fourier-bessel}, together with the radiality of $\widehat{f_{1,1}}$ when $d=1$, show that the boundary term tends to zero as $\varepsilon\downarrow0$. The Bessel estimates also show that the interior term has a locally integrable limit. Therefore,
\EQ{%\label{eq:admissible-log-gradient-bessel}
\widehat G_A(\eta)
=2Ai\eta|\eta|^{-\frac d2}K_{\frac d2}(|\eta|)
\quad\text{in }\mathcal S'(\R^d).
}
The Bessel estimates imply
\EQn{\label{eq:admissible-log-gradient-frequency}
\absb{\widehat G_A(\eta)}
\lsm_d A
\begin{cases}
|\eta|^{1-d},&0<|\eta|\le1,\\
|\eta|^{\frac{1-d}{2}}e^{-|\eta|},&|\eta|\ge1.
\end{cases}
}
By \eqref{eq:admissible-log-gradient-scaling} and the change of variables $\eta=L\xi$,
\EQ{
\norm{\nabla\Psi_{A,L,M}}_{\A^6_{\rho_0}}
=\frac1L\int_{\R^d}
e^{\rho_0\jb{\eta/L}}\jb{\eta/L}^{6}
\abs{\widehat G_A(\eta)}\dd\eta.
}
For $L\ge\max\fbrko{1,2\rho_0}$,
\EQ{
e^{\rho_0\jb{\eta/L}}
\le e^{\rho_0}e^{\frac12|\eta|},
\qquad
\jb{\eta/L}^{6}\le\jb{\eta}^{6}.
}
By \eqref{eq:admissible-log-gradient-frequency} and polar coordinates,
\EQ{
\int_{|\eta|\le1}
e^{\rho_0\jb{\eta/L}}\jb{\eta/L}^{6}
\absb{\widehat G_A(\eta)}\dd\eta
\lsm_{A,d,\rho_0}1,
}
and
\EQ{
\int_{|\eta|\ge1}
e^{\rho_0\jb{\eta/L}}\jb{\eta/L}^{6}
\absb{\widehat G_A(\eta)}\dd\eta
\lsm_{A,d,\rho_0}
\int_1^\I \jb{r}^{6}r^{\frac{d-1}{2}}
e^{-\frac r2}\dd r
\lsm_{A,d,\rho_0}1.
}
Consequently,
\EQn{\label{eq:admissible-log-gradient-decay}
\norm{\nabla\Psi_{A,L,M}}_{\A^6_{\rho_0}}
\lsm_{A,d,\rho_0}L^{-1}.
}
We now choose $L\ge\max\fbrko{1,2\rho_0}$ sufficiently large that the right-hand side of \eqref{eq:admissible-log-gradient-decay} is at most $R$. This fixes $L$ and proves \eqref{eq:admissible-log-gradient-bound}.

For this fixed $L$, Lemma \ref{lem:admissible-fourier-bessel} and $A>d/4$ show that the finite numbers
\EQ{
B_L&:=
\norm{f_{A,L}}_{L^2}
+\norm{f_{A,L}}_{\A^6_{\rho_0}},
\\
D_L&:=
\norm{f_{pA,L}}_{\A^6_{\rho_0}}
}
are well defined. Notice that only $pA>0$ is needed for the finiteness of $D_L$. Since
\EQ{
e^{\Psi_{A,L,M}}=e^{-M}f_{A,L},
\qquad
e^{p\re\Psi_{A,L,M}}=e^{-pM}f_{pA,L},
}
we have
\EQn{\label{eq:admissible-log-smallness-reduced}
\norm{e^{\Psi_{A,L,M}}}_{L^2}
+\norm{e^{\Psi_{A,L,M}}}_{\A^6_{\rho_0}}
+\norm{e^{p\re\Psi_{A,L,M}}}_{\A^6_{\rho_0}}
=e^{-M}B_L+e^{-pM}D_L.
}
After $L$ has been fixed, choose
\EQ{
M\ge
\max\fbrko{
1,\,
\log\frac{2B_L}{\ep},\,
\frac1p\log\frac{2D_L}{\ep}}.
}
Then \eqref{eq:admissible-log-smallness-reduced} is at most $\ep$. Thus \eqref{eq:admissible-log-smallness} follows, and the proof is complete.
\end{proof}

\begin{cor}[Infinite-dimensional families of admissible final data]\label{cor:admissible-data-richness}
Fix $d\ge1$, $p>0$, $\rho_0>0$, $R>0$, $0<\ep\le1$, and $A>d/4$. There exist $L>\rho_0$, $M>0$, and $\delta>0$ such that, for every complex-valued $h\in\A^7_{\rho_0}$ with $\norm{h}_{\A^7_{\rho_0}}<\delta$, the final datum $W_h=e^{\Psi_h}$ defined by
\EQ{
\Psi_h(x)&:=\Psi_{A,L,M}(x)+h(x)
=-M-A\log\brko{1+\frac{|x|^2}{L^2}}+h(x),
\\
W_h(x)&:=e^{-M}\brko{1+\frac{|x|^2}{L^2}}^{-A}e^{h(x)}
}
is admissible in the sense of Definition \ref{def:final-data} with the same parameters $(\rho_0,R,\ep)$. The radius $\delta$ can be chosen depending only on $d,p,R$, and hence independently of $\ep$, while $M$ and the seed $W_0:=e^{\Psi_{A,L,M}}$ may depend on $\ep$. The map $h\mapsto W_h$ is injective. Moreover, for $h,k$ in this ball,
\EQ{
&\norm{W_h-W_k}_{L^2}
+\norm{W_h-W_k}_{\A^6_{\rho_0}}
+\norm{|W_h|^p-|W_k|^p}_{\A^6_{\rho_0}}
\lsm_{d,p,\rho_0,R}\ep\norm{h-k}_{\A^7_{\rho_0}}.
}
\end{cor}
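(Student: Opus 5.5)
The plan is to perturb the seed of Proposition \ref{prop:admissible-data-nonempty}, with the parameter order adjusted so that the ball radius $\delta$ does not depend on $\ep$. First fix $L\ge\max\{1,2\rho_0\}$ so that, by \eqref{eq:admissible-log-gradient-decay}, $\norm{\nabla\Psi_{A,L,M}}_{\A^6_{\rho_0}}\le R/2$. This choice involves only $A,d,\rho_0,R$, and $M$ does not enter since $\nabla M=0$. The gradient condition for the perturbed datum then reads
\EQ{
\norm{\nabla\Psi_h}_{\A^6_{\rho_0}}\le \frac R2+\norm{\nabla h}_{\A^6_{\rho_0}}\le \frac R2+\norm{h}_{\A^7_{\rho_0}},
}
using $|\xi|\le\jb{\xi}$. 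So it suffices to take $\delta\le R/2$, and we also impose $\delta\le1$. This makes $\delta$ independent of $\ep$ and $M$.

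Next I would handle the smallness condition by factorizing $W_h=e^{-M}f_{A,L}e^h$ and $|W_h|^p=e^{-pM}f_{pA,L}e^{p\re h}$. By Lemma \ref{lem:fourier-algebra}(v)--(vi) with the radius-$1$ bound on $h$, and by the conjugation invariance (iii), we get $\norm{e^h}_{\A^6_{\rho_0}}+\norm{e^{p\re h}}_{\A^6_{\rho_0}}\le C_{p}$ together with $\norm{e^h}_{L^\I}\le C$. The algebra estimate and \eqref{eq:wiener-L2-product} then give
\EQ{
\norm{e^{\Psi_h}}_{L^2}+\norm{e^{\Psi_h}}_{\A^6_{\rho_0}}+\norm{e^{p\re\Psi_h}}_{\A^6_{\rho_0}}\le C\brko{e^{-M}B_L+e^{-pM}D_L}.
}
Choosing $M$ large, as in the proof of Proposition \ref{prop:admissible-data-nonempty} but with an extra constant $2C$, makes the right-hand side at most $\ep$. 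Since $\Psi_h$ is real analytic, hence $C^\I$, admissibility follows.

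For injectivity, $W_h=W_k$ gives $e^{h-k}=1$. The function $h-k$ is continuous, and $\norm{h-k}_{L^\I}\lsm\norm{h-k}_{\A^0_{\rho_0}}<2\delta$, so $h-k\in2\pi i\Z$ pointwise and small; shrinking $\delta$ if necessary, this forces $h=k$.

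For the Lipschitz bound I would write $e^h-e^k=(h-k)\int_0^1e^{\theta h+(1-\theta)k}\dd\theta$ and apply the algebra estimate and \eqref{eq:product-composition-lip}. This gives $\norm{W_h-W_k}_{\A^6_{\rho_0}}\lsm e^{-M}\norm{f_{A,L}}_{\A^6_{\rho_0}}\norm{h-k}_{\A^6_{\rho_0}}$, and similarly in $L^2$ via \eqref{eq:wiener-L2-product}, and for $|W|^p$ with $p\re(h-k)$. Each prefactor is at most $\ep$ by the choice of $M$, which yields the factor $\ep$ and uses only the weaker $\A^6$ norm of $h-k$. The main obstacle is bookkeeping: making sure $\delta$ is chosen before $M$, and that the composition constants depend only on $p$ and the bound $\delta\le1$, so that neither $\delta$ nor the implicit constant depends on $\ep$. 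The constant $\lsm_{d,p,\rho_0,R}$ then follows.
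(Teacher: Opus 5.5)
Your proposal is correct and follows essentially the paper's route. You perturb the seed of Proposition~\ref{prop:admissible-data-nonempty} and treat $e^h$ and $e^{p\re h}$ as bounded multipliers. Injectivity comes from $e^{h-k}=1$ together with $L^\infty$ smallness, and the Lipschitz bound from the integral formula for $e^h-e^k$.

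The only difference is the order of choices. You fix $\delta\le\min\{1,R/2\}$ first and absorb the resulting multiplier constant into $M$. The paper instead fixes $M$ for $\ep/2$ and then takes $\delta$ so small that $e^{c(1+p)\delta}\le2$.

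One small correction: $e^h$ itself is not an element of $\A^6_{\rho_0}$, since the constant $1$ has a Dirac Fourier transform. State the bound as the multiplier estimate \eqref{eq:product-composition}, i.e.\ $\norm{f_{A,L}e^h}_{\A^6_{\rho_0}}\lsm\norm{f_{A,L}}_{\A^6_{\rho_0}}$. Take the $L^2$ bound from $\norm{e^h}_{L^\infty}$ rather than from \eqref{eq:wiener-L2-product}.
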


\begin{proof}
Apply Proposition \ref{prop:admissible-data-nonempty} with $R/2$ and $\ep/2$ in place of $R$ and $\ep$. This gives $L,M$. Set $\Psi_0:=\Psi_{A,L,M}$ and $W_0:=e^{\Psi_0}$. Then
\EQ{
\norm{\nabla\Psi_0}_{\A^6_{\rho_0}}
&\le R/2,
\\
\norm{W_0}_{L^2}
+\norm{W_0}_{\A^6_{\rho_0}}
+\norm{|W_0|^p}_{\A^6_{\rho_0}}
&\le\ep/2.
}

Choose $c=c_d\ge1$ large enough to dominate the sixth-order algebra and embedding constants. Then
\EQ{
\norm{h}_{\A^6_{\rho_0}}
+\norm{h}_{L^\I}
+\norm{\nabla h}_{\A^6_{\rho_0}}
\le c\norm{h}_{\A^7_{\rho_0}}.
}
The exponential is used as a multiplier rather than as an element of $\A^6_{\rho_0}$. Indeed, expanding $Be^f=B+\sum_{n\ge1}Bf^n/n!$ and using \eqref{eq:fourier-algebra} gives
\EQ{
\norm{Be^f}_{\A^6_{\rho_0}}
\le e^{c\norm{f}_{\A^6_{\rho_0}}}
\norm{B}_{\A^6_{\rho_0}},
\qquad B,f\in\A^6_{\rho_0}.
}
Since $|W_0e^h|^p=|W_0|^pe^{p\re h}$, after increasing $c$ by a constant depending only on $d$, the preceding estimates give
\EQ{
\norm{W_h}_{L^2}
+\norm{W_h}_{\A^6_{\rho_0}}
+\norm{|W_h|^p}_{\A^6_{\rho_0}}
&\le e^{c(1+p)\norm{h}_{\A^7_{\rho_0}}}
\mbrko{
\norm{W_0}_{L^2}
+\norm{W_0}_{\A^6_{\rho_0}}
+\norm{|W_0|^p}_{\A^6_{\rho_0}}},
\\
\norm{\nabla(\Psi_0+h)}_{\A^6_{\rho_0}}
&\le R/2+c\norm{h}_{\A^7_{\rho_0}}.
}
Set
\EQ{
\delta:=\min\fbrko{1,\frac{R}{2c},\frac{\log2}{c(1+p)}}.
}
Then every $h$ in the stated ball satisfies \eqref{eq:R-bound} and \eqref{eq:eps-small} with $(\Psi,W)=(\Psi_h,W_h)$. Since $\rho_0>0$, the condition $h\in\A^7_{\rho_0}$ implies that $h$ is smooth. Thus $W_h=e^{\Psi_h}$ is admissible. The displayed choice of $\delta$ is independent of $\ep$. When $\ep$ changes, the value of $M$ and hence the seed $W_0$ may change.

If $W_h=W_k$, then the positivity of $W_0$ gives $e^{h-k}=1$, and hence
\EQ{
h(x)-k(x)\in2\pi i\Z,
\qquad x\in\R^d.
}
On the other hand,
\EQ{
\norm{h-k}_{L^\I}<2c\delta\le2\log2<2\pi.
}
Therefore $h-k=0$, which proves injectivity.

Finally, use
\EQ{
e^h-e^k
&=(h-k)\int_0^1e^{k+\theta(h-k)}\dd\theta,
\\
e^{p\re h}-e^{p\re k}
&=p\re(h-k)\int_0^1e^{p\re k+\theta p\re(h-k)}\dd\theta.
}
The multiplier estimate above, the algebra property, and the $L^\I$ embedding yield
\EQ{
&\norm{W_h-W_k}_{L^2}
+\norm{W_h-W_k}_{\A^6_{\rho_0}}
+\norm{|W_h|^p-|W_k|^p}_{\A^6_{\rho_0}}
\lsm_{d,p,\rho_0,R}\ep\norm{h-k}_{\A^7_{\rho_0}},
}
which proves the stated Lipschitz estimate.
\end{proof}

Writing $h=h_1+ih_2$ with $h_1,h_2$ real-valued, the preceding family allows independent amplitude and phase perturbations with no symmetry assumption on either component. This is a neighborhood in the multiplicative logarithmic variable $W_h=W_0e^h$.

\section{Low-order coefficients in the eikonal recursion}
\label{app:first-eikonal-coefficients}

This appendix writes out some low-order coefficients produced by Proposition \ref{prop:eikonal-prep}.  All coefficients are functions of $y$, and $r=\log\tau$.  The order-zero pair is
\EQ{
L_0=\Psi,
\qquad
S_0=\frac1{1-\al}e^{p\re\Psi}.
}

\subsection{\texorpdfstring{The case $\gamma=1$}{The case gamma=1}}

Here $\gamma=1$, equivalently $\al=1/2$ and $p=1/d$.  The homogeneous coefficient $C_+$ and the resonant forced correction both occur at order one.  Write
\EQ{
L&=L_0+\tau L_1+\tau^2\brko{L_{2,0}+rL_{2,1}}
+\tau^3\brko{L_{3,0}+rL_{3,1}}+\cdots,
\\
S&=S_0+\tau\brko{C_++rS_{1,1}}
+\tau^2\brko{S_{2,0}+rS_{2,1}}
+\tau^3\brko{S_{3,0}+rS_{3,1}+r^2S_{3,2}}+\cdots .
}
At order one,
\EQ{
L_1&=-2\brko{\nabla S_0\cdot\nabla L_0+\frac12\Dy S_0},
\\
S_{1,1}
&=-2\brko{\frac12\abs{\nabla S_0}^2
+p e^{p\re L_0}\re L_1}.
}
The order-two coefficient of $L$ is
\EQ{
L_{2,1}
&=-\brko{\nabla S_{1,1}\cdot\nabla L_0
+\frac12\Dy S_{1,1}},
\\
L_{2,0}
&=-\brko{
\nabla S_0\cdot\nabla L_1
+\nabla C_+\cdot\nabla L_0
+\frac12\Dy C_+
}
\\
&\quad
-\frac i2\brko{\Dy L_0+\nabla L_0\cdot\nabla L_0}
-\frac12L_{2,1}.
}
The order-two coefficient of $S$ is
\EQ{
S_{2,1}
&=-2\brko{
\nabla S_0\cdot\nabla S_{1,1}
+p e^{p\re L_0}\re L_{2,1}},
\\
S_{2,0}
&=-2\brko{
\nabla S_0\cdot\nabla C_+
+e^{p\re L_0}
\brko{p\re L_{2,0}+\frac{p^2}{2}(\re L_1)^2}}
-S_{2,1}.
}
At order three, the coefficient of $L$ is
\EQ{
L_{3,1}
&=-\frac23\brko{
\nabla S_0\cdot\nabla L_{2,1}
+\nabla S_{1,1}\cdot\nabla L_1
+\nabla S_{2,1}\cdot\nabla L_0
+\frac12\Dy S_{2,1}},
\\
L_{3,0}
&=-\frac23\brko{
\nabla S_0\cdot\nabla L_{2,0}
+\nabla C_+\cdot\nabla L_1
+\nabla S_{2,0}\cdot\nabla L_0
+\frac12\Dy S_{2,0}
}
\\
&\quad
-\frac i3\brko{\Dy L_1+2\nabla L_0\cdot\nabla L_1}
-\frac13L_{3,1}.
}
The order-three coefficient of $S$ is
\EQ{
S_{3,2}
&=-\frac12\abs{\nabla S_{1,1}}^2,
\\
S_{3,1}
&=-\nabla S_0\cdot\nabla S_{2,1}
-\nabla C_+\cdot\nabla S_{1,1}
\\
&\quad
-p e^{p\re L_0}\re L_{3,1}
-p^2e^{p\re L_0}\re L_1\re L_{2,1} -S_{3,2},
\\
S_{3,0}
&=-\nabla S_0\cdot\nabla S_{2,0}
 -\frac12\abs{\nabla C_+}^2
-p e^{p\re L_0}\re L_{3,0}
\\
&\quad
-p^2e^{p\re L_0}\re L_1\re L_{2,0} -\frac{p^3}{6}e^{p\re L_0}(\re L_1)^3
-\frac12S_{3,1}.
}
\subsection{\texorpdfstring{The case $1<\gamma<2$}{The case 1<gamma<2}}

This range is equivalent to \(2/(3d)<p<1/d\).  The prescribed homogeneous term is \(C_+\tau^\gamma\).  Apart from this term, no correction is generated at exponent $\gamma$; if $2\gamma<3$, no correction is generated at exponent $2\gamma$ either:
\EQ{
L_\gamma=0,
\qquad
\Sigma_\gamma=0,
\qquad
L_{2\gamma}=0,
\qquad
\Sigma_{2\gamma}=0\quad(2\gamma<3).
}
The nonzero coefficients through the third integer order are written as
\EQ{
L&=L_0+\tau L_1+\tau^2L_2+\tau^{1+\gamma}L_{1+\gamma}
+\tau^3L_3+\cdots,
\\
S&=S_0+C_+\tau^\gamma+\tau S_1+\tau^2S_2
+\tau^{1+\gamma}S_{1+\gamma}+\tau^3S_3+\cdots .
}
The integer coefficients of orders one and two are
\EQ{
L_1&=-\frac1\al\brko{\nabla S_0\cdot\nabla L_0+\frac12\Dy S_0},
\\
S_1&=-\frac1{2\al-1}
\brko{\frac12\abs{\nabla S_0}^2+e^{p\re L_0}p\re L_1},
\\
L_2&=-\frac1{2\al}
\brko{\nabla S_0\cdot\nabla L_1
+\nabla S_1\cdot\nabla L_0+\frac12\Dy S_1},
\\
S_2&=-\frac1{3\al-1}
\brko{\nabla S_0\cdot\nabla S_1
+e^{p\re L_0}
\brko{p\re L_2+\frac{p^2}{2}(\re L_1)^2}}.
}
Using $\al(1+\gamma)=1$, the coefficients at exponent $1+\gamma$ are
\EQ{
L_{1+\gamma}
&=-\brko{
\nabla C_+\cdot\nabla L_0+\frac12\Dy C_+
+\frac{i}{2}\brko{\Dy L_0+\nabla L_0\cdot\nabla L_0}},
\\
S_{1+\gamma}
&=-\frac1\al
\brko{
\nabla S_0\cdot\nabla C_+
+e^{p\re L_0}p\re L_{1+\gamma}}.
}
The third integer coefficients are
\EQ{
L_3&=-\frac1{3\al}
\brko{\nabla S_0\cdot\nabla L_2
+\nabla S_1\cdot\nabla L_1
+\nabla S_2\cdot\nabla L_0+\frac12\Dy S_2},
\\
S_3&=-\frac1{4\al-1}\brko{
\nabla S_0\cdot\nabla S_2+\frac12\abs{\nabla S_1}^2
\\
&\quad
+e^{p\re L_0}\brko{p\re L_3+p^2\re L_1\re L_2
+\frac{p^3}{6}(\re L_1)^3}}.
}
Beyond the prescribed homogeneous term $C_+\tau^\gamma$, the first forced coefficient depending on $C_+$ occurs at order $1+\gamma$.

\end{document}